\newtheorem{theorem}{Theorem}[section]
\newtheorem{lemma}[theorem]{Lemma}
\newtheorem{corollary}[theorem]{Corollary}
\newtheorem{proposition}[theorem]{Proposition}
\theoremstyle{definition}
\theoremstyle{remark}
\newtheorem{remark}[theorem]{Remark}
\numberwithin{equation}{section}
\newcommand{\Hom}{\operatorname{Hom}}
\begin{document}

\title{A Turaev surface approach to Khovanov homology}

\author{Oliver T. Dasbach}
\address{Department of Mathematics\\
Louisiana State University\\
Baton Rouge, Louisiana}
\email{kasten@math.lsu.edu}
\thanks{The first author was partially supported by {NSF-DMS} 0806539 and {NSF-DMS FRG} 0456275.\\
The second author was partially supported by {NSF-DMS} 0602242 ({VIGRE})}
\author{Adam M. Lowrance}
\address{Department of Mathematics\\
Vassar College\\
Poughkeepsie, New York} 
\email{adlowrance@vassar.edu}

\subjclass{}
\date{}

\begin{abstract}
We introduce Khovanov homology for ribbon graphs and show that the Khovanov homology of a certain ribbon graph embedded on the Turaev surface of a link is isomorphic to the Khovanov homology of the link (after a grading shift). We also present a spanning quasi-tree model for the Khovanov homology of a ribbon graph.
\end{abstract}

\maketitle

%----------------------------------------------------------------------------------------------%
%----------------------------------------------------------------------------------------------%
\section{Introduction}

Khovanov \cite{Khovanov:homology} introduced a categorification of the Jones polynomial now known as Khovanov homology. The Khovanov homology of a link $L$ with diagram $D$, equivalently denoted $Kh(L)$ or $Kh(D)$, is a bigraded abelian group with homological grading $i$ and polynomial grading $j$. The reduced version of Khovanov homology, denoted either $\widetilde{Kh}(L)$ or $\widetilde{Kh}(D)$ is also a bigraded abelian group. The graded Euler characteristic of reduced Khovanov homology is the Jones polynomial $J_L(q)$ while the graded Euler characteristic of Khovanov homology is $(q+q^{-1}) J_L(q)$.

An {\em oriented ribbon graph} $\mathbb{G}$ is a graph $G$ together with an embedding into an oriented surface $\Sigma$ such that $\Sigma\setminus\mathbb{G}$ is a disjoint union of two-cells. The graph $G$ is called the {\em underlying graph of $\mathbb{G}$}. Two oriented ribbon graphs $\mathbb{G}$ and $\mathbb{G}^\prime$ embedded into surfaces $\Sigma$ and $\Sigma^\prime$ respectively are equivalent if there is an orientation preserving homeomorphism from $\Sigma$ to $\Sigma^\prime$ that restricts to a graph isomorphism from the underlying graph of $\mathbb{G}$ to the underlying graph of $\mathbb{G}^\prime$. The embedding into the surface determines a cyclic ordering of the half edges incident to each vertex, and oriented ribbon graphs are often depicted as graphs drawn in the plane with the cyclic ordering around each vertex given by counterclockwise rotation. The genus of $\mathbb{G}$, denoted $g(\mathbb{G})$, is the genus of the surface $\Sigma$. All ribbon graphs that we consider are oriented and are referred to just as ribbon graphs. 

The all-$A$ ribbon graph of a link diagram (defined in Section \ref{section:khovanov}) is an oriented ribbon graph embedded on the Turaev surface of the link diagram. Dasbach, Futer, Kalfagianni, Lin, and Stoltzfus \cite{DFKLS:GraphsOnSurfaces, DFKLS:DDD} interpret the Jones polynomial via the all-$A$ ribbon graph of a diagram of the link. We define the Khovanov homology $Kh(\mathbb{G})$ and reduced Khovanov homology $\widetilde{Kh}(\mathbb{G})$ of a ribbon graph $\mathbb{G}$. If $\mathbb{G}$ is the all-$A$ ribbon graph of a link diagram $L$, then up to a grading shift, the Khovanov homology of the ribbon graph $\mathbb{G}$ is isomorphic to the Khovanov homology of the link $L$. If $M$ is a bigraded abelian group and $r$ and $s$ are integers, let $M[r]\{s\}$ denote the group $M$ but with the homological grading shifted up by $r$ units and the polynomial grading shifted up by $s$ units. The first main theorem of the paper is:
\begin{theorem}
\label{theorem:maintheorem1}
Let $L$ be a link with diagram $D$ and all-$A$ ribbon graph $\mathbb{D}$. Suppose that $D$ has $n_+$ positive crossings and $n_-$ negative crossings. There are grading preserving isomorphisms
\begin{eqnarray*}
Kh(\mathbb{D})[-n_-]\{n_+-2n_-\} & \cong & Kh(L)~\text{and}\\
\widetilde{Kh}(\mathbb{D})[-n_-]\{n_+-2n_- \} & \cong & \widetilde{Kh}(L).
\end{eqnarray*}
\end{theorem}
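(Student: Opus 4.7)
The plan is to construct an explicit chain-level isomorphism between the (appropriately shifted) Khovanov complex of $D$ and the Khovanov complex of the all-$A$ ribbon graph $\mathbb{D}$. The fundamental observation is that the edges of $\mathbb{D}$ are in one-to-one correspondence with the crossings of $D$, and under this correspondence a Kauffman state of $D$ (a choice of $A$- or $B$-smoothing at each crossing) is the same data as a spanning subgraph of $\mathbb{D}$ (a choice of whether to include each edge). The natural convention is that including an edge corresponds to keeping the $A$-resolution at the corresponding crossing, so that the full ribbon graph $\mathbb{D}$ matches the all-$A$ state and the edgeless spanning subgraph matches the all-$B$ state. Once this bijection of states is fixed, the key geometric input from the Turaev surface construction is that the state circles of any Kauffman smoothing of $D$ coincide exactly with the boundary components of the corresponding spanning sub-ribbon-graph of $\mathbb{D}$, viewed as a sub-surface of the Turaev surface.

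Having matched the generators of both chain complexes as tensor powers of $V = \langle v_+, v_-\rangle$ indexed by the same set of loops, the next step is to show that the cube of resolutions for $L$ and the cube of spanning subgraphs for $\mathbb{D}$ carry identical edge maps. In the link case each edge of the cube is a Frobenius-algebra merge $m\colon V\otimes V\to V$ or comultiplication $\Delta\colon V\to V\otimes V$, determined by whether toggling the resolution at the corresponding crossing joins two state circles into one or splits one into two. On the ribbon graph side, toggling a single edge of a spanning subgraph has precisely the same effect on boundary components: it either glues two boundary arcs together (merging two cycles) or cuts one boundary cycle into two. Because the signs making each hypercube anticommute depend only on a chosen ordering of the edges (respectively crossings), using the common ordering on both sides identifies the two chain complexes as ungraded objects.

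It remains to match the bigradings. The definition of $Kh(\mathbb{G})$ given in Section \ref{section:khovanov} is set up so that the all-edge spanning subgraph of $\mathbb{D}$ sits in homological degree $0$, whereas the standard convention for $Kh(L)$ places homological degree $0$ at the oriented resolution. Translating between these conventions introduces the homological shift $-n_-$ and the polynomial shift $n_+-2n_-$ that appear in the statement, in exactly the same way that these shifts appear when one normalizes the Kauffman bracket to obtain the Jones polynomial. The reduced version is handled identically after selecting a common marked point on a chosen state circle and corresponding boundary component. The main obstacle I anticipate is not any single deep computation but careful bookkeeping: making the state-circle / boundary-component bijection canonical enough that the Frobenius maps literally agree rather than merely agreeing up to isomorphism, and simultaneously verifying that signs and both grading shifts remain compatible across the whole cube. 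Once those ingredients are locked in, the theorem reduces to the combinatorial identity of the two hypercubes.
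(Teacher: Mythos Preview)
Your overall strategy coincides with the paper's: Lemma~\ref{lemma:components} establishes the bijection between state circles of $D(I)$ and boundary components of $\Sigma_{\mathbb{D}(I)}$, and Theorem~\ref{theorem:ribbontokh} then assembles the vertex-by-vertex isomorphisms $\rho_I$ into a chain isomorphism by checking that merges and splits correspond. Theorem~\ref{theorem:maintheorem1} is recorded as an immediate corollary.

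There is, however, a concrete convention error in your proposal that would make the argument fail as written. You state that including an edge corresponds to keeping the $A$-resolution, so that the full ribbon graph $\mathbb{D}$ matches the all-$A$ state. This is backwards, and the geometry forces the opposite choice. By construction the \emph{vertices} of $\mathbb{D}$ are the components of the all-$A$ state; hence the spanning subgraph with \emph{no} edges (whose regular neighbourhood is a disjoint union of vertex disks) has boundary components in bijection with the all-$A$ circles. Adding the edge corresponding to a crossing attaches a band, which is exactly the effect of switching that crossing from its $A$-resolution to its $B$-resolution. Consequently the full ribbon graph $\mathbb{D}$ corresponds to the all-$B$ state, and its boundary components are the faces of $\mathbb{D}$, i.e.\ the all-$B$ circles. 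Under your stated convention the counts of circles and boundary components already disagree at the two extreme vertices of the cube, so no bijection of generators is possible. Once you reverse the convention (edge present $\leftrightarrow$ $B$-resolution, edgeless subgraph in homological degree $0$), the grading shift $[-n_-]\{n_+-2n_-\}$ is simply the standard normalisation built into $CKh(D)$ and not present in $CKh(\mathbb{D})$, and your outline becomes exactly the paper's proof.
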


Chmutov \cite{Chmutov:GeneralizedDuality} generalized the notion of the all-$A$ ribbon graph to virtual link diagrams (see Section \ref{section:virtual}). However, the all-$A$ ribbon graph $\mathbb{D}$ of a virtual link diagram $D$ is not necessarily orientable. We show that a generalization of Theorem \ref{theorem:maintheorem1} holds whenever $\mathbb{D}$ is orientable.

Champanerkar and Kofman \cite{ChampanerkarKofman:SpanningTrees} and independently Wehrli \cite{Wehrli:SpanningTrees} proved that there exists a complex whose homology is the reduced Khovanov homology of a link $L$ and whose generators are in one-to-one correspondence with the spanning trees of the checkerboard graph of a diagram $D$ of $L$. Such a complex is known as a {\em spanning tree model of Khovanov homology}.
A ribbon graph $\mathbb{G}$ can be represented by a surface $\Sigma_{\mathbb{G}}$ with boundary. The surface $\Sigma_{\mathbb{G}}$ is a two-dimensional regular neighborhood of $\mathbb{G}$  in $\Sigma$, as in Figure \ref{fig:fatgraph}. A {\em spanning quasi-tree $\mathbb{T}$} of a ribbon graph $\mathbb{G}$ is a spanning ribbon subgraph of $\mathbb{G}$ such that the surface $\Sigma_{\mathbb{T}}$ has only one boundary component. The spanning quasi-trees of a ribbon graph can be viewed as generalizations of spanning trees of a graph embedded in the plane in the following sense. If the genus of $\mathbb{G}$ is zero, then the spanning quasi-trees of $\mathbb{G}$ are precisely the spanning trees of the underlying graph of $\mathbb{G}$. However, if the genus of $\mathbb{G}$ is greater than zero, then $\mathbb{G}$ will have spanning quasi-trees of all different genera ranging from zero to the genus of $\mathbb{G}$.
Champanerkar, Kofman, and Stoltzfus \cite{CKS:GraphsOnSurfacesKhovanov}  show that the spanning trees of the checkerboard graph of a link diagram $D$ are in one-to-one correspondence with the spanning quasi-trees of the all-$A$ ribbon graph of $D$. Therefore, the spanning tree model of Khovanov homology for links may be viewed as a spanning quasi-tree model. We show that a spanning quasi-tree model can be obtained directly from our definition of the Khovanov homology of ribbon graphs.
\begin{theorem}
\label{theorem:maintheorem2}
Let $\mathbb{G}$ be a ribbon graph. There exists a complex $\widetilde{C}(\mathbb{G})$ whose generators are in one-to-one correspondence with the spanning quasi-trees of $\mathbb{G}$ and whose homology is $\widetilde{Kh}(\mathbb{G})$.
\end{theorem}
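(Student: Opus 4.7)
The plan is to carry out a discrete-Morse-theoretic cancellation on the full reduced Khovanov complex $\widetilde{C}_{\text{full}}(\mathbb{G})$ of the ribbon graph, modeled on the spanning-tree construction of Champanerkar-Kofman and Wehrli but with spanning quasi-trees of $\mathbb{G}$ playing the role of spanning trees of the checkerboard graph. The complex $\widetilde{C}_{\text{full}}(\mathbb{G})$ is built from a cube of resolutions whose vertices are the spanning ribbon subgraphs $H \subseteq \mathbb{G}$; the chain module at $H$ is freely generated by labellings of the boundary components of $\Sigma_H$ by elements of the Frobenius algebra $V = \mathbb{Z}[x]/(x^2)$, with the reduction constraint pinning the basepoint component to $1$. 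Each cube edge from $H$ to $H \triangle \{e\}$ contributes a summand to the differential given by the surgery along the corresponding band in $\Sigma_H$.

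The first step is to isolate the desired generators. Because the band surgery attached to the cube edge at $e$ merges two boundary components of $\Sigma_H$, splits one into two, or attaches a handle to $\Sigma_H$, the chain module at $H$ has rank $2^{|\partial \Sigma_H|-1}$ (after reduction). In particular, when $\mathbb{T}$ is a spanning quasi-tree the chain module at $\mathbb{T}$ is of rank one; write $e_{\mathbb{T}}$ for its canonical generator. These are the generators I want to survive, and the quasi-tree complex $\widetilde{C}(\mathbb{G})$ will be spanned by $\{e_{\mathbb{T}}\}$.

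Next I would construct an acyclic matching on the non-surviving generators. Fix a linear order on the edges of $\mathbb{G}$. For each generator $(H,\phi)$ with $H$ not a quasi-tree, I locate the smallest edge $e$ such that toggling $e$ at $H$ is a merge or split surgery (not a handle attachment), and such that the corresponding component of the differential produces an invertible pairing between a coordinate of $(H,\phi)$ and a coordinate of an enhanced state at $H \triangle \{e\}$; this pair is marked for cancellation. Applying the standard Gaussian elimination / cancellation lemma iteratively along this matching produces a chain homotopy equivalent subcomplex spanned by the unmatched generators, and the induced differential (obtained by summing over zig-zags through matched pairs) gives the desired differential on $\widetilde{C}(\mathbb{G})$. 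Grading shifts are automatic since cancellation preserves the bigrading pre-specified on $\widetilde{C}_{\text{full}}(\mathbb{G})$.

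The principal obstacle is Step 3, namely verifying that the proposed matching is genuinely acyclic and exhausts every non-quasi-tree generator. In the classical planar case only merge and split arrows appear, and the activity-based argument of Champanerkar-Kofman/Wehrli works cleanly; for a ribbon graph of positive genus, handle-attachment arrows also occur, so one must show that every non-quasi-tree $H$ admits \emph{some} edge whose toggle is a merge or split surgery (a Euler-characteristic/handle-count argument should force this from $|\partial\Sigma_H| \geq 2$), and one must organize the cancellations via an edge-activity order that rules out matching cycles. Carrying out this bookkeeping, so that what remains is exactly the complex on $\{e_{\mathbb{T}}\}$ with a well-defined induced differential, is the technical heart of the proof.
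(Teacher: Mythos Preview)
Your approach is viable in spirit but differs from the paper's, and it contains one factual misconception that actually dissolves the obstacle you flag. For an \emph{oriented} ribbon graph (the only case treated here), every cube edge is either a merge or a split: attaching an oriented band to an oriented surface along its boundary always changes the number of boundary components by exactly one. The ``handle-attachment'' case where $|\partial\Sigma_{H}| = |\partial\Sigma_{H\triangle\{e\}}|$ arises only when the resulting surface is non-orientable, which the paper explicitly excludes (cf.\ the remark in Section~\ref{section:virtual}). So the positive-genus complication you anticipate in Step~3 does not occur, and the Champanerkar--Kofman/Wehrli cancellation scheme carries over essentially verbatim once this is understood.

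That said, the paper does not organize the argument as a global acyclic matching on enhanced states. Instead it proceeds recursively by deletion--ribbon contraction: Proposition~\ref{prop:mapping_cone} realizes $\widetilde{CKh}(\mathbb{G})$ as the mapping cone of a map $\widetilde{CKh}(\mathbb{G}\setminus e)\to\widetilde{CKh}(\mathbb{G}/e)\{1\}$, and iterating this over all edges produces a binary resolution tree whose leaves are ribbon graphs in which every edge is a bridge or a separating loop. Each leaf is then identified (Proposition~\ref{prop:twisted_unknot}) with the all-$A$ ribbon graph of a twisted unknot, whose reduced complex is known to split as $\widetilde{V}\oplus(\text{contractible})$; Wehrli's mapping-cone lemma (Lemma~\ref{lemma:cone_contract}) then propagates this splitting back up the tree. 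The advantage of this route is that it never touches individual enhanced states or requires verifying acyclicity of a matching: the contractible summands are peeled off wholesale at the level of complexes, and the bijection between leaves and spanning quasi-trees falls out of the resolution-tree combinatorics. Your discrete-Morse plan would reach the same endpoint but with more explicit (and, as it turns out, unnecessary) bookkeeping.
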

The homological and polynomial gradings of a generator in the spanning quasi-tree complex can be expressed via the quasi-tree activities defined by Champanerkar, Kofman, and Stoltzfus \cite{CKS:Quasi-trees}. Corollary \ref{cor:Jones} gives an expansion of the Jones polynomial as a summation over the spanning quasi-trees where each spanning quasi-tree is assigned a signed monomial in $q$.

We provide several applications of the quasi-tree model for the Khovanov homology of ribbon graphs. The first application compares the homological width of $\widetilde{Kh}(\mathbb{G})$ and the genus of $\mathbb{G}$. Recall that $\widetilde{Kh}(\mathbb{G})$ is bigraded with homological grading $i$ and polynomial grading $j$, and the summand in the $(i,j)$ bigrading is denoted $\widetilde{Kh}^{i,j}(\mathbb{G})$. The diagonal grading $\delta$ is defined by $\delta=j/2-i$, and the summand in diagonal grading $\delta$ is denoted $\widetilde{Kh}^{\delta}(\mathbb{G})$. Let $\delta_{\max}(\mathbb{G})=\max\{\delta~|~\widetilde{Kh}^\delta(\mathbb{G})\neq 0\},$ and $\delta_{\min} = \min\{\delta~|~\widetilde{Kh}^{\delta}(\mathbb{G})\neq 0\}$. The {\em homological width} of $\widetilde{Kh}(\mathbb{G})$, denoted $hw(\widetilde{Kh}(\mathbb{G}))$, is defined as $hw(\widetilde{Kh}(\mathbb{G})) = \delta_{\max}(\mathbb{G}) - \delta_{\min}(\mathbb{G})+1$. Our first application is the following theorem.
\begin{theorem}
\label{theorem:hw}
Let $\mathbb{G}$ be a ribbon graph. The genus of $\mathbb{G}$ gives an upper bound on the homological width of $\widetilde{Kh}(\mathbb{G})$. In particular, we have
$$hw(\widetilde{Kh}(\mathbb{G}))\leq g(\mathbb{G}) + 1.$$
\end{theorem}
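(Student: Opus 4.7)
The plan is to read off the homological width directly from the spanning quasi-tree complex provided by Theorem \ref{theorem:maintheorem2}. Since $\widetilde{Kh}(\mathbb{G})$ is the homology of a complex $\widetilde{C}(\mathbb{G})$ whose generators are in bijection with the spanning quasi-trees of $\mathbb{G}$, it suffices to show that the set of diagonal gradings $\delta = j/2 - i$ of these generators is contained in an interval of length $g(\mathbb{G})$. Any summand $\widetilde{Kh}^\delta(\mathbb{G})$ can be nonzero only if some generator of $\widetilde{C}(\mathbb{G})$ sits in diagonal grading $\delta$, so a bound on the range of diagonals at the chain level immediately bounds $hw(\widetilde{Kh}(\mathbb{G}))$.

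First I would make explicit the bigradings of the generators of $\widetilde{C}(\mathbb{G})$ in terms of the quasi-tree activities of Champanerkar--Kofman--Stoltzfus \cite{CKS:Quasi-trees}. Writing $i(\mathbb{T})$ and $j(\mathbb{T})$ for the homological and polynomial gradings of the generator associated to a spanning quasi-tree $\mathbb{T}$, the activity formulas should give expressions of the shape $i(\mathbb{T}) = (\text{live/dead edge count})$ and $j(\mathbb{T}) = 2i(\mathbb{T}) + (\text{correction})$, where the correction involves the genus $g(\mathbb{T})$ of $\mathbb{T}$. Combining these should yield an expression of the form
\begin{equation*}
\delta(\mathbb{T}) \;=\; \delta_0 + g(\mathbb{T}),
\end{equation*}
for some constant $\delta_0 = \delta_0(\mathbb{G})$ independent of $\mathbb{T}$. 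This mirrors the classical spanning-tree picture, where in the planar (genus $0$) case every quasi-tree has $g(\mathbb{T}) = 0$ and the chain complex is supported on a single diagonal, matching the homological thinness of alternating links.

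Given such an identity, the range of $\delta(\mathbb{T})$ over all spanning quasi-trees $\mathbb{T}$ of $\mathbb{G}$ is exactly the range of $g(\mathbb{T})$. Since $\mathbb{T}$ is a spanning ribbon subgraph of $\mathbb{G}$, the surface $\Sigma_{\mathbb{T}}$ embeds into $\Sigma_{\mathbb{G}}$ and so $0 \le g(\mathbb{T}) \le g(\mathbb{G})$. Therefore the generators of $\widetilde{C}(\mathbb{G})$ occupy at most $g(\mathbb{G}) + 1$ distinct diagonal gradings, yielding $hw(\widetilde{Kh}(\mathbb{G})) \le g(\mathbb{G}) + 1$.

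The main obstacle will be step two: pinning down the exact contribution of the quasi-tree activities to $i(\mathbb{T})$ and $j(\mathbb{T})$ and verifying that, after cancellation, only the genus term $g(\mathbb{T})$ survives in $\delta(\mathbb{T})$. This requires carefully bookkeeping how the A/B-state labels along each edge of $\mathbb{G}$ translate, via the definition of $\widetilde{C}(\mathbb{G})$, into activity contributions, and then using the Euler characteristic relation between the number of edges of $\mathbb{T}$, the number of boundary components of $\Sigma_{\mathbb{T}}$ (which is $1$ by the quasi-tree condition), and $g(\mathbb{T})$. Once this linear-algebraic step is done, the geometric bound $g(\mathbb{T}) \le g(\mathbb{G})$ finishes the argument immediately.
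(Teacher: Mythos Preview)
Your proposal is correct and matches the paper's proof essentially step for step: the paper uses the explicit activity formulas $i(\mathbb{T}) = 2g(\mathbb{T}) + ea(\mathbb{T}) - ia(\mathbb{T}) + |V(\mathbb{G})| - 1$ and $j(\mathbb{T}) = 2(g(\mathbb{T}) + ea(\mathbb{T}) - ia(\mathbb{T})) + |V(\mathbb{G})| - 1$ to compute $\delta(\mathbb{T}) = -g(\mathbb{T}) - \tfrac{1}{2}(|V(\mathbb{G})|-1)$, and then bounds the range of diagonals by the range of $g(\mathbb{T})$. The only cosmetic discrepancy is the sign: the diagonal grading is $\delta_0 - g(\mathbb{T})$ rather than $\delta_0 + g(\mathbb{T})$, which of course makes no difference to the width bound.
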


The genus of $\mathbb{G}$ is zero precisely if $\mathbb{G}$ is the all-$A$ ribbon graph of some alternating link $L$. Theorem \ref{theorem:hw} implies that the Khovanov homology of $\mathbb{G}$ lies on adjacent diagonals, and Theorem \ref{theorem:maintheorem1} implies that $Kh(\mathbb{G})$ is isomorphic to $Kh(L)$ up to a prescribed grading shift. Lee \cite{Lee:KhovanovAlternating} proved that the Khovanov homology of an alternating link is supported on adjacent diagonals, and thus Theorem \ref{theorem:hw} can be viewed as a generalization of Lee's result. 

Let $V(\mathbb{G})$ and $E(\mathbb{G})$ denote the set of vertices and edges of $\mathbb{G}$ respectively. Also, let $F(\mathbb{G})$ denote the set of faces of $\mathbb{G}$, that is the set of disjoint disks of $\Sigma\setminus\mathbb{G}$. If $S$ is any finite set, then let $|S|$ denote the number of elements of $S$. A {\em loop} of $\mathbb{G}$ is an edge that connects a vertex to itself. If a ribbon graph $\mathbb{G}$ does not have any loops, then the quasi-tree model implies that the reduced Khovanov homology is isomorphic to $\mathbb{Z}$ in its minimum nontrivial polynomial grading. Define $j_{\min}(\mathbb{G})= \min\{ j ~|~\bigoplus_{i\in\mathbb{Z}}Kh^{i,j}(\mathbb{G})\neq 0\}$. 
\begin{theorem}
\label{theorem:loopless}
Suppose that $\mathbb{G}$ is a ribbon graph with no loops. Then $j_{\min}(\mathbb{G})=1-|V(\mathbb{G})|$ and
$$\bigoplus_{i\in\mathbb{Z}}\widetilde{Kh}^{i,1-|V(\mathbb{G})|}(\mathbb{G})\cong\widetilde{Kh}^{0,1-|V(\mathbb{G})|}(\mathbb{G})\cong\mathbb{Z}.$$
\end{theorem}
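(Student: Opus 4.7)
The plan is to apply the spanning quasi-tree model from Theorem~\ref{theorem:maintheorem2}. In the complex $\widetilde{C}(\mathbb{G})$, each generator corresponds to a spanning quasi-tree $\mathbb{T}$ of $\mathbb{G}$, carries a bigrading $(i(\mathbb{T}), j(\mathbb{T}))$ expressible via the quasi-tree activities of Champanerkar-Kofman-Stoltzfus \cite{CKS:Quasi-trees}, and the differential of $\widetilde{C}(\mathbb{G})$ preserves the polynomial grading. Thus it suffices to show that $\widetilde{C}(\mathbb{G})$ contains no generator in polynomial grading strictly below $1 - |V(\mathbb{G})|$ and contains exactly one generator at polynomial grading $1 - |V(\mathbb{G})|$, sitting in homological grading $0$.

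First I would establish the lower bound $j(\mathbb{T}) \geq 1 - |V(\mathbb{G})|$ over all spanning quasi-trees $\mathbb{T}$. The key input is the Euler characteristic identity
$$|E(\mathbb{T})| \;=\; |V(\mathbb{G})| - 1 + 2g(\mathbb{T}),$$
which follows from $\chi(\Sigma_{\mathbb{T}}) = 1 - 2g(\mathbb{T})$ (since $\Sigma_{\mathbb{T}}$ has a single boundary component) together with the fact that $\Sigma_{\mathbb{T}}$ deformation retracts onto $\mathbb{T}$. Substituting this into the activity formula for $j(\mathbb{T})$ produces a bound of the form
$$j(\mathbb{T}) \;\geq\; 1 - |V(\mathbb{G})| + 2 g(\mathbb{T}) + (\text{nonnegative activity terms}),$$
so that equality forces $g(\mathbb{T}) = 0$ and the vanishing of those activity terms.

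The main obstacle is pinning down the unique minimizer, and this is where the loopless hypothesis enters decisively. Equality in the bound above forces $\mathbb{T}$ to be a genus-zero quasi-tree, hence an honest spanning tree of the underlying graph $G$. Vanishing of the remaining activity terms, combined with the absence of loops, should single out one specific spanning tree $\mathbb{T}_0$ as the unique realizer of the minimum: any loop $\ell$ of $\mathbb{G}$ would either contribute a strictly positive activity term or produce a second spanning quasi-tree (obtained from $\mathbb{T}_0$ by toggling $\ell$) that also achieves the minimum grading. A direct computation of the activities of $\mathbb{T}_0$ then yields $i(\mathbb{T}_0) = 0$.

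Given uniqueness, the conclusion is immediate. Since $\widetilde{C}(\mathbb{G})$ contains exactly one generator at bigrading $(0, 1-|V(\mathbb{G})|)$ and none elsewhere in polynomial grading $1-|V(\mathbb{G})|$, this generator is automatically a cycle that is not a boundary, producing $\widetilde{Kh}^{0, 1-|V(\mathbb{G})|}(\mathbb{G}) \cong \mathbb{Z}$ and $\widetilde{Kh}^{i, 1-|V(\mathbb{G})|}(\mathbb{G}) = 0$ for $i \neq 0$. The assertion that $j_{\min}(\mathbb{G}) = 1 - |V(\mathbb{G})|$ for unreduced Khovanov homology follows from the standard relationship between $Kh(\mathbb{G})$ and $\widetilde{Kh}(\mathbb{G})$.
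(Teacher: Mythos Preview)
Your overall strategy---compute $\widetilde{Kh}$ via the spanning quasi-tree complex and locate the minimal $j$-grading---is exactly the paper's approach, but the central inequality you assert is false as stated, and the missing ingredient is the real content of the proof.

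From Equation~\eqref{eqn:j-grading} one has $j(\mathbb{T}) = 2g(\mathbb{T}) + 2\,ea(\mathbb{T}) - 2\,ia(\mathbb{T}) + |V(\mathbb{G})|-1$. The only obvious bound on $ia(\mathbb{T})$ is $ia(\mathbb{T})\le |E(\mathbb{T})|$; substituting your Euler identity $|E(\mathbb{T})| = |V(\mathbb{G})|-1+2g(\mathbb{T})$ yields
\[
j(\mathbb{T}) \;\ge\; 1-|V(\mathbb{G})| \;-\; 2g(\mathbb{T}) \;+\; 2\,ea(\mathbb{T}),
\]
with the \emph{opposite} sign on $g(\mathbb{T})$ from what you claim. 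So the Euler relation alone does \emph{not} force minimizers to have genus zero; a priori a positive-genus quasi-tree could undercut $1-|V(\mathbb{G})|$. Ruling this out is precisely where looplessness enters, and it is the nontrivial step: the paper proves (Lemma~\ref{lemma:int_inactive}) that in a loopless ribbon graph every quasi-tree of genus $g>0$ has at least $g+1$ internally \emph{inactive} edges, hence $ia(\mathbb{T})\le |E(\mathbb{T})|-g(\mathbb{T})-1$, which upgrades the bound to $j(\mathbb{T})\ge 3-|V(\mathbb{G})|+2\,ea(\mathbb{T})>1-|V(\mathbb{G})|$. Your outline skips this lemma entirely and misplaces the role of the hypothesis.

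The uniqueness part is also too loose. The gradings depend on a fixed edge ordering, and the paper exploits this freedom: it \emph{chooses} a genus-zero spanning tree $\mathbb{T}$, labels its edges $1,\dots,k$ and the remaining edges $k+1,\dots,n$, and then checks directly that $ia(\mathbb{T})=|V(\mathbb{G})|-1$, $ea(\mathbb{T})=0$ (here looplessness guarantees every external chord crosses a lower-labeled internal one). For any other genus-zero tree $\mathbb{T}'$ it exhibits an internally inactive edge, so $j(\mathbb{T}')>1-|V(\mathbb{G})|$. Your sketch gestures at ``a specific spanning tree $\mathbb{T}_0$'' without saying which ordering makes it the unique minimizer or why other spanning trees fail.
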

A ribbon graph where both $\mathbb{G}$ and its dual $\mathbb{G}^*$ (defined in Section \ref{section:ribbongraphs}) have no loops is called {\em adequate}. If a ribbon graph is adequate, then a corollary to Theorem \ref{theorem:loopless} states that the Khovanov homology in the maximum polynomial grading is also isomorphic to $\mathbb{Z}$. This corollary is the generalization to ribbon graphs of results by Khovanov \cite{Khovanov:Patterns} and Abe \cite{Abe:TuraevGenusAdequateKnot}. Define $j_{\max}(\mathbb{G})=\max\{j~|~\bigoplus_{i\in\mathbb{Z}}\widetilde{Kh}^{i,j}(\mathbb{G})\neq 0\}$.
\begin{corollary}
\label{cor:adequate}
Let $\mathbb{G}$ be an adequate ribbon graph. Then $j_{\min}(\mathbb{G})= 1 - |V(\mathbb{G})|$, $j_{\max}(\mathbb{G}) = |E(\mathbb{G})| + |F(\mathbb{G})|-1$, and
$$
\begin{array}{l c l c l}
\bigoplus_{i\in\mathbb{Z}}\widetilde{Kh}^{i,j_{\min}(\mathbb{G})}(\mathbb{G}) & \cong & \widetilde{Kh}^{0,1-|V(\mathbb{G})|}(\mathbb{G}) & \cong & \mathbb{Z}~\text{and}\\
\bigoplus_{i\in\mathbb{Z}}\widetilde{Kh}^{i,j_{\max}(\mathbb{G})}(\mathbb{G}) & \cong & \widetilde{Kh}^{|E(\mathbb{G})|,|E(\mathbb{G})| + |F(\mathbb{G})|-1}(\mathbb{G}) & \cong & \mathbb{Z}.
\end{array}
$$
\end{corollary}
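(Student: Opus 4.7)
The plan is to handle the two statements separately: the minimum polynomial grading statement is a direct application of Theorem \ref{theorem:loopless} to $\mathbb{G}$ itself, while the maximum polynomial grading statement should be obtained by applying Theorem \ref{theorem:loopless} to the dual $\mathbb{G}^*$ together with a duality isomorphism relating $\widetilde{Kh}(\mathbb{G})$ and $\widetilde{Kh}(\mathbb{G}^*)$.

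\textbf{Step 1 (minimum grading).} Since $\mathbb{G}$ is adequate, in particular $\mathbb{G}$ has no loops, so Theorem \ref{theorem:loopless} immediately gives $j_{\min}(\mathbb{G})=1-|V(\mathbb{G})|$ and $\bigoplus_{i\in\mathbb{Z}}\widetilde{Kh}^{i,1-|V(\mathbb{G})|}(\mathbb{G}) \cong \widetilde{Kh}^{0,1-|V(\mathbb{G})|}(\mathbb{G}) \cong \mathbb{Z}$. No further work is needed here.

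\textbf{Step 2 (duality).} The key ingredient for the maximum side is a grading-reversing isomorphism of the form
$$\widetilde{Kh}^{i,j}(\mathbb{G}) \;\cong\; \widetilde{Kh}^{|E(\mathbb{G})|-i,\;|E(\mathbb{G})|-j}(\mathbb{G}^*).$$
I would establish this from the quasi-tree model of Theorem \ref{theorem:maintheorem2}. A spanning quasi-tree $\mathbb{T}$ of $\mathbb{G}$ corresponds bijectively to its edge-complement $\mathbb{T}^* := \mathbb{G}^* \setminus E(\mathbb{T})$, which is a spanning quasi-tree of $\mathbb{G}^*$; this is the standard quasi-tree duality used by Champanerkar--Kofman--Stoltzfus \cite{CKS:Quasi-trees}. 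The bigrading of the generator corresponding to $\mathbb{T}$ in $\widetilde{C}(\mathbb{G})$ is expressed in terms of the quasi-tree activities, and interchanging $\mathbb{G}\leftrightarrow\mathbb{G}^*$ exchanges the roles of the $A$- and $B$-smoothings (equivalently, live and dead edges), producing the $(i,j)\mapsto(|E|-i,|E|-j)$ involution. One checks consistency by verifying that applying the isomorphism to the min-grading statement on one side produces the expected max-grading statement on the other, using $|V(\mathbb{G}^*)|=|F(\mathbb{G})|$, $|E(\mathbb{G}^*)|=|E(\mathbb{G})|$, $|F(\mathbb{G}^*)|=|V(\mathbb{G})|$.

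\textbf{Step 3 (combine).} Since $\mathbb{G}^*$ is loopless by adequacy, Theorem \ref{theorem:loopless} applied to $\mathbb{G}^*$ yields $j_{\min}(\mathbb{G}^*)=1-|V(\mathbb{G}^*)|=1-|F(\mathbb{G})|$ and
$$\widetilde{Kh}^{0,\,1-|F(\mathbb{G})|}(\mathbb{G}^*)\;\cong\;\mathbb{Z}.$$
Transporting this through the duality isomorphism gives $\widetilde{Kh}^{|E(\mathbb{G})|,\,|E(\mathbb{G})|+|F(\mathbb{G})|-1}(\mathbb{G})\cong\mathbb{Z}$ and $j_{\max}(\mathbb{G})=|E(\mathbb{G})|-j_{\min}(\mathbb{G}^*)=|E(\mathbb{G})|+|F(\mathbb{G})|-1$, completing the maximum-grading half.

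The main obstacle is Step 2: pinning down the precise form of the grading reversal under $\mathbb{G}\leftrightarrow\mathbb{G}^*$. This requires examining how the quasi-tree activities, and hence the $i$- and $j$-gradings defined in Theorem \ref{theorem:maintheorem2}, transform when one passes from a quasi-tree $\mathbb{T}\subset\mathbb{G}$ to its complementary quasi-tree $\mathbb{T}^*\subset\mathbb{G}^*$. The homological and polynomial bijections follow from a careful edge-by-edge book-keeping, but once established, the corollary is an immediate consequence of Theorem \ref{theorem:loopless} applied on both sides.
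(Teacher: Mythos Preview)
Your overall strategy matches the paper's: apply Theorem \ref{theorem:loopless} to $\mathbb{G}$ for the minimum, apply it to $\mathbb{G}^*$ for the maximum, and transport the latter via duality. Steps 1 and 3 are exactly what the paper does.

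The gap is in Step 2. The integral isomorphism $\widetilde{Kh}^{i,j}(\mathbb{G}) \cong \widetilde{Kh}^{|E(\mathbb{G})|-i,\,|E(\mathbb{G})|-j}(\mathbb{G}^*)$ that you assert need not hold over $\mathbb{Z}$: the Khovanov complex of $\mathbb{G}^*$ is the \emph{dual complex} of that of $\mathbb{G}$ (this is Proposition \ref{prop:duality}), so by the universal coefficient theorem the free parts match at $(n-i,n-j)$ while the torsion parts are shifted by one in the homological grading. Your proposed derivation through the quasi-tree model cannot repair this: the bijection $\mathbb{T}\leftrightarrow\mathbb{T}^*$ between spanning quasi-trees does match generators and their bigradings, but the differentials in the complexes $\widetilde{C}(\mathbb{G})$ and $\widetilde{C}(\mathbb{G}^*)$ of Theorem \ref{theorem:maintheorem2} are not explicitly known, so a bijection of generators does not yield an isomorphism of homologies.

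The paper instead invokes Corollary \ref{cor:duality} directly. That corollary gives the free/torsion relationship separately, and this suffices here: Theorem \ref{theorem:loopless} for $\mathbb{G}^*$ says that at $j=1-|F(\mathbb{G})|$ the homology of $\mathbb{G}^*$ is torsion-free $\mathbb{Z}$ concentrated in $i=0$, so both the rational and torsion parts of Corollary \ref{cor:duality} transfer cleanly to give $\widetilde{Kh}^{|E(\mathbb{G})|,\,|E(\mathbb{G})|+|F(\mathbb{G})|-1}(\mathbb{G})\cong\mathbb{Z}$ and $j_{\max}(\mathbb{G})=|E(\mathbb{G})|-j_{\min}(\mathbb{G}^*)$. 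Replace your Step 2 with a reference to Corollary \ref{cor:duality} and the proof goes through.
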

Corollary \ref{cor:adequate} implies that the homological width of an adequate ribbon graph is determined by its genus (see Corollary \ref{cor:hwadequate}).

This paper is organized as follows. In Section \ref{section:ribbongraphs}, we review basic definitions for ribbon graphs. In Section \ref{section:homology}, we define the Khovanov homology of ribbon graphs. In Section \ref{section:khovanov}, we review the construction of Khovanov homology and show if $\mathbb{G}$ is the all-$A$ ribbon graph of a diagram of a link $L$, then $Kh(\mathbb{G})\cong Kh(L)$ up to a grading shift. In Section \ref{section:virtual}, we show how to construct the all-$A$ ribbon graph of a virtual link diagram and prove a generalization of Theorem \ref{theorem:maintheorem1} for virtual links whose all-$A$ ribbon graphs are orientable. In Section \ref{section:Reidemeister}, we define Reidemeister moves for ribbon graphs that generalize the Reidemeister moves for both classical and virtual links. We also show that our Khovanov homology of ribbon graphs is invariant under the ribbon graph Reidemeister moves. In Section \ref{section:quasi}, we construct the spanning quasi-tree model for Khovanov homology of ribbon graphs. We also show that the gradings in this complex can be express via activity words. In Section \ref{section:applications}, we provide several applications of our spanning quasi-tree model. In Section \ref{section:example}, we compute the Khovanov homology of an example ribbon graph.

{\bf Acknowledgment:} The first author thanks Christian Blanchet for fruitful discussions on Khovanov homology during a visit to Paris. We also thank the referee, whose suggestion to add Sections \ref{section:virtual} and \ref{section:Reidemeister} greatly improved the paper.

%----------------------------------------------------------------------------------------------%
\section{Ribbon graphs}
\label{section:ribbongraphs}

In this section, we provide some basic definitions for ribbon graphs. A {\em ribbon subgraph $\mathbb{H}$} of a ribbon graph $\mathbb{G}$ is a subgraph $H$ of the underlying graph $G$ of $\mathbb{G}$ such that the cyclic order of the edges in $\mathbb{H}$ around each vertex is inherited from the cyclic order of the edges in $\mathbb{G}$. A ribbon subgraph $\mathbb{H}$ of $\mathbb{G}$ is {\em spanning} if $V(\mathbb{H})= V(\mathbb{G})$.

Recall that a ribbon graph $\mathbb{G}$ can be represented by its two-dimensional regular neighborhood $\Sigma_{\mathbb{G}}$ in $\Sigma$. 
There is a natural identification between the faces of $\mathbb{G}$ and the boundary components of $\Sigma_{\mathbb{G}}$, and we will use the notation $F(\mathbb{G})$ to equivalently denote both sets. 
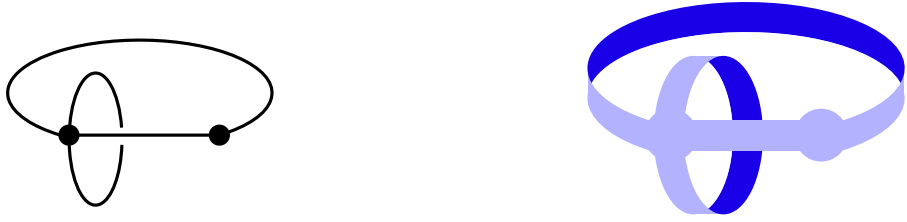
\begin{figure}[h]
$$\begin{tikzpicture}
%Vertices
\fill[blue!30!white] (0,0) circle (10pt);
\fill[blue!30!white] (2,0) circle (10pt);
%------------------------------------------------------------------------%

%Edge 1
\begin{pgfonlayer}{background}
\fill[blue!30!white] (0, 0.2) rectangle (2,-0.2);
\end{pgfonlayer}
%------------------------------------------------------------------------%

%Edge 2
\begin{pgfonlayer}{background2}
\def \firstellipse {(0.7,0) ellipse (15pt and 30pt)};
\def \secondellipse {(0.3,0) ellipse (15pt and 30pt)};
\fill[blue!90!red] \firstellipse;
\fill[blue!30] \secondellipse;
\begin{scope}
      \clip \firstellipse;
      \fill[white] \secondellipse;
\end{scope}
\end{pgfonlayer}
\begin{pgfonlayer}{background3}
\def \firstrectangle {(0.35,1.05) rectangle (0.65,-1.05)};
\fill[blue!30!] \firstrectangle;
\end{pgfonlayer}
%------------------------------------------------------------------------%

%Edge 3
\begin{pgfonlayer}{background4}
\def \firstarc {(2.3, .2) arc (-52:233:60pt and 25pt)};
\def \secondarc {(2.3, -.2) arc (-52:233:60pt and 25pt)};
\fill[blue!90!red] \firstarc;
\fill[blue!30!]\secondarc;
\begin{scope}
      	\clip \firstarc;
      	\fill[white] \secondarc;
\end{scope}
\end{pgfonlayer}
\begin{pgfonlayer}{background5}
\def \firstrectangle {(-1.1,.55) rectangle (3.1,.85)};
\fill[blue!30!] \firstrectangle;
\end{pgfonlayer}
%------------------------------------------------------------------------%

%Graph
\fill[black] (-6,0) circle (4pt);
\fill[black] (-8,0) circle (4pt);
\draw[very thick] (-8,0) -- (-6,0);
\draw[very thick] (-7.3,0.1) arc (10:355:10pt and 25pt);
\draw[very thick] (-6,0) arc (-53:233:50pt and 20pt);
\end{tikzpicture}$$
\caption{{\bf Left:} A ribbon graph $\mathbb{G}$. {\bf Right:} The surface $\Sigma_{\mathbb{G}}$.}
\label{fig:fatgraph}
\end{figure}
If the boundary components of $\Sigma_{\mathbb{G}}$ are capped off with disks, then one recovers the surface $\Sigma$. In the case where $\mathbb{G}$ is the all-$A$ ribbon graph of a link diagram $D$ (defined in Section \ref{section:khovanov}), then $\Sigma$ is known as {\em the Turaev surface of $D$}. The {\em dual ribbon graph $\mathbb{G}^*$} is constructed as follows. The vertices of $\mathbb{G}^*$ are the centers of the faces of $\mathbb{G}$. There is a one-to-one correspondence between the edges of $\mathbb{G}$ and $\mathbb{G}^*$. Locally, each edge $e$ in $\mathbb{G}$ has two (not necessarily distinct) disks attached to it to form $\Sigma$. For each edge $e$ in $\mathbb{G}$, there is a dual edge $e^*$ in $\mathbb{G}^*$ such that the endpoints of $e^*$ correspond to the two (not necessarily distinct) disks attached along $e$, and such that $e$ and $e^*$ transversely intersect exactly once in $\Sigma$. The cyclic order of the edges around each vertex in $\mathbb{G}^*$ is given by its embedding into $\Sigma$. Chmutov \cite{Chmutov:GeneralizedDuality} and Moffatt \cite{Moffatt:PartialDuality} provide other notions of duality in ribbon graphs. Compare also with \cite{Krushkal:Duality}.

Let $S(\mathbb{G})$ be the set of spanning ribbon subgraphs of the ribbon graph $\mathbb{G}$. Fix a bijection $S(\mathbb{G})\to S(\mathbb{G}^*)$ taking $\mathbb{H}$ to $\widehat{\mathbb{H}}$ as follows. Suppose that the edges of $\mathbb{G}$ are $e_1,\dots,e_n$ and the edges of $\mathbb{G}$ are $e_1^*,\dots,e_n^*$. Given a spanning ribbon subgraph $\mathbb{H}$ of $\mathbb{G}$, define $\widehat{\mathbb{H}}$ to be the spanning ribbon subgraph of $\mathbb{G}^*$ whose edge set is 
$E(\widehat{\mathbb{H}})=\{e_i^*\in E(\mathbb{G}^*)~|~e_i\notin E(\mathbb{H})\}.$ The ribbon graphs $\mathbb{H}$ and $\widehat{\mathbb{H}}$ can be mutually embedded into $\Sigma$ (though these embeddings are not necessarily cellular). Let $\Sigma_{\mathbb{H}}$ and $\Sigma_{\widehat{\mathbb{H}}}$ be two-dimensional regular neighborhoods of $\mathbb{H}$ and $\widehat{\mathbb{H}}$ inside of $\Sigma$. By taking suitably sized neighborhoods of $\mathbb{H}$ and $\widehat{\mathbb{H}}$ one may realize $\Sigma$ as $\Sigma_{\mathbb{H}}\cup \Sigma_{\widehat{\mathbb{H}}}$, which gives a bijection $\Phi$ between the boundary components $F(\mathbb{H})$ of $\mathbb{H}$ and the boundary components $F(\widehat{\mathbb{H}})$ of $\widehat{\mathbb{H}}$. See Figure \ref{figure:duality}.
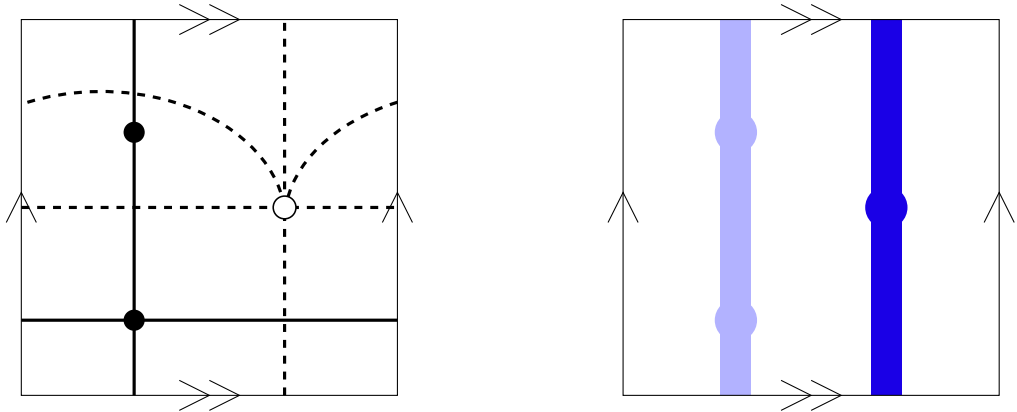
\begin{figure}[h]
$$\begin{tikzpicture}
\draw (0,0) rectangle (5,5);
\draw (-.2, 2.3) -- (0,2.7) -- (.2, 2.3);
\draw (4.8,2.3) -- (5,2.7) -- (5.2,2.3);
\draw (2.1,4.8) -- (2.5,5) -- (2.1, 5.2);
\draw (2.5,4.8) -- (2.9,5) -- (2.5,5.2);
\draw (2.1,-.2) -- (2.5,0) -- (2.1, .2);
\draw (2.5,-.2) -- (2.9,0) -- (2.5,.2);
\fill[black] (1.5,1) circle (4pt);
\fill[black] (1.5,3.5) circle (4pt);
\draw[very thick] (3.5,2.5) circle (4pt);
\fill[white] (3.5,2.5) circle (4pt);
\begin{pgfonlayer}{background}
\draw[very thick,dashed] (0,2.5) -- (5,2.5);
\draw[very thick,dashed] (3.5,0) -- (3.5,5);
\draw[very thick] (0,1) -- (5,1);
\draw[very thick] (1.5,0) -- (1.5,5);
\draw[dashed,very thick] (3.5,2.5) arc (7:115:70pt and 50pt);
\draw[dashed,very thick] (5,3.9) arc (115:175:74pt and 50pt);
\end{pgfonlayer}
\begin{scope}[xshift = 8 cm]
\draw (0,0) rectangle (5,5);
\draw (-.2, 2.3) -- (0,2.7) -- (.2, 2.3);
\draw (4.8,2.3) -- (5,2.7) -- (5.2,2.3);
\draw (2.1,4.8) -- (2.5,5) -- (2.1, 5.2);
\draw (2.5,4.8) -- (2.9,5) -- (2.5,5.2);
\draw (2.1,-.2) -- (2.5,0) -- (2.1, .2);
\draw (2.5,-.2) -- (2.9,0) -- (2.5,.2);
%\draw[very thick] (1.5,1) circle (8pt);
\fill[blue!30!] (1.5,1) circle (8pt);
%\draw[very thick] (1.5,3.5) circle (8pt);
\fill[blue!30!] (1.5,3.5) circle (8pt);
%\draw[very thick] (3.5,2.5) circle (8pt);
\fill[blue!90!red](3.5,2.5) circle (8pt);
\begin{pgfonlayer}{background}
%\draw[very thick] (1.3,0) rectangle (1.7,5);
\fill[blue!30!](1.3,0) rectangle (1.7,5);
%\draw[very thick] (3.3,0) rectangle (3.7,5);
\fill[blue!90!red] (3.3,0) rectangle (3.7,5);
\end{pgfonlayer}
\end{scope}
\end{tikzpicture}$$
\caption{{\bf Left:} The ribbon graph $\mathbb{G}$ is depicted with black vertices and solid edges, while the ribbon graph $\mathbb{G}^*$ is depicted with white vertices and dashed edges. {\bf Right:} Ribbon subgraphs $\mathbb{H}$ and $\widehat{\mathbb{H}}$ of $\mathbb{G}$ and $\mathbb{G}^*$ respectively.}
\label{figure:duality}
\end{figure}

Chmutov \cite{Chmutov:GeneralizedDuality} introduced a representation of a ribbon graph called an arrow presentation. An {\em arrow presentation} is a collection of non-nested circles in the plane together with a collection of oriented, labeled arcs lying on the circles, called {\em marking arrows}, such that each label appears on exactly two marking arrows. Two arrow presentations are equivalent if one can be obtained from the other by reversing all arrows on a circle and reversing the cyclic order of the arrows along it,  by reversing the orientation of all marking arrows that belong to some subset of the labels, or by changing the labeling set. 

A (possibly non-orientable) ribbon graph can be obtained from an arrow presentation by the following process. Consider each circle of the arrow presentation as the boundary of a disk corresponding to a vertex of the ribbon graph. Glue a band to each pair of marking arrows with the same label such that the orientation of the band agrees with the orientation of the marking arrows, as depicted in Figure \ref{fig:band}.
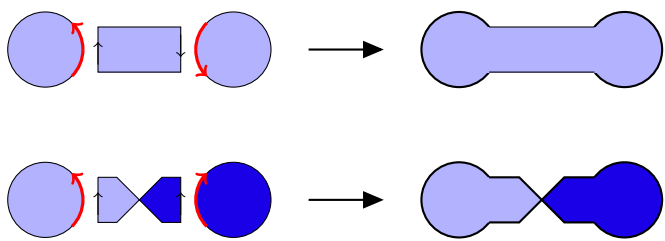
\begin{figure}[h]
$$\begin{tikzpicture}
%Arrow Presentation
\begin{pgfonlayer}{background2}
\fill[blue!30!white] (0,0) circle (.5 cm);
\fill[blue!30!white] (2.5,0) circle (.5 cm);
\fill[blue!30!white] (.7,.3) rectangle (1.8,-.3);
\end{pgfonlayer}
\begin{pgfonlayer}{background}
\draw (0,0) circle (.5cm);
\draw (2.5,0) circle (.5cm);
\end{pgfonlayer}
\draw[red,very thick] (0.5,0) arc (0:-45:.5 cm);
\draw[red,very thick, ->] (0.5,0) arc (0:45:.5cm);
\draw[red,very thick] (2,0) arc (180:135:.5cm);
\draw[red,very thick, ->] (2,0) arc (180:225:.5cm);
\draw (.7,.3) rectangle (1.8,-.3);
\draw[->] (.7,-.2) -- (.7,.1);
\draw[->] (1.8,.2) -- (1.8,-.1);

\draw[thick,>=triangle 45,->] (3.5,0) -- (4.5,0);

\begin{pgfonlayer}{background3}
\fill[blue!30!white] (5.5,0) circle (.5 cm);
\fill[blue!30!white] (7.7,0) circle (.5 cm);
\end{pgfonlayer}
\begin{pgfonlayer}{background2}
\draw[thick] (5.5,0) circle (.5cm);
\draw[thick] (7.7,0) circle (.5cm);
\end{pgfonlayer}
\begin{pgfonlayer}{background}
\fill[blue!30!white] (5.8,.3) rectangle (7.4,-.3);
\end{pgfonlayer}
\draw (5.9,.3) -- (7.3,.3);
\draw (5.9,-.3) -- (7.3,-.3);

\begin{scope}[yshift=-2cm]
\begin{pgfonlayer}{background2}
\fill[blue!30!white] (0,0) circle (.5 cm);
\fill[blue!90!red] (2.5,0) circle (.5 cm);
\fill[blue!90!red] (1.8, .3) -- (1.55,.3) -- (1.25,0) -- (1.55,-.3) -- (1.8,-.3) -- (1.8,.3);
\fill[blue!30!white] (.7,.3) -- (.95,.3) -- (1.25,0) -- (.95,-.3) -- (.7,-.3) -- (.7,.3);

\end{pgfonlayer}
\begin{pgfonlayer}{background}
\draw (0,0) circle (.5cm);
\draw (2.5,0) circle (.5cm);
\draw (.7,.3) -- (.95, .3) -- (1.55,-.3) -- (1.8,-.3) -- (1.8,.3) -- (1.55,.3) -- (.95,-.3) -- (.7,-.3) -- (.7,.3);
\end{pgfonlayer}

\draw[thick,>=triangle 45,->] (3.5,0) -- (4.5,0);

\draw[red,very thick] (0.5,0) arc (0:-45:.5 cm);
\draw[red,very thick, ->] (0.5,0) arc (0:45:.5cm);
\draw[red,very thick,->] (2,0) arc (180:135:.5cm);
\draw[red,very thick] (2,0) arc (180:225:.5cm);

\draw[->] (.7,-.2) -- (.7,.1);
\draw[->] (1.8,-.2) -- (1.8,.1);

\begin{pgfonlayer}{background3}
\fill[blue!30!white] (5.5,0) circle (.5 cm);
\fill[blue!90!red] (7.7,0) circle (.5 cm);
\end{pgfonlayer}
\begin{pgfonlayer}{background2}
\draw[thick] (5.5,0) circle (.5cm);
\draw[thick] (7.7,0) circle (.5cm);
\end{pgfonlayer}

\begin{pgfonlayer}{background}
\fill[blue!30!white] (5.8,.3) -- (6.3,.3) -- (6.6,0) -- (6.3,-.3) -- (5.8,-.3);
\fill[blue!90!red] (7.4,.3) -- (6.9,.3) -- (6.6,0) -- (6.9,-.3) -- (7.4,-.3);

\end{pgfonlayer}
\draw[thick] (5.9,.3) -- (6.3,.3) -- (6.9,-.3) -- (7.3,-.3);
\draw[thick] (5.9,-.3) -- (6.3,-.3) -- (6.9,.3) -- (7.3,.3);

\end{scope}
\end{tikzpicture}$$
\caption{Attaching bands to pairs of identically labeled marking arrows.}
\label{fig:band}
\end{figure}

Once bands are attached to every pair of identically labeled marking arrows, the resulting surface  may or may not be orientable. If the resulting surface is orientable, then it is the regular neighborhood of an oriented ribbon graph. We will not consider arrow presentations whose associated ribbon graphs are non-orientable. Figure \ref{fig:arrowpres} shows an arrow presentation for the ribbon graph depicted in Figure \ref{fig:fatgraph}.
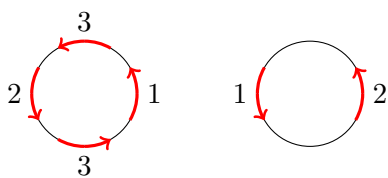
\begin{figure}[h]
$$\begin{tikzpicture}
\begin{pgfonlayer}{background}
\draw (0,0) circle (.7 cm);
\draw (3,0) circle (.7 cm);
\end{pgfonlayer}
\draw[very thick, red, ->] (0.7,0) arc (0:30:.7cm);
\draw[very thick, red] (0.7,0) arc (0:-30:.7cm);
\draw[very thick, red, ->] (0,0.7) arc (90:120:.7cm);
\draw[very thick, red] (0,0.7) arc (90:60:.7cm);
\draw[very thick, red] (-0.7,0) arc (180:150:.7cm);
\draw[very thick, red,->] (-0.7,0) arc (180:210:.7cm);
\draw[very thick, red] (0,-0.7) arc (270:240:.7cm);
\draw[very thick, red,->] (0,-0.7) arc (270:300:.7cm);

\draw[very thick, red] (2.3,0) arc (180:150:.7cm);
\draw[very thick, red, ->] (2.3,0) arc (180:210:.7cm);
\draw[very thick, red] (3.7,0) arc (0:-30:.7cm);
\draw[very thick, red, ->] (3.7,0) arc (0:30:.7cm);

\draw (0.7,0) node[right]{$1$};
\draw (0,0.7) node[above]{$3$};
\draw (-0.7,0) node[left]{$2$};
\draw (0,-0.7) node[below]{$3$};
\draw (2.3,0) node[left]{$1$};
\draw (3.7,0) node[right]{$2$};

\end{tikzpicture}$$
\caption{An arrow presentation for the ribbon graph in Figure \ref{fig:fatgraph}.}
\label{fig:arrowpres}
\end{figure}
%----------------------------------------------------------------------------------------------%

\section{Khovanov homology of ribbon graphs}
\label{section:homology}

In this section, we introduce Khovanov homology and reduced Khovanov homology for ribbon graphs. 
We also prove a result about the Khovanov homology of the dual ribbon graph. The construction closely imitates Khovanov's original categorification of the Jones polynomial \cite{Khovanov:homology} (especially as interpreted by Bar-Natan \cite{Bar-Natan:Khovanov} and Viro \cite{Viro:Khovanov}).

\subsection{Khovanov homology for ribbon graphs}

In the cube of resolutions complex for the Khovanov homology of links, the vertices in the hypercube correspond to Kauffman states of the link diagram, while in our construction the vertices in the hypercube correspond to subsets of the edge set of the ribbon graph. In both constructions, the $\mathbb{Z}$-modules associated to the vertices and the maps between those $\mathbb{Z}$-modules are defined analogously.

A bigraded $\mathbb{Z}$-module $M$ is a $\mathbb{Z}$-module that has a direct sum decomposition $M=\bigoplus_{i,j\in\mathbb{Z}} M^{i,j}$, where each summand $M^{i,j}$ is said to have bigrading $(i,j)$. Alternatively, one can think of a bigrading on $M$ as an assignment of a bigrading $(i,j)$ to each element in a chosen basis of $M$. If $M=\bigoplus_{i,j} M^{i,j}$ and $N=\bigoplus_{k,l}N^{k,l}$ are bigraded $\mathbb{Z}$-modules, then both $M\oplus N$ and $M\otimes N$ are bigraded $\mathbb{Z}$-modules where $(M\oplus N)^{m,n} = M^{m,n}\oplus N^{m,n}$ and $(M\otimes N)^{m,n} = \bigoplus_{i+k = m, j+l=n} M^{i,j}\otimes N^{k,l}$. Moreover, if $r$ and $s$ are integers, then define $(M[r]\{s\})^{i,j} = M^{i-r,j-s}$.

Let $\mathbb{G}$ be a ribbon graph with edges $e_1,\dots,e_n$, and let $\{0,1\}^n$ denote the $n$-dimensional hypercube. Denote the vertices and edges of $\{0,1\}^n$ by $\mathcal{V}(n)$ and $\mathcal{E}(n)$ respectively. A vertex $I=(m_1,\dots,m_n)$ in the hypercube is an $n$-tuple of $0$'s and $1$'s. There is a directed edge $\xi\in\mathcal{E}(n)$ from a vertex $I=(m_1,\dots,m_n)$ to a vertex $J=(m^\prime_1,\dots,m^\prime_n)$ if there exists a $k$ with $1\leq k\leq n$ such that $m_k=0$, $m^\prime_k=1$, and if $i\neq k$, then $m_i=m^\prime_i$. Define the height $h(I)$ of a vertex $I=(m_1,\dots,m_n)$ by $h(I) = \sum_{i=1}^n m_i$. The set $S(\mathbb{G})$ of spanning ribbon subgraphs of $\mathbb{G}$ is in one-to-one correspondence with the vertices of the hypercube $\mathcal{V}(\mathbb{G})$. Each vertex $I=(m_1,\dots,m_n)\in\mathcal{V}(\mathbb{G})$ is associated to the spanning ribbon subgraph $\mathbb{G}(I)$ of $\mathbb{G}$ whose edge set is $E(\mathbb{G}(I))=\{e_i|m_i=1\}$.

There are $\mathbb{Z}$-modules associated to each vertex in $\mathcal{V}(n)$ and morphisms associated to each edge in $\mathcal{E}(n)$. Let $V$ be the free $\mathbb{Z}$-module with basis elements $v_+$ and $v_-$, and suppose that $v_+$ has bigrading $(0,1)$ and $v_-$ has bigrading $(0,-1)$. Associate the $\mathbb{Z}$-module $V(\mathbb{G}(I)) = V^{\otimes F(\mathbb{G}(I))}[h(I)]\{h(I)\}$ to each $I \in \mathcal{V}(n)$. One should view this as associating one tensor factor of $V$ to each boundary component of $\Sigma_{\mathbb{H}}$. Define $CKh(\mathbb{G})$ to be the direct sum $\bigoplus_{I\in\mathcal{V}(n)}V(\mathbb{G}(I))$. The $\mathbb{Z}$-module $CKh(\mathbb{G})$ is bigraded, and we write $CKh(\mathbb{G}) = \bigoplus_{i,j}CKh^{i,j}(\mathbb{G})$. The summand $CKh^{i,j}(\mathbb{G})$ is said to have {\em homological grading $i$} and {\em polynomial grading $j$}. It will sometimes be useful to consider all summands of $CKh(\mathbb{G})$ in a particular homological grading without specifying the polynomial grading; therefore, we let $CKh^{i,*}(\mathbb{G}) = \bigoplus_{j\in\mathbb{Z}}CKh^{i,j}(\mathbb{G})$.

Suppose that there is a directed edge $\xi\in\mathcal{E}(n)$ from a vertex $I=(m_1,\dots,m_n)\in\mathcal{V}(n)$ to a vertex $J=(m^\prime_1,\dots, m^\prime_n)\in\mathcal{V}(n)$. The spanning ribbon subgraph $\mathbb{G}(J)$ can be obtained from the spanning ribbon subgraph $\mathbb{G}(I)$ by adding a single edge.
The {\em height} of the edge $\xi$ is defined as $|\xi| = E(\mathbb{G}(I))$, which is the height of the vertex from which the edge originates. Each edge in the hypercube has an associated map $d_\xi:V(\mathbb{G}(I))\to V(\mathbb{G}(J))$. Define $\mathbb{Z}$-linear maps $m:V\to V\otimes V$ and $\Delta:V\otimes V\to V$ by
$$\begin{array}{l c l}
m:V\to V\otimes V & \quad &
     m:\begin{cases}
      v_+\otimes v_-\mapsto v_- &
      v_+\otimes v_+\mapsto v_+ \\
      v_-\otimes v_+\mapsto v_- &
      v_-\otimes v_-\mapsto 0
    \end{cases}\\
    \Delta: V\otimes V\to V & \quad &
   \Delta:\begin{cases}
      v_+ \mapsto v_+\otimes v_- + v_-\otimes v_+ &\\
      v_- \mapsto v_-\otimes v_- &
    \end{cases}.
  \end{array}$$
Adding the edge $e$ to $\mathbb{G}(I)$ either merges two boundary components of $\Sigma_{\mathbb{G}(I)}$ into one boundary component of $\Sigma_{\mathbb{G}(J)}$ or splits one boundary component of $\Sigma_{\mathbb{G}(I)}$ into two boundary components of $\Sigma_{\mathbb{G}(J)}$. Define $d_\xi:V(\mathbb{G}(I))\to V(\mathbb{G}(J))$ to be the identity on the tensor factors corresponding to boundary components that do not change when adding the edge $e$. If adding the edge $e$ to $\mathbb{G}(I)$ merges two boundary components, then define $d_\xi$ to be the map $m:V\to V\otimes V$ on the tensor factors corresponding to merging boundary components, and if adding the edge $e$ to $\mathbb{G}(I)$ splits one boundary component into two, then define $d_\xi$ to be the map $\Delta:V \to V \otimes V$ on the tensor factor corresponding to the splitting boundary component.

Suppose that $I_{00}, I_{10}, I_{01},$ and $I_{11}$ are vertices in $\mathcal{V}(n)$ that agree in all but two coordinates $k$ and $l$, and whose $k$ and $l$ coordinates are given by their subscripts. Let $\xi_{*0},\xi_{0*},\xi_{1*}$, and $\xi_{*1}$ be the edges in the hypercube from $I_{00}$ to $I_{10}$, from $I_{00}$ to $I_{01}$, from $I_{10}$ to $I_{11}$, and from $I_{01}$ to $I_{11}$ respectively. The edge maps around this square commute, that is
$$d_{\xi_{1*}}\circ d_{\xi_{*0}} = d_{\xi_{*1}}\circ d_{\xi_{0*}}.$$
In order to ensure that $d \circ d = 0$, it is necessary that the edge maps around any square anti-commute. An {\em edge assignment on $\mathcal{E}(n)$} is a map $\epsilon:\mathcal{E}(n)\to\{\pm 1\}$ such that each square in the hypercube has an odd number of edges $\xi$ for which $\epsilon(\xi) = -1$. Given such an edge assignment, we have
$$\epsilon(\xi_{1*})d_{\xi_{1*}}\circ \epsilon(\xi_{*0})d_{\xi_{*0}} =-\epsilon(\xi_{*1}) d_{\xi_{*1}}\circ \epsilon(\xi_{0*})d_{\xi_{0*}}.$$ 
Proposition \ref{prop:edge_assign} below states the choice of edge assignment does not change the isomorphism type of the chain complex. If we wish to highlight the choice of the edge assignment, we denote the complex by $(CKh(\mathbb{G}),d_\epsilon)$; however we will often hide this choice and denote the complex by only $(CKh(\mathbb{G}),d)$ or just $CKh(\mathbb{G})$.

Suppose the edges of $\mathbb{G}$ are $e_1,\dots,e_n$ and fix an ordering on the edges where $e_i<e_j$ if and only if $i<j$. An edge assignment on $\mathcal{E}(n)$ can be constructed as follows. Suppose that $\xi$ is a directed edge from vertex $I$ to vertex $J$, where $I$ and $J$ differ only at the $k$th coordinate. Suppose that $I=(m_1,\dots,m_n)$, and define $\epsilon(\xi)=(-1)^l$ where $l=|\{m_i|m_i=1~\text{and}~i<k\}|$.

The differential $d^i:CKh^{i,*}(\mathbb{G})\to CKh^{i+1,*}(\mathbb{G})$ is defined by taking the signed sum of the edge maps $d_\xi$. Define $d^i:=\sum_{|\xi|=i} \epsilon(\xi) d_\xi$. Observe that $d^i$ preserves the polynomial grading, and thus $d^i = \sum_{j\in\mathbb{Z}} d^{i,j}$ where $d^{i,j} = d^i|_{CKh^{i,j}(\mathbb{G})}$. Since the signed edge maps around any square of the hypercube anticommute, it follows that $d\circ d = 0$. The Khovanov homology of the ribbon graph $\mathbb{G}$ is defined to be $$Kh(\mathbb{G}) = \bigoplus_{i,j\in\mathbb{Z}} Kh^{i,j}(\mathbb{G}),$$
where $Kh^{i,j}(\mathbb{G}) = \ker d^{i,j} / \text{im}~d^{i-1,j}$.

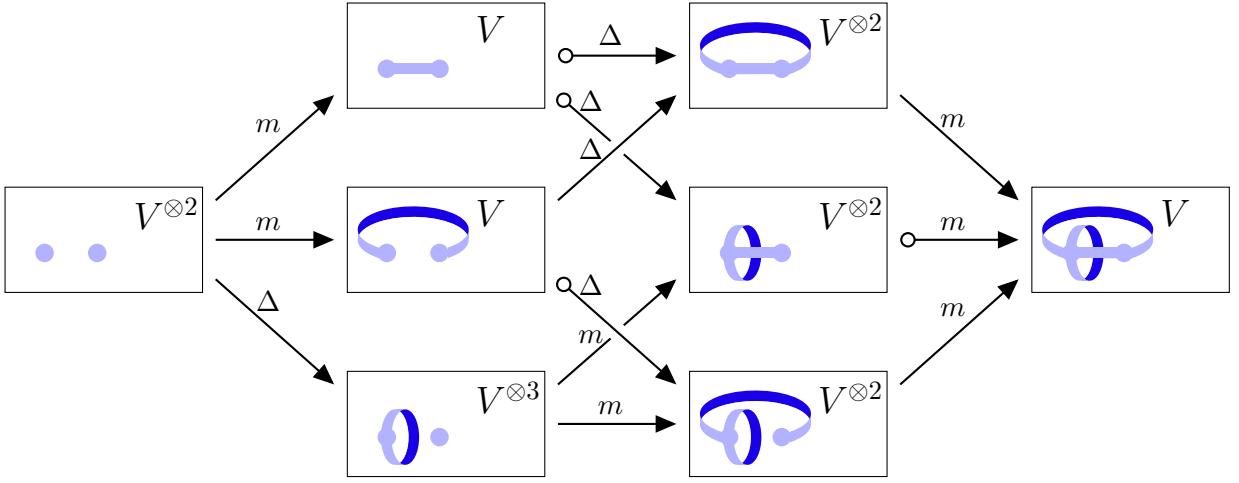
\begin{figure}[h]
$$\begin{tikzpicture}[scale=.35]
%Column 1, Only Box
\draw (-1.5,-1.5) rectangle (6,2.5);
%Vertices
\fill[blue!30!white] (0,0) circle (10pt);
\fill[blue!30!white] (2,0) circle (10pt);
%------------------------------------------------------------------------%
%Labels
\draw (3,1.5) node[right]{\Large{$V^{\otimes 2}$}};
%------------------------------------------------------------------------%
%------------------------------------------------------------------------%
%Column 2, Row 2
\begin{scope}[xshift = 13cm]
\draw (-1.5,-1.5) rectangle (6,2.5);
%Vertices
\fill[blue!30!white] (0,0) circle (10pt);
\fill[blue!30!white] (2,0) circle (10pt);
%------------------------------------------------------------------------%
%Edge 3
\begin{pgfonlayer}{background4}
\def \firstarc {(2.3, .2) arc (-52:233:60pt and 25pt)};
\def \secondarc {(2.3, -.2) arc (-52:233:60pt and 25pt)};
\fill[blue!90!red] \firstarc;
\fill[blue!30!]\secondarc;
\begin{scope}
      	\clip \firstarc;
      	\fill[white] \secondarc;
\end{scope}
\fill[white](2.3,-.22) rectangle (-.3,.22);
\end{pgfonlayer}
\begin{pgfonlayer}{background5}
\def \firstrectangle {(-1.1,.55) rectangle (3.1,.85)};
\fill[blue!30!] \firstrectangle;
\end{pgfonlayer}
%------------------------------------------------------------------------%
%Labels
\draw (3,1.5) node[right]{\Large{$V$}};
\end{scope}
%------------------------------------------------------------------------%
%------------------------------------------------------------------------%
%Column 2, Row 1
\begin{scope}[xshift=13cm, yshift=7cm]
\draw (-1.5,-1.5) rectangle (6,2.5);
%Vertices
\fill[blue!30!white] (0,0) circle (10pt);
\fill[blue!30!white] (2,0) circle (10pt);
%------------------------------------------------------------------------%

%Edge 1
\begin{pgfonlayer}{background}
\fill[blue!30!white] (0, 0.2) rectangle (2,-0.2);
\end{pgfonlayer}
%------------------------------------------------------------------------%
%Labels
\draw (3,1.5) node[right]{\Large{$V$}};
\end{scope}
%------------------------------------------------------------------------%
%------------------------------------------------------------------------%
%Column 2, Row 3
\begin{scope}[xshift=13cm, yshift=-7cm]
\draw (-1.5,-1.5) rectangle (6,2.5);
%Vertices
\fill[blue!30!white] (0,0) circle (10pt);
\fill[blue!30!white] (2,0) circle (10pt);
%------------------------------------------------------------------------%

%Edge 2
\begin{pgfonlayer}{background2}
\def \firstellipse {(0.7,0) ellipse (15pt and 30pt)};
\def \secondellipse {(0.3,0) ellipse (15pt and 30pt)};
\fill[blue!90!red] \firstellipse;
\fill[blue!30] \secondellipse;
\begin{scope}
      \clip \firstellipse;
      \fill[white] \secondellipse;
\end{scope}
\end{pgfonlayer}
\begin{pgfonlayer}{background3}
\def \firstrectangle {(0.35,1.05) rectangle (0.65,-1.05)};
\fill[blue!30!] \firstrectangle;
\end{pgfonlayer}
%------------------------------------------------------------------------%
%Labels
\draw (3,1.5) node[right]{\Large{$V^{\otimes{3}}$}};
\end{scope}
%------------------------------------------------------------------------%
%------------------------------------------------------------------------%
%Column 3, Row 2
\begin{scope}[xshift=26cm]
\draw (-1.5,-1.5) rectangle (6,2.5);
%Vertices
\fill[blue!30!white] (0,0) circle (10pt);
\fill[blue!30!white] (2,0) circle (10pt);
%------------------------------------------------------------------------%

%Edge 1
\begin{pgfonlayer}{background}
\fill[blue!30!white] (0, 0.2) rectangle (2,-0.2);
\end{pgfonlayer}
%------------------------------------------------------------------------%

%Edge 2
\begin{pgfonlayer}{background2}
\def \firstellipse {(0.7,0) ellipse (15pt and 30pt)};
\def \secondellipse {(0.3,0) ellipse (15pt and 30pt)};
\fill[blue!90!red] \firstellipse;
\fill[blue!30] \secondellipse;
\begin{scope}
      \clip \firstellipse;
      \fill[white] \secondellipse;
\end{scope}
\end{pgfonlayer}
\begin{pgfonlayer}{background3}
\def \firstrectangle {(0.35,1.05) rectangle (0.65,-1.05)};
\fill[blue!30!] \firstrectangle;
\end{pgfonlayer}
%------------------------------------------------------------------------%
%Labels
\draw (3,1.5) node[right]{\Large{$V^{\otimes{2}}$}};
\end{scope}
%------------------------------------------------------------------------%
%------------------------------------------------------------------------%
%Column 3, Row 1
\begin{scope}[xshift=26cm, yshift=7cm]
\draw (-1.5,-1.5) rectangle (6,2.5);
%Vertices
\fill[blue!30!white] (0,0) circle (10pt);
\fill[blue!30!white] (2,0) circle (10pt);
%------------------------------------------------------------------------%

%Edge 1
\begin{pgfonlayer}{background}
\fill[blue!30!white] (0, 0.2) rectangle (2,-0.2);
\end{pgfonlayer}
%------------------------------------------------------------------------%

%Edge 3
\begin{pgfonlayer}{background4}
\def \firstarc {(2.3, .2) arc (-52:233:60pt and 25pt)};
\def \secondarc {(2.3, -.2) arc (-52:233:60pt and 25pt)};
\fill[blue!90!red] \firstarc;
\fill[blue!30!]\secondarc;
\begin{scope}
      	\clip \firstarc;
      	\fill[white] \secondarc;
\end{scope}
\end{pgfonlayer}
\begin{pgfonlayer}{background5}
\def \firstrectangle {(-1.1,.55) rectangle (3.1,.85)};
\fill[blue!30!] \firstrectangle;
\end{pgfonlayer}
%------------------------------------------------------------------------%
%Labels
\draw (3,1.5) node[right]{\Large{$V^{\otimes{2}}$}};
\end{scope}
%------------------------------------------------------------------------%
%------------------------------------------------------------------------%
%Column3. Row 3
\begin{scope}[xshift=26cm, yshift=-7cm]
\draw (-1.5,-1.5) rectangle (6,2.5);
%Vertices
\fill[blue!30!white] (0,0) circle (10pt);
\fill[blue!30!white] (2,0) circle (10pt);
%------------------------------------------------------------------------%

%Edge 2
\begin{pgfonlayer}{background2}
\def \firstellipse {(0.7,0) ellipse (15pt and 30pt)};
\def \secondellipse {(0.3,0) ellipse (15pt and 30pt)};
\fill[blue!90!red] \firstellipse;
\fill[blue!30] \secondellipse;
\begin{scope}
      \clip \firstellipse;
      \fill[white] \secondellipse;
\end{scope}
\end{pgfonlayer}
\begin{pgfonlayer}{background3}
\def \firstrectangle {(0.35,1.05) rectangle (0.65,-1.05)};
\fill[blue!30!] \firstrectangle;
\end{pgfonlayer}
%------------------------------------------------------------------------%

%Edge 3
\begin{pgfonlayer}{background4}
\def \firstarc {(2.3, .2) arc (-52:233:60pt and 25pt)};
\def \secondarc {(2.3, -.2) arc (-52:233:60pt and 25pt)};
\fill[blue!90!red] \firstarc;
\fill[blue!30!]\secondarc;
\begin{scope}
      	\clip \firstarc;
      	\fill[white] \secondarc;
\end{scope}
\fill[white](2.3,-.22) rectangle (-.3,.22);
\end{pgfonlayer}
\begin{pgfonlayer}{background5}
\def \firstrectangle {(-1.1,.55) rectangle (3.1,.85)};
\fill[blue!30!] \firstrectangle;
\end{pgfonlayer}
%------------------------------------------------------------------------%
%Labels
\draw (3,1.5) node[right]{\Large{$V^{\otimes{2}}$}};
\end{scope}
%------------------------------------------------------------------------%
%------------------------------------------------------------------------%
%Column 4, Only Box
\begin{scope}[xshift=39cm]
\draw (-1.5,-1.5) rectangle (6,2.5);
%Vertices
\fill[blue!30!white] (0,0) circle (10pt);
\fill[blue!30!white] (2,0) circle (10pt);
%------------------------------------------------------------------------%

%Edge 1
\begin{pgfonlayer}{background}
\fill[blue!30!white] (0, 0.2) rectangle (2,-0.2);
\end{pgfonlayer}
%------------------------------------------------------------------------%

%Edge 2
\begin{pgfonlayer}{background2}
\def \firstellipse {(0.7,0) ellipse (15pt and 30pt)};
\def \secondellipse {(0.3,0) ellipse (15pt and 30pt)};
\fill[blue!90!red] \firstellipse;
\fill[blue!30] \secondellipse;
\begin{scope}
      \clip \firstellipse;
      \fill[white] \secondellipse;
\end{scope}
\end{pgfonlayer}
\begin{pgfonlayer}{background3}
\def \firstrectangle {(0.35,1.05) rectangle (0.65,-1.05)};
\fill[blue!30!] \firstrectangle;
\end{pgfonlayer}
%------------------------------------------------------------------------%

%Edge 3
\begin{pgfonlayer}{background4}
\def \firstarc {(2.3, .2) arc (-52:233:60pt and 25pt)};
\def \secondarc {(2.3, -.2) arc (-52:233:60pt and 25pt)};
\fill[blue!90!red] \firstarc;
\fill[blue!30!]\secondarc;
\begin{scope}
      	\clip \firstarc;
      	\fill[white] \secondarc;
\end{scope}
\end{pgfonlayer}
\begin{pgfonlayer}{background5}
\def \firstrectangle {(-1.1,.55) rectangle (3.1,.85)};
\fill[blue!30!] \firstrectangle;
\end{pgfonlayer}
%------------------------------------------------------------------------%
%Labels
\draw (3,1.5) node[right]{\Large{$V$}};
\end{scope}
%------------------------------------------------------------------------%
%------------------------------------------------------------------------%
%Arrows
\begin{scope}[>=triangle 45]
	\draw[->,thick] (6.5,.5) -- (11,.5);
	\draw (8.5,.5) node[above]{$m$};
	\draw[->,thick] (6.5,2) -- (11, 6);
	\draw (8.5, 4.2) node[above]{$m$};
	\draw[->,thick] (6.5,-1) -- (11,-5);
	\draw (8.5, -2.6) node[above]{$\Delta$};
	\begin{scope}[xshift = 13cm]
		\draw[->,thick] (6.5,2) -- (11,6);
		\draw (7.75,3.2) node[above]{$\Delta$};
		\draw[o->,thick](6.5,-1)--(11,-5);
		\draw (7.75,-2) node[above]{$\Delta$};
	\end{scope}
	\begin{scope}[xshift=13cm,yshift = 7cm]
		\draw[o->,thick](6.5,.5)--(11,.5);
		\draw (8.5,.5) node[above]{$\Delta$};
		\draw[o-,thick] (6.5,-1) -- (8.5,-2.75);
		\draw (7.75,-2) node[above]{$\Delta$};
		\draw[->,thick] (9,-3.25)--(11,-5);
	\end{scope}
	\begin{scope}[xshift=13cm, yshift=-7cm]
		\draw[->,thick](6.5,.5) -- (11,.5);
		\draw (8.5,.5) node[above]{$m$};
		\draw[thick] (6.5,2) -- (8.5,3.75);
		\draw (7.75,3.2) node[above]{$m$};
		\draw[->,thick](9,4.25) -- (11,6);
	\end{scope}
	\begin{scope}[xshift = 26cm, yshift=7cm]
		\draw[->,thick](6.5,-1) -- (11,-5);
		\draw (8.5, -2.6) node[above]{$m$};
	\end{scope}
	\begin{scope}[xshift = 26cm]
		\draw[o->,thick] (6.5,.5) -- (11,.5);
		\draw (8.5,.5) node[above]{$m$};
	\end{scope}
	\begin{scope}[xshift = 26cm, yshift = -7cm]
		\draw[->,thick] (6.5,2) -- (11, 6);
		\draw (8.5, 4.2) node[above]{$m$};
	\end{scope}
\end{scope}
\end{tikzpicture}$$
\caption{The set of ribbon subgraphs arranged into a hypercube. Each vertex $\mathbb{G}(I)$ is labeled with $V^{\otimes F(\mathbb{G}(I))}$. The edges of the cube are labeled according either $m$ or $\Delta$ corresponding to a merge or a split of boundary components respectively. If the sign of an edge map is negative, then there is a small circle on the tail of the edge.}
\end{figure}

The construction of the chain complex $CKh(\mathbb{G})$ depends on an edge assignment $\epsilon:\mathcal{E}(n)\to\{\pm 1\}$. However, using a proof adapted from Ozsv\'ath, Rasmussen, and Szab\'o \cite{OzsvathSzabo:OddKhovanov}, one can show that complexes with different edge assignments are isomorphic.

\begin{proposition}
\label{prop:edge_assign}
Let $\epsilon$ and $\epsilon^\prime$ be edge assignments on $\mathcal{E}(n)$. Then $(CKh(\mathbb{G}),d_\epsilon)\cong(CKh(\mathbb{G}),d_{\epsilon^\prime})$.
\end{proposition}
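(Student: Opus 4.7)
The plan is to show that any two edge assignments on $\mathcal{E}(n)$ are related by a ``gauge transformation,'' and that such a transformation induces an isomorphism of complexes.

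First I would introduce the appropriate notion of gauge transformation. Given any function $\eta : \mathcal{V}(n) \to \{\pm 1\}$, define a $\mathbb{Z}$-linear map $\Phi_\eta : CKh(\mathbb{G}) \to CKh(\mathbb{G})$ by letting $\Phi_\eta$ act as multiplication by $\eta(I)$ on the summand $V(\mathbb{G}(I))$. This is manifestly a bigrading-preserving isomorphism of $\mathbb{Z}$-modules with inverse $\Phi_\eta$ itself. A direct check on a single edge $\xi$ from $I$ to $J$ shows that
$$\Phi_\eta \circ \epsilon(\xi) d_\xi = \eta(I)\eta(J) \cdot \epsilon(\xi) d_\xi \circ \Phi_\eta,$$
so $\Phi_\eta$ is a chain isomorphism from $(CKh(\mathbb{G}), d_\epsilon)$ to $(CKh(\mathbb{G}), d_{\epsilon''})$, where $\epsilon''(\xi) = \eta(I)\eta(J)\epsilon(\xi)$. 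Two edge assignments $\epsilon$ and $\epsilon'$ will be called gauge equivalent if $\epsilon'(\xi) = \eta(I)\eta(J)\epsilon(\xi)$ for some such $\eta$; the above shows gauge equivalence implies isomorphism of the resulting complexes.

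Next I would show that any two edge assignments are gauge equivalent. Consider the function $f : \mathcal{E}(n) \to \{\pm 1\}$ defined by $f(\xi) = \epsilon(\xi)\epsilon'(\xi)$. On any square with edges $\xi_{*0},\xi_{0*},\xi_{1*},\xi_{*1}$ the product $\epsilon(\xi_{*0})\epsilon(\xi_{0*})\epsilon(\xi_{1*})\epsilon(\xi_{*1})$ equals $-1$ by the edge assignment axiom, and the same holds for $\epsilon'$, so the product of $f$ over the four edges of any square equals $+1$. This is precisely the cocycle condition for $f$ viewed as a $\mathbb{Z}/2$-valued $1$-cochain on the $1$-skeleton of the hypercube.

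The key step is then to show that this cocycle is a coboundary, i.e.\ there exists $\eta : \mathcal{V}(n) \to \{\pm 1\}$ with $f(\xi) = \eta(I)\eta(J)$ for every edge $\xi : I \to J$. To do this, set $\eta(0,\dots,0) = 1$, and for any other vertex $I$ choose a directed path $\gamma$ in the hypercube from $(0,\dots,0)$ to $I$ and define $\eta(I)$ to be the product of $f$ over the edges traversed by $\gamma$ (using the convention that traversing an edge backward contributes the same sign, since $f$ takes values in $\{\pm 1\}$). Well-definedness amounts to showing that the product of $f$ around any closed loop in the $1$-skeleton of $\{0,1\}^n$ is $+1$; since $\{0,1\}^n$ is simply connected, every such loop bounds in the $2$-skeleton, and the cocycle condition on squares forces the product around the boundary of any $2$-cell to be $+1$, whence the product around any loop is $+1$. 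This verification (and checking it carefully) is the main technical obstacle, though it is essentially the standard vanishing of $H^1$ of a contractible $CW$-complex with $\mathbb{Z}/2$ coefficients.

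Finally, combining these pieces: $\Phi_\eta$ is an isomorphism $(CKh(\mathbb{G}), d_\epsilon) \to (CKh(\mathbb{G}), d_{\epsilon'})$, completing the proof.
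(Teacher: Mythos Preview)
Your proposal is correct and follows essentially the same approach as the paper: the ratio $\epsilon\cdot\epsilon'$ is a $\mathbb{Z}/2$-valued $1$-cocycle on the hypercube, contractibility of the hypercube makes it a coboundary $\eta$, and multiplication by $\eta(I)$ on each summand $V(\mathbb{G}(I))$ gives the desired chain isomorphism. The only difference is cosmetic---the paper invokes $H^1$ of a contractible complex directly, while you spell out the path-integral construction of $\eta$ and verify well-definedness via simple connectivity---but these are the same argument.
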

\begin{proof}
The hypercube $\{0,1\}^n$ is a simplicial complex. We consider the edge assignments $\epsilon$ and $\epsilon^\prime$ as $1$-cochains in $\Hom(C_1,\mathbb{F}_2)$ where $C_1$ is the space of $1$-chains and $\mathbb{F}_2$ is the field of two elements. Since both edge assignments assign a $-1$ to  an odd number of edges around each square, it follows that $\epsilon\cdot\epsilon^\prime$ is a $1$-cocycle. Because the hypercube is contractible, the product of the edge assignments $\epsilon\cdot\epsilon^\prime$ is the coboundary of a $0$-cochain, that is there exists $\eta:\mathcal{V}(n)\to\{\pm 1\}$ such that $\eta(I)\eta(J)=\epsilon(\xi)\epsilon^\prime(\xi)$ if $\xi$ is an edge between vertices $I$ and $J$.

Let $\psi:(CKh(\mathbb{G}),d_{\epsilon})\to(CKh(\mathbb{G}),d_{\epsilon^\prime})$ be the map which when restricted to $V(\mathbb{G}(I))$ is multiplication by $\eta(I)$. Then $\psi$ is an isomorphism from $(CKh(\mathbb{G}),d_{\epsilon})\to(CKh(\mathbb{G},d_{\epsilon^\prime})$. 
\end{proof}

\subsection{Reduced homology}
\label{subsec:reduced}

In the construction of $CKh(\mathbb{G})$, one associates a tensor factor of $V$ to each boundary component of $\Sigma_{\mathbb{G}(I)}$. Suppose that there is a marked point on the boundary of a vertex of $\Sigma_{\mathbb{G}}$ that misses the bands attached for each edge. Let $\widetilde{F}(\mathbb{G}(I))$ denote the set of boundary components of $\Sigma_{\mathbb{G}(I)}$ without marked points. Note that $|\widetilde{F}(\mathbb{G}(I))|= |F(\mathbb{G}(I))| - 1$. For each vertex $I\in\mathcal{V}(n)$, one can consider $V(\mathbb{G}(I))$ as $V\otimes V^{\otimes \widetilde{F}(\mathbb{G}(I))} = (\mathbb{Z}v_+ \otimes V^{\otimes \widetilde{F}(\mathbb{G}(I))}) \oplus (\mathbb{Z}v_- \otimes V^{\otimes \widetilde{F}(\mathbb{G}(I))})$, where the $\mathbb{Z}v_+$ and $\mathbb{Z}v_-$ are the two summands of $V$ associated to the boundary component of $\Sigma_{\mathbb{G}(I)}$ that contains the marked point. Define $\widetilde{V}(\mathbb{G}(I)) = (\mathbb{Z}v_-\otimes V^{\otimes \widetilde{F}(\mathbb{G}(I))})\{1\}$ where as before, the $\mathbb{Z}v_-$ corresponds to the boundary component of $\Sigma_{\mathbb{G}(I)}$ that contains the marked point. 

Let $\widetilde{CKh}(\mathbb{G}) = \bigoplus_{I\in\mathcal{V}(n)} \widetilde{V}(\mathbb{G}(I))$, and define $\widetilde{d}:= d|_{\widetilde{CKh}(\mathbb{G})}$. Since the range of $\widetilde{d}$ is a subset of $\widetilde{CKh}(\mathbb{G})$, it follows that $(\widetilde{CKh}(\mathbb{G}), \widetilde{d})$ forms a chain complex. The homology of this chain complex $\widetilde{Kh}(\mathbb{G})$ is called the reduced Khovanov homology of $\mathbb{G}$.

\subsection{Homology of the dual ribbon graph}
\label{subsec:dual}

Throughout this subsection, let $\mathbb{G}$ be a ribbon graph and let $\mathbb{G}^*$ be the dual ribbon graph. In what follows we show that the Khovanov complex of $\mathbb{G}^*$ is isomorphic to the dual complex of the Khovanov complex of $\mathbb{G}$.

If $M$ is a $\mathbb{Z}$-module, then define the dual of $M$ by $M^*=\Hom(M,\mathbb{Z})$, and if $f:M\to N$ is a $\mathbb{Z}$-module homomorphism, then the dual homomorphism $f^*:M^*\to N^*$ is defined by $f^*(\phi) = \phi \circ f$. Let $(C,\partial)$ denote the complex
$$\cdots C^i\xrightarrow{\partial^i}C^{i+1}\to\cdots.$$
The dual complex $(C^*,\partial^*)$ is the complex where $(C^*)^i=(C^{-i})^*$ and $(\partial^*)^i$ is the dual of $\partial^{-i-1}$. When there is a polynomial grading on $C$ that $\partial$ preserves (as is the case with the Khovanov homology defined above), define $C^*$ to have the opposite polynomial grading, i.e. $(C^*)^{i,j}=(C^{-i,-j})^*$.

\begin{proposition}
\label{prop:duality}
Let $\mathbb{G}$ be a ribbon graph with $n$ edges, and let $\mathbb{G}^*$ be the dual ribbon graph. The Khovanov complex of $\mathbb{G}^*$ is isomorphic to the dual of the Khovanov complex of $\mathbb{G}$, that is
\begin{eqnarray*}
CKh(\mathbb{G^*}) & \cong & CKh(\mathbb{G})^*[n]\{n\}~\text{and}\\
\widetilde{CKh}(\mathbb{G}^*) & \cong & \widetilde{CKh}(\mathbb{G})^*[n]\{n\}.
\end{eqnarray*}
\end{proposition}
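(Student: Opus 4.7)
The plan is to construct an explicit isomorphism of chain complexes, working vertex-by-vertex in the hypercube and then checking the edge maps. The bijection $\mathbb{H}\mapsto\widehat{\mathbb{H}}$ from $S(\mathbb{G})$ to $S(\mathbb{G}^*)$ induces a bijection of hypercube vertices $I\leftrightarrow\overline{I}$ where $\overline{I}$ is the bitwise complement, so $h(\overline{I})=n-h(I)$; and the bijection $\Phi$ of boundary components gives $|F(\mathbb{G}^*(\overline{I}))|=|F(\mathbb{G}(I))|$. The first step is to exhibit a grading-preserving identification of the module $V^*$ with $V$ via $v_+^*\leftrightarrow v_-$ and $v_-^*\leftrightarrow v_+$ (the bigradings $(0,-1)$ and $(0,1)$ agree). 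Extending this tensor-wise and using $\Phi$ to match tensor factors, a short grading computation shows that
\[
V(\mathbb{G}(I))^*[n]\{n\}\;\cong\;V(\mathbb{G}^*(\overline{I}))
\]
as bigraded $\mathbb{Z}$-modules, uniformly in $I$. Summing gives a bigrading-preserving isomorphism $\Psi:CKh(\mathbb{G})^*[n]\{n\}\to CKh(\mathbb{G}^*)$ of the underlying bigraded modules.

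Next I would check that $\Psi$ intertwines the differentials. Any edge $\xi:I\to J$ in $\mathbb{G}$'s cube (changing the $k$-th coordinate from $0$ to $1$) corresponds to an edge $\xi':\overline{J}\to\overline{I}$ in $\mathbb{G}^*$'s cube (which adds the dual edge $e_k^*$ to $\mathbb{G}^*(\overline{J})$). Two key facts are then needed. The algebraic fact is that under the identification $V^*\cong V$ above, the dual of the merge map is the comultiplication and vice versa: $m^*=\Delta$ and $\Delta^*=m$. This is a direct basis computation. The topological fact is that adding $e_k$ to $\mathbb{G}(I)$ merges two boundary components $c_1,c_2\in F(\mathbb{G}(I))$ into $c\in F(\mathbb{G}(J))$ if and only if adding $e_k^*$ to $\mathbb{G}^*(\overline{J})$ splits $\Phi(c)$ into $\Phi(c_1)$ and $\Phi(c_2)$. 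That a merge always dualizes to a split (and vice versa) follows at the counting level from $|F(\mathbb{G}(I))|=|F(\mathbb{G}^*(\overline{I}))|$ combined with $|F(\mathbb{G}(J))|=|F(\mathbb{G}^*(\overline{J}))|$; the finer identification of which components correspond comes from a local picture near the transverse intersection $e_k\cap e_k^*$ in $\Sigma$, using $\Sigma=\Sigma_{\mathbb{G}(I)}\cup\Sigma_{\mathbb{G}^*(\overline{I})}$. Combined, these two facts imply that, factor by factor, the dual edge map on the $\mathbb{G}$ side matches the edge map on the $\mathbb{G}^*$ side.

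For signs, given an edge assignment $\epsilon$ on $\mathcal{E}(n)$ for $\mathbb{G}$, define $\epsilon^*(\xi'):=\epsilon(\xi)$ on $\mathcal{E}(n)$ for $\mathbb{G}^*$. The correspondence $\xi\leftrightarrow\xi'$ induces a bijection between the commutative squares of the two hypercubes, preserving the product of signs around each square, so $\epsilon^*$ is a valid edge assignment. With these choices, $\Psi$ intertwines $d_\epsilon^*$ with $d_{\epsilon^*}$ on the nose, and by Proposition \ref{prop:edge_assign} the isomorphism type of $CKh(\mathbb{G}^*)$ does not depend on which edge assignment we use.

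For the reduced version, choose the marked point on $\mathbb{G}^*$ so that for every $I$, the boundary component of $\Sigma_{\mathbb{G}^*(\overline{I})}$ containing it is $\Phi$ of the marked boundary component of $\Sigma_{\mathbb{G}(I)}$; such a choice exists by placing the two marked points on the same boundary circle of $\Sigma_{\mathbb{G}(I)}\cap\Sigma_{\mathbb{G}^*(\overline{I})}$ in $\Sigma$ for, say, $I=(1,\ldots,1)$ and observing that this single circle-compatibility forces compatibility for all $I$. Under this choice, $\Psi$ restricts to the isomorphism $\widetilde{CKh}(\mathbb{G}^*)\cong\widetilde{CKh}(\mathbb{G})^*[n]\{n\}$, since the $\mathbb{Z}v_-\{1\}$ factor dualizes to $(\mathbb{Z}v_-)^*\{-1\}\cong \mathbb{Z}v_+\{-1\}$, which after the global $\{n\}$-shift and the identification $V^*\cong V$ lands in the correct $v_-$-summand for the reduced complex of $\mathbb{G}^*$. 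The main obstacle is the topological compatibility statement: that merges and splits not only swap (an Euler characteristic count) but also have their specific boundary components matching under $\Phi$. Everything else is a bookkeeping of gradings and signs.
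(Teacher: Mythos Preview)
Your proposal is correct and follows essentially the same approach as the paper: both arguments build the isomorphism vertex-by-vertex via the bitwise-complement correspondence $I\leftrightarrow\overline I$, use the boundary-component bijection $\Phi$ to match tensor factors, identify $V^*\cong V$ in the grading-preserving way, verify that $m^*$ and $\Delta^*$ correspond to $\Delta$ and $m$, and transfer the edge assignment by $\epsilon^*(\xi')=\epsilon(\xi)$ before invoking Proposition~\ref{prop:edge_assign}. Your treatment is slightly more explicit than the paper's on the topological point that merges dualize to splits and on the placement of the marked point in the reduced case (which the paper dismisses with ``proved similarly''), but the underlying strategy is the same.
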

\begin{proof} We prove the proposition for $CKh(\mathbb{G}^*)$; the result for $\widetilde{CKh}(\mathbb{G}^*)$ is proved similarly.
Let $\{\widehat{0,1}\}^n$ be an $n$-dimensional hypercube with vertex set $\widehat{\mathcal{V}}(n)$ and edge set $\widehat{\mathcal{E}}(n)$. The one-skeleton of the hypercube $\{\widehat{0,1}\}^n$ is the same underlying graph as the one-skeleton of $\{0,1\}^n$ except that the edges in $\widehat{\mathcal{E}}(n)$ are in the opposite direction as the edges in $\mathcal{E}(n)$. If $I=(m_1,\dots,m_n)$ is a vertex in $\mathcal{V}(n)$, define its dual vertex $\widehat{I} = (\widehat{m}_1,\dots,\widehat{m}_n)$ in $\widehat{\mathcal{V}}(n)$ to be the vertex where $m_i+\widehat{m}_i\equiv 1\mod 2$ for $1\leq i \leq n$. The complexes $CKh(\mathbb{G})$ and $CKh(\mathbb{G}^*)$ will use the hypercube $\{0,1\}^n$, while the complex $CKh(\mathbb{G})^*$ will use the dual hypercube $\{\widehat{0,1}\}^n$.

First, we show that for each vertex $I\in\mathcal{V}(n)$, we have a grading preserving isomorphism 
$$\Psi\circ\Phi_*:V(\mathbb{G}^*(I))\xrightarrow{\cong} V(\mathbb{G}(\widehat{I}))^*[n]\{n\}.$$ Next we show that if $\xi$ is an edge in $\mathcal{E}(n)$ from $I$ to $J$ and $\widehat{\xi}$ is the dual edge in $\widehat{\mathcal{E}}(n)$ from $\widehat{I}$ to $\widehat{J}$, then the edge maps $d_\xi:\mathcal{V}(\mathbb{G}^*(I))\to\mathcal{V}(\mathbb{G}^*(J))$ and  $d_\xi^*:\mathcal{V}(\mathbb{G}(\widehat{I}))^*\to\mathcal{V}(\mathbb{G}(\widehat{J}))^*$ commute with $\Psi\circ\Phi_*$ and $(\Psi\circ\Phi_*)^{-1}$.  Finally, we note that an edge assignment for the hypercube $\{0,1\}^n$ induces an edge assignment for the dual hypercube $\{\widehat{0,1}\}^n$, giving us the desired isomorphism of complexes.

Define a basis $\{v_-^*,v_+^*\}$ of $V^*$ by
$$v_-^*(v_-) = 0,\hspace{1cm}v_-^*(v_+)=1,\hspace{1cm}v_+^*(v_-)=1,\hspace{1cm}v_+^*(v_+)=0.$$
Fix an isomorphism $\psi: V^*\to V$ where $\psi(v_-^*)=v_-$ and $\psi(v_+^*)=v_+$, and define an isomorphism $\Psi:V(\mathbb{G}(I))^*\to V(\mathbb{G}(I))$ by $\Psi=\psi\otimes\cdots\otimes\psi$. The map $\Psi$ sends summands in the $(i,j)$-bigrading of $CKh(\mathbb{G})^*$ to the $(-i,-j)$-bigrading of $CKh(\mathbb{G})$. As noted in Section \ref{section:ribbongraphs}, there is a canonical bijection $\Phi$ from the boundary components of $\Sigma_{\mathbb{G}^*(I)}$ to the boundary components of $\Sigma_{\mathbb{G}(\widehat{I})}$ given by the gluing map in $\Sigma = \Sigma_{\mathbb{G}^*(I)}\cup \Sigma_{\mathbb{G}(\widehat{I})}$. The bijection induces an isomorphism $\Phi_*:V(\mathbb{G}^*(I))\to V(\mathbb{G}(\widehat{I}))$ that sends the summand in the $(i,j)$-bigrading of $CKh(\mathbb{G})$ to the $(n-i,n-j)$-bigrading of $CKh(\mathbb{G}^*)$. The composition $\Psi\circ\Phi_*:V(\mathbb{G}^*(I))\xrightarrow{\cong} V(\mathbb{G}(\widehat{I}))^*[n]\{n\}$ is the desired isomorphism.

If $m^*$ and $\Delta^*$ are the dual maps of $m$ and $\Delta$ respectively, then
$$\begin{array}{l c l}
m^*:V^*\to V^*\otimes V^* & \quad &
     m^*:\begin{cases}
          v_+^* \mapsto v_+^*\otimes v_-^* + v_-^*\otimes v_+^* &\\
      v_-^* \mapsto v_-^*\otimes v_-^* &
    \end{cases}\\
    \Delta^*: V^*\otimes V^*\to V^* & \quad &
   \Delta^*:\begin{cases}
     v_+^*\otimes v_-^*\mapsto v_-^* &
      v_+^*\otimes v_+^*\mapsto v_+^* \\
      v_-^*\otimes v_+^*\mapsto v_-^* &
      v_-^*\otimes v_-^*\mapsto 0
    \end{cases}.
  \end{array}$$
Let $d_\xi^*$ be the edge maps in the dual complex defined using $m^*$ and $\Delta^*$.
Since $m^*(v^*_{\pm}\otimes v^*_{\pm}) = (m(v_{\pm}\otimes v_{\pm}))^*$ and $\Delta^*(v^*_{\pm}) = (\Delta(v_{\pm}))^*$, it follows that $\Psi\circ\Phi_*\circ d_\xi = d_\xi^*\circ\Psi\circ\Phi_*$.

An edge assignment $\epsilon:\mathcal{E}(n)\to\{\pm 1\}$ gives an edge assignment $\widehat{\epsilon}:\widehat{\mathcal{E}}(n)\to\{\pm 1\}$ by $\widehat{\epsilon}(\widehat{\xi}) := \epsilon(\xi)$. Therefore, up to the prescribed grading shift, the complexes $(CKh(\mathbb{G}^*),d_\epsilon)$ and $(CKh(\mathbb{G})^*,d^*_{\widehat{\epsilon}})$ are isomorphic. Proposition \ref{prop:edge_assign} states that the choice of edge assignment does not change the isomorphism type of the complex, and the result follows.
\end{proof}

The following corollary follows from Proposition \ref{prop:duality} and the relationship between the homology of a complex and the homology of its dual.
\begin{corollary}
\label{cor:duality}
Let $\mathbb{G}$ be a ribbon graph  with $n$ edges, and let $\mathbb{G}^*$ be the dual ribbon graph. There are isomorphisms
\begin{eqnarray*}
Kh^{i,j}(\mathbb{G}^*) \otimes \mathbb{Q} & \cong & Kh^{n-i,n-j}(\mathbb{G})\otimes \mathbb{Q}\\
\text{Tor}(Kh^{i,j}(\mathbb{G}^*)) & \cong & \text{Tor}(Kh^{n-i+1,n-j+1}(\mathbb{G}))\\
\widetilde{Kh}^{i,j}(\mathbb{G}^*) \otimes \mathbb{Q} & \cong & \widetilde{Kh}^{n-i,n-j}(\mathbb{G})\otimes \mathbb{Q}\\
\text{Tor}(\widetilde{Kh}^{i,j}(\mathbb{G}^*)) & \cong & \text{Tor}(\widetilde{Kh}^{n-i+1,n-j+1}(\mathbb{G})).
\end{eqnarray*}
\end{corollary}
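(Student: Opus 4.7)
The plan is to deduce the corollary directly from Proposition \ref{prop:duality} by applying the universal coefficient theorem. I would begin by observing that since the Khovanov differential on $CKh(\mathbb{G})$ preserves the polynomial grading $j$, for each fixed $j$ the slice $CKh^{*,j}(\mathbb{G})$ is an ordinary cochain complex of finitely generated free abelian groups whose cohomology is $Kh^{*,j}(\mathbb{G})$. The UCT therefore furnishes a split short exact sequence expressing the cohomology of the corresponding dual slice in terms of $\Hom$ and $\text{Ext}^1$ of the groups $Kh^{*,j}(\mathbb{G})$, with the usual shift by one in the direction of the differential appearing in the $\text{Ext}^1$ term.

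Next, I would combine this UCT sequence with the chain-level isomorphism $CKh(\mathbb{G}^*)\cong CKh(\mathbb{G})^*[n]\{n\}$ supplied by Proposition \ref{prop:duality} (and its reduced counterpart). After tracking the three grading operations at play---the negation of both $i$ and $j$ by the dualization $(\cdot)^*$, the shift $[n]\{n\}$ from the proposition, and the shift introduced by UCT in the $\text{Ext}^1$ term---the result will be a natural split short exact sequence for $Kh^{i,j}(\mathbb{G}^*)$ whose outer terms are $\Hom(Kh^{n-i,n-j}(\mathbb{G}),\mathbb{Z})$ and $\text{Ext}^1(Kh^{n-i+1,n-j+1}(\mathbb{G}),\mathbb{Z})$.

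From this, both asserted isomorphisms will follow immediately. Tensoring with $\mathbb{Q}$ annihilates the $\text{Ext}^1$ summand and identifies $\Hom(Kh^{n-i,n-j}(\mathbb{G}),\mathbb{Z})\otimes\mathbb{Q}$ with $Kh^{n-i,n-j}(\mathbb{G})\otimes\mathbb{Q}$, yielding the first rational duality. For the torsion statement I would use that $\Hom(A,\mathbb{Z})$ is torsion-free and that $\text{Ext}^1(A,\mathbb{Z})\cong\text{Tor}(A)$ for finitely generated abelian groups $A$. The reduced case is proved in exactly the same way, starting from the reduced half of Proposition \ref{prop:duality} and using that $\widetilde{CKh}(\mathbb{G})$ is also a complex of finitely generated free abelian groups.

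The main---and essentially only---obstacle I anticipate is the careful bookkeeping of the three superimposed grading conventions (dualization, the $[n]\{n\}$ shift, and the UCT shift) so that the final indices come out as $n-i,\ n-j$ in the Hom piece and $n-i+1,\ n-j+1$ in the Ext piece. The underlying homological algebra is entirely standard, and no genuinely new conceptual difficulty arises beyond tracking these indices.
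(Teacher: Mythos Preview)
Your proposal is correct and matches the paper's approach exactly: the paper simply states that the corollary ``follows from Proposition~\ref{prop:duality} and the relationship between the homology of a complex and the homology of its dual,'' which is precisely the combination of the chain-level duality and the universal coefficient theorem that you outline. Your identification of the grading bookkeeping as the only real care point is also apt, since the paper provides no further details.
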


%----------------------------------------------------------------------------------------------%

\section{The connection to Khovanov homology of links}
\label{section:khovanov}

Every link diagram $D$ has an associated ribbon graph $\mathbb{D}$, called the all-$A$ ribbon graph, whose construction is given in this section. The Khovanov homology of $\mathbb{D}$ is isomorphic to the Khovanov homology of the associated link (up to a grading shift). The goal of this section is to establish this isomorphism.

Loebl and Moffatt \cite{LoeblMoffatt:Fatgraphs} established a similar result for a different homology assigned to ribbon graphs with signed edges. Specifically, one can obtain Khovanov homology from the Loebl and Moffatt homology of the (necessarily planar) signed checkerboard graph of the link. By considering ribbon graphs with arbitrary genus, we are able to eliminate the dependency on signs.

\subsection{Link diagrams and ribbon graphs}
Let $D$ be a link diagram. Each crossing $c$ in $D$ has an $A$-resolution and a $B$-resolution (also known as a $0$-resolution and $1$-resolution, respectively) as in Figure \ref{figure:smoothing}. 
A diagram with all of its crossings resolved is called a {\it Kauffman state}. If $s$ is a Kauffman state, then define $|s|$ to be the number of components of $s$. Since a Kauffman state does not contain any information about the crossings of $D$, we find it useful to remember the crossings by adding certain colored line segments to each Kauffman state. When a crossing is resolved, we replace the crossing with a line segment called the {\it trace of the crossing} that connects the two strands. Traces corresponding to $A$-resolutions are colored blue, while traces corresponding to $B$-resolutions are colored red. 
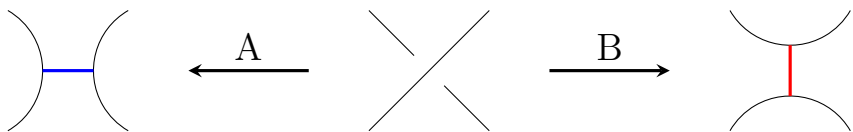
\begin{figure}[h]
$$\begin{tikzpicture}[>=stealth, scale=.8]
\draw (-1,-1) -- (1,1);
\draw (-1,1) -- (-.25,.25);
\draw (.25,-.25) -- (1,-1);
\draw (-3,0) node[above]{\Large{A}};
\draw[->,very thick] (-2,0) -- (-4,0);
\draw (3,0) node[above]{\Large{B}};
\draw[->,very thick] (2,0) -- (4,0);
\draw (-5,1) arc (120:240:1.1547cm);
\draw (-7,-1) arc (-60:60:1.1547cm);
\draw (5,1) arc (210:330:1.1547cm);
\draw (7,-1) arc (30:150:1.1547cm);
\draw[blue,very thick] (-5.57735,0) -- (-6.4226,0);
\draw[red,very thick] (6,0.422625) -- (6,-0.422625);
\end{tikzpicture}$$
\caption{The resolutions of a crossing and their traces in a link diagram.}
\label{figure:smoothing}
\end{figure}

The all-$A$ ribbon graph $\mathbb{D}$ of the link diagram $D$ is constructed as follows. First, choose the $A$-resolution for every crossing and obtain the all-$A$ Kauffman state. Next, orient the components of the all-$A$ Kauffman state in such a way that the outermost components are counterclockwise and any two nested components not separated by another component are oriented in opposite directions. The vertices of $\mathbb{D}$ are in one-to-one correspondence with the components of the all-$A$ Kauffman state, and the edges of $\mathbb{D}$ are in one-to-one correspondence with the traces of the crossings in the all-$A$ Kauffman state. An edge in $\mathbb{D}$ is incident to a vertex of $\mathbb{D}$ if and only if the corresponding trace is incident to the corresponding component in the all-$A$ Kauffman state. The orientation on each component of the all-$A$ Kauffman state gives a cyclic order on the endpoints of the traces incident to that component, which in turn induces the cyclic order of the half edges of $\mathbb{D}$ around each vertex. The construction of the all-$A$ ribbon graph of $D$ is illustrated in Figure \ref{fig:ribbonconstruct} where $D$ is a three-crossing diagram of the unknot.
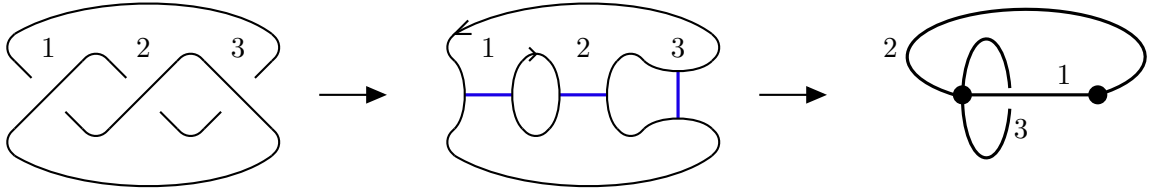
\begin{figure}[h]
$$\begin{tikzpicture}[scale=.9,thick,rounded corners = 2mm,>=triangle 45]
%Shell
\draw (0,0) -- (-.2,.2) -- (0,.4);
\draw (0,1.4) -- (-.2,1.6) -- (0,1.8);
\draw (1,.4) -- (1.2,.2) -- (1.4,.4);
\draw(1,1.4) -- (1.2, 1.6) -- (1.4,1.4);
\begin{scope}[xshift = 1.4cm]
	\draw (1,.4) -- (1.2,.2) -- (1.4,.4);
	\draw(1,1.4) -- (1.2, 1.6) -- (1.4,1.4);
\end{scope}
\begin{scope}[xshift=3.8cm]
	\draw (0,0) -- (.2,.2) -- (0,.4);
	\draw (0,1.4) -- (.2,1.6) -- (0,1.8);
\end{scope}
\draw (0,1.8) .. controls (1,2.4) and (3,2.4) .. (3.8,1.8);
\draw (0,0) .. controls (1, -.6) and (3, -.6) .. (3.8,0);
%Crossings
\draw (0,.4) -- (1,1.4);
\draw (0,1.4) -- (.25, 1.15);
\draw (1,.4) -- (.75, .65);
\draw (.5,1.3) node[above]{$1$};
\begin{scope}[xshift = 1.4cm]
	\draw (0,.4) -- (1,1.4);
	\draw (0,1.4) -- (.25, 1.15);
	\draw (1,.4) -- (.75, .65);
	\draw (.5,1.3) node[above]{$2$};
\end{scope}
\begin{scope}[xshift = 2.8cm]
	\draw (0,1.4) -- (1,.4);
	\draw (0,.4) -- (.25, .65);
	\draw (1,1.4) -- (.75, 1.15);
	\draw (.5,1.3) node[above]{$3$};
\end{scope}
%arrow
\draw[->] (4.5,.9) -- (5.5,.9);
%shell
\begin{scope}[xshift = 6.5cm]
	\draw (0,0) -- (-.2,.2) -- (0,.4);
	\draw (0,1.4) -- (-.2,1.6) -- (0,1.8);
	\draw (1,.4) -- (1.2,.2) -- (1.4,.4);
	\draw(1,1.4) -- (1.2, 1.6) -- (1.4,1.4);
	\begin{scope}[xshift = 1.4cm]
		\draw (1,.4) -- (1.2,.2) -- (1.4,.4);
		\draw(1,1.4) -- (1.2, 1.6) -- (1.4,1.4);
	\end{scope}
	\begin{scope}[xshift=3.8cm]
		\draw (0,0) -- (.2,.2) -- (0,.4);
		\draw (0,1.4) -- (.2,1.6) -- (0,1.8);
	\end{scope}
	\draw (0,1.8) .. controls (1,2.4) and (3,2.4) .. (3.8,1.8);
	\draw (0,0) .. controls (1, -.6) and (3, -.6) .. (3.8,0);
	\draw[rounded corners =0mm] (.2,2) -- (0,1.8)--(.25,1.8);
	\draw[rounded corners =0mm] (1.1,1.6) -- (1.2,1.5)--(1.1,1.4);
%resolutions
	\draw (0,.4) .. controls (.2,.6) and (.2,1.2) .. (0,1.4);
	\draw (1,.4) .. controls (.8,.6) and (.8,1.2) .. (1,1.4);
	\draw (1.4,.4) .. controls (1.6,.6) and (1.6,1.2) .. (1.4,1.4);
	\draw (2.4,1.4) .. controls (2.2,1.2) and (2.2,.6).. (2.4,.4);
	\draw (2.8,1.4) .. controls (3,1.2) and (3.6,1.2) .. (3.8,1.4);
	\draw (2.8,.4) .. controls (3,.6) and (3.6,.6) .. (3.8,.4);
	\draw (.5,1.3) node[above]{$1$};
	\draw[xshift = 1.4cm] (.5,1.3) node[above]{$2$};
	\draw[xshift = 2.8cm] (.5,1.3) node[above]{$3$};
	\begin{pgfonlayer}{background}
		\draw[very thick, blue!90!red] (.15,.9) -- (.85,.9);
		\draw[very thick, blue!90!red] (1.55,.9) -- (2.25,.9);
		\draw[very thick, blue!90!red] (3.3,.55) -- (3.3,1.25);
	\end{pgfonlayer}
\end{scope}
%arrow
\draw[->](11,.9) -- (12,.9);
%ribbongraph
\begin{scope}[xshift =22cm, yshift=.9cm]
	\fill[black] (-6,0) circle (4pt);
	\fill[black] (-8,0) circle (4pt);
	\draw[very thick] (-8,0) -- (-6,0);
	\draw (-6.5,0) node[above]{$1$};
	\draw[very thick] (-7.3,0.1) arc (10:350:10pt and 25pt);
	\draw (-7.4,-.5) node[right]{$3$};
	\draw[very thick] (-6,0) arc (-53:233:50pt and 20pt);
	\draw (-8.8,.7) node[left]{$2$};
\end{scope}
\end{tikzpicture}$$
\caption{To construct the all-$A$ ribbon graph, first construct the all-$A$ Kauffman state. Then the components of the all-$A$ Kauffman state become the vertices, and the traces of the crossings become the edges.}
\label{fig:ribbonconstruct}
\end{figure}

An alternate method to construct the all-$A$ ribbon graph uses arrow presentations. This method is useful because it is easily generalized to the virtual link case (see Section \ref{section:virtual}). Once again, start with a link diagram $D$ and choose the $A$-resolution for every crossing in $A$. Instead of replacing each crossing with a trace, we replace the crossing by a pair of marking arrows with the same label. Orient the marking arrows by one of the two equivalent choices in Figure \ref{fig:markarrowresolution}.
\begin{figure}[h]
$$\begin{tikzpicture}[scale=.9]
\draw (-1,-1) -- (1,1);
\draw (-1,1) -- (-.25,.25);
\draw (.25,-.25) -- (1,-1);
\draw (0,-1) node[below]{\large{$x$}};
\draw[>=stealth,->,very thick] (-2,0) -- (-4,0);
\draw[>=stealth,->,very thick] (2,0) -- (4,0);

\begin{pgfonlayer}{background}
\draw (-5,1) arc (120:240:1.1547cm);
\draw (-7,-1) arc (-60:60:1.1547cm);
\begin{scope}[xshift = 12cm]
	\draw (-5,1) arc (120:240:1.1547cm);
	\draw (-7,-1) arc (-60:60:1.1547cm);
	
\end{scope}
\end{pgfonlayer}
\draw[very thick, red,->] (-6.42,0) arc (0:30:1.1547cm);
\draw[very thick, red] (-6.42,0) arc (0:-30:1.1547cm);
\draw[very thick, red] (-5.58,0) arc (180:150:1.1547cm);
\draw[very thick, red, ->] (-5.58, 0) arc (180:210:1.1547cm);
\draw (-6.7, 0) node{$x$};
\draw (-5.3,0) node{$x$};
\begin{scope}[xshift=12cm]
	\draw[very thick, red] (-6.42,0) arc (0:30:1.1547cm);
	\draw[very thick, red,->] (-6.42,0) arc (0:-30:1.1547cm);
	\draw[very thick, red,->] (-5.58,0) arc (180:150:1.1547cm);
	\draw[very thick, red] (-5.58, 0) arc (180:210:1.1547cm);
	\draw (-6.7, 0) node{$x$};
	\draw (-5.3,0) node{$x$};
\end{scope}
\end{tikzpicture}$$
\caption{The $A$-resolution of a crossing labeled by $x$ with the two equivalent choices of orientations of marking arrows.}
\label{fig:markarrowresolution}
\end{figure}
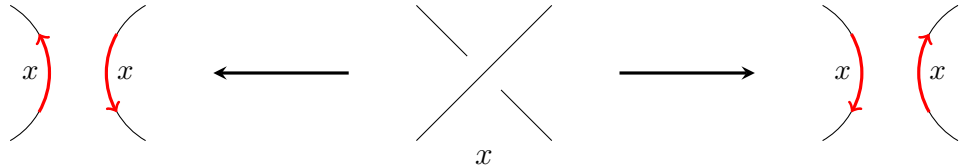
The resulting diagram is a collection of (possibly nested) circles in the plane decorated with labeled marking arrows. In order to obtain an arrow presentation, translate any circle that is nested inside another circle until it is no longer nested. An example of this process is shown in Figure \ref{fig:arrowconstruct}.

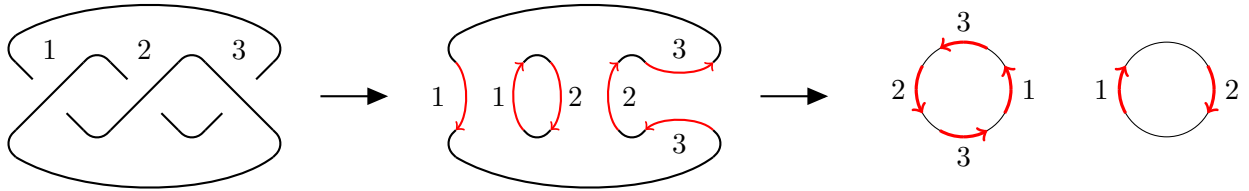
\begin{figure}[h]
$$\begin{tikzpicture}[scale=.9,thick,rounded corners = 2mm,>=triangle 45]
%Shell
\draw (0,0) -- (-.2,.2) -- (0,.4);
\draw (0,1.4) -- (-.2,1.6) -- (0,1.8);
\draw (1,.4) -- (1.2,.2) -- (1.4,.4);
\draw(1,1.4) -- (1.2, 1.6) -- (1.4,1.4);
\begin{scope}[xshift = 1.4cm]
	\draw (1,.4) -- (1.2,.2) -- (1.4,.4);
	\draw(1,1.4) -- (1.2, 1.6) -- (1.4,1.4);
\end{scope}
\begin{scope}[xshift=3.8cm]
	\draw (0,0) -- (.2,.2) -- (0,.4);
	\draw (0,1.4) -- (.2,1.6) -- (0,1.8);
\end{scope}
\draw (0,1.8) .. controls (1,2.4) and (3,2.4) .. (3.8,1.8);
\draw (0,0) .. controls (1, -.6) and (3, -.6) .. (3.8,0);
%Crossings
\draw (0,.4) -- (1,1.4);
\draw (0,1.4) -- (.25, 1.15);
\draw (1,.4) -- (.75, .65);
\draw (.5,1.3) node[above]{$1$};
\begin{scope}[xshift = 1.4cm]
	\draw (0,.4) -- (1,1.4);
	\draw (0,1.4) -- (.25, 1.15);
	\draw (1,.4) -- (.75, .65);
	\draw (.5,1.3) node[above]{$2$};
\end{scope}
\begin{scope}[xshift = 2.8cm]
	\draw (0,1.4) -- (1,.4);
	\draw (0,.4) -- (.25, .65);
	\draw (1,1.4) -- (.75, 1.15);
	\draw (.5,1.3) node[above]{$3$};
\end{scope}
%arrow
\draw[->] (4.5,.9) -- (5.5,.9);
%shell
\begin{scope}[xshift = 6.5cm]
	\draw (0,0) -- (-.2,.2) -- (0,.4);
	\draw (0,1.4) -- (-.2,1.6) -- (0,1.8);
	\draw (1,.4) -- (1.2,.2) -- (1.4,.4);
	\draw(1,1.4) -- (1.2, 1.6) -- (1.4,1.4);
	\begin{scope}[xshift = 1.4cm]
		\draw (1,.4) -- (1.2,.2) -- (1.4,.4);
		\draw(1,1.4) -- (1.2, 1.6) -- (1.4,1.4);
	\end{scope}
	\begin{scope}[xshift=3.8cm]
		\draw (0,0) -- (.2,.2) -- (0,.4);
		\draw (0,1.4) -- (.2,1.6) -- (0,1.8);
	\end{scope}
	\draw (0,1.8) .. controls (1,2.4) and (3,2.4) .. (3.8,1.8);
	\draw (0,0) .. controls (1, -.6) and (3, -.6) .. (3.8,0);
%resolutions
\begin{scope}[>=to]
	\draw[thick, red,<-] (0,.4) .. controls (.2,.6) and (.2,1.2) .. (0,1.4);
	\draw[thick,red,->] (1,.4) .. controls (.8,.6) and (.8,1.2) .. (1,1.4);
	\draw[thick, red,<-] (1.4,.4) .. controls (1.6,.6) and (1.6,1.2) .. (1.4,1.4);
	\draw[thick,red,<-] (2.4,1.4) .. controls (2.2,1.2) and (2.2,.6).. (2.4,.4);
	\draw[thick,red,->] (2.8,1.4) .. controls (3,1.2) and (3.6,1.2) .. (3.8,1.4);
	\draw[thick,red,<-] (2.8,.4) .. controls (3,.6) and (3.6,.6) .. (3.8,.4);
	\draw (0,.9) node[left]{$1$};
	\draw (.9,.9) node[left]{$1$};
	\draw (1.5,.9) node[right]{$2$};
	\draw (2.3,.9) node[right]{$2$};
	\draw (3.3,1.3) node[above]{$3$};
	\draw (3.3, .5) node[below]{$3$};
\end{scope}
\end{scope}
%arrow
\draw[->](11,.9) -- (12,.9);
%ribbongraph
\begin{scope}[xshift =14cm,yshift = 1cm,>=to]
	\begin{pgfonlayer}{background}
\draw (0,0) circle (.7 cm);
\draw (3,0) circle (.7 cm);
\end{pgfonlayer}
\draw[very thick, red, ->] (0.7,0) arc (0:30:.7cm);
\draw[very thick, red] (0.7,0) arc (0:-30:.7cm);
\draw[very thick, red, ->] (0,0.7) arc (90:120:.7cm);
\draw[very thick, red] (0,0.7) arc (90:60:.7cm);
\draw[very thick, red] (-0.7,0) arc (180:150:.7cm);
\draw[very thick, red,->] (-0.7,0) arc (180:210:.7cm);
\draw[very thick, red] (0,-0.7) arc (270:240:.7cm);
\draw[very thick, red,->] (0,-0.7) arc (270:300:.7cm);

\draw[very thick, red,->] (2.3,0) arc (180:150:.7cm);
\draw[very thick, red] (2.3,0) arc (180:210:.7cm);
\draw[very thick, red,->] (3.7,0) arc (0:-30:.7cm);
\draw[very thick, red] (3.7,0) arc (0:30:.7cm);

\draw (0.7,0) node[right]{$1$};
\draw (0,0.7) node[above]{$3$};
\draw (-0.7,0) node[left]{$2$};
\draw (0,-0.7) node[below]{$3$};
\draw (2.3,0) node[left]{$1$};
\draw (3.7,0) node[right]{$2$};\end{scope}
\end{tikzpicture}$$
\caption{The construction of the arrow presentation for the all-$A$ ribbon graph of a link diagram $D$.}
\label{fig:arrowconstruct}
\end{figure}

The all-$A$ ribbon graph $\mathbb{D}$ is embedded on the Turaev surface. One easy way to construct the Turaev surface is to start with a cobordism from the all-$A$ Kauffman state of $D$ to the all-$B$ Kauffman state of $D$ that has saddle points corresponding to the crossings, and then cap off its boundary components with disks. For an in-depth look at the Turaev surface see \cite{DFKLS:GraphsOnSurfaces}.

\subsection{Relationship to the Khovanov complex}

Suppose that the link diagram $D$ has crossings $x_1,\dots,x_n$ and that $I=(m_1,\dots,m_n)$ is a vertex in the hypercube $\{0,1\}^n$. Define $D(I)$ to be the Kauffman state with an $A$-resolution at crossing $i$ if $m_i=0$ and a $B$-resolution at crossing $i$ if $m_i=1$ for $1\leq i \leq n$.
Let $S(D)$ be the set of all Kauffman states of $D$. Since the edges of $\mathbb{D}$ correspond to the crossings of $D$, both $S(D)$ and the set $S(\mathbb{D})$ of spanning ribbon subgraphs of $\mathbb{D}$ have $2^{n}$ elements. 
\begin{lemma}
\label{lemma:components}
Let $D$ be a link diagram with $n$ crossings and $\mathbb{D}$ be its all-$A$ ribbon graph. If $I$ is a vertex in the hypercube $\{0,1\}^n$, then the number of boundary components of $\Sigma_{\mathbb{D}(I)}$ is equal to the number of components in the Kauffman state $D(I)$.
\end{lemma}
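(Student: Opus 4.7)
The plan is to identify $\partial \Sigma_{\mathbb{D}(I)}$, viewed as a disjoint union of simple closed curves on the Turaev surface $F$ of $D$, with the Kauffman state $D(I)$ itself. Once this identification is made the lemma is immediate, since the number of components on each side is the same by definition.

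First I would set the geometric stage. The ribbon graph $\mathbb{D}$ is cellularly embedded in $F$: the vertex disks are bounded on one side by the all-$A$ Kauffman circles of $D$, and each edge is realized as a band placed in the saddle region of the corresponding crossing, along the trace of the $A$-resolution. Fixing such an embedding produces, for every $I \in \{0,1\}^n$, a copy of $\Sigma_{\mathbb{D}(I)}$ sitting inside $F$, so that $\partial \Sigma_{\mathbb{D}(I)}$ is literally a collection of curves on $F$.

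Next I would analyze $\partial \Sigma_{\mathbb{D}(I)}$ locally. Away from crossings it follows arcs of the all-$A$ Kauffman circles, which are also the strands of $D(I)$ between crossings. In the saddle region of crossing $c_k$, there are two cases. If $m_k = 0$, then $e_k \notin \mathbb{D}(I)$ and the boundary follows the arcs of the two adjacent vertex disks---exactly the $A$-resolution arcs used at $c_k$ in $D(I)$. If $m_k = 1$, then the band $e_k$ is glued in, and the boundary follows its two long sides, which by the construction of $\mathbb{D}$ (see Figure \ref{fig:ribbonconstruct}) are precisely the two $B$-resolution arcs at $c_k$. Gluing these local pictures yields the identification $\partial \Sigma_{\mathbb{D}(I)} = D(I)$ as curve systems on $F$.

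A more combinatorial alternative would be to induct on $h(I)$: the base case $h(I) = 0$ gives $|V(\mathbb{D})|$ on both sides, and flipping one coordinate from $0$ to $1$ attaches a single band on the ribbon side while performing the corresponding $A$-to-$B$ surgery on the state side---both operations have the same local merge-or-split effect on the number of components. The main obstacle in either approach is the same local verification: one must check that the two long sides of the band realizing $e_k$ coincide with the $B$-resolution arcs at $c_k$, which is essentially the defining feature of the all-$A$ ribbon graph construction described in Section \ref{section:khovanov}.
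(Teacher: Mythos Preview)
Your proposal is correct. The paper's proof is precisely your ``combinatorial alternative'': it inducts on $h(I)$, checks the base case $I=(0,\dots,0)$ by noting that both sides count the all-$A$ circles, and for the inductive step observes that attaching the band for $e_k$ merges or splits boundary components of $\Sigma_{\mathbb{D}(J)}$ in exactly the same way that switching crossing $c_k$ from an $A$- to a $B$-resolution merges or splits components of $D(J)$.

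Your primary approach---identifying $\partial\Sigma_{\mathbb{D}(I)}$ with $D(I)$ as a curve system on the Turaev surface via a crossing-by-crossing local analysis---is a genuinely different route. It proves a slightly stronger statement (an actual isotopy of curve systems, not merely equality of component counts), and this stronger statement is in fact what is implicitly used later in the paper when showing that the edge maps commute with the isomorphism $\rho$ in Theorem~\ref{theorem:ribbontokh}. The cost is that you must first set up the embedding of $\mathbb{D}$ on $F$ carefully enough to make the local picture at each saddle unambiguous; the paper's inductive argument avoids mentioning the Turaev surface at all and works purely with the abstract ribbon surface $\Sigma_{\mathbb{D}(I)}$, which keeps the prerequisites lighter but yields only the numerical equality.
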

\begin{proof}
We prove this lemma by induction on the height $h(I)$ of the vertex $I$. Recall that if $I=(m_1,\dots,m_n)$, then $h(I)=\sum_{i=1}^n m_i$. 
If $I=(0,\dots,0)$, then $\mathbb{D}(I)$ is the spanning ribbon subgraph with no edges (and thus consisting of only isolated vertices) and $D(I)$ is the all-$A$ Kauffman state. The vertices of $\mathbb{D}$, and thus the vertices of $\mathbb{D}(I)$, are in one-to-one correspondence with the components of the all-$A$ Kauffman state. The surface $\Sigma_{\mathbb{D}(I)}$ has one boundary component for each vertex of $\mathbb{D}(I)$ and therefore one boundary component for each component of $D(I)$.

Let $I=(m_1,\dots,m_n)$ be a vertex such that $m_k=1$ for some $k$ with $1\leq k \leq n$, and let $J=(m_1^\prime,\dots,m_n^\prime)$ where $m_k^\prime=0$ and $m_i^\prime=m_i$ if $i\neq k$. Then there is an edge in the hypercube from $J$ to $I$. By induction, we may assume that the number of boundary components of $\Sigma_{\mathbb{D}(J)}$ is equal to the boundary components of $D(J)$. The surface $\Sigma_{\mathbb{D}(I)}$ can be obtained from the surface $\Sigma_{\mathbb{D}(J)}$ by attaching a $2$-dimensional one-handle $h$ along an $S^0$ in the boundary of $\Sigma_{\mathbb{D}(J)}$. The $2$-dimensional one-handle $h$ corresponds to a trace $t$ in the Kauffman state $D(J)$, and since $m_k^\prime=0$, the trace $t$ corresponds to an $A$-resolution of $D$.

If the endpoints of $t$ lie on the same component of $D(J)$, then the two attaching points of $S^0$ where $h$ is attached lie on different boundary components of $\Sigma_{\mathbb{D}(J)}$. Attaching the one-handle $h$ splits one boundary component of $\Sigma_{\mathbb{D}(J)}$ into two and leaves the other boundary components unchanged. Likewise, since both endpoints of $t$ lie on the same component, changing that crossing from an $A$-resolution to a $B$-resolution corresponds to splitting on component of $D(J)$ into two, while leaving the other components unchanged. Therefore, in this case, the number of boundary component of $\Sigma_{\mathbb{D}(I)}$ equals the number of components of $D(I)$.

If the endpoints of $t$ lie on different components of $D(J)$, then the attaching points of $S^0$ for the one-handle $h$ lie on different boundary components of $\Sigma_{\mathbb{D}(J)}$. Adding the one-handle $h$ merges two boundary components of $\Sigma_{\mathbb{D}(J)}$ into one and leaves the other boundary components unchanged. Likewise, since the endpoints of $t$ lie on different components, changing that crossing from an $A$-resolution to a $B$-resolution corresponds to merging two components of $D(J)$ into one, while leaving the other components unchanged. Therefore, the number of boundary component of $\Sigma_{\mathbb{D}(I)}$ equals the number of components of $D(I)$.
\end{proof}

We now review the construction of Khovanov homology. Let $D$ be a diagram of the link $L$ with $n$ crossings. Assign the Kauffman state $D(I)$ to the vertex $I$ in the hypercube $\{0,1\}^n$. There is a directed edge $\xi$ of the hypercube from $D(I)$ to $D(J)$ if and only if $D(J)$ can be obtained from $D(I)$ by changing one $A$-resolution to a $B$-resolution. Let $|D(I)|$ denote the number of components in the Kauffman state $D(I)$. 

Associate to each vertex $I\in\mathcal{V}(n)$ the $\mathbb{Z}$-module $V(D(I)) := V^{\otimes |D(I)|}[h(I)]\{h(I)\}$ where $h(I)$ denotes the height of the vertex $I$. Let $n_+$ and $n_-$ be the number of positive and negative crossings in $D$ respectively. Define $CKh(D) = \bigoplus_{I\in \mathcal{V}(n)} V(D(I)) [-n_-]\{n_+ - 2n_-\}$. The module $CKh(D)$ is bigraded, and we denote the summand with bigrading $(i,j)$ by $CKh^{i,j}(D)$. As before, it will useful to refer to all summands in a specific homological grading but arbitrary polynomial grading; therefore, we write $CKh^{i,*}(D) = \bigoplus_j CKh^{i,j}(D)$.

Each edge $\xi$ in the hypercube has an associated edge map $d_\xi:V(D(I))\to V(D(J))$ where $\xi$ is an edge from $I$ to $J$. Changing a resolution in $D(I)$ from an $A$-resolution to a $B$-resolution either merges two components of $D(J)$ into one or splits one component of $D(I)$ into two. Define $d_{\xi}$ to be the identity on tensor factors of $V(D(I))$ that correspond to components of $D(I)$ which are not changed when the resolution is changed. If $\xi$ merges two components of $D(I)$, then the edge map $d_{\xi}$ is defined to be $m:V\otimes V\to V$ on the two tensor factors corresponding to the merging components, and if $\xi$ splits one component of $D(I)$ into two, then $d_\xi$ is defined to be $\Delta:V\to V\otimes V$ on the tensor factor of $V(D(I))$ corresponding to the component being split.

Suppose that $\xi$ is an edge from the vertex $I=(m_1,\dots,m_n)$ to the vertex $J=(m_1^\prime,\dots,m_n^\prime)$, and let $|\xi|=h(I)$. The coordinates of $I$ and $J$ are the same except at one coordinate, say the $k$th coordinate.
Define $(-1)^\xi=(-1)^{\sum_{i=1}^{k-1}m_i}$. Define the differential $d^i:CKh^{i,*}(D) \to CKh^{i+1,*}(D)$ by $d^i=\sum_{|\xi|=i-n_-} (-1)^\xi d_\xi$. Since the differential preserves the polynomial grading, we can write $d^i = \sum_j d^{i,j}$ where $d^{i,j}:CKh^{i,j}(D)\to CKh^{i+1,j}(D)$. 

The Khovanov homology of $L$, denoted $Kh(L)$, is the homology of the complex $(CKh(D), d)$. Specifically, the summand of $Kh(L)$ in the $(i,j)$ bigrading is defined as $Kh^{i,j}(L) = \text{ker}~d^{i,j} /\text{im}~d^{i-1,j}$. Khovanov \cite{Khovanov:homology} proves that $Kh(L)$ is a link invariant. In a manner similar to the construction described in Section \ref{subsec:reduced}, one can also construct reduced Khovanov homology $\widetilde{Kh}(L)$. 

\begin{theorem}
\label{theorem:ribbontokh}
Let $L$ be a link with diagram $D$ and all-$A$ ribbon graph $\mathbb{D}$. Then there are  bigraded isomorphisms of complexes
\begin{eqnarray*}
C(\mathbb{D})[-n_-]\{n_+ - 2n_-\} & \cong & CKh(D)~\text{and}\\
\widetilde{C}(\mathbb{D})[-n_-]\{n_+-2n_- \}  & \cong &\widetilde{CKh}(D).
\end{eqnarray*}
\end{theorem}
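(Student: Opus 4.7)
The plan is to build an explicit bigraded isomorphism of complexes that matches the two cubes of resolutions vertex by vertex. The same hypercube $\{0,1\}^n$ indexes both complexes: vertex $I$ labels the spanning ribbon subgraph $\mathbb{D}(I)$ on one side and the Kauffman state $D(I)$ on the other. By Lemma \ref{lemma:components}, there is a bijection $\beta_I$ between the boundary components of $\Sigma_{\mathbb{D}(I)}$ and the components of $D(I)$. Since $V(\mathbb{D}(I))=V^{\otimes F(\mathbb{D}(I))}[h(I)]\{h(I)\}$ and $V(D(I))=V^{\otimes|D(I)|}[h(I)]\{h(I)\}$ carry identical local shifts, the bijection $\beta_I$ determines a bigrading-preserving isomorphism $\varphi_I$ by permuting tensor factors. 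Applying the global shift $[-n_-]\{n_+-2n_-\}$ on both sides makes the total bigradings agree.

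Next I would verify that the $\varphi_I$ commute with the unsigned edge maps. The inductive proof of Lemma \ref{lemma:components} shows that traversing the hypercube edge $\xi$ from $I$ to $J$ which adds the edge $e_k$ merges (resp.\ splits) two boundary components of $\Sigma_{\mathbb{D}(I)}$ precisely when changing the resolution at the crossing $x_k$ from $A$ to $B$ merges (resp.\ splits) two components of $D(I)$. Choosing the $\beta_I$ inductively, starting from the canonical $\beta_{(0,\dots,0)}$ that identifies vertices of $\mathbb{D}$ with components of the all-$A$ Kauffman state, one can arrange that $\beta_I$ and $\beta_J$ match the ``unchanged'' tensor factors identically while sending the merging or splitting boundary components to the merging or splitting Kauffman state components. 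Since the definitions of $m$, $\Delta$, and the identity factors are literally the same on both sides, it follows that $d_\xi\circ\varphi_I=\varphi_J\circ d_\xi$ as unsigned maps.

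Finally, I would match the signs and adapt to the reduced case. By Proposition \ref{prop:edge_assign} the choice of edge assignment is immaterial up to isomorphism, so one picks $\epsilon(\xi)=(-1)^l$ where $l=|\{i<k:m_i=1\}|$ on the ribbon graph side, which coincides with the link convention $(-1)^\xi$; the signed edge maps then agree term by term and we obtain $C(\mathbb{D})[-n_-]\{n_+-2n_-\}\cong CKh(D)$. For the reduced complex, one places the marked point of $\Sigma_{\mathbb{D}}$ on the vertex disk corresponding to the component of the all-$A$ Kauffman state that carries the link basepoint; the inductive construction of the $\beta_I$ then sends the marked boundary component of $\Sigma_{\mathbb{D}(I)}$ to the marked component of $D(I)$, so each $\varphi_I$ restricts to an isomorphism $\widetilde{V}(\mathbb{D}(I))\to\widetilde{V}(D(I))$ and the reduced complexes are isomorphic as well. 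The main subtlety is the consistent propagation of the $\beta_I$ across the hypercube, but since the merge/split data matches edge-by-edge and the hypercube is simply connected, this is unobstructed.
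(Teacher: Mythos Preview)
Your proposal is correct and follows essentially the same approach as the paper: both proofs use Lemma~\ref{lemma:components} to identify $V(\mathbb{D}(I))$ with $V(D(I))$ at each vertex of the hypercube, then observe that the merge/split behavior of boundary components matches that of Kauffman state components along each edge, so the edge maps agree. You are somewhat more explicit than the paper about the sign matching (invoking Proposition~\ref{prop:edge_assign}), the inductive propagation of the bijections $\beta_I$, and the placement of the marked point in the reduced case, but the underlying argument is the same.
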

\begin{proof}
Lemma \ref{lemma:components} implies that there is a bijection from the spanning ribbon subgraphs of $\mathbb{D}$ and the Kauffman states of $D$ sending $\mathbb{D}(I)\mapsto D(I)$ such that the number of boundary components of $\Sigma_{\mathbb{D}(I)}$ equals the number of components in $D(I)$. Therefore, for each $I\in\mathcal{V}(n)$, there is a bigraded isomorphism $\rho_I:V(\mathbb{D}(I))\to V(D(I))$.

Hence, the map $\rho=\sum_{I\in\mathcal{V}(n)}\rho_I$ is grading preserving isomorphism $$\rho:CKh(\mathbb{D})[-n_-]\{n_+-2n_-\}\to CKh(D).$$ If there is an edge in $\mathcal{E}(n)$ from $I$ to $J$ and two components of $D(I)$ merge to form $D(J)$, then the corresponding boundary components of $\Sigma_{\mathbb{D}(I)}$ merge to give the boundary components of $\Sigma_{\mathbb{D}(J)}$. Similarly, if a boundary component of $D(I)$ splits into two boundary components of $D(J)$, then the corresponding boundary component of $\Sigma_{\mathbb{D}(I)}$ splits into two boundary components of $\Sigma_{\mathbb{D}(J)}$. Therefore $\rho$ commutes with the differentials of $CKh(\mathbb{D})$ and $CKh(D)$, and hence the result follows. The proof of the statement for reduced homology is similar.
\end{proof}

Theorem \ref{theorem:maintheorem1} is an obvious corollary of Theorem \ref{theorem:ribbontokh}.

As an example, in Figure \ref{fig:kauffman_cube} we depict the cube of resolutions complex of the three-crossing unknot from Figure \ref{fig:ribbonconstruct}. One can explicitly see the correspondence of Lemma \ref{lemma:components} for this example.
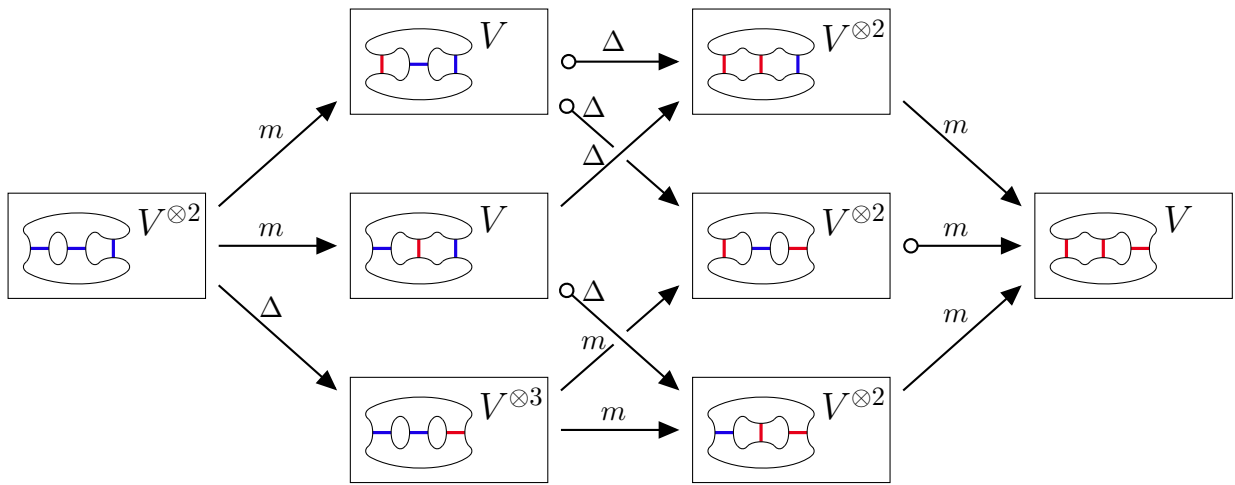
\begin{figure}[h]
$$\begin{tikzpicture}[scale=.35]
%Column 1, Only Box
\draw (-1.5,-1.5) rectangle (6,2.5);
%Shell
\begin{scope}[xshift=-.8cm,yshift=-.5cm,rounded corners=.7mm]
\draw (0,0) -- (-.2,.2) -- (0,.4);
\draw (0,1.4) -- (-.2,1.6) -- (0,1.8);
\draw (1,.4) -- (1.2,.2) -- (1.4,.4);
\draw(1,1.4) -- (1.2, 1.6) -- (1.4,1.4);
\begin{scope}[xshift = 1.4cm]
	\draw (1,.4) -- (1.2,.2) -- (1.4,.4);
	\draw(1,1.4) -- (1.2, 1.6) -- (1.4,1.4);
\end{scope}
\begin{scope}[xshift=3.8cm]
	\draw (0,0) -- (.2,.2) -- (0,.4);
	\draw (0,1.4) -- (.2,1.6) -- (0,1.8);
\end{scope}
\draw (0,1.8) .. controls (1,2.4) and (3,2.4) .. (3.8,1.8);
\draw (0,0) .. controls (1, -.6) and (3, -.6) .. (3.8,0);
%Resolutions
\draw (0,.4) .. controls (.2,.6) and (.2,1.2) .. (0,1.4);
\draw (1,.4) .. controls (.8,.6) and (.8,1.2) .. (1,1.4);
\begin{pgfonlayer}{background}
	\draw[very thick, blue!90!red] (.15,.9) -- (.85,.9);
\end{pgfonlayer}
\begin{scope}[xshift = 1.4cm]
\draw (0,.4) .. controls (.2,.6) and (.2,1.2) .. (0,1.4);
\draw (1,.4) .. controls (.8,.6) and (.8,1.2) .. (1,1.4);
\begin{pgfonlayer}{background}
	\draw[very thick, blue!90!red] (.15,.9) -- (.85,.9);
\end{pgfonlayer}
\end{scope}
\begin{scope}[xshift=2.8cm]
\draw (0,.4) .. controls (.2,.6) and (.8,.6) .. (1,.4);
\draw (0,1.4) .. controls (.2,1.2) and (.8,1.2) .. (1,1.4);
\begin{pgfonlayer}{background}
	\draw[very thick, blue!90!red] (.5,.55) -- (.5,1.25);
\end{pgfonlayer}
\end{scope}
\end{scope}
%------------------------------------------------------------------------%
%Labels
\draw (3,1.5) node[right]{\Large{$V^{\otimes 2}$}};
%------------------------------------------------------------------------%
%------------------------------------------------------------------------%
%Column 2, Row 2
\begin{scope}[xshift = 13cm]
\draw (-1.5,-1.5) rectangle (6,2.5);
%Shell
\begin{scope}[xshift=-.8cm,yshift=-.5cm,rounded corners=.7mm]
\draw (0,0) -- (-.2,.2) -- (0,.4);
\draw (0,1.4) -- (-.2,1.6) -- (0,1.8);
\draw (1,.4) -- (1.2,.2) -- (1.4,.4);
\draw(1,1.4) -- (1.2, 1.6) -- (1.4,1.4);
\begin{scope}[xshift = 1.4cm]
	\draw (1,.4) -- (1.2,.2) -- (1.4,.4);
	\draw(1,1.4) -- (1.2, 1.6) -- (1.4,1.4);
\end{scope}
\begin{scope}[xshift=3.8cm]
	\draw (0,0) -- (.2,.2) -- (0,.4);
	\draw (0,1.4) -- (.2,1.6) -- (0,1.8);
\end{scope}
\draw (0,1.8) .. controls (1,2.4) and (3,2.4) .. (3.8,1.8);
\draw (0,0) .. controls (1, -.6) and (3, -.6) .. (3.8,0);
%Resolutions
\draw (0,.4) .. controls (.2,.6) and (.2,1.2) .. (0,1.4);
\draw (1,.4) .. controls (.8,.6) and (.8,1.2) .. (1,1.4);
\begin{pgfonlayer}{background}
	\draw[very thick, blue!90!red] (.15,.9) -- (.85,.9);
\end{pgfonlayer}
\begin{scope}[xshift = 1.4cm]
\draw (0,.4) .. controls (.2,.6) and (.8,.6) .. (1,.4);
\draw (0,1.4) .. controls (.2,1.2) and (.8,1.2) .. (1,1.4);
\begin{pgfonlayer}{background}
	\draw[very thick, red!90!blue] (.5,.55) -- (.5,1.25);
\end{pgfonlayer}\end{scope}
\begin{scope}[xshift=2.8cm]
\draw (0,.4) .. controls (.2,.6) and (.8,.6) .. (1,.4);
\draw (0,1.4) .. controls (.2,1.2) and (.8,1.2) .. (1,1.4);
\begin{pgfonlayer}{background}
	\draw[very thick, blue!90!red] (.5,.55) -- (.5,1.25);
\end{pgfonlayer}
\end{scope}
\end{scope}

%------------------------------------------------------------------------%
%Labels
\draw (3,1.5) node[right]{\Large{$V$}};
\end{scope}
%------------------------------------------------------------------------%
%------------------------------------------------------------------------%
%Column 2, Row 1
\begin{scope}[xshift=13cm, yshift=7cm]
\draw (-1.5,-1.5) rectangle (6,2.5);
%Shell
\begin{scope}[xshift=-.8cm,yshift=-.5cm,rounded corners=.7mm]
\draw (0,0) -- (-.2,.2) -- (0,.4);
\draw (0,1.4) -- (-.2,1.6) -- (0,1.8);
\draw (1,.4) -- (1.2,.2) -- (1.4,.4);
\draw(1,1.4) -- (1.2, 1.6) -- (1.4,1.4);
\begin{scope}[xshift = 1.4cm]
	\draw (1,.4) -- (1.2,.2) -- (1.4,.4);
	\draw(1,1.4) -- (1.2, 1.6) -- (1.4,1.4);
\end{scope}
\begin{scope}[xshift=3.8cm]
	\draw (0,0) -- (.2,.2) -- (0,.4);
	\draw (0,1.4) -- (.2,1.6) -- (0,1.8);
\end{scope}
\draw (0,1.8) .. controls (1,2.4) and (3,2.4) .. (3.8,1.8);
\draw (0,0) .. controls (1, -.6) and (3, -.6) .. (3.8,0);
%Resolutions
\draw (0,.4) .. controls (.2,.6) and (.8,.6) .. (1,.4);
\draw (0,1.4) .. controls (.2,1.2) and (.8,1.2) .. (1,1.4);
\begin{pgfonlayer}{background}
	\draw[very thick, red!90!blue] (.5,.55) -- (.5,1.25);
\end{pgfonlayer}
\begin{scope}[xshift = 1.4cm]
\draw (0,.4) .. controls (.2,.6) and (.2,1.2) .. (0,1.4);
\draw (1,.4) .. controls (.8,.6) and (.8,1.2) .. (1,1.4);
\begin{pgfonlayer}{background}
	\draw[very thick, blue!90!red] (.15,.9) -- (.85,.9);
\end{pgfonlayer}
\end{scope}
\begin{scope}[xshift=2.8cm]
\draw (0,.4) .. controls (.2,.6) and (.8,.6) .. (1,.4);
\draw (0,1.4) .. controls (.2,1.2) and (.8,1.2) .. (1,1.4);
\begin{pgfonlayer}{background}
	\draw[very thick, blue!90!red] (.5,.55) -- (.5,1.25);
\end{pgfonlayer}
\end{scope}
\end{scope}
%Labels
\draw (3,1.5) node[right]{\Large{$V$}};
\end{scope}
%------------------------------------------------------------------------%
%------------------------------------------------------------------------%
%Column 2, Row 3
\begin{scope}[xshift=13cm, yshift=-7cm]
\draw (-1.5,-1.5) rectangle (6,2.5);
%Shell
\begin{scope}[xshift=-.8cm,yshift=-.5cm,rounded corners=.7mm]
\draw (0,0) -- (-.2,.2) -- (0,.4);
\draw (0,1.4) -- (-.2,1.6) -- (0,1.8);
\draw (1,.4) -- (1.2,.2) -- (1.4,.4);
\draw(1,1.4) -- (1.2, 1.6) -- (1.4,1.4);
\begin{scope}[xshift = 1.4cm]
	\draw (1,.4) -- (1.2,.2) -- (1.4,.4);
	\draw(1,1.4) -- (1.2, 1.6) -- (1.4,1.4);
\end{scope}
\begin{scope}[xshift=3.8cm]
	\draw (0,0) -- (.2,.2) -- (0,.4);
	\draw (0,1.4) -- (.2,1.6) -- (0,1.8);
\end{scope}
\draw (0,1.8) .. controls (1,2.4) and (3,2.4) .. (3.8,1.8);
\draw (0,0) .. controls (1, -.6) and (3, -.6) .. (3.8,0);
%Resolutions
\draw (0,.4) .. controls (.2,.6) and (.2,1.2) .. (0,1.4);
\draw (1,.4) .. controls (.8,.6) and (.8,1.2) .. (1,1.4);
\begin{pgfonlayer}{background}
	\draw[very thick, blue!90!red] (.15,.9) -- (.85,.9);
\end{pgfonlayer}
\begin{scope}[xshift = 1.4cm]
\draw (0,.4) .. controls (.2,.6) and (.2,1.2) .. (0,1.4);
\draw (1,.4) .. controls (.8,.6) and (.8,1.2) .. (1,1.4);
\begin{pgfonlayer}{background}
	\draw[very thick, blue!90!red] (.15,.9) -- (.85,.9);
\end{pgfonlayer}
\end{scope}
\begin{scope}[xshift=2.8cm]
\draw (0,.4) .. controls (.2,.6) and (.2,1.2) .. (0,1.4);
\draw (1,.4) .. controls (.8,.6) and (.8,1.2) .. (1,1.4);
\begin{pgfonlayer}{background}
	\draw[very thick, red!90!blue] (.15,.9) -- (.85,.9);
\end{pgfonlayer}
\end{scope}
\end{scope}
%Labels
\draw (3,1.5) node[right]{\Large{$V^{\otimes{3}}$}};
\end{scope}
%------------------------------------------------------------------------%
%------------------------------------------------------------------------%
%Column 3, Row 2
\begin{scope}[xshift=26cm]
\draw (-1.5,-1.5) rectangle (6,2.5);
%Shell
\begin{scope}[xshift=-.8cm,yshift=-.5cm,rounded corners=.7mm]
\draw (0,0) -- (-.2,.2) -- (0,.4);
\draw (0,1.4) -- (-.2,1.6) -- (0,1.8);
\draw (1,.4) -- (1.2,.2) -- (1.4,.4);
\draw(1,1.4) -- (1.2, 1.6) -- (1.4,1.4);
\begin{scope}[xshift = 1.4cm]
	\draw (1,.4) -- (1.2,.2) -- (1.4,.4);
	\draw(1,1.4) -- (1.2, 1.6) -- (1.4,1.4);
\end{scope}
\begin{scope}[xshift=3.8cm]
	\draw (0,0) -- (.2,.2) -- (0,.4);
	\draw (0,1.4) -- (.2,1.6) -- (0,1.8);
\end{scope}
\draw (0,1.8) .. controls (1,2.4) and (3,2.4) .. (3.8,1.8);
\draw (0,0) .. controls (1, -.6) and (3, -.6) .. (3.8,0);
%Resolutions
\draw (0,.4) .. controls (.2,.6) and (.8,.6) .. (1,.4);
\draw (0,1.4) .. controls (.2,1.2) and (.8,1.2) .. (1,1.4);
\begin{pgfonlayer}{background}
	\draw[very thick, red!90!blue] (.5,.55) -- (.5,1.25);
\end{pgfonlayer}
\begin{scope}[xshift = 1.4cm]
\draw (0,.4) .. controls (.2,.6) and (.2,1.2) .. (0,1.4);
\draw (1,.4) .. controls (.8,.6) and (.8,1.2) .. (1,1.4);
\begin{pgfonlayer}{background}
	\draw[very thick, blue!90!red] (.15,.9) -- (.85,.9);
\end{pgfonlayer}
\end{scope}
\begin{scope}[xshift=2.8cm]
\draw (0,.4) .. controls (.2,.6) and (.2,1.2) .. (0,1.4);
\draw (1,.4) .. controls (.8,.6) and (.8,1.2) .. (1,1.4);
\begin{pgfonlayer}{background}
	\draw[very thick, red!90!blue] (.15,.9) -- (.85,.9);
\end{pgfonlayer}
\end{scope}
\end{scope}
%Labels
\draw (3,1.5) node[right]{\Large{$V^{\otimes{2}}$}};
\end{scope}
%------------------------------------------------------------------------%
%------------------------------------------------------------------------%
%Column 3, Row 1
\begin{scope}[xshift=26cm, yshift=7cm]
\draw (-1.5,-1.5) rectangle (6,2.5);
%Shell
\begin{scope}[xshift=-.8cm,yshift=-.5cm,rounded corners=.7mm]
\draw (0,0) -- (-.2,.2) -- (0,.4);
\draw (0,1.4) -- (-.2,1.6) -- (0,1.8);
\draw (1,.4) -- (1.2,.2) -- (1.4,.4);
\draw(1,1.4) -- (1.2, 1.6) -- (1.4,1.4);
\begin{scope}[xshift = 1.4cm]
	\draw (1,.4) -- (1.2,.2) -- (1.4,.4);
	\draw(1,1.4) -- (1.2, 1.6) -- (1.4,1.4);
\end{scope}
\begin{scope}[xshift=3.8cm]
	\draw (0,0) -- (.2,.2) -- (0,.4);
	\draw (0,1.4) -- (.2,1.6) -- (0,1.8);
\end{scope}
\draw (0,1.8) .. controls (1,2.4) and (3,2.4) .. (3.8,1.8);
\draw (0,0) .. controls (1, -.6) and (3, -.6) .. (3.8,0);
%Resolutions
\draw (0,.4) .. controls (.2,.6) and (.8,.6) .. (1,.4);
\draw (0,1.4) .. controls (.2,1.2) and (.8,1.2) .. (1,1.4);
\begin{pgfonlayer}{background}
	\draw[very thick, red!90!blue] (.5,.55) -- (.5,1.25);
\end{pgfonlayer}
\begin{scope}[xshift = 1.4cm]
\draw (0,.4) .. controls (.2,.6) and (.8,.6) .. (1,.4);
\draw (0,1.4) .. controls (.2,1.2) and (.8,1.2) .. (1,1.4);
\begin{pgfonlayer}{background}
	\draw[very thick, red!90!blue] (.5,.55) -- (.5,1.25);
\end{pgfonlayer}
\end{scope}
\begin{scope}[xshift=2.8cm]
\draw (0,.4) .. controls (.2,.6) and (.8,.6) .. (1,.4);
\draw (0,1.4) .. controls (.2,1.2) and (.8,1.2) .. (1,1.4);
\begin{pgfonlayer}{background}
	\draw[very thick, blue!90!red] (.5,.55) -- (.5,1.25);
\end{pgfonlayer}
\end{scope}
\end{scope}
%Labels
\draw (3,1.5) node[right]{\Large{$V^{\otimes{2}}$}};
\end{scope}
%------------------------------------------------------------------------%
%------------------------------------------------------------------------%
%Column3. Row 3
\begin{scope}[xshift=26cm, yshift=-7cm]
\draw (-1.5,-1.5) rectangle (6,2.5);
%Shell
\begin{scope}[xshift=-.8cm,yshift=-.5cm,rounded corners=.7mm]
\draw (0,0) -- (-.2,.2) -- (0,.4);
\draw (0,1.4) -- (-.2,1.6) -- (0,1.8);
\draw (1,.4) -- (1.2,.2) -- (1.4,.4);
\draw(1,1.4) -- (1.2, 1.6) -- (1.4,1.4);
\begin{scope}[xshift = 1.4cm]
	\draw (1,.4) -- (1.2,.2) -- (1.4,.4);
	\draw(1,1.4) -- (1.2, 1.6) -- (1.4,1.4);
\end{scope}
\begin{scope}[xshift=3.8cm]
	\draw (0,0) -- (.2,.2) -- (0,.4);
	\draw (0,1.4) -- (.2,1.6) -- (0,1.8);
\end{scope}
\draw (0,1.8) .. controls (1,2.4) and (3,2.4) .. (3.8,1.8);
\draw (0,0) .. controls (1, -.6) and (3, -.6) .. (3.8,0);
%Resolutions
\draw (0,.4) .. controls (.2,.6) and (.2,1.2) .. (0,1.4);
\draw (1,.4) .. controls (.8,.6) and (.8,1.2) .. (1,1.4);
\begin{pgfonlayer}{background}
	\draw[very thick, blue!90!red] (.15,.9) -- (.85,.9);
\end{pgfonlayer}
\begin{scope}[xshift = 1.4cm]
\draw (0,.4) .. controls (.2,.6) and (.8,.6) .. (1,.4);
\draw (0,1.4) .. controls (.2,1.2) and (.8,1.2) .. (1,1.4);
\begin{pgfonlayer}{background}
	\draw[very thick, red!90!blue] (.5,.55) -- (.5,1.25);
\end{pgfonlayer}
\end{scope}
\begin{scope}[xshift=2.8cm]
\draw (0,.4) .. controls (.2,.6) and (.2,1.2) .. (0,1.4);
\draw (1,.4) .. controls (.8,.6) and (.8,1.2) .. (1,1.4);
\begin{pgfonlayer}{background}
	\draw[very thick, red!90!blue] (.15,.9) -- (.85,.9);
\end{pgfonlayer}
\end{scope}
\end{scope}
%Labels
\draw (3,1.5) node[right]{\Large{$V^{\otimes{2}}$}};
\end{scope}
%------------------------------------------------------------------------%
%------------------------------------------------------------------------%
%Column 4, Only Box
\begin{scope}[xshift=39cm]
\draw (-1.5,-1.5) rectangle (6,2.5);
%Shell
\begin{scope}[xshift=-.8cm,yshift=-.5cm,rounded corners=.7mm]
\draw (0,0) -- (-.2,.2) -- (0,.4);
\draw (0,1.4) -- (-.2,1.6) -- (0,1.8);
\draw (1,.4) -- (1.2,.2) -- (1.4,.4);
\draw(1,1.4) -- (1.2, 1.6) -- (1.4,1.4);
\begin{scope}[xshift = 1.4cm]
	\draw (1,.4) -- (1.2,.2) -- (1.4,.4);
	\draw(1,1.4) -- (1.2, 1.6) -- (1.4,1.4);
\end{scope}
\begin{scope}[xshift=3.8cm]
	\draw (0,0) -- (.2,.2) -- (0,.4);
	\draw (0,1.4) -- (.2,1.6) -- (0,1.8);
\end{scope}
\draw (0,1.8) .. controls (1,2.4) and (3,2.4) .. (3.8,1.8);
\draw (0,0) .. controls (1, -.6) and (3, -.6) .. (3.8,0);
%Resolutions
\draw (0,.4) .. controls (.2,.6) and (.8,.6) .. (1,.4);
\draw (0,1.4) .. controls (.2,1.2) and (.8,1.2) .. (1,1.4);
\begin{pgfonlayer}{background}
	\draw[very thick, red!90!blue] (.5,.55) -- (.5,1.25);
\end{pgfonlayer}
\begin{scope}[xshift = 1.4cm]
\draw (0,.4) .. controls (.2,.6) and (.8,.6) .. (1,.4);
\draw (0,1.4) .. controls (.2,1.2) and (.8,1.2) .. (1,1.4);
\begin{pgfonlayer}{background}
	\draw[very thick, red!90!blue] (.5,.55) -- (.5,1.25);
\end{pgfonlayer}
\end{scope}
\begin{scope}[xshift=2.8cm]
\draw (0,.4) .. controls (.2,.6) and (.2,1.2) .. (0,1.4);
\draw (1,.4) .. controls (.8,.6) and (.8,1.2) .. (1,1.4);
\begin{pgfonlayer}{background}
	\draw[very thick, red!90!blue] (.15,.9) -- (.85,.9);
\end{pgfonlayer}
\end{scope}
\end{scope}
%Labels
\draw (3,1.5) node[right]{\Large{$V$}};
\end{scope}
%------------------------------------------------------------------------%
%------------------------------------------------------------------------%
%Arrows
\begin{scope}[>=triangle 45]
	\draw[->,thick] (6.5,.5) -- (11,.5);
	\draw (8.5,.5) node[above]{$m$};
	\draw[->,thick] (6.5,2) -- (11, 6);
	\draw (8.5, 4.2) node[above]{$m$};
	\draw[->,thick] (6.5,-1) -- (11,-5);
	\draw (8.5, -2.6) node[above]{$\Delta$};
	\begin{scope}[xshift = 13cm]
		\draw[->,thick] (6.5,2) -- (11,6);
		\draw (7.75,3.2) node[above]{$\Delta$};
		\draw[o->,thick](6.5,-1)--(11,-5);
		\draw (7.75,-2) node[above]{$\Delta$};
	\end{scope}
	\begin{scope}[xshift=13cm,yshift = 7cm]
		\draw[o->,thick](6.5,.5)--(11,.5);
		\draw (8.5,.5) node[above]{$\Delta$};
		\draw[o-,thick] (6.5,-1) -- (8.5,-2.75);
		\draw (7.75,-2) node[above]{$\Delta$};
		\draw[->,thick] (9,-3.25)--(11,-5);
	\end{scope}
	\begin{scope}[xshift=13cm, yshift=-7cm]
		\draw[->,thick](6.5,.5) -- (11,.5);
		\draw (8.5,.5) node[above]{$m$};
		\draw[thick] (6.5,2) -- (8.5,3.75);
		\draw (7.75,3.2) node[above]{$m$};
		\draw[->,thick](9,4.25) -- (11,6);
	\end{scope}
	\begin{scope}[xshift = 26cm, yshift=7cm]
		\draw[->,thick](6.5,-1) -- (11,-5);
		\draw (8.5, -2.6) node[above]{$m$};
	\end{scope}
	\begin{scope}[xshift = 26cm]
		\draw[o->,thick] (6.5,.5) -- (11,.5);
		\draw (8.5,.5) node[above]{$m$};
	\end{scope}
	\begin{scope}[xshift = 26cm, yshift = -7cm]
		\draw[->,thick] (6.5,2) -- (11, 6);
		\draw (8.5, 4.2) node[above]{$m$};
	\end{scope}
\end{scope}
\end{tikzpicture}$$
\caption{The Kauffman states are arranged into a hypercube, in the same way that the spanning ribbon subgraphs are. Since this is the all-$A$ ribbon graph of a diagram of the unknot, the homology of this complex is isomorphic to $\mathbb{Z}\oplus\mathbb{Z}$.}
\label{fig:kauffman_cube}
\end{figure}

%------------------------------------------------------%
%------------------------------------------------------%
\section{Virtual links}
\label{section:virtual}

A {\em virtual link diagram} is a closed one-manifold generically immersed in the plane so that each double point is either a {\em classical crossing} or a {\em virtual crossing}. Classical crossings are depicted exactly as in classical knot theory, and virtual crossings are depicted by a small circle surrounding the double point. See Figure \ref{fig:virtual}.
\begin{figure}[h]
$$\begin{tikzpicture}
%Classical Crossing
\draw[thick] (0,0) -- (2,2);
\draw[thick] (2,0) -- (1.25,0.75);
\draw[thick] (0.75,1.25) -- (0,2);
\draw (1,0) node[below]{Classical};

%Virtual Crossing
\begin{scope}[xshift = 3cm]
\draw[thick] (0,0) -- (2,2);
\draw[thick] (2,0) -- (0,2);
\draw (1,1) circle (.2cm);
\draw (1,0) node[below]{Virtual};
\end{scope}

\begin{scope}[xshift=9cm,yshift = 0 cm, rounded corners = .75cm]

%\draw[blue] (0,0) circle (2cm);
%\draw[blue] (-1,1.732) circle (2cm);
%\draw[blue] (1,1.732) circle (2cm);

\draw[thick,xshift=-1cm,yshift=1.732cm] (230:2cm) arc (230:-110:2cm);
\draw[thick] (0,0) circle (2cm);
\draw[thick, xshift=1cm, yshift=1.732cm] (190:2cm) arc (190:230:2cm);
\draw[thick,xshift=1cm,yshift=1.732cm] (170:2cm) arc (170:-110:2cm);

%\fill (0,0) circle (.1cm);
\draw (2,0) circle (.2cm);
\draw (1,1.732) circle (.2cm);
%\fill (-2,0) circle (.1cm);
%\fill (-1,1.732) circle (.1cm);
\draw (0,3.464) circle (.2cm);

\end{scope}

\end{tikzpicture}$$
\caption{A classical crossing, a virtual crossing, and a virtual link diagram}
\label{fig:virtual}
\end{figure}
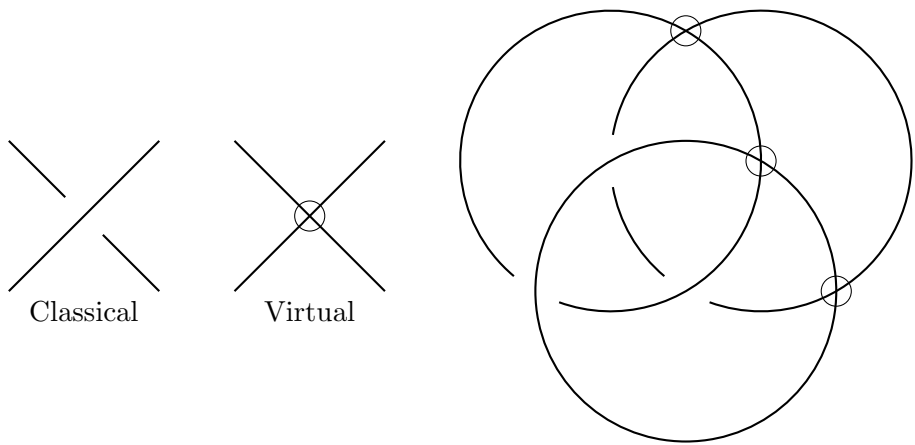

A {\em virtual link} is an equivalence class of virtual link diagrams where two virtual link diagrams are equivalent if they are related by a finite sequence of classical or virtual Reidemeister moves (see Figures \ref{fig:Rmoves} and \ref{fig:virtualRmoves}). Virtual link theory was discovered by Kauffman \cite{Kauffman:VirtualKnotTheory} and rediscovered by Goussarov, Polyak, and Viro \cite{GPV:VirtualKnots}.
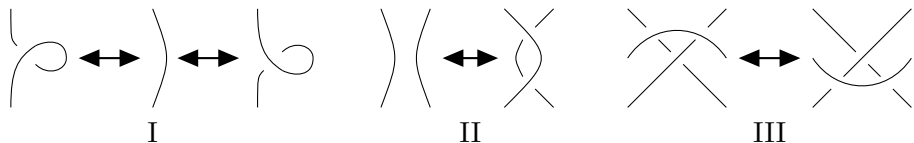
\begin{figure}[h]
$$\begin{tikzpicture}[scale = .82]
%Reidemeister 1

%Right hand twist
\draw (0,.4) .. controls (0,1) and (.1,1.2) .. (.4,1.4);
\draw (0,2) .. controls (0,1.5) and (0,1.5) .. (.1,1.4);
\draw (.4,1.4) .. controls (.6,1.5) and (.9,1.5) .. (.9,1.2);
\draw (.4,1.1) .. controls (.6,.9) and (.9,1) .. (.9,1.2);

%No twist
\draw (2.3,.4) .. controls (2.6,1.2) and (2.6,1.2) .. (2.3,2);

%Left hand twist
\begin{scope}[xshift = 4cm]
	\draw (0,.4) .. controls (0,.9) and (0,.9) .. (.1,1);
	\draw (0,2) .. controls (0,1.4) and (.1,1.2) .. (.4,1);
	\draw (.4,1) .. controls (.6,.9) and (.9,.9) .. (.9, 1.2);
	\draw (.9,1.2) ..controls (.9,1.4) and (.6,1.5) .. (.4,1.3);
\end{scope}

%Arrows
\begin{scope}[>=triangle 45, thick]
	\draw[<->] (1.1,1.2) -- (2.1,1.2);
	\draw[<->] (2.7,1.2) -- (3.7,1.2);
\end{scope}

%Label
\draw (2.3,0) node{I};

%--------------------------------------------

%Reidemeister 2

\begin{scope}[xshift=6cm]

	%Parallel strands
	\draw (0,.4) .. controls (.3,1.2) and (.3,1.2) .. (0,2);
	\draw (.8,.4) .. controls (.5,1.2) and (.5,1.2) .. (.8,2);
	
	%Crossing strands 
	\draw (2,.4) .. controls (2.8,1.2) and (2.8,1.2) .. (2,2);
	%\draw (2.8,.4) .. controls (2,1.2) and (2,1.2) .. (2.8,2);
	\draw (2.8,.4) .. controls (2.6,.6) and (2.6,.6) .. (2.5,.7);
	\draw (2.8,2) .. controls (2.6,1.8) and (2.6,1.8) .. (2.5,1.7);
	\draw (2.3,.9) .. controls (2.15,1.2) and (2.15,1.2) .. (2.3,1.5);
	
	%Arrow
	\draw[thick, >=triangle 45,<->] (1,1.2) -- (1.9,1.2);
	
	%Label
	\draw (1.45,0) node{II};
	
\end{scope}

%--------------------------------------------

%Reidemeister 3

\begin{scope}[xshift = 10cm]

	%Above crossing
	\draw (0,1.2) .. controls (0.4,1.8) and (1.2,1.8) .. (1.6,1.2);
	\draw (0,.4) -- (1.1,1.5);
	\draw (1.3,1.7) -- (1.6,2);
	\draw (0,2) -- (.3,1.7);
	\draw (.5,1.5) -- (.7,1.3);
	\draw (.9,1.1) -- (1.6,.4);

	%Below crossing
	\draw (3,1.2) .. controls (3.4,.6) and (4.2,.6) .. (4.6,1.2);
	\draw (3,.4) -- (3.3,.7);
	\draw (4.6,.4) -- (4.3,.7);
	\draw (4.1,.9) -- (3.9,1.1);
	\draw (3.7,1.3) -- (3,2);
	\draw (3.5,.9) -- (4.6,2);
	
	%Arrow
	\draw[thick, >=triangle 45, <->] (1.8,1.2) -- (2.8,1.2);
	
	%Label
	\draw (2.3,0) node{III};

\end{scope}

\end{tikzpicture}$$
\caption{The classical Reidemeister moves}
\label{fig:Rmoves}
\end{figure}

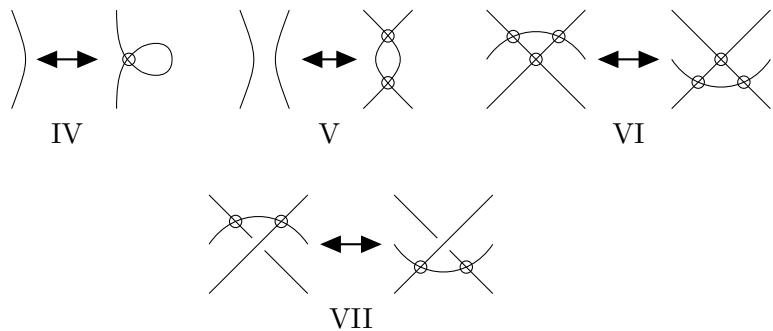
\begin{figure}[h]
$$\begin{tikzpicture}[scale = .82]
%Virtual Reidemeister 1

%No twist
\draw (2.3,.4) .. controls (2.6,1.2) and (2.6,1.2) .. (2.3,2);

%Virtual twist
\begin{scope}[xshift = 4cm]
	\draw (0,.4) .. controls (0,1) and (.1,1.2) .. (.4,1.4);
	\draw (0,2) .. controls (0,1.4) and (.1,1.2) .. (.4,1);
	\draw (.4,1) .. controls (.6,.9) and (.9,.9) .. (.9, 1.2);
	\draw (.4,1.4) .. controls (.6,1.5) and (.9,1.5) .. (.9,1.2);
	\draw (.2,1.2) circle (.1cm);

\end{scope}

%Arrows
\begin{scope}[>=triangle 45, thick]
	\draw[<->] (2.7,1.2) -- (3.7,1.2);
\end{scope}

%Label
\draw (3.2,0) node{IV};

%--------------------------------------------

%Virtual Reidemeister 2

\begin{scope}[xshift=6cm]

	%Parallel strands
	\draw (0,.4) .. controls (.3,1.2) and (.3,1.2) .. (0,2);
	\draw (.8,.4) .. controls (.5,1.2) and (.5,1.2) .. (.8,2);
	
	%Crossing strands 
	\draw (2,.4) .. controls (2.8,1.2) and (2.8,1.2) .. (2,2);
	\draw (2.8,.4) .. controls (2,1.2) and (2,1.2) .. (2.8,2);
	\draw (2.4,.82) circle (.1cm);
	\draw (2.4,1.58) circle (.1cm);

	%Arrow
	\draw[thick, >=triangle 45,<->] (1,1.2) -- (1.9,1.2);
	
	%Label
	\draw (1.45,0) node{V};
	
\end{scope}

%--------------------------------------------

%Virtual Reidemeister 3

\begin{scope}[xshift = 10cm]

	%Above crossing
	\draw (0,1.2) .. controls (0.4,1.8) and (1.2,1.8) .. (1.6,1.2);
	\draw (0,.4) -- (1.6,2);
	\draw (1.6,.4) -- (0,2);
	\draw (.8,1.2) circle (.1cm);
	\draw (.43,1.57) circle (.1cm);
	\draw (1.17, 1.57) circle (.1cm);

	%Below crossing
	\draw (3,1.2) .. controls (3.4,.6) and (4.2,.6) .. (4.6,1.2);
	\draw (3,.4) -- (4.6,2);
	\draw (3,2) -- (4.6,.4);
	\draw (3.8,1.2) circle (.1cm);
	\draw (4.17, .83) circle (.1cm);
	\draw (3.43,.83) circle (.1cm);
	
	%Arrow
	\draw[thick, >=triangle 45, <->] (1.8,1.2) -- (2.8,1.2);
	
	%Label
	\draw (2.3,0) node{VI};

\end{scope}

%Mixed Reidemeister 3

\begin{scope}[xshift = 5.5cm, yshift=-3cm]

	%Above crossing
	\draw (0,1.2) .. controls (0.4,1.8) and (1.2,1.8) .. (1.6,1.2);
	\draw (0,.4) -- (1.6,2);
	\draw (1.6,.4) -- (.9,1.1);
	\draw (.7,1.3) -- (0,2);
	
	\draw (.43,1.57) circle (.1cm);
	\draw (1.17, 1.57) circle (.1cm);

	%Below crossing
	\draw (3,1.2) .. controls (3.4,.6) and (4.2,.6) .. (4.6,1.2);
	\draw (3,.4) -- (4.6,2);
	\draw (4.6,.4) -- (3.9,1.1);
	\draw (3.7,1.3) -- (3,2);
	\draw (4.17, .83) circle (.1cm);
	\draw (3.43,.83) circle (.1cm);
	
	%Arrow
	\draw[thick, >=triangle 45, <->] (1.8,1.2) -- (2.8,1.2);
	
	%Label
	\draw (2.3,0) node{VII};

\end{scope}

\end{tikzpicture}$$
\caption{The virtual Reidemeister moves}
\label{fig:virtualRmoves}
\end{figure}

\subsection{Ribbon graphs and virtual links}

Chmutov \cite{Chmutov:GeneralizedDuality} extended the construction of the all-$A$ ribbon graph $\mathbb{D}$ of a link diagram to virtual link diagrams. The following procedure produces an arrow presentation $P(D)$ of the all-$A$ ribbon graph $\mathbb{D}$ for each virtual link diagram $D$. Label the classical crossings of $D$ by $1,2,\dots, n$. For each classical crossing of $D$, choose the $A$-resolution and record marking arrows near the resolution whose orientations are one of the two equivalent choices depicted in Figure \ref{fig:markarrowresolution}. The label of the pair of marking arrows is the same as the label of the crossing (in Figure \ref{fig:markarrowresolution}, the label of the crossing and marking arrows is $x$). 

After resolving each crossing, the resulting diagram is a collection of immersed curves $\widetilde{c}_1, \widetilde{c}_2, \dots, \widetilde{c}_k$ in the plane with marking arrows along the curves. Each immersed curve $\widetilde{c}_i$ has an associated circle $c_i$ in the arrow presentation $P(D)$. Traversing the curve $\widetilde{c}_i$ (in either direction) gives a cyclic order of the marking arrows along $\widetilde{c}_i$. Additionally, the orientation of each marking arrow on $\widetilde{c}_i$ either agrees or disagrees with the direction $\widetilde{c}_i$ is traversed. Place marking arrows on the circle $c_i$ in $P(D)$ so that the cyclic order of the marking arrows on $c_i$ (also in either direction) is the same as the cyclic order of the arrows around $\widetilde{c}_i$. Additionally, each marking arrow on $c_i$ agrees with the direction that $c_i$ is traversed if and only if the corresponding marking arrow on $\widetilde{c}_i$ agrees with the direction that $\widetilde{c}_i$ is traversed. 

Said more informally, the collection of immersed curves $\widetilde{c}_1,\dots, \widetilde{c}_k$ can be considered as a virtual link diagram $\widetilde{D}$ with only virtual crossings. The arrow presentation $P(D)$ is obtained by transforming $\widetilde{D}$ into a $k$-component unlink using the virtual Reidemeister moves while carrying the marking arrow information in the obvious way. See Figure \ref{fig:virtualribbon} for an example. 
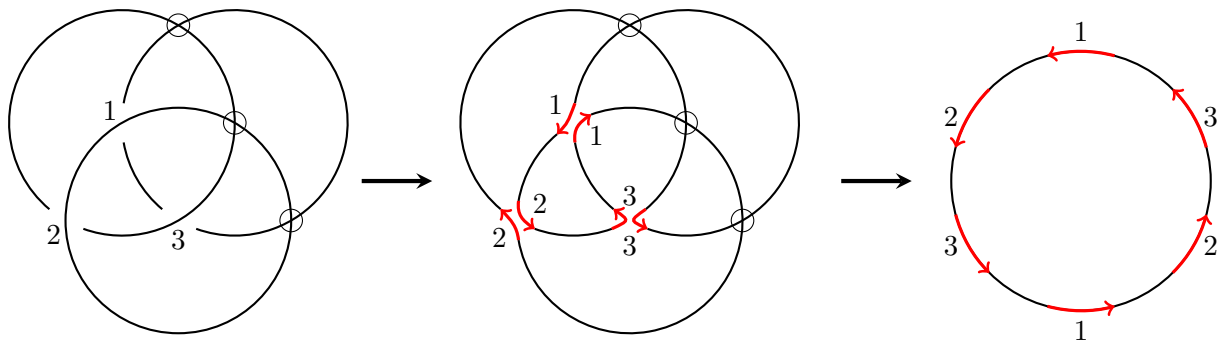
\begin{figure}[h]
$$\begin{tikzpicture}[scale=.75]
\draw[thick,xshift=-1cm,yshift=1.732cm] (230:2cm) arc (230:-110:2cm);
\draw[thick] (0,0) circle (2cm);
\draw[thick, xshift=1cm, yshift=1.732cm] (190:2cm) arc (190:230:2cm);
\draw[thick,xshift=1cm,yshift=1.732cm] (170:2cm) arc (170:-110:2cm);

\draw (2,0) circle (.2cm);
\draw (1,1.732) circle (.2cm);
\draw (0,3.464) circle (.2cm);

\draw (-.9,1.952) node[left]{$1$};
\draw (0,0) node[below]{$3$};
\draw (-1.9,-0.25) node[left]{$2$};

%Smoothed diagram
\begin{scope}[xshift = 8cm]
\draw (-1,2.052) node[left]{$1$};
\draw (-.9,1.5) node[right]{$1$};
\draw (0,-0.1) node[below]{$3$};
\draw (0, 0.1) node[above]{$3$};
\draw (-2.0,-0.3) node[left]{$2$};
\draw (-1.9,0.3) node[right]{$2$};

\path[xshift=-1cm,yshift=1.732cm] (230:2cm)  coordinate (P1);
\path[xshift=-1cm,yshift=1.732cm] (250:2cm) coordinate (P4);
\path(170:2cm) coordinate (P3);
\path(190:2cm) coordinate (P2);

\path[xshift=1cm,yshift=1.732cm] (170:2cm) coordinate (Q1);
\path[xshift=1cm,yshift=1.732cm] (190:2cm) coordinate (Q4);
\path (130:2cm) coordinate (Q2);
\path (110:2cm) coordinate (Q3);

\path[xshift=-1cm,yshift=1.732cm] (290:2cm) coordinate (R2);
\path[xshift=-1cm,yshift=1.732cm] (310:2cm) coordinate (R3);
\path[xshift=1cm,yshift=1.732cm] (230:2cm) coordinate (R1);
\path[xshift=1cm,yshift=1.732cm] (250:2cm) coordinate (R4);

\draw[thick] (P1) arc (230:-50:2cm);
\draw[thick] (R2) arc (290:250:2cm);
\draw[thick] (Q2) arc (130:170:2cm);
\draw[thick] (P2) arc (-170:110:2cm);
\draw[thick] (Q1) arc (170:-110:2cm);
\draw[thick] (Q4) arc (190:230:2cm);

\begin{scope}[very thick, red,->]
\draw (P2) .. controls (-2,-.1) and (-2.1,0) .. (P1);
\draw (P3) .. controls (-2,.1) and (-1.9,0) .. (P4);

\draw (Q1) .. controls (-1.1,1.732) and (-1.1,1.732) .. (Q2);
\draw (Q4) .. controls (-1,1.632) and (-.9,1.732) .. (Q3);

\draw (R2) .. controls (0,0) and (0,0) .. (R1);
\draw (R3) .. controls (0,0) and (0,0) .. (R4);

\end{scope}

\draw (2,0) circle (.2cm);
\draw (1,1.732) circle (.2cm);
\draw (0,3.464) circle (.2cm);

\end{scope}

%Arrow Presentation
\begin{scope}[xshift = 16cm, yshift = .7cm]
\draw (90:2.3cm) node[above]{$1$};
\draw (150:2.3cm) node[left]{$2$};
\draw (210:2.3cm) node[left]{$3$};
\draw (270:2.3cm) node[below]{$1$};
\draw (330:2.3cm) node[right]{$2$};
\draw (30:2.3cm) node[right]{$3$};

\begin{pgfonlayer}{background}
\draw[thick] (0,0) circle (2.3cm);
\end{pgfonlayer}
\begin{scope}[very thick, red,->]
\draw (75:2.3cm) arc (75:105:2.3cm);
\draw (135:2.3cm) arc (135:165:2.3cm);
\draw (195:2.3cm) arc (195:225:2.3cm);
\draw (255:2.3cm) arc (255:285:2.3cm);
\draw (315:2.3cm) arc (315:345:2.3cm);
\draw (15:2.3cm) arc (15:45:2.3cm);
\end{scope}

\end{scope}

%Arrows
\draw[ultra thick,>=stealth,->] (3.25,.7) -- (4.5,.7);
\draw[ultra thick,>=stealth,->] (11.75,.7) -- (13,.7);

\end{tikzpicture}$$
\caption{Transforming a virtual link diagram $D$ into an arrow presentation $P(D)$.}
\label{fig:virtualribbon}
\end{figure}

Recall that each arrow presentation $P(D)$ has an associated surface $\Sigma_{\mathbb{D}}$ obtained considering each circle of $P(D)$ as the boundary of a vertex disk and by attaching edge bands to identically labeled marking arrows as in Figure \ref{fig:band}. The arrow presentation $P(D)$ is orientable if $\Sigma_{\mathbb{D}}$ is orientable; otherwise $P(D)$ is non-orientable. Chmutov and Pak \cite{ChmutovPak:BollobasRiordan} observed that every ribbon graph can be obtained as the all-$A$ ribbon graph of some (not necessarily unique) virtual link diagram. See Figure \ref{fig:sameribbon}. 
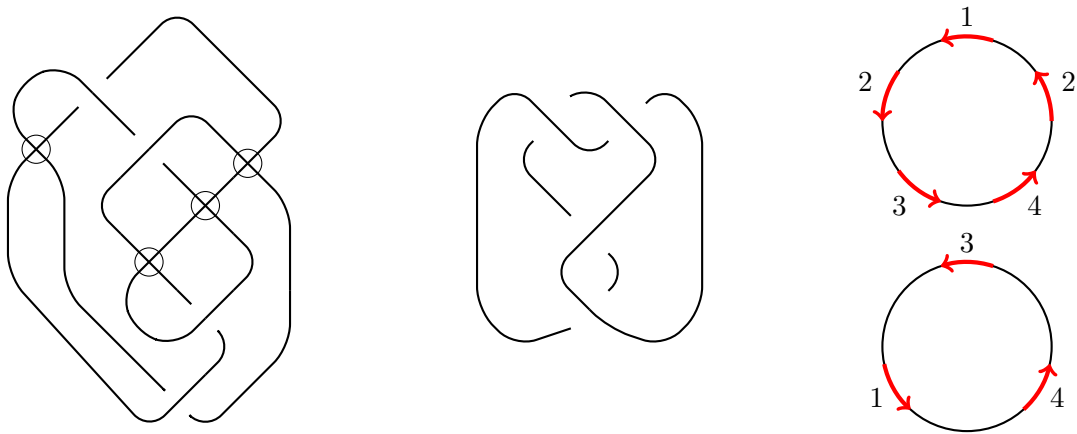
\begin{figure}[h]
$$\begin{tikzpicture}[scale=.75]
%Virtual Diagram
\draw[thick](0.5,8.5) -- (1.25,9.25);
\draw[thick](1.5,9.5) -- (2.25, 8.75);
\draw[thick](2.5,8.5) -- (2,8);
\draw[thick](2.75,8.25) -- (4,7);
\draw[thick] (4,8) -- (2.5,6.5);
\draw[thick] (3,6) -- (2,7);
\draw[thick] (3,6) -- (3.25,5.75);
\draw[thick] (4,6) -- (3.5,5.5);

\begin{scope}[rounded corners = 3mm, thick]
	\draw (.5,8.5) -- (0,9) -- (.25,9.75) -- (1,10) -- (1.5,9.5);
	\draw (.5,8.5) -- (1,8) -- (1,6) -- (2.5,4.5) -- (2.75, 4.25);
	\draw (3.5,4.5) -- (2.5,3.5) -- (0,6.25) -- (0,8) -- (.5,8.5);
	\draw (3.5,5.5) -- (3,5) -- (2.25,5.25) -- (2,6) -- (2.5,6.5);
	\draw (3.75,5.25) -- (4,5) -- (3.5,4.5);
	\draw (4,7) -- (4.5,6.5) -- (4,6);
	\draw (2,7) -- (1.5,7.5) -- (2,8);
	\draw (2.5,8.5) -- (3.25,9.25) -- (5,7.5) -- (5,6);
	\draw (4,8) -- (5,9) -- (3,11) -- (1.75,9.75);
	\draw (3.25,3.75) -- (3.5,3.5) -- (5,5) -- (5,6);
\end{scope}

\draw (0.5,8.5) circle (.25cm);
\draw (4.25,8.25) circle (.25cm);
\draw (3.5,7.5) circle (.25cm);
\draw (2.5,6.5) circle (.25cm);

%Classical Diagram

\begin{scope}[thick, rounded corners = 3mm, xshift= 1cm, yshift=1cm, scale=1.33]
\draw (7,3.5) -- (6.5,4) -- (8,5.5) -- (7,6.5) -- (6.75,6.35);
\draw (7.25,3.75) -- (7.5,4) -- (7.25,4.25);
\draw (6.75,4.75) -- (6,5.5) -- (6.25,5.75);
\draw (7.25, 5.75) -- (7,5.5) -- (6,6.5) -- (5.5,6) -- (5.5,3.5) -- (6,3)  -- (6.75,3.25);
\draw (7,3.5) -- (7.25,3.25) -- (8,3) -- (8.5,3.5) -- (8.5,6) -- (8,6.5) -- (7.75,6.25) ;
\end{scope}

%Arrow Presentation

\begin{pgfonlayer}{background}
	\draw[thick] (17,9) circle (1.5cm);
\end{pgfonlayer}
\begin{scope}[xshift=17cm,yshift=9cm] 
	\draw[ultra thick, red,->] (72:1.5cm) arc (72:108:1.5cm);
	\draw[ultra thick, red,->] (144:1.5cm) arc (144:180: 1.5cm);
	\draw[ultra thick, red,->] (216:1.5cm) arc (216: 252:1.5cm);
	\draw[ultra thick, red,->] (288:1.5cm) arc (288: 324:1.5cm);
	\draw[ultra thick, red,->] (0:1.5cm) arc (0:36:1.5cm);
\end{scope}
\draw (17,10.5) node[above]{$1$};
\draw (15.2, 9.7) node{$2$};
\draw (15.8,7.5) node{$3$};
\draw (18.2,7.5) node{$4$};
\draw (18.8,9.7) node{$2$};

\begin{pgfonlayer}{background}
	\draw[thick] (17,5) circle (1.5cm);
\end{pgfonlayer}
\begin{scope}[xshift=17cm, yshift=5cm]
	\draw[ultra thick, red,->] (72:1.5cm) arc (72:108:1.5cm);
	\draw[ultra thick, red, ->] (192:1.5cm) arc (192:228:1.5cm);
	\draw[ultra thick, red, ->] (312:1.5cm) arc (312:348:1.5cm);
\end{scope}
\draw (17, 6.5) node[above]{$3$};
\draw (15.4, 4.1) node{$1$};
\draw (18.6,4.1) node{$4$};

\end{tikzpicture}$$
\caption{A virtual knot diagram and a classical knot diagram with the same all-$A$ ribbon graph, whose arrow presentation is depicted.}
\label{fig:sameribbon}
\end{figure}

A virtual link diagram is {\em alternating} if the classical crossings alternate between over and under along each component of the virtual link. A virtual link diagram is called {\em alternatable} if it can be transformed into an alternating diagram via some number of crossing changes. Viro \cite{Viro:ChordDiagrams} gave various characterizations of alternatable virtual link diagrams. The following proposition provides one more characterization of alternatable virtual link diagrams.

\begin{proposition}
\label{prop:alternatable}
A virtual link diagram $D$ is alternatable if and only if its all-$A$ ribbon graph $\mathbb{D}$ is orientable.
\end{proposition}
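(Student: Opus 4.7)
The plan is to reduce the proposition to a shadow-invariance statement together with a checkerboard argument for the alternating case. The first step is to observe that orientability of $\mathbb{D}$ depends only on the shadow of $D$, not on the over/under data at each classical crossing. Indeed, $\mathbb{D}$ is orientable if and only if the Turaev surface---the closed surface obtained from $\Sigma_{\mathbb{D}}$ by capping off its boundary components---is orientable, and the Turaev surface admits the saddle-cobordism description in which it is built from the all-$A$ state, the all-$B$ state, and a saddle at each classical crossing, with boundary circles capped by disks. A crossing change at $x$ merely swaps the labels ``$A$'' and ``$B$'' at $x$, which reverses the direction of the saddle there but leaves the homeomorphism type of the resulting surface unchanged. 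Hence orientability of $\mathbb{D}$ is an invariant of the shadow of $D$.

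Given this invariance, both implications reduce to the case where the diagram under consideration is itself alternating.

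For the forward direction, let $D'$ be an alternating diagram with the same shadow as $D$; it suffices by invariance to show $\mathbb{D}'$ is orientable. Take the checkerboard $2$-coloring of the complementary regions of the planar immersion of $D'$ (treating virtual crossings as transverse double points). This coloring exists because every vertex has even valence. Because $D'$ is alternating, the $A$-smoothing at each classical crossing connects two regions of the same color, say shaded. Thus each all-$A$ state circle of $D'$ bounds a shaded region on one side, and orienting each state circle as the counterclockwise boundary of its shaded region gives orientations for the circles of the arrow presentation $P(D')$. A local check at each classical crossing, using the orientation convention of Figure~\ref{fig:markarrowresolution}, verifies that the two marking arrows with matching label are then both consistent with their circles' orientations, so every band in $P(D')$ is untwisted and $\mathbb{D}'$ is orientable.

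For the converse, suppose $\mathbb{D}$ is orientable, and choose orientations of the state circles of the all-$A$ state of $D$ making every band in $P(D)$ untwisted. These orientations induce a $2$-coloring of the complementary regions in the plane (each state circle bounds a shaded region on its prescribed side). Now construct a diagram $D'$ with the same shadow as $D$ by assigning over/under at each classical crossing so that the $A$-smoothing of $D'$ connects two same-colored regions; by construction $D'$ is alternating, so $D$ is alternatable. The main technical obstacle in either direction is the local verification at each classical crossing that the ``marking arrows parallel'' condition in the arrow presentation translates exactly to the ``$A$-smoothing connects same-colored regions'' checkerboard condition. This amounts to tracking the orientation of marking arrows through the two possible crossing types at $x$ and the two possible local orientations of the $A$-arcs, and checking that the compatible configurations are precisely those corresponding to the alternating choice of over/under.
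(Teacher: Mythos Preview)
Your reduction via shadow-invariance (Step~1) is a nice idea and is correct: a crossing change corresponds to taking a partial dual of $\mathbb{D}$ along one edge, and partial duality preserves orientability of a ribbon graph. This is a genuinely different strategy from the paper's, which instead runs an induction on the number of cycles in $\Sigma_{\mathbb{D}}$, peeling off edges by $A$-resolving crossings.

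The gap is in the checkerboard step. For a \emph{virtual} alternating diagram $D'$, the all-$A$ state circles are immersed curves in the plane---they pass transversely through the virtual crossings. Such a curve does not have a well-defined ``shaded side'': as you traverse a state circle and pass through a virtual crossing, the planar region on your left switches checkerboard color. Hence the prescription ``orient each state circle as the counterclockwise boundary of its shaded region'' is not well-defined; the orientation you write down on one arc of the circle reverses when you cross a virtual crossing. Consequently the local verification you describe (matching marking-arrow orientations to circle orientations) cannot even be set up. The same problem breaks the converse: orientations of the abstract circles in $P(D)$ do not induce a consistent $2$-coloring of planar complementary regions once those circles are immersed in the plane.

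To salvage your approach you would need either to prove directly that for an alternating virtual diagram each all-$A$ state circle traverses an even number of virtual crossings (so that the planar ``side'' is globally consistent), or---more naturally---to replace the planar checkerboard coloring by an abstract $2$-coloring of the regions of $D'$ on its Turaev surface. But the latter is circular: you would be assuming the Turaev surface is orientable in order to checkerboard-color it. This is exactly the circularity the paper's inductive argument avoids by reducing to the one-cycle case (a closed virtual $2$-braid), where both alternatability and orientability are read off from the parity of virtual crossings.
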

\begin{proof}
Let $\Sigma_{\mathbb{D}}$ be the surface associated to the all-$A$ arrow presentation of $D$. A cycle in $\Sigma_{\mathbb{D}}$ is a collection of vertex disks $v_1,\dots,v_k$ and edge bands $e_1,\dots, e_k$ such that the edge band $e_i$ is attached to the vertex disks $v_{i}$ and $v_{i+1}$ (with subscripts taken modulo $k$). A cycle in $\Sigma_{\mathbb{D}}$ is either an embedded annulus or an embedded M\"obius band. The surface $\Sigma_{\mathbb{D}}$ is orientable if and only if it contains no embedded M\"obius bands. We proceed by induction on the number of cycles contained in $\Sigma_{\mathbb{D}}$.

Suppose that $\Sigma_{\mathbb{D}}$ contains no cycles. Then $D$ is alternatable since it can be transformed into a diagram of the unknot using only Reidemeister I moves and the virtual Reidemeister moves. Moreover, since $\Sigma_{\mathbb{D}}$ contains no cycles, it is necessarily orientable.

Suppose that $\Sigma_{\mathbb{D}}$ contains one cycle. Then $D$ can be transformed into a closed virtual two-braid $D^\prime$ using only Reidemeister I moves and the virtual Reidemeister moves. If $D^\prime$ contains an odd number of virtual crossings, then $D^\prime$, and hence $D$, is not alternatable and $\Sigma_{\mathbb{D}^\prime}$ is a M\"obius band. If $D^\prime$ contains an even number of virtual crossings, then $D^\prime$, and hence $D$, is alternatable and $\Sigma_{\mathbb{D^\prime}}$ is an annulus.

Suppose that $D$ is a virtual diagram such that $\Sigma_{\mathbb{D}}$ contains at least two cycles $C_1$ and $C_2$. Let $e_1$ and $e_2$ be two edge bands in $\Sigma_{\mathbb{D}}$ such that $e_1$ is in $C_1$ but not $C_2$ and $e_2$ is in $C_2$ but not $C_1$. Let $c_1$ and $c_2$ be the two crossings of $D$ corresponding to $e_1$ and $e_2$, and let $D_1$ and $D_2$ be the diagrams obtained by performing an $A$-resolution at $c_1$ and $c_2$ respectively. Let $\Sigma_{\mathbb{D}_1}$ and $\Sigma_{\mathbb{D}_2}$ be the surfaces obtained from the arrow presentations of $D_1$ and $D_2$. The surfaces $\Sigma_{\mathbb{D}_1}$ and $\Sigma_{\mathbb{D}_2}$ can be obtained from the surface $\Sigma_{\mathbb{D}}$ by deleting the edge bands $e_1$ and $e_2$ respectively.

If $D$ is alternatable, then $D_1$ and $D_2$ are alternatable, and since they each contain fewer cycles, the surfaces $\Sigma_{\mathbb{D}_1}$ and $\Sigma_{\mathbb{D}_2}$ are orientable. Since both $\Sigma_{\mathbb{D}_1}$ and $\Sigma_{\mathbb{D}_2}$ are orientable, it follows that $\Sigma_{\mathbb{D}}$ is orientable. If $\Sigma_{\mathbb{D}}$ is orientable, then $\Sigma_{\mathbb{D}_1}$ and $\Sigma_{\mathbb{D}_2}$ are orientable, and hence $D_1$ and $D_2$ are alternatable. Since $D_1$ and $D_2$ are alternatable, it follows that $D$ is alternatable.
\end{proof}

\subsection{Khovanov homology of virtual links}

Manturov \cite{Manturov:KhovanovVirtual} extended the definition of a Khovanov homology to virtual links. The construction of Khovanov homology for virtual links is similar to the construction for classical links. Let $D$ be a virtual link diagram with classical crossings labeled $1,\dots,n$, and let $I=(m_1,\dots,m_n)$ be a vertex in the hypercube $\{0,1\}^n$. The Kauffman state of $D(I)$ is the collection of immersed curves obtained by choosing an $A$-resolution for those crossings where $m_i=0$ and a $B$-resolution for  those crossings where $m_i=1$. 

Associate to each vertex $I\in\mathcal{V}(n)$ the space $V(D(I)):=V^{\otimes|D(I)|}[h(I)]\{h(I)\}$ where $|D(I)|$ denotes the number of (possibly immersed) curves in the Kauffman state $D(I)$. Define $CKh(D) = \bigoplus_{I\in\mathcal{V}(n)} V(D(I))[-n_-]\{n_+-2n_-\}$ where $n_+$ and $n_-$ are the number of positive and negative (classical) crossings of $D$, respectively. As before, it will useful to refer to all summands in a specific homological grading but arbitrary polynomial grading; therefore, we write $CKh^{i,*}(D) = \bigoplus_j CKh^{i,j}(D)$.

The differential in the Khovanov complex of a virtual link diagram is signed sum of maps corresponding to the edges of the hypercube $\{0,1\}^n$. Suppose that there is an edge $\xi$ in the hypercube from vertex $I$ to vertex $J$. The number of curves in $D(J)$ is either one greater, one less, or the same as the number of curves in $D(I)$. If $|D(J)| = |D(I)|+1$, then define the edge map $d_{\xi}$ to be $\Delta:V\to V\otimes V$ on the tensor factor of $V$ corresponding to the curve in $D(I)$ that is split into two curves in $D(J)$. If $|D(J)| = |D(I)|-1$, then define the edge map $d_{\xi}$ to be $m:V \otimes V\to V$ on the two tensor factors of $V$ corresponding to the curves in $D(I)$ that are merged into one curve in $D(J)$. In either case, define $d_{\xi}$ to be the identity on those tensor factors of $V$ that do not correspond to either merging or splitting circles. There are various constructions of the edge map between vertices where $|D(I)| = |D(J)|$ (one possibility is to set that edge map to zero). However, if such an edge of the hypercube exists, then the all-$A$ ribbon graph $\mathbb{D}$ of $D$ is non-orientable. Since our focus in this paper is on oriented ribbon graphs, we are able to avoid any issues that arise when $|D(J)|=|D(I)|$.

The differential $d^i:CKh^{i,*}(D)\to CKh^{i+1,*}(D)$ is defined as $d^i=\sum_{|\xi|=i - n_-}(-1)^{\xi} d_{\xi}$, where $\xi$ is an edge corresponding to crossing $k$, originating from $I=(m_1,\dots,m_n)$, and $(-1)^{\xi}=\sum_{i=1}^{k} m_i$. The Khovanov homology $Kh(D)$ of the virtual link with diagram $D$ is defined to be the homology of the complex $(CKh(D),d)$. Once again this homology is bigraded, and we write $Kh(D) =\oplus_{i,j}Kh^{i,j}(D)$. The reduced Khovanov homology $\widetilde{Kh}(D)$ of a virtual link diagram $D$ can be defined in a similar manner as the classical link case.

Theorem \ref{theorem:maintheorem1} can be generalized to virtual link diagrams with orientable all-$A$ ribbon graphs.
\begin{theorem}
Let $L$ be a virtual link with diagram $D$ whose all-$A$ ribbon graph $\mathbb{D}$ is orientable. Suppose that $D$ has $n_+$ positive crossings and $n_-$ negative crossings. There are grading preserving isomorphisms
\begin{eqnarray*}
Kh(\mathbb{D})[-n_-]\{n_+-2n_-\} & \cong & Kh(L)~\text{and}\\
\widetilde{Kh}(\mathbb{D})[-n_-]\{n_+-2n_-\} & \cong &\widetilde{Kh}(L).
\end{eqnarray*}
\end{theorem}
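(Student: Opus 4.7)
The plan is to mimic the proof of Theorem \ref{theorem:ribbontokh} in the virtual setting, with the orientability hypothesis playing the role that was automatic in the classical case. The essential step is to establish a virtual analog of Lemma \ref{lemma:components}: for every vertex $I\in\mathcal{V}(n)$ of the hypercube, the number of boundary components of $\Sigma_{\mathbb{D}(I)}$ equals the number of immersed curves in the Kauffman state $D(I)$. Once this is in hand, choosing for each $I$ a bijection between these two sets yields a bigraded isomorphism $\rho_I\colon V(\mathbb{D}(I))\to V(D(I))$, and $\rho=\bigoplus_I\rho_I$ is a grading-preserving isomorphism from $CKh(\mathbb{D})[-n_-]\{n_+-2n_-\}$ to $CKh(D)$.

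I would prove the virtual analog of Lemma \ref{lemma:components} by induction on $h(I)$. For the base case $I=(0,\dots,0)$, the construction of the arrow presentation $P(D)$ puts the immersed curves of the all-$A$ Kauffman state in bijection with the circles of $P(D)$, which are exactly the vertex disks of $\Sigma_{\mathbb{D}}$ and hence the boundary components of $\Sigma_{\mathbb{D}(I)}$ (which has no edge bands). For the inductive step, an edge $\xi$ of the hypercube from $J$ to $I$ corresponds to attaching a single edge band $e$ to $\Sigma_{\mathbb{D}(J)}$ to obtain $\Sigma_{\mathbb{D}(I)}$, and to switching an $A$-resolution to a $B$-resolution at the corresponding classical crossing of $D$. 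Since $\mathbb{D}$ is orientable, every edge band is attached in an orientation-preserving fashion, so adding $e$ either merges two boundary components of $\Sigma_{\mathbb{D}(J)}$ into one or splits one boundary component into two. On the Kauffman state side, changing that classical crossing's resolution in $\widetilde{D}$ likewise produces a one-component merge or split of the immersed curves --- the key point is that under the arrow presentation correspondence, the two attaching arcs of the band lie on the same immersed curve precisely when they lie on the same boundary component of $\Sigma_{\mathbb{D}(J)}$. This gives the inductive step.

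The main obstacle is exactly this last compatibility check: one has to verify that tracing a boundary component of $\Sigma_{\mathbb{D}(J)}$ agrees, under the canonical identification coming from the arrow presentation, with tracing an immersed curve of $D(J)$. Virtual crossings complicate the visualization, but they simply correspond to the re-embedding of the immersed diagram as a disjoint union of circles with marking arrows; the combinatorics of how arrows glue into bands forces the two notions of ``same curve'' to coincide. The orientability hypothesis is precisely what rules out an edge band attached so that the passage from $\mathbb{D}(J)$ to $\mathbb{D}(I)$ replaces one boundary component by one boundary component, which is the anomalous case that the paper explicitly set aside in defining the virtual edge maps.

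Once the component count is established, the argument concludes in parallel with Theorem \ref{theorem:ribbontokh}. The isomorphisms $\rho_I$ intertwine the merge map $m$ and split map $\Delta$ on the two sides because merges and splits of boundary components of $\Sigma_{\mathbb{D}(I)}$ correspond, under the bijection just established, to merges and splits of curves in $D(I)$. The edge signs agree because they are defined identically from the edge ordering in both complexes, and the overall grading shift $[-n_-]\{n_+-2n_-\}$ matches by construction. The resulting chain isomorphism descends to homology to give the stated isomorphism for $Kh$, and the reduced version follows by the same argument after marking a point on one vertex disk of $\Sigma_{\mathbb{D}}$ and the corresponding immersed curve of $D$, restricting to the $v_-$-summand in each complex as in Section \ref{subsec:reduced}.
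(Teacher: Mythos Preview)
Your proposal is correct and follows essentially the same approach as the paper, which simply states that the proof is identical to that of Theorem \ref{theorem:maintheorem1} (i.e., via Theorem \ref{theorem:ribbontokh} and Lemma \ref{lemma:components}). You have spelled out in more detail where the orientability hypothesis is used---to rule out the case $|D(I)|=|D(J)|$ along an edge of the hypercube---but this is exactly the point the paper alludes to when discussing the virtual edge maps.
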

\begin{proof}
The proof of this theorem is identical to the proof of Theorem \ref{theorem:maintheorem1}.
\end{proof}
%------------------------------------------------------%
%------------------------------------------------------%
\section{Reidemeister moves}
\label{section:Reidemeister}

In this section we define local moves on ribbon graphs that generalize Reidemeister moves on both classical and virtual link diagrams. The main result of this section states that if two ribbon graphs are related by a sequence of these moves, then their Khovanov homologies are isomorphic. The ribbon graph Reidemeister moves are most easily described using arrow presentations. Let $\mathbb{G}$ be a ribbon graph with arrow presentation $P$.
\medskip

\noindent{\bf Reidemeister I:} The following two operations on $P$ (and their inverses) are called {\em Reidemeister I moves}. See Figure \ref{fig:R1}.
\begin{itemize}
\item Add an additional circle $C$ to $P$ and add a pair of marking arrows with the same label $x$ to $P$ such that one arrow lies on $C$ and the other arrow lies one of the circles of $P$. 

\item Add two adjacent marking arrows with the same orientation and  label $x$ to an arc of a circle of $P$.

\end{itemize}
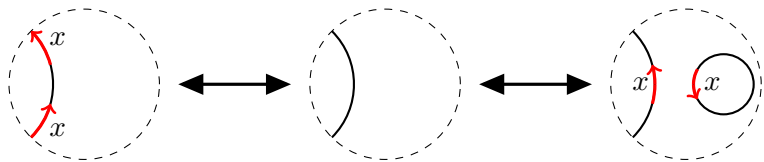
\begin{figure}[h]
$$\begin{tikzpicture}
%Before
\draw[dashed] (0,0) circle (1cm);
\draw[thick] (-.707,-.707) arc (-45:45:1cm);

%After - Pendant Edge
\begin{scope}[xshift=4cm]
\draw[dashed] (0,0) circle (1cm);
\begin{pgfonlayer}{background}
	\draw[thick] (-.707,-.707) arc (-45:45:1cm);
	\draw[thick] (.5,0) circle (.4cm);
\end{pgfonlayer}
\draw[red,very thick,->] (-.413,0) arc (0:15:1cm);
\draw[red,very thick] (-.413,0) arc (0:-15:1cm);
\draw[red,very thick] (.1,0) arc (180:150:.4cm);
\draw[red,very thick,->] (.1,0) arc (180:210:.4cm);
\draw (-.6,0) node{$x$};
\draw (.35,0) node{$x$};
\end{scope}

%After - Loop
\begin{scope}[xshift=-4cm]
	\draw[dashed] (0,0) circle (1cm);
	\draw[thick] (-.707,-.707) arc (-45:45:1cm);
	\draw[red,very thick, ->] (-.707,-.707) arc (-45:-15:1cm);
	\draw[red,very thick, <-] (-.707,.707) arc (45:15:1cm);
	\draw (-.35,-.6) node{$x$};
	\draw (-.35,.6) node{$x$};
\end{scope}

%Arrows
\draw[very thick, >=triangle 45,<->] (1.25,0) -- (2.75,0);
\draw[very thick, >=triangle 45,<->] (-2.75,0) -- (-1.25,0);

\end{tikzpicture}$$
\caption{The two different Reidemeister I moves on an arrow presentation.}
\label{fig:R1}
\end{figure}

The first type of Reidemeister I move adds an isolated vertex to $\mathbb{G}$ and then connects that isolated vertex to $\mathbb{G}$ with a single edge, and the second type of Reidemeister I move adds a loop to a vertex $v$ of $\mathbb{G}$ such that the two half-edges of the loop are adjacent in the cyclic order of the half-edges around $v$.
\medskip

\noindent{\bf Reidemeister II:} The following move (and its inverse) is called a {\em Reidemeister II move}. Suppose that $P$ contains two arcs (segments of the circles of $P$) with no marking arrows. Then the two arcs can be replaced as in Figure \ref{fig:R2} in such a way that all circles of the resulting arrow presentation are non-nested. Also, two new pairs of marking arrows, labeled $x$ and $y$, are added to the resulting arrow presentation, as in Figure \ref{fig:R2}.
\begin{figure}[h]
$$\begin{tikzpicture}
%Before
\draw[dashed] (0,0) circle (1cm);
\draw[thick] (-.707,-.707) arc (-45:45:1cm);
\draw[thick] (.707, .707) arc (135:225:1cm);

%After
\begin{scope}[xshift = 5cm]
	\draw[dashed] (0,0) circle (1cm);
	\draw[thick] (-.707,-.707) arc (135:45:1cm);
	\draw[thick] (.707,.707) arc (-45:-135:1cm);
	\draw[very thick, ->,red] (-.707,-.707) arc (135: 115: 1cm);
	\draw[very thick, <-,red] (.707,-.707) arc (45:65:1cm);
	\draw[very thick, ->, red] (0,-.413) arc (90:75:1cm);
	\draw[very thick, red] (0,-.413) arc (90:105:1cm);
	\draw[very thick, red, ->] (0,.413) arc (270:255:1cm);
	\draw[very thick, red] (0,.413) arc (270:285:1cm);
	
	\draw (-.707,-.707) node[left]{$x$};
	\draw (.707,-.707) node[right]{$x$};
	\draw (0,-.413) node[above]{$y$};
	\draw (0,.413) node[above]{$y$};
\end{scope}

%Arrow

\draw[very thick, <->, >=triangle 45] (1.5,0) -- (3.5,0);

\end{tikzpicture}$$
\caption{A Reidemeister II move on an arrow presentation.}
\label{fig:R2}
\end{figure}
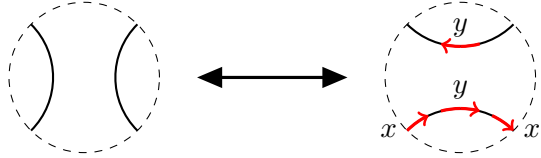

If the two arcs in the Reidemeister II move lie on different circles in the arrow presentation $P$, then the move merges two vertices of $\mathbb{G}$ into one. It is possible to transform an oriented ribbon graph into a non-orientable ribbon graph by merging two vertices into one. However, if the arcs lie on the same circle in $P$, then the move splits a single vertex of $\mathbb{G}$ into two.
\medskip

\noindent{\bf Reidemeister III:} The following move (and its inverse) is called a {\em Reidemeister III move}. If three arcs on $P$ have three pairs of marking arrows labeled as in the left side of Figure \ref{fig:R3}, then the marking arrows can be changed to the right side of Figure \ref{fig:R3}.
\begin{figure}[h]
$$\begin{tikzpicture}
\draw[dashed] (0,0) circle (1cm);
%45 - 135
\draw[thick] (.707, .707) arc (315:225:1cm);
%165 - 255
\draw[thick] (-.966,.259) arc (75:-15:1cm);
%15 - 285
\draw[thick] (.966, .259) arc (105:195:1cm);

\draw[very thick,red,<-] (.707,.707) arc (315:290:1cm);
\draw[very thick, red, ->] (-.707,.707) arc (225:250:1cm);
\draw[very thick, red, ->] (0, .413) arc (270:280:1cm);
\draw[very thick,red] (0,.413) arc (270:260:1cm);
\draw[very thick, red,<-] (-.966,.259) arc (75:50:1cm);
\draw[very thick, red,->] (-.259,-.966) arc (-15:10:1cm);
\draw[very thick, red,->] (.51,0) arc (135:165:1cm);

\draw (-.707,.707) node[left]{$x$};
\draw (-.966,.259) node[left]{$x$};
\draw (.707,.707) node[right]{$z$};
\draw (-.3,-1) node[below]{$z$};
\draw (0,.7) node{$y$};
\draw (.6,-.3) node{$y$};
 
\begin{scope}[xshift = 5cm]
	\draw[dashed] (0,0) circle (1cm);
	\begin{pgfonlayer}{background}
	%45 - 135
	\draw[thick] (.707, .707) arc (315:225:1cm);
	%165 - 255
	\draw[thick] (-.966,.259) arc (75:-15:1cm);
	%15 - 285
	\draw[thick] (.966, .259) arc (105:195:1cm);
	\end{pgfonlayer}
	
	\draw[very thick,red,<-] (.707,.707) arc (315:290:1cm);
	\draw[very thick, red, ->] (-.707,.707) arc (225:250:1cm);
	\draw[very thick, red, ->] (0, .413) arc (270:280:1cm);
	\draw[very thick,red] (0,.413) arc (270:260:1cm);
	\draw[very thick, red,->] (.966,.259) arc (105:130:1cm);
	\draw[very thick, red,<-] (.259,-.966) arc (195:170:1cm);
	\draw[very thick, red,<-] (-.51,0) arc (45:15:1cm);
	
	\draw (-.707,.707) node[left]{$z$};
	\draw (.966,.259) node[right]{$z$};
	\draw (.707,.707) node[right]{$x$};
	\draw (.3,-1) node[below]{$x$};
	\draw (0,.7) node{$y$};
	\draw (-.6,-.3) node{$y$};
 
\end{scope}

%Arrow
\draw[very thick, <->, >=triangle 45] (1.5,0) -- (3.5,0);

\end{tikzpicture}$$
\caption{A Reidemeister III move on an arrow presentation.}
\label{fig:R3}
\end{figure}
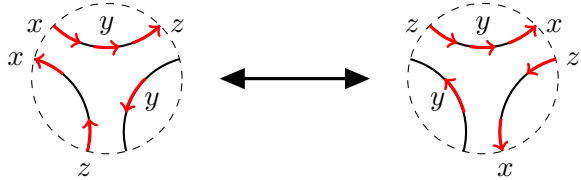

\begin{figure}[h]
$$\begin{tikzpicture}[scale=.9]

%Reidemeister 1

\draw (-8,0) node{\large{{\bf Reidemeister I:}}};

%Before

\def \middlebase{(0,0) circle (1cm);}
\def \middleside{(-1.414,0) circle (1cm);}

\begin{pgfonlayer}{background3}
\begin{scope}
	\clip \middlebase;
	\fill[blue!30!white] \middleside;
\end{scope}
\end{pgfonlayer}

\draw[dashed] (0,0) circle (1cm);

\draw[thick] (-.707,-.707) arc (-45:45:1cm);

%After - Pendant Edge
\begin{scope}[xshift=4cm]

\def \rightbase{(0,0) circle (1cm)};
\def \rightside{(-1.414,0) circle (1cm)};

\draw[dashed] (0,0) circle (1cm);
	\draw[thick] (-.707,-.707) arc (-45:45:1cm);
	\draw[thick] (.5,0) circle (.4cm);

\begin{pgfonlayer}{background}
	\fill[blue!30!white] (.5,0) circle (.4cm);
\end{pgfonlayer}

\begin{pgfonlayer}{background3}
\begin{scope}
	\clip \rightbase;
	\fill[blue!30!white] \rightside;
\end{scope}
\end{pgfonlayer}

\begin{pgfonlayer}{background4}
	\draw[thick] (-.5,.15) -- (.5,.15);
	\draw[thick] (-.5,-.15) -- (.5,-.15);
\end{pgfonlayer}

\begin{pgfonlayer}{background5}
	\fill[blue!30!white] (-.5,.15) -- (.5,.15) -- (.5,-.15) -- (-.5,-.15);
\end{pgfonlayer}

\end{scope}

%After - Loop
\begin{scope}[xshift=-4cm]

	\def \leftbase{(0,0) circle (1cm);}
	\def \leftside{(-1.414,0) circle (1cm);}
	
	\begin{pgfonlayer}{background2}
		\begin{scope}
			\clip \leftbase;
			\fill[blue!30!white] \leftside;
		\end{scope}
	\end{pgfonlayer}

	\draw[dashed] (0,0) circle (1cm);
	\draw[thick] (-.707,-.707) arc (-45:45:1cm);
	
	\def \edgeout{(-.35,0) circle (.6cm);}
	\def \edgein{(-.35,0) circle (.3cm)}
	
	\begin{pgfonlayer}{background3}
		\draw[thick] (-.35,0) circle (.6cm);
		\draw[thick] (-.35,0) circle (.3cm);
	\end{pgfonlayer}
	
	\begin{pgfonlayer}{background4}
		\fill[white] \edgein;
	\end{pgfonlayer}
	
	\begin{pgfonlayer}{background5}
		\fill[blue!30!white] \edgeout;
	\end{pgfonlayer}
	
\end{scope}

%Arrows
\draw[very thick, >=triangle 45,<->] (1.25,0) -- (2.75,0);
\draw[very thick, >=triangle 45,<->] (-2.75,0) -- (-1.25,0);

%Reidemeister 2

\draw (-8,-3) node{\large{{\bf Reidemeister II:}}};

\begin{scope}[xshift = -2.5cm, yshift=-3cm]

\def \firstmain{(0,0) circle (1cm);}
\def \firstleft{(-1.414,0) circle (1cm);}
\def \firstright{(1.414,0) circle (1cm);}

\begin{pgfonlayer}{background}
	\begin{scope}
		\clip \firstmain;
		\fill[white] \firstleft;
		\fill[white] \firstright;
	\end{scope}
\end{pgfonlayer}

\begin{pgfonlayer}{background2}
	\fill[blue!30!white] \firstmain;
\end{pgfonlayer}

\draw[dashed] (0,0) circle (1cm);
\draw[thick] (-.707,-.707) arc (-45:45:1cm);
\draw[thick] (.707, .707) arc (135:225:1cm);

%After
\begin{scope}[xshift = 5cm]
	
	\def \secondmain{(0,0) circle (1cm);}
	\def \secondup{(0,1.414) circle (1cm);}
	\def \seconddown{(0,-1.414) circle (1cm);}
	
	\begin{pgfonlayer}{background}
		\begin{scope}
			\clip \secondmain;
			\fill[blue!30!white] \secondup;
			\fill[blue!30!white] \seconddown;
		\end{scope}
	\end{pgfonlayer}
	
	\draw[dashed] (0,0) circle (1cm);
	\draw[thick] (-.707,-.707) arc (135:45:1cm);
	\draw[thick] (.707,.707) arc (-45:-135:1cm);
	
	\begin{pgfonlayer}{background6}
		\fill[blue!30!white] (0,-.35) circle (.6cm);
	\end{pgfonlayer}
	
	\begin{pgfonlayer}{background5}
		\fill[white] (0,-.35) circle (.3cm);
	\end{pgfonlayer}
	
	\begin{pgfonlayer}{background4}
		\draw[thick] (0,-.35) circle (.6cm);
		\draw[thick] (0,-.35) circle (.3cm);
	\end{pgfonlayer}
	
	\begin{pgfonlayer}{background3}
		\fill[blue!30!white] (-.15,-.45) -- (-.15,.45) -- (.15,.45) -- (.15,-.45);
	\end{pgfonlayer}
	
	\begin{pgfonlayer}{background2}
		\draw[thick] (-.15,-.45) -- (-.15,.45);
		\draw[thick] (.15,-.45) -- ( .15,.45);
	\end{pgfonlayer}
	
\end{scope}

%Arrow

\draw[very thick, <->, >=triangle 45] (1.5,0) -- (3.5,0);
\end{scope}

%Reidemeister 3

\draw (-8,-6) node{\large{{\bf Reidemeister III:}}};

\begin{scope}[xshift  = -2.5cm, yshift = -6cm]

\def \firstmain{(0,0) circle (1cm);}
\def \firstup{(0,1.414) circle (1cm);}
\def \firstleft{(-1.22497, -.706886) circle (1cm);}
\def \firstright{(1.22497,-.706886) circle (1cm);}

\begin{pgfonlayer}{background}
	\begin{scope}
		\clip \firstmain;
		\fill[blue!30!white] \firstup;
		\fill[blue!30!white] \firstleft;
		\fill[blue!30!white] \firstright;
	\end{scope}
\end{pgfonlayer}

\begin{pgfonlayer}{background2}
	\draw[thick] (-.607,.707) -- (-.966,.086027);
	\draw[thick] (-.357,.707) -- (-.716,.086027);
	\draw[thick] (-.05, .5) -- (.51,0);
	\draw[thick] (-.5,.601786) -- (.51,-.3);
\end{pgfonlayer}

\begin{pgfonlayer}{background3}
	\fill[blue!30!white] (-.607,.707) -- (-.966,.086027) -- (-.716,.086027) -- (-.357,.707);
	\fill[blue!30!white] (-.05, .5) -- (.51,0) -- (.51,-.3) -- (-.5,.601786);
\end{pgfonlayer}

\begin{pgfonlayer}{background4}
	\draw[thick] (.657,.707) -- (-.25,-.9);
	\draw[thick] (.357,.707) -- (-.45,-.722822);
\end{pgfonlayer}

\begin{pgfonlayer}{background5}
	\fill[blue!30!white] (.657,.707) -- (-.25,-.9) -- (-.45,-.722822) -- (.357,.707);
\end{pgfonlayer}

\draw[dashed] (0,0) circle (1cm);
%45 - 135
\draw[thick] (.707, .707) arc (315:225:1cm);
%165 - 255
\draw[thick] (-.966,.259) arc (75:-15:1cm);
%15 - 285
\draw[thick] (.966, .259) arc (105:195:1cm);

\begin{scope}[xshift = 5cm]

	\def \secondmain{(0,0) circle (1cm);}
	\def \secondup{(0,1.414) circle (1cm);}
	\def \secondleft{(-1.22497, -.706886) circle (1cm);}
	\def \secondright{(1.22497,-.706886) circle (1cm);}
	
	\begin{pgfonlayer}{background}
		\begin{scope}
			\clip \secondmain;
			\fill[blue!30!white] \secondup;
			\fill[blue!30!white] \secondleft;
			\fill[blue!30!white] \secondright;
		\end{scope}
	\end{pgfonlayer}

	\draw[dashed] (0,0) circle (1cm);
	\begin{pgfonlayer}{background}
	%45 - 135
	\draw[thick] (.707, .707) arc (315:225:1cm);
	%165 - 255
	\draw[thick] (-.966,.259) arc (75:-15:1cm);
	%15 - 285
	\draw[thick] (.966, .259) arc (105:195:1cm);
	\end{pgfonlayer}
	
	\begin{pgfonlayer}{background2}
		\begin{scope}
			\clip \secondmain;
			\draw[thick] (1.3,-.2) circle (1.2cm);
			\draw[thick] (1.5,-.2) circle (1.2cm);
		\end{scope}
	\end{pgfonlayer}
	
	\begin{pgfonlayer}{background3}
		\begin{scope}
			\clip \secondmain;
			\fill[white] (1.5,-.2) circle (1.2cm);
		\end{scope}
	\end{pgfonlayer}
	
	\begin{pgfonlayer}{background4}
		\begin{scope}
			\clip \secondmain;
			\fill[blue!30!white] (1.3,-.2) circle (1.2cm);
		\end{scope}
	\end{pgfonlayer}
		
	\begin{comment}
	\begin{pgfonlayer}{background2}
		\begin{scope}
			\clip \secondmain;
			\draw[thick] (.3,1) circle (1.06cm);
			\draw[thick] (.4,1.1) circle (1cm);
		\end{scope}
	\end{pgfonlayer}
	\end{comment}
	
	\begin{pgfonlayer}{background5}
		\begin{scope}
			\clip \secondmain;
			\draw[thick] (.3,1) circle (1.06cm);
			\draw[thick] (.4,1.1) circle (1cm);
		\end{scope}
	\end{pgfonlayer}
	
	\begin{pgfonlayer}{background6}
		\begin{scope}
			\clip \secondmain;
			\fill[white] (.4,1.1) circle (1cm);
		\end{scope}
	\end{pgfonlayer}
	
	\begin{pgfonlayer}{background7}
		\begin{scope}
			\clip \secondmain;
			\fill[blue!30!white] (.3,1) circle (1.06cm);
		\end{scope}
	\end{pgfonlayer}
	
	\def \whitecirc{(1.5,-.2) circle (1.2cm);}
	\begin{pgfonlayer}{background2}
		\begin{scope}
			\clip \secondmain;
			\clip \whitecirc;
			\draw[thick] (.3,1) circle (1.06cm);
			\draw[thick] (.4,1.1) circle (1cm);
		\end{scope}
	\end{pgfonlayer}
	
	\begin{pgfonlayer}{background2a}
		\begin{scope}
			\clip \secondmain;
			\clip \whitecirc;
			\fill[white] (.4,1.1) circle (1cm);
		\end{scope}
	\end{pgfonlayer}{background2a}
	
	\begin{pgfonlayer}{background2b}
		\begin{scope}
			\clip \secondmain;
			\clip \whitecirc;
			\fill[blue!30!white] (.3,1) circle (1.06cm);
		\end{scope}
	\end{pgfonlayer}
	
	\begin{pgfonlayer}{background2}
		\draw[thick] (.2,.5) -- ( -.3,-.5);
		\draw[thick] (-.1,.5) -- (-.6,-.5);	
	\end{pgfonlayer}
	
	\begin{pgfonlayer}{background2a}
		\fill[blue!30!white] (.2,.5) -- ( -.3,-.5) -- (-.6,-.5) -- (-.1,.5);
	\end{pgfonlayer}

	\begin{comment}
	\draw[very thick,red,<-] (.707,.707) arc (315:290:1cm);
	\draw[very thick, red, ->] (-.707,.707) arc (225:250:1cm);
	\draw[very thick, red, ->] (0, .413) arc (270:280:1cm);
	\draw[very thick,red] (0,.413) arc (270:260:1cm);
	\draw[very thick, red,->] (.966,.259) arc (105:130:1cm);
	\draw[very thick, red,<-] (.259,-.966) arc (195:170:1cm);
	\draw[very thick, red,<-] (-.51,0) arc (45:15:1cm);
	
	\draw (-.707,.707) node[left]{$z$};
	\draw (.966,.259) node[right]{$z$};
	\draw (.707,.707) node[right]{$x$};
	\draw (.3,-1) node[below]{$x$};
	\draw (0,.7) node{$y$};
	\draw (-.6,-.3) node{$y$};
	
	\end{comment}
 
\end{scope}

%Arrow
\draw[very thick, <->, >=triangle 45] (1.5,0) -- (3.5,0);

\end{scope}

\end{tikzpicture}$$
\caption{Depictions of each of the ribbon graph Reidemeister moves on $\Sigma_{\mathbb{G}}$. The Reidemeister II move where one vertex is split into two is shown.}
\label{fig:ribbonReid}
\end{figure}
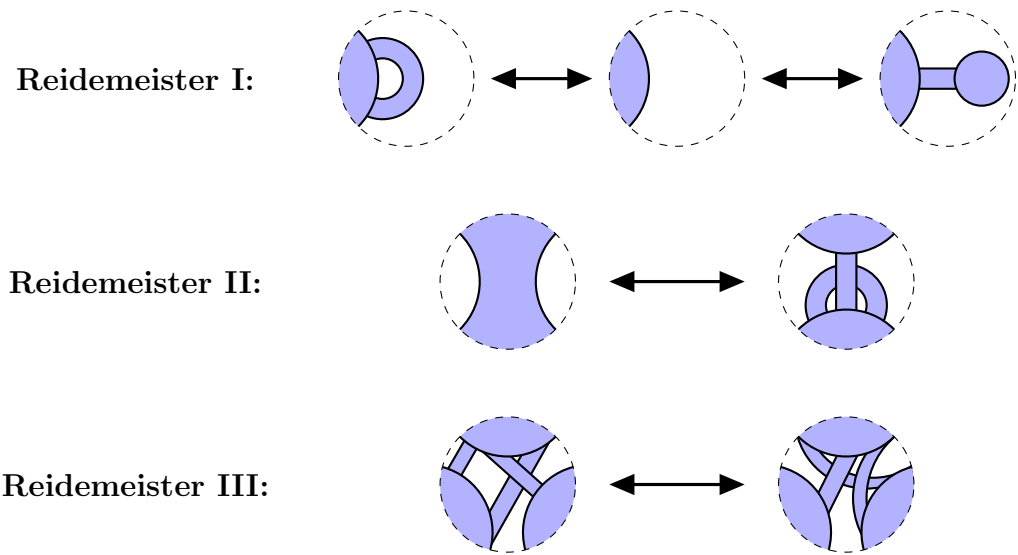

\begin{remark}
\label{prop:orientability}
Let $\mathbb{D}_1$ and $\mathbb{D}_2$ be ribbon graphs such that $\mathbb{D}_1$ can be obtained from $\mathbb{D}_2$ by a single Reidemeister I move or  Reidemeister III move. Then $\mathbb{D}_1$ is orientable if and only if $\mathbb{D}_2$ is orientable. However, since a Reidemeister II move either merges two vertices together or splits them apart, it may possibly create or destroy an embedded M\"obius band, and hence could change an orientable ribbon graph into a non-orientable ribbon graph, or vice versa.
\end{remark}
 A ribbon graph Reidemeister move that takes an oriented ribbon graph to another oriented ribbon graph is called an {\em oriented ribbon graph Reidemeister move.} The ribbon graph Reidemeister moves generalize Reidemeister moves for classical links and also generalize the virtual Reidemeister moves.

\begin{proposition}
\label{prop:reidemeister}
Suppose that $D_1$ and $D_2$ are diagrams of the same (classical) link, and suppose that $\mathbb{D}_1$ and $\mathbb{D}_2$ are their respective all-$A$ ribbon graphs. Then there is a sequence of oriented ribbon graph Reidemeister moves transforming $\mathbb{D}_1$ into $\mathbb{D}_2$.
\end{proposition}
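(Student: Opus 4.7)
The plan is to reduce the proposition to a local, crossing-by-crossing comparison. Since $D_1$ and $D_2$ are diagrams of the same classical link, by the classical Reidemeister theorem they are related by a finite sequence of planar isotopies and classical Reidemeister moves I, II, and III. A planar isotopy of $D$ does not affect the cyclic data of the all-$A$ Kauffman state and hence does not change the all-$A$ ribbon graph (up to equivalence of ribbon graphs). Therefore it suffices to verify the claim locally: for each of the three classical Reidemeister moves, exhibit an explicit sequence of oriented ribbon graph Reidemeister moves that transforms the all-$A$ ribbon graph of the ``before'' diagram into the all-$A$ ribbon graph of the ``after'' diagram.

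The verification is a case analysis organized by the sign of the crossings involved, since the $A$-resolution of a crossing depends on its sign. First I would handle Reidemeister I by drawing the local $A$-resolution of a single kink together with its marking arrows (as in Figure \ref{fig:markarrowresolution}) and comparing to the local $A$-resolution of the trivial strand. The two possibilities for the sign of the kink correspond precisely to the two forms of ribbon graph Reidemeister I pictured in Figure \ref{fig:R1}: one kink sign produces an extra circle in the arrow presentation joined to the host circle by a single pair of marking arrows (the first form of ribbon R-I), while the other sign produces two adjacent identically oriented marking arrows on the same arc (the second form). Next I would handle Reidemeister II. The two crossings in a classical R-II have opposite signs, so their $A$-resolutions give one ``horizontal'' smoothing and one ``vertical'' smoothing, yielding four marking arrows in the configuration of Figure \ref{fig:R2}; depending on how the strands connect globally into the ambient state, this is either exactly a ribbon R-II move or its merge/split variant, both of which preserve orientability in this case because the ``before'' and ``after'' ribbon graphs come from genuine $A$-resolutions of a classical diagram.

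For Reidemeister III, I would consider the configurations of signs on the three crossings (modulo the symmetries of the move), do the $A$-resolutions explicitly, and read off the three pairs of marking arrows. In each case the resulting arrow presentation change matches the ribbon R-III move of Figure \ref{fig:R3}, possibly after conjugation by some ribbon R-II moves to bring the three arcs into the pictured normal form. Throughout all three cases, orientability is maintained automatically: since each intermediate arrow presentation arises as the all-$A$ ribbon graph of a classical link diagram, Proposition \ref{prop:alternatable} (or the direct observation that classical diagrams always yield orientable Turaev surfaces) guarantees that every ribbon graph appearing in the sequence is orientable, so every move used is in fact an oriented ribbon graph Reidemeister move.

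The main obstacle is the Reidemeister III case: there are several sign patterns to check, and in some of them the three arcs participating in the move do not initially appear in the exact local configuration of Figure \ref{fig:R3}, so auxiliary ribbon R-II moves are needed to bring the arrow presentation into standard form before the ribbon R-III can be applied and then undone. The bookkeeping of these auxiliary moves, together with checking that each one stays within the class of oriented ribbon graphs, is the only nontrivial part of the argument; the rest is direct inspection of the local $A$-resolution pictures.
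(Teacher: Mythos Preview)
Your overall strategy matches the paper's: reduce to checking that each classical Reidemeister move on a diagram induces a ribbon graph Reidemeister move on its all-$A$ ribbon graph, and observe that orientability is automatic because every intermediate ribbon graph is the all-$A$ ribbon graph of a classical diagram. That is exactly what the paper does.

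There is, however, one factual slip and one unnecessary complication. First, the $A$-resolution of a crossing does \emph{not} depend on its sign; it depends only on the over/under data (see Figure~\ref{figure:smoothing}, which has no orientation on the strands). So organizing the case analysis by sign is the wrong bookkeeping --- the relevant cases are the different over/under patterns (two for R-I, essentially one for R-II, and the various strand orderings for R-III). Second, the paper's verification of R-III is more direct than you anticipate: the all-$A$ resolution of the R-III configuration already produces exactly the three-arc arrow pattern of Figure~\ref{fig:R3}, so the correspondence is a single ribbon R-III move with no auxiliary ribbon R-II conjugations needed. The paper simply draws the all-$A$ resolutions with their marking arrows beneath each classical move (Figure~\ref{fig:Reidproof}) and reads off the match to Figures~\ref{fig:R1}--\ref{fig:R3}. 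Your proposed extra R-II bookkeeping is not wrong, but it is not required.
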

\begin{proof}
Since $D_1$ and $D_2$ are diagrams of the same link, there is a sequence of Reidemeister moves for links transforming $D_1$ into $D_2$. Hence, it suffices to show that each Reidemeister move on a link diagram induces one of the Reidemeister moves on a ribbon graph. Since the all-$A$ ribbon graph of a classical link diagram is always oriented, the ribbon graph Reidemeister moves in our sequence are also oriented. Figure \ref{fig:Reidproof} exhibits how each link Reidemeister move induces a ribbon graph Reidemeister move.
\begin{figure}[h]
$$\begin{tikzpicture}
%Reidemeister 1

%Right hand twist
\draw (0,.4) .. controls (0,1) and (.1,1.2) .. (.4,1.4);
\draw (0,2) .. controls (0,1.5) and (0,1.5) .. (.1,1.4);
\draw (.4,1.4) .. controls (.6,1.5) and (.9,1.5) .. (.9,1.2);
\draw (.4,1.1) .. controls (.6,.9) and (.9,1) .. (.9,1.2);
\draw (.3,1.4) node[above]{$x$};

%No twist
\draw (2.3,.4) .. controls (2.6,1.2) and (2.6,1.2) .. (2.3,2);

%Left hand twist
\begin{scope}[xshift = 4cm]
	\draw (0,.4) .. controls (0,.9) and (0,.9) .. (.1,1);
	\draw (0,2) .. controls (0,1.4) and (.1,1.2) .. (.4,1);
	\draw (.4,1) .. controls (.6,.9) and (.9,.9) .. (.9, 1.2);
	\draw (.9,1.2) ..controls (.9,1.4) and (.6,1.5) .. (.4,1.3);
	\draw (.3,1.4) node[above]{$x$};
\end{scope}

%Arrows
\begin{scope}[>=triangle 45, thick]
	\draw[<->] (1.1,1.2) -- (2.1,1.2);
	\draw[<->] (2.7,1.2) -- (3.7,1.2);
\end{scope}

\begin{scope}[yshift = -3cm]

%No twist
\draw (2.3,.4) .. controls (2.6,1.2) and (2.6,1.2) .. (2.3,2);

%A-resolution of right hand twist
\begin{pgfonlayer}{background}
\draw (0,.4) -- (0,.7);
\draw (0,2) -- (0,1.7);
\draw (0,.7) arc (180:75:.3cm);
\draw (0, 1.7) arc (180:285:.3cm);
\draw (.375,.988) arc (-105:105:.22cm);
\end{pgfonlayer}
\draw[very thick, red] (.375,.988) arc (75:130:.3cm);
\draw[very thick, red, ->] (.375,.988) arc (-105:-90:.22cm);
\draw[very thick, red] (.375,1.412) arc (105:90:.22cm);
\draw[very thick, red, ->] (.375,1.412) arc (285:230:.3cm);
\draw (.3,1.5) node[above]{$x$};
\draw (.3,.9) node[below]{$x$};

%A-resolution of left hand twist
\begin{pgfonlayer}{background}
\draw (4,.4) -- (4,2);
\draw (4.8,1.2) circle (.3cm);
\end{pgfonlayer}
\draw[red, very thick, ->] (4,1) -- (4,1.4);
\draw[xshift=4.8cm, yshift=1.2cm,red,very thick, ->] (130:.3cm) arc (130:230:.3cm);
\draw (4.8,1.2) node{$x$};
\draw (4.25,1.2) node{$x$};

%Arrows
\begin{scope}[>=triangle 45, thick]
	\draw[<->] (1.1,1.2) -- (2.1,1.2);
	\draw[<->] (2.7,1.2) -- (3.7,1.2);
\end{scope}

\end{scope}
%--------------------------------------------

%Reidemeister 2

\begin{scope}[xshift=6cm]

	%Parallel strands
	\draw (0,.4) .. controls (.3,1.2) and (.3,1.2) .. (0,2);
	\draw (.8,.4) .. controls (.5,1.2) and (.5,1.2) .. (.8,2);
	
	%Crossing strands 
	\draw (2,.4) .. controls (2.8,1.2) and (2.8,1.2) .. (2,2);
	\draw (2.8,.4) .. controls (2.6,.6) and (2.6,.6) .. (2.5,.7);
	\draw (2.8,2) .. controls (2.6,1.8) and (2.6,1.8) .. (2.5,1.7);
	\draw (2.3,.9) .. controls (2.15,1.2) and (2.15,1.2) .. (2.3,1.5);
	\draw (2.45,.7) node[below]{$x$};
	\draw (2.45,1.7) node[above]{$y$};
	
	%Arrow
	\draw[thick, >=triangle 45,<->] (1,1.2) -- (1.9,1.2);

\end{scope}

\begin{scope}[xshift=6cm,yshift = -3cm]

	%Parallel strands
	\draw (0,.4) .. controls (.3,1.2) and (.3,1.2) .. (0,2);
	\draw (.8,.4) .. controls (.5,1.2) and (.5,1.2) .. (.8,2);
	
	%A-resolution of crossing strands 
	\begin{pgfonlayer}{background}
		\draw (2.15,.4) arc (-45:45:.4cm);
		\draw (2.65,.4) arc (225:135:.4cm);
		\draw (2.15,.965) arc (225:-45:.355cm);
		\draw (2.1,2) arc (225:315:.428cm);
	\end{pgfonlayer}
	\draw[red,very thick, ->] (2.265,.6825) arc (0:30:.4cm);
	\draw[red,very thick] (2.265,.6825) arc (0:-30:.4cm);
	\draw[red,very thick, ->] (2.535,.6825) arc (180:210:.4cm);
	\draw[red,very thick] (2.535,.6825) arc (180:150:.4cm);
	\draw[red,very thick,->] (2.4,1.57) arc (90:50:.355cm);
	\draw[red,very thick] (2.4,1.57) arc (90:130:.355cm);
	\draw[red, very thick, ->] (2.4,1.875) arc (-90:-120:.355cm);
	\draw[red, very thick] (2.4,1.875) arc (-90:-60:.355cm);
	
	\draw (2.2,.6825) node[left]{$x$};
	\draw (2.6,.6825) node[right]{$x$};
	\draw (2.4,1.57) node[below]{$y$};
	\draw (2.4,1.875) node[above]{$y$};
	
	%Arrow
	\draw[thick, >=triangle 45,<->] (1,1.2) -- (1.9,1.2);

\end{scope}

%--------------------------------------------

%Reidemeister 3

\begin{scope}[xshift = 10cm]

	%Above crossing
	\draw (0,1.2) .. controls (0.4,1.8) and (1.2,1.8) .. (1.6,1.2);
	\draw (0,.4) -- (1.1,1.5);
	\draw (1.3,1.7) -- (1.6,2);
	\draw (0,2) -- (.3,1.7);
	\draw (.5,1.5) -- (.7,1.3);
	\draw (.9,1.1) -- (1.6,.4);

	%Below crossing
	\draw (3,1.2) .. controls (3.4,.6) and (4.2,.6) .. (4.6,1.2);
	\draw (3,.4) -- (3.3,.7);
	\draw (4.6,.4) -- (4.3,.7);
	\draw (4.1,.9) -- (3.9,1.1);
	\draw (3.7,1.3) -- (3,2);
	\draw (3.5,.9) -- (4.6,2);
	
	%Arrow
	\draw[thick, >=triangle 45, <->] (1.8,1.2) -- (2.8,1.2);
	
	%Labels
	\draw (1.2,1.7) node[above]{$x$};
	\draw (.4,1.65) node[above]{$y$};
	\draw (.8,1.1) node[below]{$z$};
	
	\draw (3.8,1.3)node[above]{$z$};
	\draw (3.4,.7) node[below]{$x$};
	\draw (4.2,.7) node[below]{$y$};

\end{scope}
\begin{scope}[xshift = 10cm,yshift = -3cm]

	%A-resolution of Reidemeister 3 moves
	\begin{scope}[rounded corners = .2cm]
		\begin{pgfonlayer}{background}
			%Left side
			\draw (0,1.2) -- (.4,1.6) -- (0,2);
			\draw (1.6,0.4) -- (.8,1.2) -- (1.2,1.6) -- (1.6,1.2);
			\draw (1.6,2) -- (1.2,1.6) -- (.8,2) -- (.4,1.6) -- (.8,1.2) -- (0,.4);
			%Right side
			\draw (3,.4) -- (3.4,.8) -- (3.8,.4) -- (4.2,.8) -- (3.8,1.2) -- (4.6,2);
			\draw (3,1.2) -- (3.4,.8) -- (3.8,1.2) -- (3,2);
			\draw (4.6,.4) -- (4.2,.8) -- (4.6,1.2);
		\end{pgfonlayer}
	\begin{scope}[very thick, red, ->]
		%Left side
		\draw (.2,1.4) -- (.4,1.6) -- (.2,1.8);
		\draw (.6,1.8) -- (.4,1.6) -- (.6,1.4);
		\draw (.65,1.35) -- (.8,1.2) -- (.65,1.05);
		\draw (.95,1.05) -- (.8,1.2) -- (.95,1.35);
		\draw (1,1.4) -- (1.2,1.6) -- (1.4,1.4);
		\draw (1.4,1.8) -- (1.2,1.6) -- (1,1.8);
		%Right side
		\draw (3.2,1) -- (3.4,.8) -- (3.6,1);
		\draw (3.65,1.05) -- (3.8,1.2) -- (3.6,1.4);
		\draw (4,1.4) -- (3.8,1.2) -- (4,1);
		\draw (4.05,.95) -- (4.2,.8) -- (4,.6);
		\draw (3.6,.6) -- (3.4,.8) -- (3.2,.6);
		\draw (4.4,.6) -- (4.2,.8) -- (4.4,1);
	\end{scope}
	\end{scope}
	
	%Labels
	\draw (.3,1.6) node[left]{$y$};
	\draw (.5,1.6) node[right]{$y$};
	\draw (.7,1.1)node[left]{$z$};
	\draw (.9,1.1)node[below]{$z$};
	\draw (1.2,1.5) node[below]{$x$};
	\draw (1.2,1.7) node[above]{$x$};
	
	\draw (3.7,1.3) node[above]{$z$};
	\draw (3.9,1.2) node[right]{$z$};
	\draw (3.4,.9) node[above]{$x$};
	\draw (3.4,.7) node[below]{$x$};
	\draw (4.1,.8) node[left]{$y$};
	\draw (4.3,.8) node[right]{$y$};
	
	%Arrow
	\draw[thick, >=triangle 45, <->] (1.8,1.2) -- (2.8,1.2);

\end{scope}

\draw[very thick, >=stealth,->] (2.3,.1) -- (2.3,-.7);
\draw[very thick, >=stealth,->] (7.45,.1) -- (7.45,-.7);
\draw[very thick, >=stealth,->] (12.3,.1) -- (12.3,-.7);

\end{tikzpicture}$$
\caption{The top row depicts the three Reidemeister moves. Beneath each Reidemeister move is the corresponding all-$A$ resolution together with the marking arrows for each resolution. The crossings and the marking arrows are labeled either $x$, $y$, or $z$.}
\label{fig:Reidproof}
\end{figure}
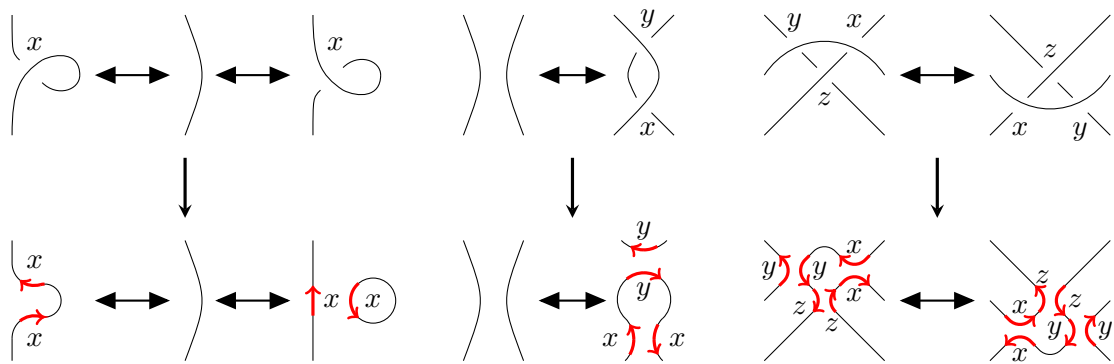
\end{proof}

Work of Viro \cite{Viro:ChordDiagrams} combined with Proposition \ref{prop:alternatable} provide an analog of Proposition \ref{prop:orientability} for virtual links.

\begin{proposition}
\label{prop:virtualreidemeister}
Suppose that $D_1$ and $D_2$ are diagrams of the same virtual link, and suppose that their respective all-$A$ ribbon graphs $\mathbb{D}_1$ and $\mathbb{D}_2$ are oriented. Then there is a sequence of oriented ribbon graph Reidemeister moves transforming $\mathbb{D}_1$ into $\mathbb{D}_2$.
\end{proposition}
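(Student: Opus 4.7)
The plan is to mimic the classical proof of Proposition \ref{prop:reidemeister}, with the additional subtlety that we must choose a sequence of Reidemeister moves between $D_1$ and $D_2$ which stays inside the class of virtual diagrams whose all-$A$ ribbon graphs are orientable. By Proposition \ref{prop:alternatable}, this is equivalent to staying in the class of alternatable virtual link diagrams.

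First I would invoke Viro's work \cite{Viro:ChordDiagrams}: the orientability of $\mathbb{D}_1$ and $\mathbb{D}_2$ forces both $D_1$ and $D_2$ to be alternatable, and Viro's theory shows that alternatable diagrams representing the same virtual link are connected by a sequence of classical and virtual Reidemeister moves whose intermediate diagrams are also alternatable. Applying Proposition \ref{prop:alternatable} again converts this into a sequence $D_1=D^{(0)},\dots,D^{(k)}=D_2$ whose associated all-$A$ ribbon graphs $\mathbb{D}^{(0)},\dots,\mathbb{D}^{(k)}$ are all orientable.

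Next I would verify that each elementary move in this sequence induces an oriented ribbon graph Reidemeister move (possibly trivial). The three classical Reidemeister moves are handled exactly as in Proposition \ref{prop:reidemeister}: the local analysis of Figure \ref{fig:Reidproof} takes place entirely at the $A$-resolution of a classical crossing and is insensitive to the presence of virtual crossings elsewhere. The four virtual Reidemeister moves IV--VII affect only virtual crossings, which contribute no marking arrows to the all-$A$ arrow presentation; their sole effect on the arrow presentation is an isotopy of non-nested circles in the plane, which is one of the prescribed equivalences of arrow presentations. Hence these moves leave the ribbon graph unchanged, and concatenating the induced moves yields the desired sequence from $\mathbb{D}_1$ to $\mathbb{D}_2$.

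The main obstacle is invoking Viro's input in the right form---that the connecting sequence can be confined to alternatable diagrams---since, by Remark \ref{prop:orientability}, Reidemeister II is the only move that can toggle orientability of the ribbon graph, and so a generic connecting sequence between alternatable diagrams may well pass through diagrams with non-orientable all-$A$ ribbon graphs. Once this alternatability-preserving connectedness is in hand, the rest of the argument is a direct transcription of the proof in the classical setting.
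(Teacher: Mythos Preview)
Your approach matches the paper's proof almost exactly: invoke Viro to obtain an alternatable-preserving sequence of moves, translate alternatability into orientability of the all-$A$ ribbon graph via Proposition~\ref{prop:alternatable}, and then check that each elementary move induces (at most) an oriented ribbon graph Reidemeister move.

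There is one inaccuracy in your treatment of the virtual moves. You assert that moves IV--VII ``affect only virtual crossings,'' but move VII involves one \emph{classical} crossing (it slides that crossing past a strand through two virtual crossings). So your blanket justification---no marking arrows are touched---does not apply to VII. The paper separates the cases: moves IV, V, VI indeed involve only virtual crossings and leave the arrow presentation unchanged, whereas move VII requires an explicit check (Figure~\ref{fig:virtReidproof}) that the $A$-resolution of the classical crossing, together with its marking arrows, yields the same arrow presentation on both sides. The conclusion you state for VII is correct, but the reason you give is not; you should verify VII directly as the paper does.
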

\begin{proof} Viro \cite{Viro:ChordDiagrams} proved that there is a sequence of classical and virtual Reidemeister moves transforming $D_1$ into $D_2$ such that each intermediate diagram is also alternatable. Proposition \ref{prop:alternatable} states that a virtual link diagram is alternatable if and only if its all-$A$ ribbon graph is oriented, and hence each virtual link diagram in the sequence between $D_1$ and $D_2$ has an oriented all-$A$ ribbon graph.

It remains to show that each classical and virtual Reidemeister moves on diagrams induce ribbon graph Reidemeister moves. The proof of Proposition \ref{prop:reidemeister} shows that the classical Reidemeister moves induce ribbon graph Reidemeister moves. The virtual Reidemeister moves labeled IV, V, and VI in Figure \ref{fig:virtualRmoves} involve only virtual crossings, and they do not change the all-$A$ ribbon graph at all. Figure \ref{fig:virtReidproof} shows that the virtual Reidemeister move labeled VII in Figure \ref{fig:virtualRmoves} does not change the all-$A$ ribbon graph, and hence the result follows.
\begin{figure}[h]
$$\begin{tikzpicture}
%Above crossing
\draw (0,1.2) .. controls (0.4,1.8) and (1.2,1.8) .. (1.6,1.2);
\draw (0,.4) -- (1.6,2);
\draw (1.6,.4) -- (.9,1.1);
\draw (.7,1.3) -- (0,2);
	
\draw (.43,1.57) circle (.1cm);
\draw (1.17, 1.57) circle (.1cm);

\draw (.8,1.1) node[below]{$x$};
\draw (3.8,1.3) node[above]{$x$};

%Below crossing
\draw (3,1.2) .. controls (3.4,.6) and (4.2,.6) .. (4.6,1.2);
\draw (3,.4) -- (4.6,2);
\draw (4.6,.4) -- (3.9,1.1);
\draw (3.7,1.3) -- (3,2);
\draw (4.17, .83) circle (.1cm);
\draw (3.43,.83) circle (.1cm);
	
%Arrow
\draw[thick, >=triangle 45, <->] (1.8,1.2) -- (2.8,1.2);

\begin{scope}[rounded corners = .2cm, xshift =8cm]
	%Above crossing
	\draw (0,1.2) .. controls (0.4,1.8) and (1.2,1.8) .. (1.6,1.2);
	\begin{pgfonlayer}{background}
	\draw (0,.4) -- (.8,1.2) -- (0,2);
	\draw (1.6,.4) -- (.8,1.2) -- (1.6,2);
	\end{pgfonlayer}
	\draw[very thick, red, ->] (.6,1) -- (.8,1.2) -- (.6,1.4);
	\draw[very thick, red, ->] (1,1.4) -- (.8,1.2) -- (1,1);
	
	\draw (.43,1.57) circle (.1cm);
	\draw (1.17, 1.57) circle (.1cm);
	
	\draw (.7,1.2) node[left]{$x$};
	\draw (.9,1.2) node[right]{$x$};

	%Below crossing
	\draw (3,1.2) .. controls (3.4,.6) and (4.2,.6) .. (4.6,1.2);
	\begin{pgfonlayer}{background}
	\draw (3,.4) -- (3.8,1.2) -- (3,2);
	\draw (4.6,.4) -- (3.8,1.2) -- (4.6,2);
	\end{pgfonlayer}
	\draw[very thick, red, ->] (3.6,1) -- (3.8,1.2) -- (3.6,1.4);
	\draw[very thick, red, ->] (4,1.4) -- (3.8,1.2) -- (4,1);

	\draw (4.17, .83) circle (.1cm);
	\draw (3.43,.83) circle (.1cm);
	
	\draw (3.7,1.2) node[left]{$x$};
	\draw (3.9,1.2) node[right]{$x$};
	
	%Arrow
	\draw[thick, >=triangle 45, <->] (1.8,1.2) -- (2.8,1.2);

\end{scope}
\end{tikzpicture}$$
\caption{Virtual Reidemeister VII and its all-$A$ resolutions.}
\label{fig:virtReidproof}
\end{figure}
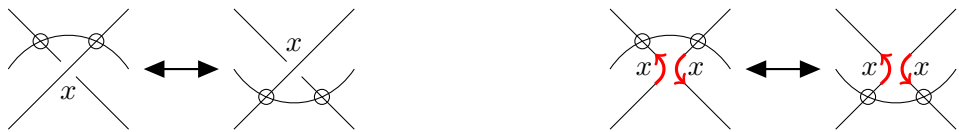
\end{proof}

The main result of this section states that oriented ribbon graph Reidemeister moves induce isomorphisms on Khovanov homology.

\begin{theorem}
\label{thm:reidinv}
Suppose that $\mathbb{G}_1$ and $\mathbb{G}_2$ are ribbon graphs such that $\mathbb{G}_1$ can be transformed into $\mathbb{G}_2$ by a sequence of oriented ribbon graph Reidemeister moves. Then (up to a grading shift), there are isomorphisms
\begin{eqnarray*}
Kh(\mathbb{G}_1) & \cong & Kh(\mathbb{G}_2)~\text{and}\\
\widetilde{Kh}(\mathbb{G}_1) & \cong & \widetilde{Kh}(\mathbb{G}_2).
\end{eqnarray*}
\end{theorem}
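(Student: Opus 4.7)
The plan is to verify invariance one Reidemeister move at a time. For each oriented ribbon graph Reidemeister move $\mathbb{G}_1 \leftrightarrow \mathbb{G}_2$, the move is local, so the larger cube of resolutions decomposes over the coordinates corresponding to the edges introduced by the move, with inner cubes arising in a controlled way from $CKh$ of the smaller ribbon graph. It therefore suffices to produce, for each move, a chain homotopy equivalence between the two sides that commutes with the edge maps attaching to the unchanged portion of the cube.

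For Reidemeister I, one new edge $e$ is added, either as a loop whose two half-edges are cyclically adjacent at a single vertex or as a pendant edge joining a new isolated vertex to $\mathbb{G}_2$. In the loop case, the two resolutions of $e$ differ by splitting one boundary component, giving a local complex $V(\mathbb{G}_2) \xrightarrow{\Delta} V(\mathbb{G}_2) \otimes V$; in the pendant case they differ by merging the new vertex's boundary circle with one in $\mathbb{G}_2$, giving $V(\mathbb{G}_2) \otimes V \xrightarrow{m} V(\mathbb{G}_2)$. The Frobenius structure on $V$ (equivalently, Bar-Natan's cancellation lemma) collapses either local complex to $V(\mathbb{G}_2)$ with the correct grading shift. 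For Reidemeister II the two new edges span a $2\times 2$ subcube whose local analysis exhibits an acyclic direct summand whose cancellation leaves a shifted copy of $CKh(\mathbb{G}_2)$. Reidemeister III is the most intricate and is handled just as in Bar-Natan's proof of classical Khovanov invariance: one compares the eight-vertex subcubes on the two sides, iteratively cancels matching acyclic summands, and identifies the remaining complexes by a natural isomorphism.

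A more conceptual alternative uses the virtual-link bridge developed in Section \ref{section:virtual}. By Chmutov-Pak, every ribbon graph is the all-$A$ ribbon graph of some virtual link diagram, and the local pictures in Figure \ref{fig:Reidproof} (together with Figure \ref{fig:virtReidproof}) exhibit each oriented ribbon graph Reidemeister move as the change in the all-$A$ ribbon graph produced by a single classical or virtual link Reidemeister move on a suitable diagram. Combining the virtual analog of Theorem \ref{theorem:maintheorem1} proved at the end of Section \ref{section:virtual} with Manturov's invariance of Khovanov homology for virtual links \cite{Manturov:KhovanovVirtual} then yields the result, after accounting for the grading shift $[-n_-]\{n_+-2n_-\}$ and its predictable change under each move. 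The main obstacle in either approach is the bookkeeping for Reidemeister III; the reduced statement follows by restricting each chain equivalence above to the $\widetilde{V}$-subcomplex defined in Section \ref{subsec:reduced}.
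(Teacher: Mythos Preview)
Your first approach is essentially the paper's proof. The paper is even terser: it simply observes that Bar-Natan's quasi-isomorphisms for each Reidemeister move depend only on the local combinatorics of how state circles merge and split, and that the oriented ribbon graph Reidemeister moves change the boundary components of the $\Sigma_{\mathbb{G}(I)}$ in exactly the same local pattern, so the same chain homotopy equivalences apply verbatim. Your explicit discussion of the R1 and R2 subcubes is a faithful expansion of what ``identical to Bar-Natan's proof'' means in this setting.

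Your second, virtual-link route is a genuine alternative, but it hides a step that deserves a sentence of justification. Propositions \ref{prop:reidemeister} and \ref{prop:virtualreidemeister} only show that link Reidemeister moves \emph{induce} ribbon graph Reidemeister moves; you need the converse, namely that any oriented ribbon graph Reidemeister move on an arrow presentation $P$ can be realized by a classical Reidemeister move on \emph{some} virtual diagram $D$ with all-$A$ presentation $P$. This is true---one builds $D$ from $P$ as in the Chmutov--Pak observation, then uses the purely virtual moves IV--VII (which do not change the all-$A$ ribbon graph) to bring the relevant arcs into position for the classical move---but it is not literally the content of Figure \ref{fig:Reidproof}. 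Note also the remark following the theorem: ribbon graph Reidemeister equivalence is strictly coarser than virtual link equivalence, so you cannot choose a single virtual realization once and for all and run the whole sequence of moves there. The direct approach avoids this bookkeeping and the dependence on Manturov's invariance theorem; the virtual-link route buys you nothing extra here, though it does make the connection to the existing literature more explicit.
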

\begin{proof}
The proof of this theorem is identical to Bar-Natan's proof \cite{Bar-Natan:Khovanov} of Reidemeister move invariance for Khovanov homology of links. The quasi-isomorphisms between complexes of link diagrams that differ by a single Reidemeister move depend only on the local difference between the state circles in the two different diagrams. Since each ribbon graph Reidemeister move changes the boundary components of the regular neighborhood of the spanning subgraphs in the same way as a Reidemeister move changes the state circles of the Kauffman states, the  Bar-Natan quasi-isomorphisms between complexes generalize naturally to the ribbon graph setting.
\end{proof}

\begin{remark}
The example in Figure \ref{fig:sameribbon} implies that oriented ribbon graphs modulo oriented ribbon graph Reidemeister moves are distinct from virtual link diagrams modulo the classical and virtual Reidemeister moves. The ribbon graph in the example can be transformed into the ribbon graph of the standard diagram of the unknot by a Reidemeister II move followed by two Reidemeister I moves. However, the virtual link diagram on the left of Figure \ref{fig:sameribbon} is not equivalent to the unknot.
\end{remark}

\section{Quasi-tree expansion}
\label{section:quasi}

In this section, we describe a spanning quasi-tree model for ribbon graph homology. Recall that a spanning quasi-tree $\mathbb{H}$ of a connected ribbon graph $\mathbb{G}$ is a spanning ribbon subgraph of $\mathbb{G}$ such that $\Sigma_{\mathbb{H}}$ has one boundary component. 

\subsection{Deletion - ribbon contraction decomposition}

Let $\mathbb{G}$ be a ribbon graph and let $e$ be an edge in $E(\mathbb{G})$. Define $\mathbb{G}\backslash e$ to be the ribbon graph with $e$ removed and the cyclic order of all the other edges preserved. Define the {\em ribbon contraction of $e$}, denoted $\mathbb{G}/ e$, as follows. If $e$ is not a loop, then $\mathbb{G}/ e$ is $\mathbb{G}$ with the edge $e$ contracted and the cyclic ordering of the edges around each vertex induced from $\mathbb{G}$. If $e$ is a loop incident to the vertex $v$, then $\mathbb{G}/ e$ is $\mathbb{G}$ with $e$ deleted and 
the vertex $v$ split into two vertices, with the other edges incident to $v$ as in Figure \ref{fig:ribboncontract}.
A loop $e$ in $\mathbb{G}$ is {\em separating} if $\mathbb{G}/e$ has one more component than $\mathbb{G}$; otherwise, $e$ is {\em nonseparating}.
\begin{figure}[h]
$$\begin{tikzpicture}[thick, >=triangle 45]
\draw (1,1) -- (0.3,0.3) node[left]{$3$};
\draw (1,1) -- (0,1) node[left]{$2$};
\draw (1,1) -- (0.3,1.7) node[left]{$1$};
\draw (1,1) -- (1.7,1.7) node[right]{$5$};
\draw (1,1) -- (1.7,0.3) node[right]{$4$};
\draw (2,1) circle (1cm);
\draw (2.75,1.7) node[right]{$e$};
\fill[black] (1,1) circle (.1cm);
\draw[very thick, ->] (4,1) -- (6,1);
\begin{scope}[xshift = 7cm]
	\draw (1,1) -- (0.3,0.3) node[left]{$3$};
	\draw (1,1) -- (0,1) node[left]{$2$};
	\draw (1,1) -- (0.3,1.7) node[left]{$1$};
	\fill[black] (1,1) circle (.1cm);
\end{scope}
\begin{scope}[xshift = 8cm]
	\fill[black] (1,1) circle (.1cm);
	\draw (1,1) -- (1.7,1.7) node[right]{$5$};
	\draw (1,1) -- (1.7,0.3) node[right]{$4$};
\end{scope}
\end{tikzpicture}$$
\caption{Ribbon contracting the loop $e$ in $\mathbb{G}$ on the left to form the ribbon graph $\mathbb{G}/e$ on the right.}
\label{fig:ribboncontract}
\end{figure}
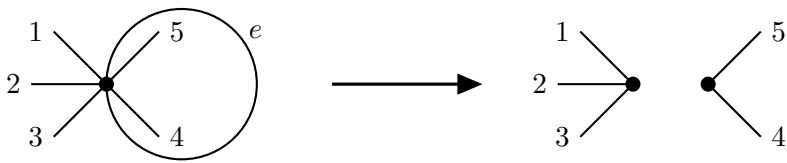

Suppose that $C_0$ and $C_1$ are cochain complexes with differentials $d_0$ and $d_1$ respectively, and that $w:C_0\to C_1$ is a cochain transformation. The mapping cone of $w$ is defined to be the cochain complex $C(w) := C_0 \oplus C_1[1]$ with differential $d_{C(w)}(x,y) := (d_0(x), w(x) - d_1(y))$.

\begin{proposition}
\label{prop:mapping_cone}
Let $\mathbb{G}$ be a ribbon graph with an edge $e$. Then there exists cochain transformations $w$ and $\widetilde{w}$ such that $CKh(\mathbb{G})$ is isomorphic to the mapping cone of $w:CKh(\mathbb{G}\backslash e)\to CKh(\mathbb{G}/e)\{1\}$ and $\widetilde{CKh}(\mathbb{G})$ is isomorphic to the mapping cone of $\widetilde{w}:\widetilde{CKh}(\mathbb{G}\backslash e) \to \widetilde{CKh}(\mathbb{G}/e)\{1\}$.
\end{proposition}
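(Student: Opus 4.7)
I would prove this by splitting the Khovanov hypercube of $\mathbb{G}$ along the coordinate corresponding to $e$. By Proposition \ref{prop:edge_assign} we may reorder the edges so that $e$ is listed last; then $\{0,1\}^n = \{0,1\}^{n-1}\times\{0,1\}$ and $CKh(\mathbb{G})$ splits as a direct sum $C_0 \oplus C_1$ according to whether the $e$-coordinate is $0$ or $1$. For a vertex $I = (I',0)$, the spanning ribbon subgraph $\mathbb{G}(I)$ equals $(\mathbb{G}\setminus e)(I')$ as ribbon graphs, so $C_0 \cong CKh(\mathbb{G}\setminus e)$ as bigraded modules. For $I = (I',1)$, the spanning subgraph $\mathbb{G}(I)$ contains $e$ and its ribbon contraction $\mathbb{G}(I)/e$ equals $(\mathbb{G}/e)(I')$ as ribbon graphs; the goal is to establish $C_1 \cong CKh(\mathbb{G}/e)[1]\{1\}$.

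The module identification for $C_1$ reduces to the face-count lemma: if $\mathbb{H}$ is a spanning ribbon subgraph of $\mathbb{G}$ containing $e$, then $|F(\mathbb{H})| = |F(\mathbb{H}/e)|$. When $e$ is not a loop in $\mathbb{H}$ this is immediate, since $\Sigma_{\mathbb{H}/e}$ is obtained from $\Sigma_{\mathbb{H}}$ by absorbing the band $B_e$ into a single combined vertex disk without altering the boundary structure. When $e$ is a loop in $\mathbb{H}$, I would invoke the standard ribbon-graph identity that ribbon contraction is dual to deletion, $(\mathbb{H}/e)^* = \mathbb{H}^*\setminus e^*_{\mathbb{H}}$. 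Since deletion does not alter the vertex set, $|F(\mathbb{H}/e)| = |V((\mathbb{H}/e)^*)| = |V(\mathbb{H}^*)| = |F(\mathbb{H})|$. Combined with the height identity $h((I',1)) = h(I')+1$, this lemma yields $V(\mathbb{G}(I)) \cong V((\mathbb{G}/e)(I'))[1]\{1\}$, and summing over $I'$ gives the required $C_1 \cong CKh(\mathbb{G}/e)[1]\{1\}$.

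It then remains to recognize the Khovanov differential as the mapping cone differential. The directed edges of the hypercube split into three classes: those internal to the $0$-subcube, those internal to the $1$-subcube, and those of $e$-type that cross from $C_0$ to $C_1$. Because $e$ is listed last, the edge assignment $\epsilon$ restricts on the first two classes to natural edge assignments on the hypercubes for $\mathbb{G}\setminus e$ and $\mathbb{G}/e$, so these edge maps assemble into the differentials of $CKh(\mathbb{G}\setminus e)$ and $CKh(\mathbb{G}/e)[1]\{1\}$ respectively. The $e$-type edge maps (after absorbing the sign $(-1)^{h(I')}$ coming from $\epsilon$) assemble into a bigraded homomorphism $w\colon CKh(\mathbb{G}\setminus e) \to CKh(\mathbb{G}/e)\{1\}$, and the anticommutativity of Khovanov edge maps around each hypercube square mixing the $e$-direction with another direction is precisely the chain-map identity for $w$. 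Hence $(CKh(\mathbb{G}),d)$ is isomorphic to the mapping cone of $w$. The reduced case follows by restricting everywhere to $\widetilde{V}(\mathbb{G}(I))$; the marked point can be canonically tracked through either deletion of $e$ or the vertex-splitting that occurs when a loop at the marked vertex is ribbon-contracted.

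The principal obstacle I foresee is the sign bookkeeping for the $e$-direction edge maps: one must verify that after the prescribed sign adjustments the map $w$ realizes the decomposition as a bona fide mapping cone on the nose, rather than merely up to a subsequent change of edge assignment. The face-count lemma, while conceptually central to the statement, is quickly dispatched once ribbon-graph duality is invoked.
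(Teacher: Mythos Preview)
Your approach is correct and essentially identical to the paper's: both split the hypercube along the $e$-coordinate, identify the two halves with $CKh(\mathbb{G}\setminus e)$ and $CKh(\mathbb{G}/e)[1]\{1\}$ via the face-count identity, and read off the mapping cone structure from the block form of the differential. You supply more detail where the paper is terse---invoking ribbon-graph duality for the loop case of the face-count lemma (which the paper simply declares ``straightforward to check'') and explicitly flagging the sign bookkeeping---but the underlying argument is the same.
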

\begin{proof}
The spanning ribbon subgraphs of $\mathbb{G}$ can be partitioned into two sets: ribbon subgraphs not containing the edge $e$ and ribbon subgraphs containing the edge $e$. There is a natural identification between the spanning ribbon subgraphs of $\mathbb{G}\backslash e$ and the spanning ribbon subgraphs of $\mathbb{G}$ not containing the edge $e$ that assigns to a ribbon subgraph $\mathbb{H}\in S(\mathbb{G}\backslash e)$ the spanning ribbon subgraph of $\mathbb{G}$ containing the same edges as $\mathbb{H}$. Similarly, there is a natural identification between the spanning ribbon subgraphs of $\mathbb{G}/ e$ and the spanning ribbon subgraphs of $\mathbb{G}$ that assigns to a ribbon subgraph $\mathbb{H}\in S(\mathbb{G} / e)$ the spanning ribbon subgraph of $\mathbb{G}$ containing all the edges of $\mathbb{H}$ plus the edge $e$. Together, these identifications define a bijection
$$f: S(\mathbb{G}\backslash e)\cup S(\mathbb{G}/e)\to S(\mathbb{G}).$$

It is straightforward to check that $\Sigma_{\mathbb{H}}$ and $\Sigma_{f(\mathbb{H})}$ have the same number of boundary components. Therefore, as modules we have $CKh(\mathbb{G}) = CKh(\mathbb{G}\backslash e)\oplus CKh(\mathbb{G}/ e)[1]\{1\}$, where the shift grading occurs because if $\mathbb{H}\in S(\mathbb{G}/e)$, then $\mathbb{H}$ has one less edge than $f(\mathbb{H})$. 

Let $d_0$ and $d_1$ denote the differentials in the complex $C(\mathbb{G}\backslash e)$ and $C(\mathbb{G}/e)$ respectively. Then $d_0$ and $d_1$ can be represented by square matrices (also called $d_0$ and $d_1$) whose rows and columns are indexed by bases of $CKh(\mathbb{G}\backslash e)$ and $CKh(\mathbb{G}/e)$ respectively. The differential of $C(\mathbb{G})$ is given by the block matrix
$$d=\left(
\begin{array}{c | c}
d_0 & 0\\
\hline
w & -d_1
\end{array}\right),$$
where the first block of rows is indexed by a basis of $\bigoplus_{I \in\mathcal{V}(n)} V(\mathbb{G}(I))$ where $\mathbb{G}(I)$ does not contain $e$, the second block of rows is indexed by $\bigoplus_{I\in\mathcal{V}(n)}V(\mathbb{G}(I))$ where $\mathbb{G}(I)$ contains $e$, and likewise for the two blocks of columns. The bottom right block is $-d_1$ instead of $d_1$ since all edges in the hypercube for $\mathbb{G}$ contain one more edge (the edge $e$) than the corresponding edges in the hypercube of $\mathbb{G}/e$. Therefore $CKh(\mathbb{G})$ is isomorphic to the mapping cone of $w$. The proof for $\widetilde{CKh}(\mathbb{G})$ is similar.
\end{proof}

Let $\mathbb{G}$ be a connected ribbon graph with edges $e_1,\dots, e_n$. We construct a binary tree  $\mathcal{T}(\mathbb{G})$ whose vertices correspond to ribbon graphs recursively constructed from $\mathbb{G}$. By a slight abuse of notation, we label the vertices of $\mathcal{T}(\mathbb{G})$ by their associated ribbon graphs. The root of the tree $\mathcal{T}(\mathbb{G})$ is $\mathbb{G}$. The {\em depth} of a vertex in $\mathcal{T}(\mathbb{G})$ is the length of the path from the vertex to the root. Suppose that the ribbon graph $\mathbb{H}$ is a vertex of depth $k$. If the edge $e_{n-k}$ is either a bridge or a separating loop in $\mathbb{H}$, then $\mathbb{H}$ has only one child vertex and the ribbon graph assigned to that vertex is also $\mathbb{H}$. If the edge $e_{n-k}$ is neither a bridge nor a separating loop, then $\mathbb{H}$ has two children: $\mathbb{H}\setminus e_{n-k}$ and $\mathbb{H}/e_{n-k}$. The leaves of $\mathcal{T}(\mathbb{G})$ are those vertices with depth $n$. Let $\mathcal{L}(\mathbb{G})$ denote the set of leaves of $\mathcal{T}(\mathbb{G})$. Figure \ref{fig:res_tree} depicts an example of the tree $\mathcal{T}(\mathbb{G})$ for the ribbon graph of Figure \ref{fig:fatgraph}.

Define a map $\tau:\mathcal{L}(\mathbb{G})\to S(\mathbb{G})$ from the leaves of $\mathcal{T}(\mathbb{G})$ to the spanning ribbon subgraphs of $\mathbb{G}$ as follows. Suppose $\mathbb{L}$ is a leaf of $\mathcal{T}(\mathbb{G})$. Since the codomain of $\tau$ is the set of spanning ribbon subgraphs of $\mathbb{G}$, it follows that the ribbon graph $\tau(\mathbb{L})$ is determined by its edges. There is a unique path $P$ in $\mathcal{T}(\mathbb{G})$ of length $n$ between $\mathbb{L}$ and the root $\mathbb{G}$. Let $\mathbb{H}$ and $\mathbb{H}^\prime$ be vertices in $P$ such that the depth of $\mathbb{H}$ is $k$ and the depth of $\mathbb{H}^\prime$ is $k+1$. Either the edge $e_{n-k}$ is a bridge or separating loop in $\mathbb{H}$, or $\mathbb{H}^\prime$ is obtained from $\mathbb{H}$ by deleting or ribbon contracting $e_{n-k}$. The edge $e_{n-k}$ is in $\tau(\mathbb{L})$ if and only if either $e_{n-k}$ is a bridge in $\mathbb{H}$ or $\mathbb{H}^\prime$ is obtained from $\mathbb{H}$ by ribbon contracting $e_{n-k}$. The following proposition states that $\tau$ is actually a bijection onto the set $\mathcal{Q}(\mathbb{G})$ of spanning quasi-trees of $\mathbb{G}$.

\begin{proposition}
Let $\mathbb{G}$ be a connected ribbon graph with edges $e_1,\dots, e_n$. The map $\tau:\mathcal{L}(\mathbb{G})\to S(\mathbb{G})$ is a bijection onto the set $\mathcal{Q}(\mathbb{G})$ of spanning quasi-trees of $\mathbb{G}$.
\end{proposition}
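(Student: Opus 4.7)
The plan is induction on $n=|E(\mathbb{G})|$, with base case $n=0$ being immediate: $\mathbb{G}$ is a single vertex, $\mathcal{T}(\mathbb{G})$ is just its root (which is already a leaf), and the unique spanning quasi-tree is $\mathbb{G}$ itself.

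The heart of the argument is a deletion/ribbon-contraction decomposition of $\mathcal{Q}(\mathbb{G})$ that mirrors the recursive definition of $\mathcal{T}(\mathbb{G})$. Setting $e=e_n$, the following three statements form the key lemma: (i) if $e$ is a bridge in $\mathbb{G}$, then every spanning quasi-tree contains $e$ (otherwise $\mathbb{Q}$ would be disconnected and $\Sigma_{\mathbb{Q}}$ would have at least $2$ boundary components), and $\mathbb{Q}\mapsto\mathbb{Q}/e$ is a bijection $\mathcal{Q}(\mathbb{G})\to\mathcal{Q}(\mathbb{G}/e)$; (ii) if $e$ is a separating loop in $\mathbb{G}$, then no spanning quasi-tree contains $e$, so inclusion gives a bijection $\mathcal{Q}(\mathbb{G})\to\mathcal{Q}(\mathbb{G}\setminus e)$; (iii) otherwise, $\mathcal{Q}(\mathbb{G})$ partitions into the quasi-trees avoiding $e$ (bijecting with $\mathcal{Q}(\mathbb{G}\setminus e)$ via inclusion) and those containing $e$ (bijecting with $\mathcal{Q}(\mathbb{G}/e)$ via $\mathbb{Q}\mapsto\mathbb{Q}/e$). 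For (ii), the crucial point is that disconnectedness of $\mathbb{G}/e$ passes to any spanning subgraph, so if a quasi-tree $\mathbb{Q}$ contained $e$ then $e$ would also be a separating loop in $\mathbb{Q}$; then writing $\mathbb{Q}/e=\mathbb{Q}_{1}\sqcup\mathbb{Q}_{2}$, one checks that merging $v_{1}\in\mathbb{Q}_{1}$ and $v_{2}\in\mathbb{Q}_{2}$ identifies two distinct boundary components, so $F(\mathbb{Q}\setminus e)=F(\mathbb{Q}_{1})+F(\mathbb{Q}_{2})-1\geq 1$, while the two attaching arcs of $e$ in $\Sigma_{\mathbb{Q}\setminus e}$ lie on the resulting common boundary component, forcing $F(\mathbb{Q})=F(\mathbb{Q}\setminus e)+1\geq 2$ and contradicting $F(\mathbb{Q})=1$.

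Given the lemma, the induction splits into the same three cases as the root of $\mathcal{T}(\mathbb{G})$. When $e_n$ is a bridge, the root has a single child $\mathbb{G}$, and the subtree beneath it is naturally identified with $\mathcal{T}(\mathbb{G}/e_n)$ under $\mathbb{H}\leftrightarrow\mathbb{H}/e_n$; this uses that $\setminus e_i$ and $/e_i$ commute with $/e_n$ for $i<n$, and that the bridge and separating-loop status of each $e_i$ is preserved by contraction of a bridge. The inductive hypothesis applied to $\mathbb{G}/e_n$, composed with the bijection in (i), then gives the desired $\mathcal{L}(\mathbb{G})\xrightarrow{\tau}\mathcal{Q}(\mathbb{G})$. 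The separating-loop case is analogous, with $\mathbb{G}\setminus e_n$ in place of $\mathbb{G}/e_n$. In the remaining case, the root has two children whose subtrees are canonically $\mathcal{T}(\mathbb{G}\setminus e_n)$ and $\mathcal{T}(\mathbb{G}/e_n)$, and the partition in (iii) glues the two inductive bijections into the required one.

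The main obstacle is statement (ii) of the lemma, which rests on the topological boundary-component analysis sketched above. A slicker alternative route uses the duality of Section~\ref{section:ribbongraphs}: one verifies that an edge $e$ is a separating loop in $\mathbb{G}$ precisely when $e^{*}$ is a bridge in $\mathbb{G}^{*}$, and that the complement map $\mathbb{Q}\leftrightarrow\widehat{\mathbb{Q}}$ restricts to a bijection $\mathcal{Q}(\mathbb{G})\to\mathcal{Q}(\mathbb{G}^{*})$ (since $\Sigma_{\mathbb{Q}}$ has one boundary component if and only if $\Sigma_{\widehat{\mathbb{Q}}}$ does); then (ii) reduces to (i) applied to $\mathbb{G}^{*}$. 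A secondary bookkeeping task is to verify that the bridge and separating-loop status of the remaining edges $e_1,\ldots,e_{n-1}$ is unchanged by contracting a bridge or deleting a separating loop, so that the recursion trees for $\mathbb{G}$ and for $\mathbb{G}/e_n$ (respectively $\mathbb{G}\setminus e_n$) really do match vertex by vertex below the root.
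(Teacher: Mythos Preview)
Your proof is correct and follows essentially the same deletion/ribbon-contraction induction as the paper's argument; the paper organizes it as three separate steps (injectivity from the construction, image contained in $\mathcal{Q}(\mathbb{G})$, surjectivity by induction on $|E(\mathbb{G})|$), while you package the whole bijection into a single induction via the explicit lemma (i)--(iii). Your treatment is in fact more careful on one point the paper leaves implicit: the verification in (ii) that no spanning quasi-tree can contain a separating loop.
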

\begin{proof}
It is clear from the construction that $\tau$ is injective. It remains to show that the image of $\tau$ is contained in $\mathcal{Q}(\mathbb{G})$ and that $\tau$ is a surjection onto $\mathcal{Q}(\mathbb{G})$.

Let $\mathbb{L}$ be a leaf in $\mathcal{T}(\mathbb{G})$. If each edge of $\tau(\mathbb{L})$ is ribbon contracted, then the resulting ribbon graph is a single vertex. If $\mathbb{H}$ is a connected ribbon graph such that $\Sigma_{\mathbb{H}/e}$ has one boundary component for some edge $e$, then $\Sigma_{\mathbb{H}}$ also has one boundary component. It follows that $\tau(\mathbb{L})$ is a spanning quasi-tree of $\mathbb{G}$, and hence the image of $\tau$ is contained in $\mathcal{Q}(\mathbb{G})$.

Let $\mathbb{T}\in\mathcal{Q}(\mathbb{G})$ be a spanning quasi-tree of $\mathbb{G}$. By way of induction, if $e_n$ is not an edge in $\mathbb{T}$, assume that $\mathbb{T}$ is in the image of $\tau(\mathbb{G}\setminus e_n)$, and if $e_n$ is an edge of $\mathbb{T}$, assume that $\mathbb{T}/e_n$ is in the image of $\tau(\mathbb{G}/e_n)$. The edge $e_n$ is either a bridge, a separating loop, or neither in $\mathbb{G}$. If $e_n$ is a separating loop, then $\mathbb{T}$ is a spanning quasi-tree of $\mathbb{G}\setminus e_n$. If $e_n$ is a bridge, then $\mathbb{T}/e_n$ is a spanning quasi-tree of $\mathbb{G}/e_n$. Suppose $e_n$ is neither a bridge nor a separating loop in $\mathbb{G}$. If $e_n$ is an edge in $\mathbb{T}$, then $\mathbb{T}/e_n$ is a spanning quasi-tree of $\mathbb{G}/e_n$, and if $e_n$ is not an edge in $\mathbb{T}$, then $\mathbb{T}$ is a spanning quasi-tree of $\mathbb{G}\setminus e_n$.  Therefore, $\mathbb{T}$ is in the image $\tau(\mathbb{G})$, and hence $\tau$ is a bijection between the leaves of $\mathcal{T}(\mathbb{G})$ and the spanning quasi-trees of $\mathbb{G}$.
\end{proof}

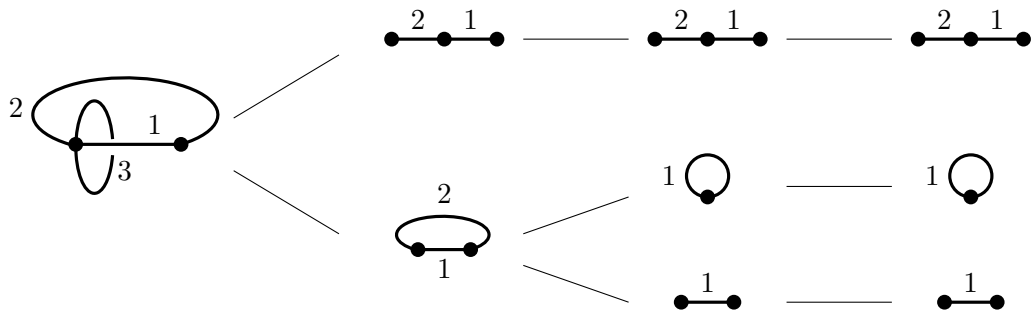
\begin{figure}[h]
$$\begin{tikzpicture}[scale=.7]
%root
	\fill[black] (-6,0) circle (4pt);
	\fill[black] (-8,0) circle (4pt);
	\draw[very thick] (-8,0) -- (-6,0);
	\draw (-6.5,0) node[above]{$1$};
	\draw[very thick] (-7.3,0.1) arc (10:350:10pt and 25pt);
	\draw (-7.4,-.5) node[right]{$3$};
	\draw[very thick] (-6,0) arc (-53:233:50pt and 20pt);
	\draw (-8.8,.7) node[left]{$2$};
%c2top
	\fill[black] (-2,2) circle (4pt);
	\fill[black] (-1,2) circle (4pt);
	\fill[black] (0,2) circle (4pt);
	\draw[very thick] (-2,2) -- (0,2);
	\draw(-1.5,2) node[above]{$2$};
	\draw(-.5,2) node[above]{$1$};
%c2bottom
	\fill[black] (-1.5,-2) circle (4pt);
	\fill[black] (-.5,-2) circle (4pt);
	\draw[very thick] (-1.5,-2) -- (-.5,-2);
	\draw (-1,-2) node[below]{$1$};
	\draw[very thick] (-.5,-2) arc (-53:233:25pt and 10 pt);
	\draw (-1,-1.3) node[above]{$2$};
%c3top
	\begin{scope}[xshift = 5cm]
		\fill[black] (-2,2) circle (4pt);
		\fill[black] (-1,2) circle (4pt);
		\fill[black] (0,2) circle (4pt);
		\draw[very thick] (-2,2) -- (0,2);
		\draw(-1.5,2) node[above]{$2$};
		\draw(-.5,2) node[above]{$1$};
	\end{scope}
%c3middle
	\fill[black] (4,-1) circle (4pt);
	\draw[very thick] (4,-.6) circle (.4cm);
	\draw (3.6, -.6) node[left]{$1$};
%c3bottom
	\fill[black] (3.5,-3) circle (4pt);
	\fill[black] (4.5,-3) circle (4pt);
	\draw[very thick] (3.5,-3) -- (4.5,-3);
	\draw (4,-3) node[above]{$1$};
%c4top
	\begin{scope}[xshift = 10cm]
		\fill[black] (-2,2) circle (4pt);
		\fill[black] (-1,2) circle (4pt);
		\fill[black] (0,2) circle (4pt);
		\draw[very thick] (-2,2) -- (0,2);
		\draw(-1.5,2) node[above]{$2$};
		\draw(-.5,2) node[above]{$1$};
	\end{scope}
%c4middle
	\begin{scope}[xshift = 5cm]
		\fill[black] (4,-1) circle (4pt);
		\draw[very thick] (4,-.6) circle (.4cm);
		\draw (3.6, -.6) node[left]{$1$};
	\end{scope}
%c4bottom
	\begin{scope}[xshift = 5cm]
		\fill[black] (3.5,-3) circle (4pt);
		\fill[black] (4.5,-3) circle (4pt);
		\draw[very thick] (3.5,-3) -- (4.5,-3);
		\draw (4,-3) node[above]{$1$};
	\end{scope}
%edges of the tree
	\draw (-5,.5) -- (-3,1.7);
	\draw (-5,-.5) -- (-3, -1.7);
	\draw (.5,2) -- (2.5,2);
	\draw (5.5,2) -- (7.5,2);
	\draw (5.5,-.8) -- (7.5,-.8);
	\draw (5.5, -3) -- (7.5,-3);
	\draw (.5, -1.7) -- ( 2.5,-1);
	\draw (.5, -2.3) -- (2.5, -3);
\end{tikzpicture}$$
\caption{The tree $\mathcal{T}(\mathbb{G})$ for the ribbon graph of Figure \ref{fig:fatgraph}.}
\label{fig:res_tree}
\end{figure}

\subsection{Quasi-tree expansion}

In this section, we describe a decomposition $CKh(\mathbb{G})\cong A\oplus B$ where $B$ is a contractible complex and $A$ is a complex generated by the spanning quasi-trees of $\mathbb{G}$. Our approach is a modification of Wehrli's method for proving a similar result in Khovanov homology for links \cite{Wehrli:SpanningTrees}. 

A knot diagram that can be transformed into the crossingless diagram of the unknot via a sequence of Reidemeister I moves is called a {\em twisted unknot}.

\begin{proposition}
\label{prop:twisted_unknot}
Let $\mathbb{L}$ be a leaf in the resolution tree $\mathcal{T}(\mathbb{G})$ of a connected ribbon graph $\mathbb{G}$. Then $\mathbb{L}$ is the all-$A$ ribbon graph of a twisted unknot.
\end{proposition}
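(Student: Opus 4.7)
My plan is to first characterize the structure of a leaf $\mathbb{L}$ of $\mathcal{T}(\mathbb{G})$ and then prove the result by induction on $|E(\mathbb{L})|$, peeling off one Reidemeister I move at each step. I would start by verifying that any such leaf $\mathbb{L}$ is a connected ribbon graph in which every edge is either a bridge or a separating loop of $\mathbb{L}$. Connectedness passes through every node of $\mathcal{T}(\mathbb{G})$ because deleting a non-bridge and ribbon contracting a non-separating loop both preserve the number of connected components. For the second property, observe that an edge is retained in the construction only when it is a bridge or a separating loop at the moment it is processed, and both of these features persist under the subsequent operations on \emph{other} edges: further deletions and ribbon contractions cannot merge components of $\mathbb{L}\setminus e$, nor can they merge the two components produced by ribbon contracting a separating loop, and the two half-edges of such a loop are never interleaved with those of any loop contracted later.

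I would then prove the following general statement by induction on $|E(\mathbb{L})|$: a connected ribbon graph in which every edge is a bridge or separating loop is the all-$A$ ribbon graph of some twisted unknot. The base case $|E(\mathbb{L})|=0$ is immediate: $\mathbb{L}$ is a single vertex, which is the all-$A$ ribbon graph of the crossingless unknot diagram. For the inductive step I would produce an edge $e$ that is either (a) a bridge incident to a vertex of degree one in $\mathbb{L}$, or (b) a separating loop whose two half-edges are adjacent in the cyclic order at its endpoint; each of these corresponds to the inverse of one of the two Reidemeister I moves depicted in Figure \ref{fig:R1}. Removing such an $e$ leaves a connected ribbon graph $\mathbb{L}_0$ in which every edge is still a bridge or a separating loop, so by induction $\mathbb{L}_0$ is the all-$A$ ribbon graph of some twisted unknot $D_0$; performing on $D_0$ the Reidemeister I move that induces (in the sense of the proof of Proposition \ref{prop:reidemeister}) the inverse of the ribbon graph move used to remove $e$ then produces a twisted unknot $D$ with all-$A$ ribbon graph $\mathbb{L}$.

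The crucial geometric ingredient, which I expect to be the main obstacle, is showing that such an edge $e$ always exists. The key fact is that separating loops at a common vertex $v$ must have pairwise non-interleaved half-edges, for if two loops $l_1,l_2$ at $v$ had half-edges in cyclic order $l_1^a,l_2^a,l_1^b,l_2^b$ then ribbon contracting $l_1$ would attach one half-edge of $l_2$ to each of the two vertices produced by the split, so $l_2$ would remain an edge joining them and $l_1$ would fail to be separating. Consequently the loop half-edges at any vertex of $\mathbb{L}$ form a non-crossing matching in the cyclic order, and any such matching has an innermost matched pair. Now delete all the loops from $\mathbb{L}$ to obtain a subgraph $\mathbb{L}'$ which is connected with every edge a bridge, i.e.\ a tree. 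If $\mathbb{L}'$ has no edges then $\mathbb{L}$ is a single vertex carrying only separating loops, and the innermost pair of the non-crossing matching yields a loop with adjacent half-edges (case (b)). Otherwise $\mathbb{L}'$ has a leaf vertex $v$; if $v$ carries no loop of $\mathbb{L}$, its unique incident bridge gives case (a), while if $v$ does carry loops, cutting the cyclic order at $v$ along the unique bridge half-edge converts the matching into a non-crossing matching on an arc, whose innermost matched pair is adjacent in the full cyclic order at $v$ and provides case (b).
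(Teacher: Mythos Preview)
Your proof is correct and takes a genuinely different route from the paper's. The paper constructs the twisted unknot diagram for $\mathbb{L}$ all at once: it replaces each vertex of $\mathbb{L}$ by a circle in the plane, each bridge by a blue arc in the exterior of the circles, and each separating loop by a red arc in the interior of the relevant circle, and then converts each arc into a crossing; the key observations are that the bridges form a tree (so the circles and blue arcs embed planarly without intersections) and that separating loops at a common vertex have non-interleaved half-edges (so the red arcs inside each circle are pairwise disjoint). You instead build the diagram inductively, peeling off one Reidemeister~I move at a time by locating an edge that is either a bridge to a degree-one vertex or a separating loop with adjacent half-edges. Both arguments rest on the same underlying structural facts about bridges and separating loops, but yours makes the correspondence with Reidemeister~I moves explicit at every step and also supplies a justification (which the paper simply asserts) that every edge of a leaf $\mathbb{L}$ is a bridge or a separating loop. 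The paper's direct construction is shorter and yields a global picture of the twisted unknot in one stroke; your inductive argument is more modular and connects naturally with the ribbon-graph Reidemeister moves of Section~\ref{section:Reidemeister}. One small wording issue: in case~(a) your phrase ``removing such an $e$'' must be read as performing the inverse of the first Reidemeister~I move in Figure~\ref{fig:R1}, i.e.\ deleting both the bridge $e$ and its degree-one endpoint, in order for $\mathbb{L}_0$ to remain connected.
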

\begin{proof}
Each edge of $\mathbb{L}$ is either a bridge or a separating loop. A twisted unknot can be constructed from $\mathbb{L}$ as follows. Replace each vertex of $\mathbb{L}$ with a circle. Replace each non-loop edge $e$ of $\mathbb{L}$ with a blue arc that connects the two circles corresponding to the endpoints of the edge $e$. The arc should lie in the exterior of the circles. Replace each loop $e$ at a vertex $v$ of $\mathbb{L}$ with a red arc whose interior lies inside the circle corresponding to $v$ and whose endpoints lie on the circle corresponding to $v$. Furthermore, arrange the red and blue arcs so that the cyclic order of the arcs around any circle is the same as the cyclic order of the half edges around the corresponding vertex in $\mathbb{L}$. 

Since $\mathbb{L}$ is a tree with loops, it follows that the blue arcs can be embedded into the plane so that they are pairwise nonintersecting. The red arcs are also pairwise nonintersecting since they correspond to separating loops.

The diagram of circles and blue and red arcs can be modified to obtained a twisted unknot. Replace each blue arc and neighborhood of the endpoints of the arc in the circles with a crossing whose $A$-resolution is locally a diagram of a neighborhood of the endpoints of the blue arc in the circles. Similarly, replace each red arc and a neighborhood of its endpoints in the circle with a crossing whose $B$-resolution results is locally a diagram of a neighborhood of the endpoints of the red arc in the circle. The resulting diagram is a twisted unknot.
\end{proof}

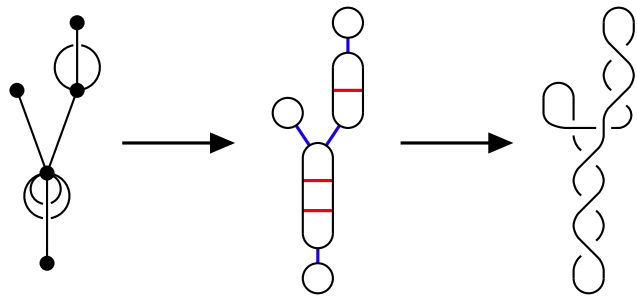
\begin{figure}[h]
$$\begin{tikzpicture}[rounded corners = 1mm,thick,rotate= 90,>=triangle 45]
%arrows
\draw[->, very thick] (1.8,7) -- (1.8,5.5);
\draw[->, very thick] (1.8,3.3) -- (1.8,1.8);
%unknot diagram
\draw (0,1) arc (90:270:.2cm);
\draw (0,0.6) -- (0.2,0.6) -- (0.6,1)--(0.8,1) -- (1.2,.6) -- (1.4,.6) -- (1.5,.7);
\draw (0,1) -- (0.2,1) -- (0.3,0.9);
\draw (0.5,0.7) -- (0.6,0.6) -- (0.8,0.6) -- (0.9,0.7);
\draw (1.1,0.9) -- (1.2,1) -- (1.4,1) -- (1.8,.6) -- (2.2,.6) --(2.6,.2) -- (2.8, .2) -- (3.2,.6)--(3.4,.6);
\draw (3.4,0.2) arc (-90:90:.2cm);
\draw (3.4,0.2) -- (3.2,0.2) -- (3.1,0.3);
\draw (2.9,0.5) -- (2.8,0.6) -- (2.6,0.6) -- (2.5,0.5);
\draw (2.3,0.3) -- (2.2,0.2) -- (2,.3) -- (2,.5);
\draw (1.7,.9) -- (1.8,1) --(1.9,1);
\draw (2,.7) -- (2,1.2) -- (2.1,1.4) -- (2.4,1.4);
\draw (2.4,1)--(2.1,1);
\draw (2.4,1) arc (-90:90:.2cm);
%resolution
\begin{scope}[yshift = 2cm]
\draw (3.4,2) circle (.2cm);
\draw (2.8,1.8) arc (-90:90:.2cm);
\draw (2.2,1.8) -- (2.8,1.8);
\draw (2.2,2.2) -- (2.8, 2.2);
\draw (2.2,2.2) arc (90:270:.2cm);
\draw (2.2,2.8) circle (.2cm);
\draw (1.6,2.2) arc (-90:90:.2cm);
\draw (0,2.4) circle (.2cm);
\draw (0.6,2.6) arc (90:270:.2cm);
\draw (0.6,2.6) -- (1.6,2.6);
\draw (0.6,2.2) -- (1.6,2.2);
\begin{pgfonlayer}{background}
	\draw[very thick, blue!90!red] (0.2,2.4) -- (0.4,2.4);
	\draw[very thick, red!90!blue] (0.9,2.6) -- (0.9,2.2);
	\draw[very thick, red!90!blue] (1.3,2.6) -- (1.3,2.2);
	\draw[very thick, red!90!blue] (2.5,1.8) -- (2.5,2.2);
	\draw[very thick, blue!90!red] (3.2,2) -- (3,2);
	\fill[white] (2.2,2.8) circle (.2cm);
	\fill[white] (1.6,2.4) circle (.2cm);
	\fill[white] (2.2,2) circle (.2cm);
\end{pgfonlayer}
\begin{pgfonlayer}{background2}
	\draw[very thick, blue!90!red] (2.2,2.8) -- (1.6,2.4);
	\draw[very thick, blue!90!red] (2.2,2) -- (1.6,2.4);
\end{pgfonlayer}
\end{scope}
%graph
\begin{scope}[yshift = 4cm]
\fill[black] (3.4,3.6) circle (.1cm);
\fill[black] (2.5,3.6) circle (.1cm);
\fill[black] (2.5, 4.4) circle (.1cm);
\fill[black] (1.4,4) circle (.1 cm);
\fill[black] (.2,4) circle (.1 cm);
\draw (.2,4) -- (1.4,4);
\draw (1.4,4) -- (2.5,4.4);
\draw (1.4,4) -- (2.5,3.6);
\draw (2.5,3.6) -- (3.4,3.6);
\draw (3.1,3.65) arc (10:350:.3cm);
\draw (1, 3.95) arc (-160:170:.2cm);
\draw (.8, 3.95) arc (-170:170:.3cm);
\end{scope}
\end{tikzpicture}$$
\caption{An example showing the process described in the proof of Proposition \ref{prop:twisted_unknot}.}
\end{figure}

Recall that $V=\mathbb{Z}\oplus\mathbb{Z}$ where the two summands have bigradings $(0,-1)$ and $(0,1)$ and $\widetilde{V}=\mathbb{Z}$ supported in bigrading $(0,0)$ . One can consider both $V$ and $\widetilde{V}$ to be a bigraded cochain complexes with zero differential.

Since each leaf of $\mathcal{T}(\mathbb{G})$ is the all-$A$ ribbon graph of a twisted unknot, one can easily compute the Khovanov homology of the leaves. If $\mathbb{L}$ is a leaf of $\mathcal{T}(\mathbb{G})$, then let $\text{Bridge}(\mathbb{L})$ be the number of edges which are bridges in $\mathbb{L}$ and let $\text{Loop}(\mathbb{L})$ be the number of edges which are loops in $\mathbb{L}$.

\begin{lemma}
\label{lemma:contractible}
Let $\mathbb{L}$ be a leaf in the resolution tree $\mathcal{T}(\mathbb{G})$. The complex $CKh(\mathbb{L})$ is isomorphic to a complex 
$$V[\text{Loop}(\mathbb{L})]\{2~\text{Loop}(\mathbb{L}) - \text{Bridge}(\mathbb{L})\}\oplus\mathcal{B},$$
and the complex $\widetilde{CKh}(\mathbb{L})$ is isomorphic to a complex
$$\widetilde{V}[\text{Loop}(\mathbb{L})]\{2~\text{Loop}(\mathbb{L})  - \text{Bridge}(\mathbb{L})\}\oplus\widetilde{\mathcal{B}},$$
where the complexes $\mathcal{B}$ and $\widetilde{\mathcal{B}}$ are contractible.
\end{lemma}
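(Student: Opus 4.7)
The plan is to prove the lemma by induction on the number of edges $n$ of $\mathbb{L}$. To make the inductive step go through, I would first strengthen the statement to allow $\mathbb{L}$ to be a (possibly disconnected) ribbon graph in which every edge is a bridge or a separating loop, replacing $V$ by $V^{\otimes c}$ (and $\widetilde{V}$ by $\widetilde{V}\otimes V^{\otimes(c-1)}$ with the marked point on one distinguished component), where $c$ is the number of connected components of $\mathbb{L}$. The base case $n=0$ is trivial: $\mathbb{L}$ is $c$ isolated vertices, $\Sigma_{\mathbb{L}}$ has $c$ boundary components, and $CKh(\mathbb{L})=V^{\otimes c}$ with both grading shifts equal to $0$, so we may take $\mathcal{B}=0$.

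For the inductive step, choose any edge $e$. By Proposition \ref{prop:mapping_cone}, $CKh(\mathbb{L})$ is the mapping cone of a chain map $w:CKh(\mathbb{L}\setminus e)\to CKh(\mathbb{L}/e)\{1\}$. Since $e$ is either a bridge or a separating loop, both $\mathbb{L}\setminus e$ and $\mathbb{L}/e$ are again leaf graphs with $n-1$ edges, and the inductive hypothesis provides decompositions of their Khovanov complexes as a ``main piece'' (a tensor power of $V$ in a specific bidegree) plus a contractible complex. Because any chain map out of a contractible complex (or into one) is null-homotopic, we can homotope $w$ to a map that is block-diagonal with respect to these splittings; the mapping cone then splits as the direct sum of the cone on the restriction to the main pieces and the cone on the restriction to the contractible pieces, the latter being contractible. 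If $e$ is a bridge, then $\mathbb{L}\setminus e$ has $c+1$ components and $\mathbb{L}/e$ has $c$, each with $\mathrm{Bridge}(\mathbb{L})-1$ bridges and $\mathrm{Loop}(\mathbb{L})$ loops; adding $e$ merges two boundary components of $\Sigma_{\mathbb{H}}$ for every $\mathbb{H}\subseteq\mathbb{L}\setminus e$, so the restricted map is $m\otimes\mathrm{id}^{\otimes(c-1)}:V^{\otimes(c+1)}\to V^{\otimes c}$ on the factors corresponding to the two endpoints of $e$. Its kernel is $V^{\otimes c}\{-1\}$ and its cokernel is zero; chasing the bidegrees gives the advertised main piece $V^{\otimes c}[\mathrm{Loop}]\{2\mathrm{Loop}-\mathrm{Bridge}\}$ plus a contractible summand. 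If $e$ is a separating loop, an identical argument using $\Delta\otimes\mathrm{id}^{\otimes(c-1)}:V^{\otimes c}\to V^{\otimes(c+1)}$ produces cokernel $V^{\otimes c}\{1\}$ placed in the correct bidegree, again matching the formula. The reduced case is proved by running the same induction on the subcomplex $\widetilde{CKh}$, where the marked boundary component is preserved throughout deletion and ribbon contraction.

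The hard part will be verifying that the restriction of $w$ to the main pieces really is (up to homotopy and sign) $m\otimes\mathrm{id}$ or $\Delta\otimes\mathrm{id}$ on the \emph{specific} tensor factors corresponding to the endpoints of $e$, rather than some other linear combination of maps. This requires tracking through the induction the fact that the main piece $V^{\otimes c'}$ of $CKh(\mathbb{H})$ is ``carried by'' the boundary components of $\Sigma_{\mathbb{T}}$ for the unique spanning quasi-tree $\mathbb{T}$ of each connected component of $\mathbb{H}$. A cleaner alternative that bypasses this bookkeeping is to combine Proposition \ref{prop:twisted_unknot} (identifying $\mathbb{L}$ with the all-$A$ ribbon graph of a twisted unknot $D$) with Theorem \ref{theorem:ribbontokh} to identify $CKh(\mathbb{L})$ with $CKh(D)$ up to a grading shift, compute that $Kh(D)$ is the free group $V^{\otimes c}$ (the Khovanov homology of an unlink), verify the grading shift by checking that $n_+(D)=\mathrm{Bridge}(\mathbb{L})$ and $n_-(D)=\mathrm{Loop}(\mathbb{L})$ from the construction in Proposition \ref{prop:twisted_unknot}, and finally invoke the structure theorem for bounded complexes of finitely generated free abelian groups with free homology to split off the homology as a direct summand with contractible complement.
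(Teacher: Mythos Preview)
Your ``cleaner alternative'' in the last paragraph is essentially the paper's proof. The paper invokes Proposition~\ref{prop:twisted_unknot} to identify $\mathbb{L}$ with the all-$A$ ribbon graph of a twisted unknot $T$, cites Khovanov's original paper for the splitting $CKh(T)\cong V\oplus\mathcal{B}$ with $\mathcal{B}$ contractible (where you instead invoke the structure theorem for bounded free complexes with free homology --- equivalent here), applies Theorem~\ref{theorem:ribbontokh} to pass from $CKh(T)$ to $CKh(\mathbb{L})$, and reads off the grading shift from the observation that the construction in Proposition~\ref{prop:twisted_unknot} sends bridges to positive crossings and separating loops to negative crossings. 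Since the leaf $\mathbb{L}$ is connected in the paper's setup, $c=1$ and no generalization to disconnected graphs is needed.

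Your primary inductive approach is a genuinely different route. It is correct in outline, and you have correctly isolated the one nontrivial step: identifying $w_{AA}$ as $m\otimes\mathrm{id}$ or $\Delta\otimes\mathrm{id}$ on the right tensor factors. This can be done by strengthening the inductive hypothesis to assert that the main piece $A$ is literally the summand $V(\mathbb{L}'(I_0))$ at the hypercube vertex $I_0$ corresponding to the unique spanning quasi-tree (the set of bridges); then $w_{AA}$ is by definition the single edge map $d_\xi$ along the $e$-edge of the hypercube, which is $m$ for a bridge and $\Delta$ for a separating loop. Carrying this through amounts to reproving Reidemeister~I invariance of Khovanov homology by hand. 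The paper's route is shorter precisely because it outsources that work to Khovanov's existing invariance proof; your inductive route is more self-contained but correspondingly longer.
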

\begin{proof}
Since $\mathbb{L}$ is a leaf of the partial resolution tree, it is the all-$A$ ribbon graph of a twisted unknot $T$. The Khovanov complex of a twisted unknot $CKh(T)$ is isomorphic to a contractible complex direct sum with $V$ and the reduced Khovanov complex $\widetilde{CKh}(T)$ is isomorphic to a contractible complex direct sum with $\widetilde{V}$ \cite{Khovanov:homology}. Theorem \ref{theorem:ribbontokh} implies that $CKh(\mathbb{L}) = V[n_-]\{2n_- - n_+\}\oplus \mathcal{B}$ and that $\widetilde{CKh}(\mathbb{L}) =\widetilde{V}[n_-]\{2n_- - n_+\}\oplus\widetilde{\mathcal{B}}$, where $\mathcal{B}$ and $\widetilde{\mathcal{B}}$ are contractible.
The result follows from the fact that negative crossings in $T$ correspond to loops in $\mathbb{L}$ and positive crossings in $T$ correspond to bridges in $\mathbb{L}$.\end{proof}

Wehrli \cite{Wehrli:SpanningTrees} proves the following lemma
\begin{lemma}
\label{lemma:cone_contract}
Let $C_0$ and $C_1$ be two complexes with $C_i=A_i\oplus B_i$ for complexes $A_i$ and $B_i$ with $B_i$ contractible. Let $w:C_0\to C_1$ be a grading preserving chain transformation and let $w_{AA}:A_0\to A_1$ denote $w$ composed with the obvious projection and inclusion. Let $A$ be the mapping cone of $w_{AA}$, let $B$ be the contractible complex $B_0\oplus B_1[1]$, and let $C$ be the mapping cone of $w$. Then $C$ is isomorphic to $A\oplus B$.
\end{lemma}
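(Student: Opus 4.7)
The plan is to construct an explicit isomorphism of complexes $\Phi : C \to A \oplus B$, where both sides have the same underlying graded module $A_0 \oplus B_0 \oplus A_1[1] \oplus B_1[1]$ but carry different differentials. The map $\Phi$ will be a block lower-triangular module automorphism built from identities together with the components of $w$ and the null-homotopies that contract $B_0$ and $B_1$; the argument is essentially a formal Gaussian elimination, in the spirit of Bar-Natan's simplification lemma for Khovanov complexes.

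First I would decompose $w$ as a matrix of four maps $w_{XY}: Y_0 \to X_1$ for $X, Y \in \{A,B\}$, using the direct sum decompositions $C_i = A_i \oplus B_i$. Because these decompositions are decompositions of chain complexes (so each $d_{C_i}$ is block diagonal), the chain map identity $d_{C_1} w = w d_{C_0}$ forces each of the four components $w_{AA}, w_{AB}, w_{BA}, w_{BB}$ to be a chain map in its own right. Next I would choose null-homotopies $h_i : B_i \to B_i$ of degree $-1$ with $d_{B_i} h_i + h_i d_{B_i} = \mathrm{id}_{B_i}$, which exist since each $B_i$ is contractible.

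Then I would define
\[
\Phi = \begin{pmatrix} 1 & 0 & 0 & 0 \\ 0 & 1 & 0 & 0 \\ 0 & -w_{AB} h_0 & 1 & 0 \\ -h_1 w_{BA} & -h_1 w_{BB} & 0 & 1 \end{pmatrix},
\]
with rows and columns indexed by $A_0, B_0, A_1[1], B_1[1]$. Since $\Phi$ is lower triangular with identity diagonal it is automatically a module isomorphism. A direct multiplication of the two $4 \times 4$ differential matrices (the one for $d_C$ from Proposition \ref{prop:mapping_cone} and the block-diagonal one for $d_{A\oplus B}$) shows that the equation $\Phi\, d_C = d_{A \oplus B}\, \Phi$ reduces to the three identities
\[
d_{A_1}(w_{AB} h_0) + (w_{AB} h_0)\, d_{B_0} = w_{AB},
\]
\[
d_{B_1}(h_1 w_{BA}) + (h_1 w_{BA})\, d_{A_0} = w_{BA}, \qquad d_{B_1}(h_1 w_{BB}) + (h_1 w_{BB})\, d_{B_0} = w_{BB},
\]
each of which is proved by commuting the differential through the appropriate chain map $w_{XY}$ and then invoking the null-homotopy equation for $h_0$ or $h_1$. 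This produces the desired isomorphism $C \cong A \oplus B$; the reduced case follows by restricting $\Phi$ to the reduced subcomplex, since all the maps involved respect the distinguished marked-point tensor factor.

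The main obstacle is purely bookkeeping: setting up the $4 \times 4$ block matrices with the correct signs inherited from the sign conventions for the mapping cone, and placing the off-diagonal entries of $\Phi$ so as to kill exactly the unwanted parts $w_{AB}$, $w_{BA}$, $w_{BB}$ while leaving $w_{AA}$ intact. There is no conceptual subtlety beyond the standard observation that a contractible direct summand of a complex can always be stripped off once a specific null-homotopy is chosen.
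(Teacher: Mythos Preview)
Your argument is correct: the lower-triangular $\Phi$ you wrote down is a grading-preserving module automorphism, and the three identities you list are exactly what $\Phi\,d_C=d_{A\oplus B}\,\Phi$ reduces to, each following immediately from the chain-map property of the relevant $w_{XY}$ together with $d_{B_i}h_i+h_id_{B_i}=\mathrm{id}$.  This is the standard homological-perturbation (Gaussian elimination) argument.

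The paper does not give its own proof of this lemma at all; it simply cites Wehrli \cite{Wehrli:SpanningTrees}.  So your write-up actually supplies more detail than the paper does, and is precisely the sort of argument one expects Wehrli's proof to contain.

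One small remark: your final sentence about ``the reduced case follows by restricting $\Phi$ to the reduced subcomplex'' does not belong here.  Lemma~\ref{lemma:cone_contract} is a purely algebraic statement about abstract complexes and mapping cones---there is no marked-point tensor factor in sight.  The application to $\widetilde{CKh}$ comes later, when this lemma is fed the decompositions of Lemma~\ref{lemma:contractible}; that is where reduced versus unreduced matters, not in the proof of this lemma itself.  Just delete that sentence.
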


Let $\text{Con}(\mathbb{L})$ denote the number of edges that are ribbon contracted from $\mathbb{G}$ in order to obtain $\mathbb{L}$. Proposition \ref{prop:mapping_cone}, Lemma \ref{lemma:contractible} and Lemma \ref{lemma:cone_contract} imply that ribbon graph homology has the following spanning quasi-tree expansion.

\begin{theorem}
\label{theorem:quasi-tree_expansion}
Let $\mathbb{G}$ be a ribbon graph, and let $\mathcal{L}(\mathbb{G})$ be the set of leaves of the resolution tree $\mathcal{T}(\mathbb{G})$ of $\mathbb{G}$. There is an isomorphism of complexes $CKh(\mathbb{G})\cong A\oplus B$, where $B$ is contractible and as a bigraded module, $A$ is given by
$$A = \bigoplus_{\mathbb{L}\in\mathcal{L}(\mathbb{G})}
V[\text{Loop}(\mathbb{L})+\text{Con}(\mathbb{L})]\{2~\text{Loop}(\mathbb{L}) - \text{Bridge}(\mathbb{L}) + \text{Con}(\mathbb{L})\}.$$
Similarly, there is an isomorphism of complexes $\widetilde{CKh}(\mathbb{G})\cong\widetilde{A}\oplus\widetilde{B}$, where $B$ is contractible and as a bigraded module, $\widetilde{A}$ is given by
$$\widetilde{A} = \bigoplus_{\mathbb{L}\in\mathcal{T}(\mathbb{G})}
\widetilde{V}[\text{Loop}(\mathbb{L})+\text{Con}(\mathbb{L})]\{2~\text{Loop}(\mathbb{L}) - \text{Bridge}(\mathbb{L}) + \text{Con}(\mathbb{L})\}.$$

\end{theorem}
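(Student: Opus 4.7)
My plan is to induct on the number of edges $n=|E(\mathbb{G})|$, following the structure of the resolution tree $\mathcal{T}(\mathbb{G})$: at each branching vertex I will apply Proposition~\ref{prop:mapping_cone} to split $CKh$ as a mapping cone, use Lemma~\ref{lemma:cone_contract} to isolate the contractible summands, and at a leaf invoke Lemma~\ref{lemma:contractible} to extract a copy of $V$ (or $\widetilde V$) with the correct shifts. The base case $n=0$ is immediate, since $\mathbb{G}$ is a single vertex which is its own unique leaf with $\text{Loop}=\text{Bridge}=\text{Con}=0$ and $CKh(\mathbb{G})=V$.

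For the inductive step, the main case is that the highest-indexed edge $e_n$ is neither a bridge nor a separating loop. Then $\mathcal{T}(\mathbb{G})$ splits at depth~$1$ into $\mathcal{T}(\mathbb{G}\setminus e_n)$ and $\mathcal{T}(\mathbb{G}/e_n)$, so that $\mathcal{L}(\mathbb{G})=\mathcal{L}(\mathbb{G}\setminus e_n)\sqcup\mathcal{L}(\mathbb{G}/e_n)$. Leaves from the deletion branch retain their $\text{Loop},\text{Bridge},\text{Con}$ values exactly, while leaves from the contraction branch have $\text{Con}$ incremented by one (and $\text{Loop},\text{Bridge}$ unchanged). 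Proposition~\ref{prop:mapping_cone} presents $CKh(\mathbb{G})$ as the mapping cone of $w\colon CKh(\mathbb{G}\setminus e_n)\to CKh(\mathbb{G}/e_n)\{1\}$; feeding in the inductive decompositions $CKh(\mathbb{G}\setminus e_n)\cong A_0\oplus B_0$ and $CKh(\mathbb{G}/e_n)\cong A_1\oplus B_1$ (with $B_i$ contractible) and applying Lemma~\ref{lemma:cone_contract} yields $CKh(\mathbb{G})\cong A\oplus B$ with $B$ contractible and, as a bigraded module, $A\cong A_0\oplus A_1[1]\{1\}$. The shift $[1]\{1\}$ on the second summand exactly records the extra contraction of $e_n$, so the formula for $A$ reproduces the claimed direct sum over $\mathcal{L}(\mathbb{G})$ term by term.

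The remaining case is when $e_n$ is a bridge or separating loop in $\mathbb{G}$; then the resolution tree has a single depth-$1$ child $\mathbb{G}$ and no branching occurs at the top, so I simply move to $e_{n-1}$ and repeat the analysis. A key supporting observation, which I will need to verify along the way, is that bridges and separating loops of $\mathbb{G}$ persist in $\mathbb{G}\setminus e_k$ and $\mathbb{G}/e_k$ whenever $e_k$ is neither a bridge nor a separating loop; consequently iterating the trivial-skip step reduces either to an application of the main case at some earlier edge or to the terminal situation in which every edge of $\mathbb{G}$ is already a bridge or separating loop. In the terminal case $\mathbb{G}$ is itself the unique leaf of $\mathcal{T}(\mathbb{G})$ with $\text{Con}(\mathbb{G})=0$, and Proposition~\ref{prop:twisted_unknot} together with Lemma~\ref{lemma:contractible} produces the claimed decomposition directly. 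The reduced statement $\widetilde{CKh}(\mathbb{G})\cong\widetilde{A}\oplus\widetilde{B}$ follows by the identical argument using the reduced halves of Proposition~\ref{prop:mapping_cone} and Lemma~\ref{lemma:contractible}. The only piece of bookkeeping I expect to be a real obstacle is the verification that the mapping cone's $[1]\{1\}$ shift on the contraction branch aligns exactly with the $\text{Con}$-increment of the corresponding leaves; once this is in hand, the rest assembles inductively without difficulty.
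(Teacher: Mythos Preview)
Your proposal is correct and is exactly the inductive argument the paper leaves implicit when it says that Proposition~\ref{prop:mapping_cone}, Lemma~\ref{lemma:contractible}, and Lemma~\ref{lemma:cone_contract} together imply the theorem. The persistence claim you flag is true (it follows at once from Propositions~\ref{prop:chord_diagram} and~\ref{prop:bridge}, since deleting a chord cannot create new intersections), but you can avoid it altogether by doing reverse induction on depth in $\mathcal{T}(\mathbb{G})$ rather than on $|E(\mathbb{G})|$, so that the bridge/separating-loop step is literally the identity and no matching of subtrees is required.
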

 
Theorem \ref{theorem:maintheorem2} is a consequence of Theorem \ref{theorem:quasi-tree_expansion}, where the complex $\widetilde{C}(\mathbb{G})$ of Theorem \ref{theorem:maintheorem2} is the complex $\widetilde{A}$ from the previous theorem. The homology of $A$ is $Kh(\mathbb{G})$, and the homology of $\widetilde{A}$ is $\widetilde{Kh}(\mathbb{G})$. Moreover, the generators of $\widetilde{A}$ are in one-to-one correspondence with the spanning quasi-trees of $\mathbb{G}$. If $\mathbb{T}$ is a spanning quasi-tree whose corresponding leaf in the resolution tree is $\mathbb{L}$, then $\mathbb{T}$ may be considered as an element of a basis of $A$ with bigrading $(i(\mathbb{T}), j(\mathbb{T}))$ where
\begin{eqnarray}
\label{eqn:grading1}
i(\mathbb{T}) & = & \text{Loop}(\mathbb{L}) + \text{Con}(\mathbb{L})~\text{and}\\
\label{eqn:grading2}
j(\mathbb{T}) & = & 2~\text{Loop}(\mathbb{L}) -\text{Bridge}(\mathbb{L}) +\text{Con}(\mathbb{L}).
\end{eqnarray}
Similarly, the generators of $A$ are in two-to-one correspondence with spanning quasi-trees of $\mathbb{G}$. For each spanning quasi-tree $\mathbb{T}$ there are two generators $\mathbb{T}_\pm$ of $A$ where the bigrading of $\mathbb{T}_\pm$ is $(i(\mathbb{T}_\pm),j(\mathbb{T}_\pm))$ where
\begin{eqnarray*}
i(\mathbb{T_\pm}) & = & \text{Loop}(\mathbb{L}) + \text{Con}(\mathbb{L})~\text{and}\\
j(\mathbb{T_\pm}) & = & 2~\text{Loop}(\mathbb{L}) -\text{Bridge}(\mathbb{L}) +\text{Con}(\mathbb{L})\pm 1.
\end{eqnarray*}

\subsection{Quasi-tree activities}
\label{subsec:activities}

Tutte \cite{Tutte:ChromaticPoly, Tutte:GraphPoly} defined activities for spanning trees of a graph, which can be used to express the Tutte polynomial of that graph. Thistlethwaite \cite{Thistlethwaite:SpanningTreeExpansion} used activity words for spanning trees of the checkerboard graph of a link diagram to express the Jones polynomial. Champanerkar and Kofman \cite{ChampanerkarKofman:SpanningTrees} showed that Thistlethwaite's activity words give gradings on a spanning tree complex for Khovanov homology. Champanerkar, Kofman, and Stoltzfus \cite{CKS:Quasi-trees} extended the idea of activities to spanning quasi-trees of a ribbon graph and used them to express the Bollobas-Riordan-Tutte polynomial. In this section, we show the gradings of the quasi-tree expansion can be expressed in terms of ribbon graph activity words.

Let $\mathbb{G}$ be a ribbon graph. Each spanning quasi-tree $\mathbb{T}$ of $\mathbb{G}$ has an associated chord diagram $C(\mathbb{G},\mathbb{T})$ constructed as follows. Suppose that the edges of $\mathbb{G}$ are $e_1,\dots, e_n$ where $e_i < e_j$ if and only if $i <  j$. The surface $\Sigma_{\mathbb{T}}$ is a collection of disks, which correspond to the vertices of $\mathbb{G}$, and $2$-dimensional one-handles attached to the disks, which correspond to the edges of $\mathbb{G}$. Each one-handle contributes two segments $S^0\times [0,1]$ to the boundary of $\Sigma_{\mathbb{T}}$. Label the two points $S^0\times \{\frac{1}{2}\}$ with the label on the corresponding edge of $\mathbb{G}$. Also, label points on the boundary of the disks of $\Sigma_{\mathbb{T}}$ where one-handles would be attached for edges in $\mathbb{G}$ but not $\mathbb{T}$ by the label on the corresponding edge. Since $\Sigma_{\mathbb{T}}$ is a quasi-tree, it has one boundary component. 

The orientation of the boundary of $\Sigma_{\mathbb{T}}$ induces a cyclic ordering of the $2n$ labeled points on the boundary of $\Sigma_{\mathbb{T}}$. The chord diagram $C(\mathbb{G},\mathbb{T})$ is formed by taking a circle with $2n$ marked points, labeling those points according to the cyclic ordering given by the boundary of $\Sigma_{\mathbb{T}}$, and connecting two marked points if they have the same label. An example of this construction is shown in Figure \ref{fig:chord_diagram}.

\begin{figure}[h]
$$\begin{tikzpicture}[scale=.85]
\draw (-1.5,0) node[above]{$\mathbb{T}_3$};
%Vertices
\fill[blue!30!white] (0,0) circle (10pt);
\fill[blue!30!white] (2,0) circle (10pt);
%------------------------------------------------------------------------%

%Edge 1
\begin{pgfonlayer}{background}
\fill[blue!30!white] (0, 0.2) rectangle (2,-0.2);
\end{pgfonlayer}
%------------------------------------------------------------------------%

%Edge 2
\begin{pgfonlayer}{background2}
\def \firstellipse {(0.7,0) ellipse (15pt and 30pt)};
\def \secondellipse {(0.3,0) ellipse (15pt and 30pt)};
\fill[blue!90!red] \firstellipse;
\fill[blue!30] \secondellipse;
\begin{scope}
      \clip \firstellipse;
      \fill[white] \secondellipse;
\end{scope}
\end{pgfonlayer}
\begin{pgfonlayer}{background3}
\def \firstrectangle {(0.35,1.05) rectangle (0.65,-1.05)};
\fill[blue!30!] \firstrectangle;
\end{pgfonlayer}
%------------------------------------------------------------------------%
%Edge 3
\begin{pgfonlayer}{background4}
\def \firstarc {(2.3, .2) arc (-52:233:60pt and 25pt)};
\def \secondarc {(2.3, -.2) arc (-52:233:60pt and 25pt)};
\fill[blue!90!red] \firstarc;
\fill[blue!30!]\secondarc;
\begin{scope}
      	\clip \firstarc;
      	\fill[white] \secondarc;
\end{scope}
\end{pgfonlayer}
\begin{pgfonlayer}{background5}
\def \firstrectangle {(-1.1,.55) rectangle (3.1,.85)};
\fill[blue!30!] \firstrectangle;
\end{pgfonlayer}
%Labels
\draw (-.1,.7) node[left]{$3$};
\draw (.2,-.6) node[right]{$3$};
\draw (1.5,.2) node[above]{$1$};
\draw (1.5,-.2) node[below]{$1$};
\draw (1.4,1.4) node[below]{$2$};
\draw (1.4,1.7) node[above]{$2$};
\begin{scope}[xshift  = 1cm]
\def \chorddiagram{(10,.5) circle (1.5cm)};
\draw[very thick] \chorddiagram;
\begin{pgfonlayer}{background}
	\draw[thick] (10,2) -- (10,-1);
	\draw[thick] (8.57,1) -- (11.43,1);
	\draw[thick] (8.57,0) -- (11.43,0);
\end{pgfonlayer}
\draw (10,2) node[above]{$3$};
\draw (10,-1) node[below]{$3$};
\draw (8.5,1) node[left]{$1$};
\draw (11.5,1) node[right]{$1$};
\draw (8.5,0) node[left]{$2$};
\draw (11.5,0) node[right]{$2$};
\end{scope}
\begin{scope}[yshift = 4cm]
\draw (-1.5,0) node[above]{$\mathbb{T}_2$};
%Vertices
\fill[blue!30!white] (0,0) circle (10pt);
\fill[blue!30!white] (2,0) circle (10pt);
%------------------------------------------------------------------------%
%Edge 3
\begin{pgfonlayer}{background4}
\def \firstarc {(2.3, .2) arc (-52:233:60pt and 25pt)};
\def \secondarc {(2.3, -.2) arc (-52:233:60pt and 25pt)};
\fill[blue!90!red] \firstarc;
\fill[blue!30!]\secondarc;
\begin{scope}
      	\clip \firstarc;
      	\fill[white] \secondarc;
\end{scope}
\fill[white](2.3,-.22) rectangle (-.3,.22);
\end{pgfonlayer}
\begin{pgfonlayer}{background5}
\def \firstrectangle {(-1.1,.55) rectangle (3.1,.85)};
\fill[blue!30!] \firstrectangle;
\end{pgfonlayer}
\begin{pgfonlayer}{background}
	\draw[thick, dashed] (.2,0) -- (1.8,0);
	\draw[thick, dashed] (.5,0) ellipse (.5cm and 1cm);
\end{pgfonlayer}
%Labels
\draw (1.4,1.4) node[below]{$2$};
\draw (1.4,1.7) node[above]{$2$};
\draw (.3,.2) node[right]{$1$};
\draw (1.7,.2) node[left]{$1$};
\begin{scope}[xshift  = 1cm]
\def \chorddiagram{(10,.5) circle (1.5cm)};
\draw[very thick] \chorddiagram;
\begin{pgfonlayer}{background}
	\draw[thick] (10,2) -- (10,-1);
	\draw[thick] (8.57,1) -- (11.43,1);
	\draw[thick] (8.57,0) -- (11.43,0);
\end{pgfonlayer}
\draw (10,2) node[above]{$1$};
\draw (10,-1) node[below]{$1$};
\draw (8.5,1) node[left]{$2$};
\draw (11.5,1) node[right]{$2$};
\draw (8.5,0) node[left]{$3$};
\draw (11.5,0) node[right]{$3$};
\end{scope}
\draw (0.1,.6) node[left]{$3$};
\draw (0.1, -.6) node[left]{$3$};
\end{scope}
\begin{scope}[yshift = 8cm]
\draw (-1.5,0) node[above]{$\mathbb{T}_1$};
%Vertices
\fill[blue!30!white] (0,0) circle (10pt);
\fill[blue!30!white] (2,0) circle (10pt);
%------------------------------------------------------------------------%

%Edge 1
\begin{pgfonlayer}{background}
\fill[blue!30!white] (0, 0.2) rectangle (2,-0.2);
\end{pgfonlayer}
\begin{pgfonlayer}{background2}
\draw[thick,dashed] (.5,0) ellipse (.5cm and 1cm);
\draw[thick, dashed] (2.3, 0) arc (-52:233:60pt and 25pt);
\end{pgfonlayer}
%Labels
\draw (1.5,.2) node[above]{$1$};
\draw (1.5,-.2) node[below]{$1$};
\draw (-.2,-.2) node[left]{$2$};
\draw (2.3,-.2) node[right]{$2$};
\draw (0.1,.6) node[left]{$3$};
\draw (0.1, -.6) node[left]{$3$};
%chord diagram
\begin{scope}[xshift  = 1cm]
\def \chorddiagram{(10,.5) circle (1.5cm)};
\draw[very thick] \chorddiagram;
\begin{pgfonlayer}{background}
	\draw[thick] (10,2) -- (10,-1);
	\draw[thick] (8.57,1) -- (11.43,1);
	\draw[thick] (8.57,0) -- (11.43,0);
\end{pgfonlayer}
\draw (10,2) node[above]{$2$};
\draw (10,-1) node[below]{$2$};
\draw (8.5,1) node[left]{$3$};
\draw (11.5,1) node[right]{$3$};
\draw (8.5,0) node[left]{$1$};
\draw (11.5,0) node[right]{$1$};
\end{scope}
\end{scope}
\end{tikzpicture}$$
\caption{The three spanning quasi-trees $\mathbb{T}_1$, $\mathbb{T}_2$, and $\mathbb{T}_3$ of $\mathbb{G}$ and their corresponding chord diagrams.}
\label{fig:chord_diagram}
\end{figure}
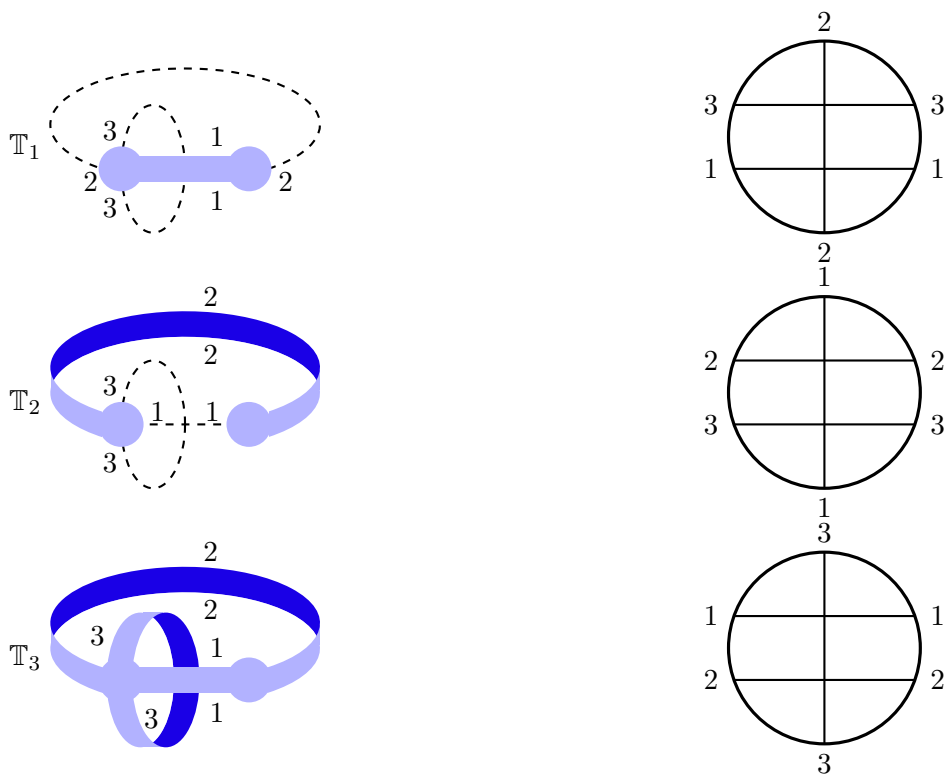

Each edge in $\mathbb{G}$ can be described as either internally active, internally inactive, externally active, or externally inactive with respect to $\mathbb{T}$. An edge $e_i$ is internal with respect to $\mathbb{T}$ if $e_i\in E(\mathbb{T})$; otherwise, the edge $e_i$ is external with respect to $\mathbb{T}$. An edge $e_i$ is active with respect to $\mathbb{T}$ if the chord $c_i$ in $C(\mathbb{G},\mathbb{T})$  corresponding to $e_i$ in $C$ does not intersect any chord $c_j$ corresponding to the edge $e_j$ where $e_j<e_i$; otherwise, the edge $e_i$ is inactive with respect to $\mathbb{T}$. Let $ia(\mathbb{T})$ and $ea(\mathbb{T})$ denote the number of internally active and the number of externally active edges in $\mathbb{G}$ with respect to $\mathbb{T}$ respectively.

Let $\mathbb{G}$ be the ribbon graph in Figure \ref{fig:ribbonconstruct}, and let $\mathbb{T}_1$, $\mathbb{T}_2$, and $\mathbb{T}_3$ be its three spanning quasi-trees. Figure \ref{fig:chord_diagram} shows the chord diagrams for each of the spanning quasi-trees. We have $ia(\mathbb{T}_1) = 1, ea(\mathbb{T}_1)=0,ia(\mathbb{T}_2)=0,ea(\mathbb{T}_2)=1,ia(\mathbb{T}_3)=2$, and $ea(\mathbb{T}_3)=0$.

\begin{proposition}
\label{prop:chord_diagram}
Let $\mathbb{G}$ be a ribbon graph with spanning quasi-tree $\mathbb{T}$ and associated chord diagram $C(\mathbb{G},\mathbb{T})$. Let $e$ be an edge of $\mathbb{G}$.
\begin{enumerate}

\item If $e$ is an edge in $\mathbb{T}$, then the chord diagram $C(\mathbb{G}/e,\mathbb{T}/e)$ can be obtained from $C(\mathbb{G},\mathbb{T})$ by deleting the chord associated to the edge $e$.

\item If $e$ is an edge in $\mathbb{G}$ but not $\mathbb{T}$, then the chord diagram $C(\mathbb{G}\setminus e, \mathbb{T})$ can be obtained from $C(\mathbb{G},\mathbb{T})$ by deleting the chord associated to the edge $e$.

\end{enumerate}
\end{proposition}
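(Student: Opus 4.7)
The plan is to unwind the construction of the chord diagram directly, in terms of the surface $\Sigma_{\mathbb{T}}$ built from vertex disks and edge bands, and to observe that the two operations (ribbon contracting $e\in E(\mathbb{T})$ and deleting $e\notin E(\mathbb{T})$) affect only the boundary data associated to $e$ itself, leaving all other marked points and their cyclic order intact.

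I would dispose of case (2) first, since it is essentially tautological. If $e\notin E(\mathbb{T})$, then $\mathbb{T}$ is already a spanning ribbon subgraph of $\mathbb{G}\setminus e$, and the surface $\Sigma_{\mathbb{T}}$ is literally the same surface whether we consider $\mathbb{T}$ inside $\mathbb{G}$ or inside $\mathbb{G}\setminus e$. In particular, $\partial\Sigma_{\mathbb{T}}$ is the same oriented circle, and the cyclic order of marked points coming from edges other than $e$ is unchanged. The only difference between $C(\mathbb{G},\mathbb{T})$ and $C(\mathbb{G}\setminus e,\mathbb{T})$ is that the two marked points labeled $e$ on the boundary of the vertex disks (the potential attaching sites of the band for $e$) are present in the former and absent in the latter; removing them and the chord between them gives $C(\mathbb{G}\setminus e,\mathbb{T})$.

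For case (1), with $e\in E(\mathbb{T})$, I would first record that $\mathbb{T}/e$ is itself a spanning quasi-tree of $\mathbb{G}/e$: when $e$ is not a loop in $\mathbb{T}$, ribbon-contraction is a surface homeomorphism of $\Sigma_{\mathbb{T}}$, so $\Sigma_{\mathbb{T}/e}$ also has exactly one boundary component; when $e$ is a loop in $\mathbb{T}$, a direct Euler-characteristic computation combined with the fact that a loop at a single vertex of a one-boundary-component ribbon graph is nonseparating on the boundary yields the same conclusion. Next, I would identify $\partial\Sigma_{\mathbb{T}/e}$ with $\partial\Sigma_{\mathbb{T}}$ minus the two short arcs $S^0\times\{\tfrac{1}{2}\}$ on the band for $e$, joined up in the only way compatible with the orientation of $\partial\Sigma_{\mathbb{T}}$. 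Because the band for $e$ is removed entirely, the two marked points labeled $e$ disappear; because the vertex disks adjacent to $e$ are merged (or, for a loop, split) without disturbing the other attaching sites, all other marked points keep both their labels and their relative cyclic order. Hence $C(\mathbb{G}/e,\mathbb{T}/e)$ is exactly $C(\mathbb{G},\mathbb{T})$ with the chord for $e$ deleted.

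The main obstacle is the loop subcase of (1): when $e\in E(\mathbb{T})$ is a loop, the ribbon contraction splits one vertex disk into two, and one must check carefully that the cyclic order of the remaining marked points around the (new) single boundary component of $\Sigma_{\mathbb{T}/e}$ agrees with their cyclic order around $\partial\Sigma_{\mathbb{T}}$ once the two arcs labeled $e$ are excised. A small local picture near the loop band — showing explicitly how the boundary traverses the loop band and then continues along the vertex disk — settles this, and the rest of the argument is a routine verification on the remaining edges.
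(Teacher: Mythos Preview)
Your proposal is correct and follows essentially the same approach as the paper: working directly with the surface $\Sigma_{\mathbb{T}}$ and observing that removing the data associated to $e$ (either the two attaching marks when $e\notin E(\mathbb{T})$, or the band itself when $e\in E(\mathbb{T})$) leaves the cyclic order of all other marked points unchanged. The paper's proof is considerably terser---it simply asserts that the cyclic order is ``inherited'' without separating the loop and non-loop subcases---so your explicit treatment of the loop case in (1) is a genuine improvement in exposition, though not a different method.
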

\begin{proof}
Suppose that $e$ is an edge in $\mathbb{T}$. The cyclic order of the labeled points on the boundary of $\Sigma_{\mathbb{T}/e}$ is inherited from the cyclic order of the labeled points on the boundary of $\Sigma_{\mathbb{T}}$. Now suppose that $e$ is an edge in $\mathbb{G}$ but not in $\mathbb{T}$. If one considers $\mathbb{T}$ as a spanning quasi-tree of $\mathbb{G}$, then the surface $\Sigma_{\mathbb{T}}$ has $2n$ labeled points on its boundary, and if one considers $\mathbb{T}$ as a spanning quasi-tree of $\mathbb{G}\setminus e$, then the surface $\Sigma_{\mathbb{T}}$ has $2n-2$ labeled points on its boundary. The labeled points when $\mathbb{T}$ is a spanning quasi-tree of $\mathbb{G}/e$ are obtained by deleting the $2$ labeled points corresponding to $e$ on the boundary of $\Sigma_{\mathbb{T}}$ when $\mathbb{T}$ is considered as a spanning quasi-tree of $\mathbb{G}$.
\end{proof}

\begin{proposition}
\label{prop:bridge}
Let $\mathbb{T}$ be a spanning quasi-tree of a ribbon graph $\mathbb{G}$. A chord $c$ in $C(\mathbb{G},\mathbb{T})$ does not intersect any other chords if and only if $c$ corresponds to an edge $e$ in $\mathbb{G}$ that is either a bridge or a separating loop in $\mathbb{G}$.
\end{proposition}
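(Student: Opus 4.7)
The plan is to prove the internal case $(e\in\mathbb{T})$ directly and then deduce the external case $(e\notin\mathbb{T})$ by the complementary-subgraph duality of Section~\ref{section:ribbongraphs}. A preliminary observation is that every bridge of $\mathbb{G}$ must lie in $\mathbb{T}$, because $\Sigma_\mathbb{T}$ is connected with one boundary component, so $\mathbb{T}$ is a connected spanning subgraph of $\mathbb{G}$; in particular, external edges are never bridges.

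For the internal case I claim that $c$ is isolated if and only if $e$ is a bridge. The two endpoints of $c$ lie at the midpoints of the long sides $l_1,l_2$ of the band $B_e$, and in particular on $\partial\Sigma_\mathbb{T}$ but not on any vertex-disk arc. For $(\Rightarrow)$, I would trace $\partial\Sigma_\mathbb{T}$ and note that the two arcs $\alpha,\beta$ determined by the endpoints of $c$ can be switched only by crossing a chord endpoint, which (by isolation of $c$) must be the midpoint of some other internal edge's long side and, for any other chord, this happens an even number of times around the traversal of any given vertex disk or band. This produces a well-defined partition $V(\mathbb{T})=V_\alpha\sqcup V_\beta$ with $v_1\in V_\beta$ and $v_2\in V_\alpha$, and a parallel analysis shows that each edge $f\neq e$ of $\mathbb{G}$ has both of its endpoints in a single $V_\sigma$ (whether $f$ is internal, in which case both long sides of $B_f$ lie in one arc, or external, in which case both attaching points do). Hence $\mathbb{G}\setminus e$ is disconnected and $e$ is a bridge. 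For $(\Leftarrow)$, a bridge $e$ lies in $\mathbb{T}$ and $\mathbb{T}\setminus e=\mathbb{T}_1\sqcup\mathbb{T}_2$ spans the two components $\mathbb{G}_1,\mathbb{G}_2$ of $\mathbb{G}\setminus e$; an Euler-characteristic count in the orientable setting forces $\Sigma_{\mathbb{T}\setminus e}$ to decompose as $\Sigma_{\mathbb{T}_1}\sqcup\Sigma_{\mathbb{T}_2}$ with one boundary circle each, and these two circles are exactly the arcs $\alpha,\beta$ of $c$ closed off by the short sides of $B_e$. Since every other edge of $\mathbb{G}$ sits in one $\mathbb{G}_i$, its chord is confined to one arc and $c$ is isolated.

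For the external case I would use the fact that $\Sigma=\Sigma_\mathbb{T}\cup\Sigma_{\widehat{\mathbb{T}}}$ with $\partial\Sigma_\mathbb{T}=\partial\Sigma_{\widehat{\mathbb{T}}}$, together with the observation that for every edge $e$ of $\mathbb{G}$ the two labeled points of $e$ in $C(\mathbb{G},\mathbb{T})$ agree with the two labeled points of $e^*$ in $C(\mathbb{G}^*,\widehat{\mathbb{T}})$: the midpoints of the long sides of $B_e$ (when $e\in\mathbb{T}$) are precisely the attaching points of $B_{e^*}$ on the vertex disks of $\mathbb{G}^*$, because $B_e$ and $B_{e^*}$ cross transversely through the midpoints of each other's cocores in $\Sigma$. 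Consequently $C(\mathbb{G},\mathbb{T})$ and $C(\mathbb{G}^*,\widehat{\mathbb{T}})$ coincide on the shared circle under the relabeling $e\leftrightarrow e^*$, so $c$ is isolated in one iff the corresponding chord is isolated in the other. Under this duality, $e\notin\mathbb{T}$ becomes $e^*\in\widehat{\mathbb{T}}$, and the contraction-deletion duality $\mathbb{G}/e=(\mathbb{G}^*\setminus e^*)^*$ gives that $e$ is a separating loop of $\mathbb{G}$ iff $e^*$ is a bridge of $\mathbb{G}^*$. Applying the already-proven internal case to $(\mathbb{G}^*,\widehat{\mathbb{T}},e^*)$ then yields exactly the external statement.

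The main technical obstacle will be the side-classification argument in the internal $(\Rightarrow)$ direction: carefully justifying that each vertex disk of $\mathbb{T}$ has a well-defined side, and that the partition extends from internal edges to external edges of $\mathbb{G}$ in a way that makes $V_\alpha\sqcup V_\beta$ a vertex cut of $\mathbb{G}\setminus e$. Once this is handled, the external case follows cleanly from the duality setup and no separate case analysis is needed.
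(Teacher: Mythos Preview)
Your strategy---settle the internal case directly, then transport to the external case via the duality $C(\mathbb{G},\mathbb{T})\cong C(\mathbb{G}^*,\widehat{\mathbb{T}})$---is quite different from the paper's, which proves the contrapositive by induction on $|E(\mathbb{G})|$: pick a third chord, delete it via Proposition~\ref{prop:chord_diagram}, and reduce to a two-edge base case checked by hand. Your duality step is correct and a nice shortcut once the internal case is in hand.

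The gap is exactly where you flag it, but your sketch there does not work. The claim that $\alpha,\beta$ ``can be switched only by crossing a chord endpoint, which (by isolation of $c$) must be the midpoint of some other internal edge's long side'' is confused: the arcs switch only at the two endpoints of $c$ itself, so isolation of $c$ is irrelevant to that sentence, and the parity remark that follows does not give vertex disks a well-defined side, because passing along $\partial\Sigma_{\mathbb{T}}$ from one free arc of $\partial D_v$ to an adjacent one may wander through arbitrarily many other vertex disks and bands before returning to $v$. What actually works is a local propagation: for each internal $f\neq e$, isolation of $c$ forces all four half-long-sides of $B_f$ into the same arc, so the two free arcs of $\partial D_v$ flanking any attachment of $B_f$ lie on the same side; walking around $\partial D_v$ then forces all internal edges at $v$ to a single side, with the endpoints of $e$ pinned to opposite sides (and an immediate contradiction if $e\in\mathbb{T}$ is a loop, a case your sketch omits). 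Only after this does connectedness of $\mathbb{T}\setminus e$ yield a contradiction, and then the external edges fall into line as you say. The repair is doable but more delicate than your sketch suggests; the paper's induction sidesteps the whole issue.
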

\begin{proof}
If $c$ corresponds to an edge $e$ that is a bridge in $\mathbb{G}$, then $e$ is an edge of $\mathbb{T}$, and if $c$ corresponds to an edge $e$ that is a separating loop in $\mathbb{G}$, then $e$ is not an edge of $\mathbb{T}$. Suppose that the edges of $\mathbb{G}$ are labeled $1,\dots,n$, and that the label of the edge $e$ is $k$. If $e$ is either a bridge or a separating loop in $\mathbb{G}$, then the cyclic order of the labels given by the boundary of $\Sigma_{\mathbb{T}}$ is of the form $k,a_1,\dots,a_p,k,b_1,\dots b_q$ where $a_i\neq b_j$ for $1\leq i\leq p$ and $1\leq j \leq q$. Therefore the chord $c$ does not intersect any other chords in $C(\mathbb{G},\mathbb{T})$.

Suppose that $c$ intersects another chord in $C(\mathbb{G},\mathbb{T})$. We will show that the chord $c$ corresponds to an edge $e$ that is not a bridge or separating loop via induction on the number of chords in $C(\mathbb{G},\mathbb{T})$. For the base case, suppose that $C(\mathbb{G},\mathbb{T})$ has $2$ chords, i.e. $\mathbb{G}$ has $2$ edges. Then $\mathbb{T}$ is a spanning quasi-tree of one of the five ribbon graphs in Figure \ref{fig:2edges}, and a straightforward check shows that the result holds.
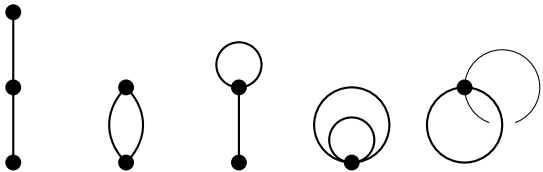
\begin{figure}[h]
$$\begin{tikzpicture}[thick]
%G1
\fill[black] (0,0) circle (3pt);
\fill[black] (0,1) circle (3pt);
\fill[black] (0,2) circle (3pt);
\draw (0,0) -- (0,2);
%G2
\begin{scope}[yshift = -.5cm]
	\fill[black] (1.5,.5) circle (3pt);
	\fill[black](1.5,1.5) circle (3pt);
	\draw (1.5,.5) .. controls (1.2,.8) and (1.2,1.2).. (1.5,1.5);
	\draw (1.5,.5) .. controls (1.8,.8) and (1.8,1.2) .. (1.5,1.5);
\end{scope}
%G3
\fill[black] (3,0) circle (3pt);
\fill[black] (3,1) circle (3pt);
\draw (3,0) -- (3,1);
\draw (3,1.3) circle (.3cm);
%G4
\fill[black](4.5,0) circle (3pt);
\draw (4.5,.3) circle (.3cm);
\draw (4.5,.5) circle (.5cm);
%G5
\fill[black] (6,1) circle (3pt);
\draw (6,.5) circle (.5cm);
\begin{pgfonlayer}{background2}
	\draw(6.5,1) circle (.5cm);
\end{pgfonlayer}
\begin{pgfonlayer}{background}
	\fill[white] (6.5,.5) circle (5pt);
\end{pgfonlayer}
\end{tikzpicture}$$
\caption{The five connected ribbon graphs with exactly $2$ edges.}
\label{fig:2edges}
\end{figure}

Now suppose that $C(\mathbb{G},\mathbb{T})$ has $n$ chords, where $n\geq 3$. Let $c^\prime$ be a chord distinct from $c$ that intersects $c$, and let $c^{\prime\prime}$ be any chord distinct from $c$ and $c^\prime$. Furthermore, let $e^{\prime\prime}$ be the edge in $\mathbb{G}$ that corresponds to the chord $c^{\prime\prime}$. If $e^{\prime\prime}$ is an edge in $\mathbb{T}$, then the chord diagram of $C(\mathbb{G}/e^{\prime\prime},\mathbb{T}/e^{\prime\prime})$ is obtained from the chord diagram $C(\mathbb{G},\mathbb{T})$ by deleting the chord $c^{\prime\prime}$. In this case, the chords $c$ and $c^\prime$ still intersect in $C(\mathbb{G}/e^{\prime\prime},\mathbb{T}/e^{\prime\prime})$, and hence the edge $e$ is not a bridge or a separating loop in $\mathbb{G}/e^{\prime\prime}$. Therefore, $e$ is also not a bridge or a separating loop in $\mathbb{G}$. If $e^{\prime\prime}$ is not an edge in $\mathbb{T}$, then the chord diagram $C(\mathbb{G}\setminus e^{\prime\prime},\mathbb{T})$ is obtained by deleting the chord $c^{\prime\prime}$ from $C(\mathbb{G},\mathbb{T})$. As in the previous case, the chords $c$ and $c^\prime$ intersect in $C(\mathbb{G}\setminus e^{\prime\prime},\mathbb{T})$ and hence $e$ is not a bridge or separating loop in $\mathbb{G}\setminus e^{\prime\prime}$. Therefore $e$ is not a bridge or separating loop in $\mathbb{G}$.
\end{proof}

\begin{theorem}
Let $\mathbb{G}$ be a connected ribbon graph with edges $e_1,\dots, e_n$. There is a complex $\widetilde{A}(\mathbb{G})$ whose generators are in one-to-one correspondence with the spanning quasi-trees of $\mathbb{G}$ and whose homology is $\widetilde{Kh}(\mathbb{G})$. Moreover, the bigrading of each spanning quasi-tree $\mathbb{T}$ is $(i(\mathbb{T}),j(\mathbb{T}))$ where
\begin{eqnarray}
\label{eqn:i-grading}
i(\mathbb{T})  & = & 2g(\mathbb{T}) +ea(\mathbb{T}) - ia(\mathbb{T}) + |V(\mathbb{G})| -1 ~\text{and}\\
\label{eqn:j-grading}
j(\mathbb{T}) & = & 2(g(\mathbb{T}) + ea(\mathbb{T}) - ia(\mathbb{T})) +|V(\mathbb{G})| -1.
\end{eqnarray}
\end{theorem}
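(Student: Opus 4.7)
The plan is to start from Theorem \ref{theorem:quasi-tree_expansion}, which already expresses the bigrading of a spanning quasi-tree $\mathbb{T}$ corresponding to a leaf $\mathbb{L}\in\mathcal{L}(\mathbb{G})$ as $i(\mathbb{T})=\text{Loop}(\mathbb{L})+\text{Con}(\mathbb{L})$ and $j(\mathbb{T})=2\,\text{Loop}(\mathbb{L})-\text{Bridge}(\mathbb{L})+\text{Con}(\mathbb{L})$, namely equations (\ref{eqn:grading1})--(\ref{eqn:grading2}). Since $\Sigma_{\mathbb{T}}$ has one boundary component and $\mathbb{T}$ spans $\mathbb{G}$, the Euler characteristic identity $|V(\mathbb{G})|-|E(\mathbb{T})|+1=2-2g(\mathbb{T})$ yields $|E(\mathbb{T})|=|V(\mathbb{G})|-1+2g(\mathbb{T})$. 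The edges of $\mathbb{T}$ are precisely the bridges in $\mathbb{L}$ together with the edges ribbon-contracted along the unique path from $\mathbb{G}$ to $\mathbb{L}$ in $\mathcal{T}(\mathbb{G})$, so $\text{Bridge}(\mathbb{L})+\text{Con}(\mathbb{L})=|V(\mathbb{G})|-1+2g(\mathbb{T})$. Substituting for $\text{Con}(\mathbb{L})$ in the two formulas above, the target identities (\ref{eqn:i-grading})--(\ref{eqn:j-grading}) reduce to the two combinatorial equalities $\text{Bridge}(\mathbb{L})=ia(\mathbb{T})$ and $\text{Loop}(\mathbb{L})=ea(\mathbb{T})$.

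To establish these, I would track the chord diagram along the unique path in $\mathcal{T}(\mathbb{G})$ from the root to $\mathbb{L}$. Fix the edge ordering $e_1<\cdots<e_n$ and work at depth $k$, where the current ribbon graph $\mathbb{H}$ is about to process $e_{n-k}$. Proposition \ref{prop:chord_diagram} identifies $C(\mathbb{H},\mathbb{T}\cap E(\mathbb{H}))$ as the chord diagram obtained from $C(\mathbb{G},\mathbb{T})$ by deleting the chords of those higher-labeled edges that were either contracted or deleted at earlier depths. Any higher-labeled edge that was preserved (recognized as a bridge or separating loop during its own processing) has, by Proposition \ref{prop:bridge}, a chord that was already isolated in the chord diagram at that earlier moment; since subsequent chord deletions cannot destroy isolation, such a chord remains isolated in the current $C(\mathbb{H},\mathbb{T}\cap E(\mathbb{H}))$ and contributes no intersections with $c_{n-k}$.

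It follows that $c_{n-k}$ is isolated in $C(\mathbb{H},\mathbb{T}\cap E(\mathbb{H}))$ if and only if it fails to intersect any $c_j$ with $j<n-k$, which is exactly the activity condition for $e_{n-k}$; equivalently, by Proposition \ref{prop:bridge}, $e_{n-k}$ is a bridge or a separating loop in $\mathbb{H}$. If $e_{n-k}\in\mathbb{T}$, then $e_{n-k}$ cannot be a separating loop in $\mathbb{H}$, since otherwise the resolution tree would not branch on $e_{n-k}$ and the definition of $\tau$ would force $e_{n-k}\notin\tau(\mathbb{L})=\mathbb{T}$. Hence an internally active edge is precisely a bridge in the current $\mathbb{H}$, which stays as a bridge in $\mathbb{L}$, giving $\text{Bridge}(\mathbb{L})=ia(\mathbb{T})$. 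The symmetric dichotomy for $e_{n-k}\notin\mathbb{T}$ shows externally active edges are precisely the separating loops in $\mathbb{H}$ at processing time, hence the loops in $\mathbb{L}$, giving $\text{Loop}(\mathbb{L})=ea(\mathbb{T})$. Plugging these into the expressions from Theorem \ref{theorem:quasi-tree_expansion} and simplifying yields (\ref{eqn:i-grading}) and (\ref{eqn:j-grading}). The main subtlety is justifying that preserved chords from higher depths remain isolated throughout the descent in $\mathcal{T}(\mathbb{G})$; once this persistence is secured, the activity condition at each step reduces cleanly to a single application of Proposition \ref{prop:bridge} in the evolving ribbon graph $\mathbb{H}$.
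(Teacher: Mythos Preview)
Your argument is correct, and the reduction to the two identities $\text{Bridge}(\mathbb{L})=ia(\mathbb{T})$ and $\text{Loop}(\mathbb{L})=ea(\mathbb{T})$ via the Euler-characteristic relation $|E(\mathbb{T})|=|V(\mathbb{G})|-1+2g(\mathbb{T})$ is clean and checks out. The descent along the path in $\mathcal{T}(\mathbb{G})$, combined with the observation that previously preserved chords stay isolated under further chord deletions, lets Proposition~\ref{prop:bridge} identify activity of $e_{n-k}$ with the bridge/separating-loop dichotomy in $\mathbb{H}$ exactly as you claim.

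The paper organizes the same ingredients differently. Rather than isolating those two identities, it runs an induction on the number of edges of $\mathbb{G}$ that are neither bridges nor separating loops: the base case is the leaf $\mathbb{L}$ itself, and the inductive step strips off the highest-labeled such edge $e_n$, passing to $\mathbb{G}\setminus e_n$ or $\mathbb{G}/e_n$ and directly comparing the quantities in (\ref{eqn:grading1})--(\ref{eqn:grading2}) with those in (\ref{eqn:i-grading})--(\ref{eqn:j-grading}) before and after. The key observations are identical to yours---Propositions~\ref{prop:chord_diagram} and~\ref{prop:bridge}, together with the fact that $e_n$ is necessarily inactive and does not affect the activity of lower-labeled edges---but the paper tracks the effect on $g(\mathbb{T})$ and $|V(\mathbb{G})|$ separately in the loop versus non-loop subcases of contraction, rather than packaging everything through Euler characteristic up front. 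Your route makes the combinatorial content (active $\leftrightarrow$ survives to $\mathbb{L}$) more transparent; the paper's route avoids the preliminary algebra but pays for it with more case-by-case bookkeeping in the inductive step.
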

\begin{proof}
The existence of the complex $\widetilde{A}(\mathbb{G})$ is the content of Theorem \ref{theorem:quasi-tree_expansion}. It remains to show that the grading formulas in Equations \ref{eqn:i-grading} and \ref{eqn:j-grading} are equivalent to those in Equations \ref{eqn:grading1} and \ref{eqn:grading2}.

We prove that the grading formulas hold in three steps. First we show that if every edge of $\mathbb{G}$ is either a bridge or a separating loop, then the grading formulas hold. Next, we show that if an edge $e$ is not in $\mathbb{T}$, then the grading formulas for $\mathbb{G}\setminus e$ imply the grading formulas for $\mathbb{G}$. Finally, we show that if an edge $e$ is in $\mathbb{T}$, then the grading formulas for $\mathbb{G}/e$ imply the grading formulas for $\mathbb{G}$.

If an edge $e$ in $\mathbb{G}$ is either a bridge or a separating loop in $\mathbb{G}$, then Proposition \ref{prop:bridge} states the corresponding chord in $C(\mathbb{G},\mathbb{T})$ does not intersect any other chord. 
Suppose that every edge in $\mathbb{G}$ is either a bridge or a separating loop. The resolution tree $\mathcal{T}(\mathbb{G})$ has only one leaf $\mathbb{L}=\mathbb{G}$, and thus only one spanning quasi-tree $\mathbb{T}$. The quasi-tree $\mathbb{T}$ consists of all edges of $\mathbb{G}$ that are bridges. There are no intersections in the chord diagram $C(\mathbb{G},\mathbb{T})$, and hence every edge is active. Since the bridges of $\mathbb{G}$ are internal with respect to $\mathbb{T}$, it follows that $ia(\mathbb{T})=\text{Bridge}(\mathbb{L})=|V(\mathbb{G})|-1$ and $ea(\mathbb{T}) = \text{Loop}(\mathbb{L})$. The grading formulas follow from the above equations and the fact that the genus of $\mathbb{T}$ is zero.

Suppose that $e_n$ is an edge of $\mathbb{G}$ that is not a bridge or a separating loop of $\mathbb{G}$. Proposition \ref{prop:bridge} states that the chord $c_n$ corresponding to $e_n$ intersects another another chord in $C(\mathbb{G},\mathbb{T})$. Suppose that $e_n$ is not an edge of $\mathbb{T}$, and define $\mathbb{T}^\prime$ to be the spanning quasi-tree of $\mathbb{G}\setminus e_n$ with $E(\mathbb{T}^\prime)=E(\mathbb{T})$. Let $\mathbb{L}$ be the leaf of the resolution tree of $\mathbb{G}$ corresponding to $\mathbb{T}$, and let $\mathbb{L}^\prime$ be the leaf of the resolution tree of $\mathbb{G}\setminus e_n$ corresponding to $\mathbb{T}^\prime$. Note that $\mathbb{L}$ and $\mathbb{L}^\prime$ are the same ribbon graph. By induction on the number of edges of $\mathbb{G}$ that are not bridges or separating loops, we know that
\begin{eqnarray*}
i(\mathbb{T}^\prime) & = & \text{Loop}(\mathbb{L}^\prime) + \text{Con}(\mathbb{L}^\prime)\\
& = &
2g(\mathbb{T}^\prime)+ea(\mathbb{T}^\prime)-ia(\mathbb{T}^\prime)+V(\mathbb{G}\setminus e_n) -1~\text{and}\\
j(\mathbb{T}^\prime) & = & 2~\text{Loop}(\mathbb{L}^\prime) - \text{Bridge}(\mathbb{L}^\prime) + \text{Con}(\mathbb{L}^\prime)\\
& = & 2(g(\mathbb{T}^\prime) + ea(\mathbb{T}^\prime) - ia(\mathbb{T}^\prime))+V(\mathbb{G}\setminus e_n) -1.
\end{eqnarray*}

Proposition \ref{prop:chord_diagram} implies that $C(\mathbb{G}\setminus e_n,\mathbb{T}^\prime)$ can be obtained from $C(\mathbb{G},\mathbb{T})$ by deleting the chord $c_n$. Since $c_n$ is the chord with the largest label and it intersects at least one more chord, it is not active. Moreover, the chord $c_n$ does not cause any other chord to be inactive. Therefore, $ia(\mathbb{T}) = ia(\mathbb{T}^\prime)$ and $ea(\mathbb{T})=ea(\mathbb{T}^\prime)$. Also, we have that $g(\mathbb{T})=g(\mathbb{T}^\prime)$ and $|V(\mathbb{G})| = |V(\mathbb{G}\setminus e_n)|$. Let $P$ be the path in the resolution tree of $\mathbb{G}$ from the root to the leaf $\mathbb{L}$, and let $P^\prime$ be the path in the resolution tree of $\mathbb{G}\setminus e_n$ from the root to $\mathbb{L}^\prime$. Since $\mathbb{L}$ and $\mathbb{L}^\prime$ are isomorphic, it follows that $\text{Loop}(\mathbb{L})  = \text{Loop}(\mathbb{L}^\prime)$ and $\text{Bridge}(\mathbb{L})=\text{Bridge}(\mathbb{L}^\prime)$. Since $P$ can be obtained from $P^\prime$ by adding one edge that corresponds to a deletion, it follows that $\text{Con}(\mathbb{L})=\text{Con}(\mathbb{L}^\prime)$. Therefore, $i(\mathbb{T})= i(\mathbb{T}^\prime)$ and $j(\mathbb{T})=j(\mathbb{T}^\prime)$, and the result holds for $\mathbb{T}$.

Suppose that $e_n$ is an edge of $\mathbb{G}$ that is not a bridge or separating loop and that $e_n$ is an edge of $\mathbb{T}$. Then $\mathbb{T}/e_n$ is a spanning quasi-tree of $\mathbb{G}/e_n$. Let $\mathbb{L}$ be the leaf of the resolution tree of $\mathbb{G}$ corresponding to $\mathbb{T}$, and let $\mathbb{L}^\prime$ be the leaf of the resolution tree of $\mathbb{G}/e_n$ corresponding to $\mathbb{T}/e_n$. As in the previous case, the ribbon graphs $\mathbb{L}$ and $\mathbb{L}^\prime$ are isomorphic. By induction on the number of edges of $\mathbb{G}$ that are not bridges or separating loops, we know that
\begin{eqnarray*}
i(\mathbb{T}/e_n) & = & \text{Loop}(\mathbb{L}^\prime) + \text{Con}(\mathbb{L}^\prime)\\
& = &
2g(\mathbb{T}/e_n)+ea(\mathbb{T}/e_n)-ia(\mathbb{T}/e_n)+V(\mathbb{G}/e_n) -1~\text{and}\\
j(\mathbb{T}/e_n) & = & 2~\text{Loop}(\mathbb{L}^\prime) - \text{Bridge}(\mathbb{L}^\prime) + \text{Con}(\mathbb{L}^\prime)\\
& = & 2(g(\mathbb{T}/e_n) + ea(\mathbb{T}/e_n) - ia(\mathbb{T}/e_n))+V(\mathbb{G}/e_n) -1.
\end{eqnarray*}

Proposition \ref{prop:chord_diagram} implies that $C(\mathbb{G}/ e_n,\mathbb{T}/e_n)$ can be obtained from $C(\mathbb{G},\mathbb{T})$ by deleting the chord $c_n$. Since $c_n$ is the chord with the largest label and it intersects at least one more chord, it is not active. Moreover, the chord $c_n$ does not cause any other chord to be inactive. Therefore, $ia(\mathbb{T}) = ia(\mathbb{T}/e_n)$ and $ea(\mathbb{T})=ea(\mathbb{T}/e_n)$. If the edge $e_n$ is a loop, then it must be a nonseparating loop, and thus $g(\mathbb{T}) = g(\mathbb{T}/e_n)+1$ and $|V(\mathbb{G})| = |V(\mathbb{G}/e_n)| -1$. If the edge $e_n$ is not a loop, then $g(\mathbb{T}) = g(\mathbb{T}/e_n)$ and $|V(\mathbb{G})| = |V(\mathbb{G}/e_n)| +1$.
Let $P$ be the path in the resolution tree of $\mathbb{G}$ from the root to the leaf $\mathbb{L}$, and let $P^\prime$ be the path in the resolution tree of $\mathbb{G}/e_n$ from the root to $\mathbb{L}^\prime$. Since $\mathbb{L}$ and $\mathbb{L}^\prime$ are isomorphic, it follows that $\text{Loop}(\mathbb{L})  = \text{Loop}(\mathbb{L}^\prime)$ and $\text{Bridge}(\mathbb{L})=\text{Bridge}(\mathbb{L}^\prime)$. Since $P$ can be obtained from $P^\prime$ by adding one edge that corresponds to a contraction, it follows that $\text{Con}(\mathbb{L})=\text{Con}(\mathbb{L}^\prime)+1$. 

Either $e_n$ is a loop or $e_n$ is not a loop. Suppose $e_n$ is a (necessarily nonseparating) loop. Then
\begin{eqnarray*}
\text{Loop}(\mathbb{L}) + \text{Con}(\mathbb{L}) & = & \text{Loop}(\mathbb{L}^\prime) + \text{Con}(\mathbb{L}^\prime)  +1\\
& = & 2 [g(\mathbb{T}/e_n) + 1] + ea(\mathbb{T}/e_n) - ia(\mathbb{T}/e_n) + [|V(\mathbb{G}/e_n)| - 1] -1\\
& = & 2 g(\mathbb{T}) + ea(\mathbb{T}) - ia(\mathbb{T}) +|V(\mathbb{G})| - 1 
\end{eqnarray*}
and
\begin{eqnarray*}
2~\text{Loop}(\mathbb{L}) - \text{Bridge}(\mathbb{L}) + \text{Con}(\mathbb{L}) & = &
2 ~ \text{Loop}(\mathbb{L}^\prime) - \text{Bridge}(\mathbb{L}^\prime) + \text{Con}(\mathbb{L}^\prime) + 1\\
& = & 2([g(\mathbb{T}/e_n) +1] +ea(\mathbb{T}/e_n) - ia(\mathbb{T}/e_n)) + |V(\mathbb{G}/e_n)| -2\\
& = & 2(g(\mathbb{T}) + ea(\mathbb{T}) -ia(\mathbb{T}))+|V(\mathbb{G})| -1. 
\end{eqnarray*}
Suppose that $e_n$ is not a loop. Then
\begin{eqnarray*}
\text{Loop}(\mathbb{L}) + \text{Con}(\mathbb{L}) & = & \text{Loop}(\mathbb{L}^\prime) + \text{Con}(\mathbb{L}^\prime)  +1\\
& = & 2 g(\mathbb{T}/e_n) + ea(\mathbb{T}/e_n) - ia(\mathbb{T}/e_n) + [|V(\mathbb{G}/e_n)| + 1] -1\\
& = & 2 g(\mathbb{T}) + ea(\mathbb{T}) - ia(\mathbb{T}) +|V(\mathbb{G})| - 1 
\end{eqnarray*}
and
\begin{eqnarray*}
2~\text{Loop}(\mathbb{L}) - \text{Bridge}(\mathbb{L}) + \text{Con}(\mathbb{L}) & = &
2 ~ \text{Loop}(\mathbb{L}^\prime) - \text{Bridge}(\mathbb{L}^\prime) + \text{Con}(\mathbb{L}^\prime) + 1\\
& = & 2(g(\mathbb{T}/e_n) +ea(\mathbb{T}/e_n) - ia(\mathbb{T}/e_n)) + [|V(\mathbb{G}/e_n)| +1]-1\\
& = & 2(g(\mathbb{T}) + ea(\mathbb{T}) -ia(\mathbb{T}))+|V(\mathbb{G})| -1. 
\end{eqnarray*}
\end{proof}

\begin{corollary}
\label{cor:Jones}
Let $D$ be a diagram of the link $L$, and let $\mathbb{D}$ be the all-$A$ ribbon graph of $D$. The Jones polynomial of $L$ evaluated at $q^2$ can be expressed as
$$J_L(q^2) = \sum_{\mathbb{T}\in\mathcal{Q}(\mathbb{G})} (-1)^{i(\mathbb{T})-n_-}q^{j(\mathbb{T})+n_+-2n_-},$$
where $n_{\pm}$ is the number of positive and negative crossings in $D$ respectively.
\end{corollary}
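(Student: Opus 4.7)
The plan is to take the graded Euler characteristic of the quasi-tree complex and match it against the Jones polynomial via the two main theorems already at our disposal. Recall from the introduction that, with the paper's convention that $v_\pm$ have polynomial grading $\pm 1$, the graded Euler characteristic of reduced Khovanov homology is exactly $J_L(q^2)$:
\begin{equation*}
J_L(q^2) \;=\; \sum_{i,j}(-1)^i\,q^j\,\operatorname{rk}\widetilde{Kh}^{i,j}(L).
\end{equation*}
Applying the grading-preserving isomorphism $\widetilde{Kh}(\mathbb{D})[-n_-]\{n_+-2n_-\}\cong\widetilde{Kh}(L)$ of Theorem \ref{theorem:maintheorem1} and reindexing $(i,j)\mapsto(i-n_-,\,j+n_+-2n_-)$ gives
\begin{equation*}
J_L(q^2) \;=\; \sum_{i,j}(-1)^{i-n_-}\,q^{j+n_+-2n_-}\,\operatorname{rk}\widetilde{Kh}^{i,j}(\mathbb{D}).
\end{equation*}

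Next I would invoke Theorem \ref{theorem:maintheorem2}, in the precise form of Theorem \ref{theorem:quasi-tree_expansion}: the reduced Khovanov homology of $\mathbb{D}$ is the homology of a finitely generated free bigraded complex $\widetilde{A}(\mathbb{D})$ whose generators are in bijection with the spanning quasi-trees of $\mathbb{D}$, and where the generator corresponding to $\mathbb{T}\in\mathcal{Q}(\mathbb{D})$ occupies bigrading $(i(\mathbb{T}),j(\mathbb{T}))$. Because the graded Euler characteristic of such a complex equals the graded Euler characteristic of its homology, we get
\begin{equation*}
\sum_{i,j}(-1)^i\,q^j\,\operatorname{rk}\widetilde{Kh}^{i,j}(\mathbb{D}) \;=\; \sum_{\mathbb{T}\in\mathcal{Q}(\mathbb{D})}(-1)^{i(\mathbb{T})}\,q^{j(\mathbb{T})}.
\end{equation*}
Substituting into the previous display (and using the fact that reindexing commutes with the Euler characteristic computation) yields the claimed identity.

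There is essentially no genuine obstacle here, since the corollary is a formal consequence of two already-established results; the only small bookkeeping point is to verify that the polynomial grading convention used throughout the paper (namely $\deg v_\pm = \pm 1$) is the one that produces $J_L(q^2)$ rather than $J_L(q)$ when taking the Euler characteristic of $\widetilde{Kh}(L)$. This is immediate from the Euler characteristic calculation for the unknot and is consistent with Khovanov's original normalization.
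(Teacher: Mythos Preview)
Your proof is correct and is precisely the argument the paper has in mind: the corollary is stated without proof in the paper, as it is an immediate consequence of taking the graded Euler characteristic of the quasi-tree complex $\widetilde{A}(\mathbb{D})$ and applying the grading shift from Theorem~\ref{theorem:maintheorem1}. Your remark about the $J_L(q)$ versus $J_L(q^2)$ normalization is the only subtlety, and you handle it correctly (the paper's introduction writes $J_L(q)$ for the graded Euler characteristic, so the $q^2$ in the corollary reflects a particular choice of variable for the Jones polynomial rather than any new content).
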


%----------------------------------------------------------------------------------------------%

\section{Applications of the quasi-tree model}
\label{section:applications}

\subsection{Genus and homological width}
Recall that the diagonal grading $\delta$ is defined as $\delta=j/2-i$ and that the homological width of $\widetilde{Kh}(\mathbb{G})$ is defined by $hw(\widetilde{Kh}(\mathbb{G}))=\delta_{\max}(\mathbb{G}) - \delta_{\min}(\mathbb{G}) + 1$, where 
\begin{eqnarray*}
\delta_{\max}(\mathbb{G}) & = & \max\{\delta~|~\widetilde{Kh}^{\delta}(\mathbb{G})\neq 0\}~\text{and}\\
\delta_{\min}(\mathbb{G}) & = & \min\{\delta~|~\widetilde{Kh}^{\delta}(\mathbb{G})\neq 0\}.
\end{eqnarray*}

If $\mathbb{D}$ is the all-$A$ ribbon graph of a link diagram $D$, then the surface $\Sigma_D$ obtained by capping off the boundary components of $\Sigma_{\mathbb{D}}$ with disks is called the Turaev surface of $D$. The {\em Turaev genus of $L$}, denoted $g_T(L)$, is defined as $g_T(L)=\min\{g(\Sigma_D)~|~D~\text{is a diagram of}~L\}.$ There are several lower bounds on Turaev genus coming from the different knot homology theories \cite{CKS:Quasi-trees, Lowrance:KnotFloerWidth, DasbachLowrance:KnotSignature}, and Theorem \ref{theorem:hw} is another result of this flavor. Further studies of Turaev genus may be found in \cite{DFKLS:GraphsOnSurfaces,Abe:TuraevGenusAdequateKnot, Lowrance:ThreeBraids}.
\begin{proof}[Proof of Theorem \ref{theorem:hw}] Let $\mathbb{G}$ be a ribbon graph whose edges are numbered $1,\dots,n$. The diagonal grading of a generator $\mathbb{T}$ in the spanning quasi-tree complex is given by
\begin{eqnarray*}
\delta(\mathbb{T}) & = & \frac{ j(\mathbb{T})}{2} - i(\mathbb{T})\\
& = & g(\mathbb{T}) + ea(\mathbb{T}) - ia(\mathbb{T}) +  \frac{1}{2}(|V(\mathbb{G})| - 1)\\
&  & -\left(2g(\mathbb{T}) + ea(\mathbb{T}) - ia(\mathbb{T}) + |V(\mathbb{G})| -1 \right)\\
& = & -g(\mathbb{T}) - \frac{1}{2}(|V(\mathbb{G})| -1).
\end{eqnarray*} 
While the homological and polynomial gradings each depend on the activities of the spanning quasi-tree (and thus on the labeling of the edges of $\mathbb{G}$), the diagonal grading does not. In fact, since $\frac{1}{2}(|V(\mathbb{G})|-1)$ does not depend on $\mathbb{T}$, the difference between the diagonal gradings of any two spanning quasi-trees depends only on the difference of their genera. The result follows from the fact that any ribbon graph $\mathbb{G}$ contains a spanning quasi-tree of genus zero and a spanning quasi-tree of genus $g(\mathbb{G})$.
\end{proof}

\subsection{Ribbon graphs with no loops}
Throughout this section, suppose $\mathbb{G}$ is a ribbon graph with no loops. If, in addition to having no loops, the dual ribbon graph $\mathbb{G}^*$ has no loops, then $\mathbb{G}$ is called {\em adequate}. If $\mathbb{D}$ is adequate and the all-$A$ ribbon graph of some link diagram of the link $L$, then $L$ is said to be adequate. Khovanov \cite{Khovanov:Patterns} proves that the summands of the Khovanov homology of an adequate link in the maximal and minimal polynomial gradings are each isomorphic to $\mathbb{Z}$. Abe \cite{Abe:TuraevGenusAdequateKnot} proves that if a link is adequate, then its Turaev genus is its homological width plus one. Theorem \ref{theorem:loopless} and Corollary \ref{cor:adequate} are ribbon graph analogs of Khovanov's result for adequate links, and Corollary \ref{cor:hwadequate} is the ribbon graph version of Abe's result.
\begin{lemma}
\label{lemma:int_inactive}
Let $\mathbb{G}$ be a ribbon graph with no loops, and let $\mathbb{T}$ be a spanning quasi-tree of $\mathbb{G}$ with $g(\mathbb{T})>0$. For any ordering of the edges of $\mathbb{G}$, there are at least $g(\mathbb{T})+1$ internally inactive edges with respect to $\mathbb{T}$.
\end{lemma}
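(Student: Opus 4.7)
First, since external edges only add chords that may cross internal ones, deleting all edges of $\mathbb{G}$ not in $\mathbb{T}$ can only decrease the set of internally inactive edges. Thus we may assume $\mathbb{G} = \mathbb{T}$ and write $g := g(\mathbb{T})$, $V := |V(\mathbb{T})|$, $n := |E(\mathbb{T})| = V + 2g - 1$. In any ordering of $E(\mathbb{T})$, the set of active edges forms a pairwise non-crossing subset of chords in $C(\mathbb{T},\mathbb{T})$, so its size is at most the independence number $\alpha(I)$ of the interlacement graph $I$ of $C(\mathbb{T},\mathbb{T})$. The lemma therefore reduces to showing $\alpha(I) \leq V + g - 2$.

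Let $A$ be any independent set of $I$, that is, a pairwise non-crossing subset of chords. The plan is to build $\Sigma_{\mathbb{T}}$ by attaching bands to a disk along the chords of $C(\mathbb{T},\mathbb{T})$, processing the chords of $A$ first and the remaining $n-|A|$ chords afterwards. Each band attachment has one of two types: type (A), when the two chord endpoints lie on the same boundary component at the moment of gluing, in which case the band splits that component (so the number of boundary components increases by one and the genus is unchanged); or type (B), when the endpoints lie on different boundary components, in which case the band merges them and adds a handle (so the number of boundary components decreases by one while the genus increases by one). Since $A$ is non-crossing, every chord of $A$ is type (A) on addition, and after the first phase the intermediate surface has $|A|+1$ boundary components and genus $0$. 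The final surface has $V$ boundary components (one per vertex disk of $\mathbb{T}$) and genus $g$, so an Euler-characteristic count shows that in the second phase exactly $V+g-1-|A|$ chords are type (A) and exactly $g$ chords are type (B), independently of the order chosen within the second phase.

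The main step, where the loopless hypothesis enters, is to rule out $|A| = V + g - 1$. In this case every chord in the second phase must be type (B) on addition. For any such chord $c$, its two endpoints lie on different boundary components just before $c$ is glued and on a single merged boundary component immediately after. Because every subsequent second-phase chord is also type (B), subsequent additions only merge boundary components and never split them, so the endpoints of $c$ remain on a common boundary component for the rest of the construction. In the final surface this means both endpoints of $c$ sit on a single vertex disk of $\mathbb{T}$, i.e., $c$ is a loop. Since $g \geq 1$ forces at least one such chord to exist, this contradicts the hypothesis that $\mathbb{G}$ has no loops. Hence $|A| \leq V + g - 2$, so every ordering of $E(\mathbb{G})$ has at least $n - (V+g-2) = g+1$ internally inactive edges, as required.
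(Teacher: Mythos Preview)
Your proof is correct and genuinely different from the paper's. The paper inducts on $g(\mathbb{T})$: the base case $g=1$ requires identifying two explicit subgraph configurations in $\mathbb{T}$ (a theta-type graph and two interlocking cycles through a common vertex) whose chord patterns visibly force at least two inactive internal chords; the inductive step locates two crossing internal chords, removes the corresponding edges to drop the genus by one, and uses a surgery description of the chord diagram to show that the remaining crossings are unchanged, so induction supplies $g$ inactive edges and the removed pair contributes one more. Your argument instead bounds the independence number of the interlacement graph directly via a band-attachment count, sidestepping both the configuration analysis and the induction.

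Two small phrasing issues are worth fixing. The surface you build from a disk by attaching bands along the chords of $C(\mathbb{T},\mathbb{T})$ is not $\Sigma_{\mathbb{T}}$ (which has only one boundary component) but rather $\Sigma_{\mathbb{T}^*}$, equivalently the complement in the closed surface $\Sigma$ of the open vertex disks of $\mathbb{T}$; this is precisely why the final surface has $V$ boundary components, one per vertex disk, and genus $g$. Also, once the band for $c$ is attached, the chord endpoints themselves lie in the interior of the surface, so what you should track through the subsequent type-(B) merges are the four \emph{corners} of the band (the endpoints of its attaching arcs). These do stay on the boundary throughout, and in the final surface two of them sit on the $u$-disk boundary and two on the $v$-disk boundary whenever the edge $e$ joins distinct vertices $u$ and $v$; all four landing on a single boundary component then forces $u=v$, producing the loop you need. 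With these adjustments the argument is complete.
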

\begin{proof}
Since $g(\mathbb{T})>0$ and $\mathbb{G}$ has no loops, $\mathbb{T}$ has a ribbon subgraph of one of the two following forms. The first type of ribbon subgraph contains at least two vertices $u$ and $v$ and three disjoint paths $P_1$, $P_2$ and $P_3$ from $u$ to $v$ such that the cyclic order of the paths at each both $u$ and $v$ is $(P_1,P_2,P_3)$ as in Figure \ref{fig:schem1}. If $P_1,P_2,$ and $P_3$ contain $k,l,$ and $m$ edges respectively, then the chord diagram $C(\mathbb{G},\mathbb{T})$ contains the configuration on the right of Figure \ref{fig:schem1}.
\begin{figure}[h]
$$\begin{tikzpicture}[thick,scale=.9]
%Schematic 1
\draw (2,2) circle (2cm);
\draw (2,4) node[above]{$P_1$};
\draw (2,0) node[below]{$P_3$};
\draw (2,2) node[below]{$P_2$};
\fill (4,2) circle (.2cm);
\draw (4.2,1.8) node[right]{$v$};
\fill (0,2) circle (.2cm);
\draw (-.2,1.8) node[left]{$u$};
\draw (0,2) arc (270:80:.5cm);
\draw (4,2) .. controls (2,2) and (.7, 2.5) .. (.4,2.8);
%Chord Schematic
\def \circle{(10,2) circle (2cm)};
\draw \circle;
\begin{scope}[xshift = 10cm, yshift=2cm]
	\draw (50:2cm) -- (250:2cm);
	\draw (70:2cm) -- (230:2cm);
	\draw (55:1.8cm) node{.};
	\draw (60:1.8cm) node{.};
	\draw (65:1.8cm) node{.};
	\draw (235:1.8cm) node{.};
	\draw (240:1.8cm) node{.};
	\draw (245:1.8cm) node{.};
	\draw (110:2cm) -- (310:2cm);
	\draw (130:2cm) -- (290:2cm);
	\draw (115:1.8cm) node{.};
	\draw (120:1.8cm) node{.};
	\draw (125:1.8cm) node{.};
	\draw (295:1.8cm) node{.};
	\draw (300:1.8cm) node{.};
	\draw (305:1.8cm) node{.};
	\draw (170:2cm) -- (10:2cm);
	\draw(190:2cm) -- (350:2cm);
	\draw (-5:1.8cm) node{.};
	\draw (0:1.8cm) node{.};
	\draw (5:1.8cm) node{.};
	\draw (175:1.8cm) node{.};
	\draw (180:1.8cm) node{.};
	\draw (185:1.8cm) node{.};
\end{scope}
\draw (10.8,4) node[right]{$k$};
\draw (9.2,4) node[left]{$l$};
\draw (8,2) node[left]{$m$};
\end{tikzpicture}$$
\caption{{\bf Left:} A schematic depiction of the first type of a ribbon subgraph of $\mathbb{T}$. {\bf Right:} The corresponding configuration in the chord diagram $C(\mathbb{G},\mathbb{T})$.}
\label{fig:schem1}
\end{figure}
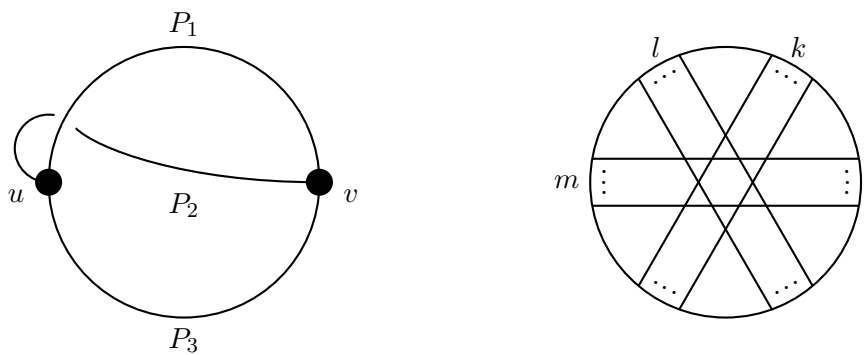

The second type of ribbon subgraph consists of a vertex $v$ and two disjoint cycles $C_1$ and $C_2$ containing $v$. The cyclic order of the cycles at the vertex $v$ is $(C_1,C_2,C_1,C_2)$ as depicted on the in Figure \ref{fig:schem2}. If $C_1$ contains $k$ edges and $C_2$ contains $l$ edges, then the chord diagram $C(\mathbb{G},\mathbb{T})$ contains the configuration on the right of Figure \ref{fig:schem2}. Since $\mathbb{G}$ has no loops, both $k$ and $l$ are at least two.
\begin{figure}[h]
$$\begin{tikzpicture}[scale=.9]
%Schematic 2
\draw (2,2) circle (1.5cm);
\draw (3.5,2) node[left]{$C_2$};
\fill (0.5,2) circle (.2cm);
\draw (.3,1.8) node[left]{$v$};
\begin{pgfonlayer}{background2}
	\draw (0.5,3.5) circle (1.5cm);
	\draw (-1,3.5) node[right]{$C_1$};
\end{pgfonlayer}
\begin{pgfonlayer}{background}
	\fill[white] (2,3.5) circle (.3cm);
\end{pgfonlayer}
\draw (10,2) circle (2cm);
\begin{scope}[xshift = 10cm, yshift = 2cm]
	\draw (10:2cm) -- (170:2cm);
	\draw (-10:2cm) -- (190:2cm);
	\draw (-5:1.8cm) node{.};
	\draw (0:1.8cm) node{.};
	\draw (5:1.8cm) node{.};
	\draw (175:1.8cm) node{.};
	\draw (180:1.8cm) node{.};
	\draw (185:1.8cm) node{.};
	\draw (80:2cm) -- (280:2cm);
	\draw (85:1.8cm) node{.};
	\draw (90:1.8cm) node{.};
	\draw (95:1.8cm) node{.};
	\draw (100:2cm) -- (260:2cm);
	\draw (265:1.8cm) node{.};
	\draw (270:1.8cm) node{.};
	\draw (275:1.8cm) node{.};
	\draw (90:2cm) node[above]{$k$};
	\draw (180:2cm) node[left]{$l$};
\end{scope}
\end{tikzpicture}$$
\caption{{\bf Left:} A schematic depiction of the second type of ribbon subgraph of $\mathbb{T}$. {\bf Right:} The corresponding configuration in the chord diagram $C(\mathbb{G},\mathbb{T})$. Note that $k\geq 2$ and $l\geq 2$.}
\label{fig:schem2}
\end{figure}
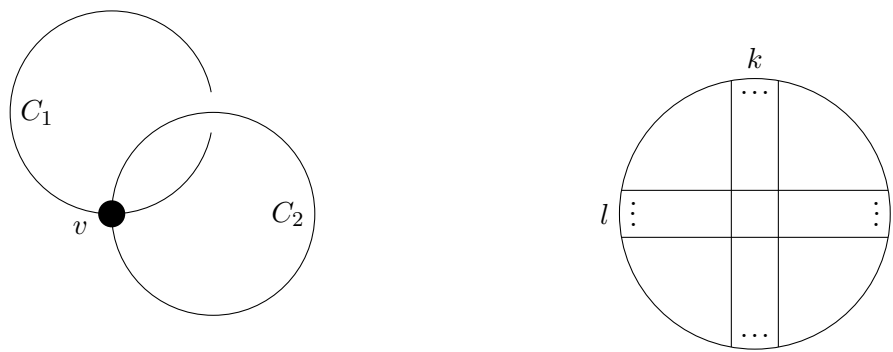

We will prove a slightly stronger result than stated via induction on the genus of $\mathbb{T}$. Let $C_{\text{int}}(\mathbb{G},\mathbb{T})$ be the chord diagram obtained by deleting all chords that are external with respect to $\mathbb{T}$. We show there are at least $g(\mathbb{T})+1$ internally inactive edges in $C_{\text{int}}(\mathbb{G},\mathbb{T})$. Since adding the external edges back can only possibly increase the number of internally inactive edges, the result follows. If $g(\mathbb{T})=1$, then $C_{\text{int}}(\mathbb{G},\mathbb{T})$ contains a configuration as in either Figure \ref{fig:schem1} or Figure \ref{fig:schem2}. In either case, there are at least two chords which must intersect lower numbered chords. Hence there are at least two internally inactive edges with respect to $\mathbb{T}$.

Suppose that $g(\mathbb{T})>1$. Since $g(\mathbb{T})>0$, there exists two chords $c_1$ and $c_2$ in $C(\mathbb{G},\mathbb{T})$ that are internal and intersect. Since they intersect, at least one of $c_1$ and $c_2$ is internally inactive with respect to $\mathbb{T}$. If $c_1$ and $c_2$ correspond to edges $e_1$ and $e_2$ respectively, there is a spanning quasi-tree $\mathbb{T}^\prime$ of $\mathbb{G}$ with $E(\mathbb{T}^\prime) = E(\mathbb{T})\setminus\{e_1,e_2\}$. The chords $c_1$ and $c_2$ intersect in $C(\mathbb{G},\mathbb{T}^\prime)$ except now they are external with respect to $\mathbb{T}^\prime$. 
\begin{figure}[h]
$$\begin{tikzpicture}[thick,>= triangle 45]
\begin{scope}[xshift=4cm]
	\draw (-2,2) node{\Large{$C(\mathbb{G},\mathbb{T}^\prime)$}};
	\draw (2,2) circle (2cm);
	\draw (2,4) node[above]{$c_1$};
	\draw (2,0) node[below]{$c_1$};
	\draw (2,4) -- (2,0);
	\draw (0,2) node[left]{$c_2$};
	\draw (4,2) node[right]{$c_2$};
	\draw (0,2) -- (4,2);
	\draw (1.3,3.9) node[above]{$w_1$};
	\draw (0.1,2.7) node[left]{$w_{k_1}$};
	\draw[loosely dotted, ultra thick] (.65,3.73) arc (128:142:2cm);
	\draw (1.3,0.1) node[below]{$x_{k_2}$};
	\draw (0.1,1.3) node[left]{$x_1$};
	\draw[loosely dotted, ultra thick] (.65,.27) arc (232:218:2cm);
	\draw (2.7,0.1) node[below]{$y_1$};
	\draw (3.9, 1.3) node[right]{$y_{k_3}$};
	\draw[loosely dotted, ultra thick] (3.35,.27) arc (308:322:2cm);
	\draw (3.9,2.7) node[right]{$z_1$};
	\draw (2.7,3.9) node[above]{$z_{k_4}$};
	\draw[loosely dotted, ultra thick] (3.35,3.73) arc (52:38:2cm);
\end{scope}
\draw[->,very thick] (4.2,0.2) -- (3.5,-.5);
\draw (3.967, -2.6327) arc (10:80:2cm);
\draw (0.033, -2.6327) arc (170:100:2cm);
\draw (0.033, -3.3473) arc (190:260:2cm);
\draw (3.967, -3.3473) arc (350:280:2cm);
\draw (2.3473,-1.033) -- (2.3473, -4.967);
\draw[xshift = -.6946cm] (2.3473,-1.033) -- (2.3473, -4.967);
\begin{pgfonlayer}{background}
	\fill[white] (2,-3) circle (2cm);
\end{pgfonlayer}
\begin{pgfonlayer}{background2}
\draw[thick] (2,-4.5) ellipse (3cm and 1.5cm);
\draw[thick] (2,-4.35) ellipse (3.4cm and 2.1cm);
\end{pgfonlayer}
\begin{scope}[yshift = -5cm]
	\draw (1.3,3.9) node[above]{$w_1$};
	\draw (0.1,2.7) node[left]{$w_{k_1}$};
	\draw[loosely dotted, ultra thick] (.65,3.73) arc (128:142:2cm);
	\draw (1.3,0.1) node[below]{$x_{k_2}$};
	\draw (0.1,1.3) node[left]{$x_1$};
	\draw[loosely dotted, ultra thick] (.65,.27) arc (232:218:2cm);
	\draw (2.7,0.1) node[below]{$y_1$};
	\draw (3.9, 1.3) node[right]{$y_{k_3}$};
	\draw[loosely dotted, ultra thick] (3.35,.27) arc (308:322:2cm);
	\draw (3.9,2.7) node[right]{$z_1$};
	\draw (2.7,3.9) node[above]{$z_{k_4}$};
	\draw[loosely dotted, ultra thick] (3.35,3.73) arc (52:38:2cm);
\end{scope}
\draw (1.7,-3) node[left]{$c_1$};
\draw (2.3,-3) node[right]{$c_1$};
\draw (2,-5.9) node[above]{$c_2$};
\draw (2,-6.4) node[below]{$c_2$};
\draw[->,very thick] (6,-3) -- (7,-3);
\begin{scope}[xshift=8cm,yshift=-5cm]
	\draw (2,2) circle (2cm);
	\draw (2,4) node[above]{$c_1$};
	\draw (2,0) node[below]{$c_1$};
	\draw (2,4) -- (2,0);
	\draw (0,2) node[left]{$c_2$};
	\draw (4,2) node[right]{$c_2$};
	\draw (0,2) -- (4,2);
	\draw (1.3,3.9) node[above]{$w_1$};
	\draw (0.1,2.7) node[left]{$w_{k_1}$};
	\draw[loosely dotted, ultra thick] (.65,3.73) arc (128:142:2cm);
	\draw (1.3,0.1) node[below]{$z_{k_2}$};
	\draw (0.1,1.3) node[left]{$z_1$};
	\draw[loosely dotted, ultra thick] (.65,.27) arc (232:218:2cm);
	\draw (2.7,0.1) node[below]{$y_1$};
	\draw (3.9, 1.3) node[right]{$y_{k_3}$};
	\draw[loosely dotted, ultra thick] (3.35,.27) arc (308:322:2cm);
	\draw (3.9,2.7) node[right]{$x_1$};
	\draw (2.7,3.9) node[above]{$x_{k_4}$};
	\draw[loosely dotted, ultra thick] (3.35,3.73) arc (52:38:2cm);
	\draw (2.8,4.5) node[above]{\Large{$C(\mathbb{G},\mathbb{T})$}};
\end{scope}
\end{tikzpicture}$$
\caption{Chords $c_1$ and $c_2$ are exterior in $C(\mathbb{G},\mathbb{T})$. Surgering bands along $c_1$ and $c_2$ results in a one component diagram, which can be smoothly deformed to give the chord diagram $C(\mathbb{G},\mathbb{T})$, where $c_1$ and $c_2$ are interior.}
\label{fig:chord_surgery}
\end{figure}
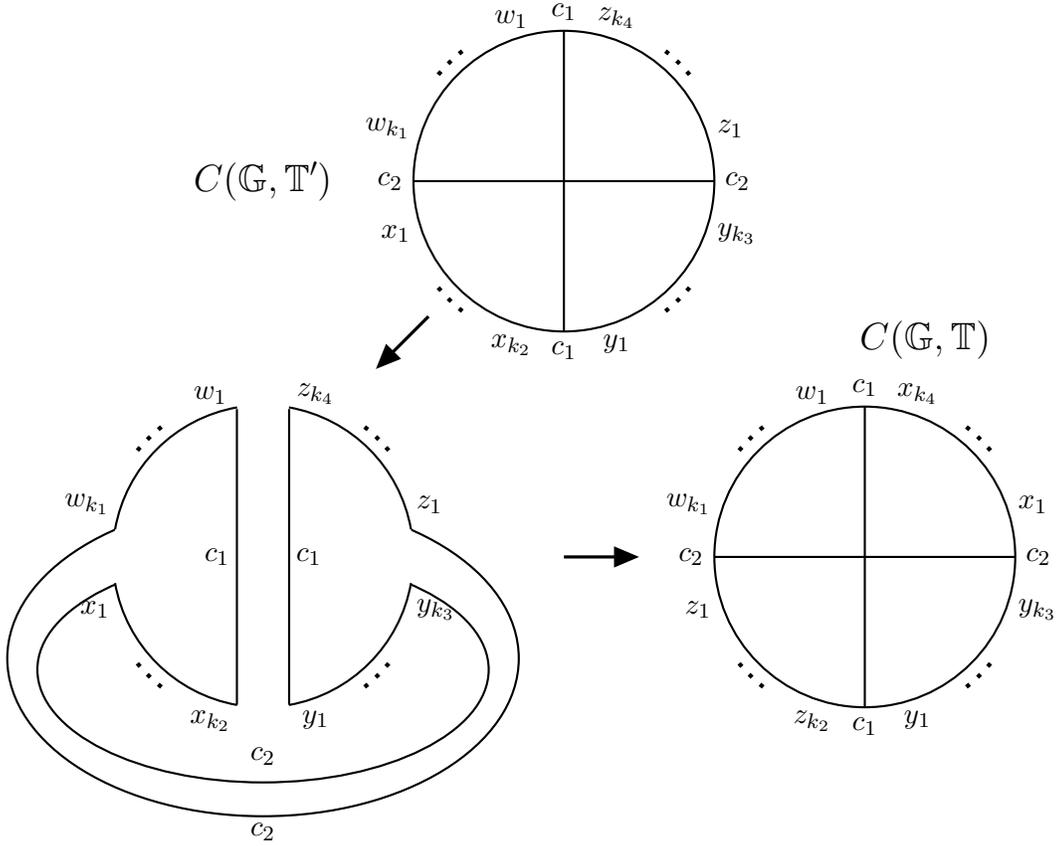

The endpoints of the chords $c_1$ and $c_2$ partition the circle in the chord diagram $C(\mathbb{G},\mathbb{T}^\prime)$ into four arcs. Label the endpoints of the other chords in $C(\mathbb{G},\mathbb{T}^\prime)$ by $w_{k_i},x_{k_i},y_{k_i},$ or $z_{k_i}$ as in Figure \ref{fig:chord_surgery}. Surgery may be performed along $c_1$ and $c_2$ in $C(\mathbb{G},\mathbb{T}^\prime)$ by replacing the chords with bands and labeling opposing midpoints of each band by the original label of the chord. The resulting diagram has one component that can be deformed into a round circle, producing the chord diagram $C(\mathbb{G},\mathbb{T})$. This process is depicted in Figure \ref{fig:chord_surgery}. 

Two chords different from $c_1$ and $c_2$ intersect in $C(\mathbb{G},\mathbb{T}^\prime)$ if and only if they also intersect in $C(\mathbb{G},\mathbb{T})$. By way of induction, there are at least $g(\mathbb{T}^\prime) + 1 = g(\mathbb{T})$ internally inactive edges in $C_{\text{int}}(\mathbb{G},\mathbb{T}^\prime)$. Those chords are also internally inactive in $C_{\text{int}}(\mathbb{G},\mathbb{T}).$ Since at least one of $c_1$ and $c_2$ must also be internally inactive in $C_{\text{int}}(\mathbb{G},\mathbb{T})$, it follows that there are at least $g(\mathbb{T})+1$ internally inactive edges in $C_{\text{int}}(\mathbb{G},\mathbb{T})$.
\end{proof}

\begin{proof}[Proof of Theorem \ref{theorem:loopless}]
Let $\mathbb{T}$ be a spanning quasi-tree of $\mathbb{G}$ such that $g(\mathbb{T})=0$. Hence the underlying graph $T$ of $\mathbb{T}$ is a spanning tree of the underlying graph $G$ of $\mathbb{G}$. We prove that there exists a labeling of the edges of $\mathbb{G}$ such that $i(\mathbb{T})=0$ and $j(\mathbb{T})=1-|V(\mathbb{G})|$ and that there is no other spanning quasi-tree $\mathbb{T}^\prime$ with $j(\mathbb{T}^\prime)\leq 1-|V(\mathbb{G})|$. The result follows from Theorem \ref{theorem:maintheorem2}.

Arbitrarily label the edges of $\mathbb{T}$ by $1,\dots,k$ and the edges of $\mathbb{G}$ not in $\mathbb{T}$ by $k+1,\dots,n$. Since $g(\mathbb{T})=0$, the chords corresponding to the edges of $\mathbb{T}$ do not intersect one another. Thus every edge in $\mathbb{T}$ is internally active, and $ia(\mathbb{T}) = |E(\mathbb{T})| = |V(\mathbb{G})|-1$. Let $c$ be a chord corresponding an edge $e$ in $\mathbb{G}$ but not in $\mathbb{T}$. Since $\mathbb{G}$ is loopless, the endpoints of the edge $e$ are two distinct vertices $u$ and $v$ in $V(\mathbb{G})$. Since $T$ is a spanning tree of $\mathbb{G}$, there is a unique path $\gamma$ from $u$ to $v$ in $T$. The chord $c$ intersects each of the chords corresponding to the edges of $\gamma$, and hence $c$ is not externally active. Therefore $ea(\mathbb{T}) = 0$, and thus $i(\mathbb{T}) = 0$ and $j(\mathbb{T})=1-|V(\mathbb{G})|$.

We prove that there does not exist a spanning quasi-tree $\mathbb{T}^\prime$ distinct from $\mathbb{T}$ such that $j(\mathbb{T}^\prime)\leq 1 - |V(\mathbb{G})|$ in two steps: first we show that no such genus zero quasi-tree can exist, and then we show it for quasi-trees with positive genus. 

Suppose that $\mathbb{T}^\prime$ is a quasi-tree which is distinct from $\mathbb{T}$ and such that $g(\mathbb{T}^\prime) = 0$. The least possible $j$-grading occurs when $ea(\mathbb{T}^\prime) = 0$ and $ia(\mathbb{T}^\prime) = |E(\mathbb{T})|$. Therefore, if $\mathbb{T}^\prime$ contains an internally inactive edge, then $j(\mathbb{T})>1-|V(\mathbb{G})|$.
Since $\mathbb{T}$ and $\mathbb{T}^\prime$ are distinct, there is a cycle $C$ in $\mathbb{G}$ whose edge set is a subset of $E(\mathbb{T})\cup E(\mathbb{T}^\prime)$. Also, there exist edges $e$ and $e^\prime$ in the edge set of $C$, such that $e\in E(\mathbb{T})$, $e\notin E(\mathbb{T}^\prime)$, $e^\prime\in E(\mathbb{T}^\prime)$, and $e^\prime\notin E(\mathbb{T})$. The chords $c$ and $c^\prime$  corresponding to the edges $e$ and $e^\prime$ intersect in $C(\mathbb{G},\mathbb{T}^\prime)$. Therefore $c^\prime$ is not internally active. 

Suppose that $\mathbb{T}^\prime$ is a spanning quasi-tree of $\mathbb{G}$ of positive genus. Lemma \ref{lemma:int_inactive} states that there are at least $g(\mathbb{T}^\prime)+1$ internally inactive edges in $C(\mathbb{G},\mathbb{T})$. An Euler characteristic argument implies that $2g(\mathbb{T}^\prime) - |E(\mathbb{T}^\prime)| = 1 - |V(\mathbb{G})|$. Thus
\begin{eqnarray*}
j(\mathbb{T}^\prime) & = & 2[g(\mathbb{T}^\prime) + ea(\mathbb{T}^\prime) - ia(\mathbb{T}^\prime)] +V(\mathbb{G}) -1\\
&\geq & 2[g(\mathbb{T}^\prime) + ea(\mathbb{T}^\prime) - (|E(\mathbb{T}^\prime)| - g(\mathbb{T}^\prime) -1)]+V(\mathbb{G}) -1\\
& = & 2[2g(\mathbb{T}^\prime) - |E(\mathbb{T}^\prime)| +ea(\mathbb{T}^\prime) +1] + V - 1\\
& = & 2[1-|V(\mathbb{G})|+ea(\mathbb{T}^\prime) +1] + V - 1\\
& = & 3- |V(\mathbb{G})| + 2~ea(\mathbb{T}^\prime).
\end{eqnarray*}
Therefore $j(\mathbb{T}^\prime)> 1 - |V(\mathbb{G})|$, and the result follows.
\end{proof}
For another result about the form of Khovanov homology in its extremal gradings see Todd \cite{Todd:KhovanovTwistNumber}.

\begin{proof}[Proof of Corollary \ref{cor:adequate}]
Since $\mathbb{G}$ is adequate, neither $\mathbb{G}$ nor $\mathbb{G}^*$ have loops. Corollary \ref{cor:duality} implies that $j_{\max}(\mathbb{G}) = |E(\mathbb{G})| - j_{\min}(\mathbb{G}^*)$. Since $j_{\min}(\mathbb{G}^*) = 1 - |V(\mathbb{G}^*)| = 1 - |F(\mathbb{G})|$, it follows that $j_{\max}(\mathbb{G}) =  |E(\mathbb{G})| + |F(\mathbb{G})| -1$. The result follows from Theorem \ref{theorem:loopless} and Corollary \ref{cor:duality}.
\end{proof}

Abe \cite{Abe:TuraevGenusAdequateKnot} proves that if a link is adequate, then the upper bound on its homological width coming from Khovanov homology is tight. We generalize that result to ribbon graphs in the following corollary.
\begin{corollary}
\label{cor:hwadequate}
Let $\mathbb{G}$ be an adequate ribbon graph. Then
$$hw(\widetilde{Kh}(\mathbb{G})) = g(\mathbb{G}) + 1.$$
\end{corollary}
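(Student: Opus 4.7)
The plan is to combine the upper bound already established in Theorem \ref{theorem:hw} with an explicit lower bound coming from the extremal generators identified in Corollary \ref{cor:adequate}. Since Theorem \ref{theorem:hw} gives $hw(\widetilde{Kh}(\mathbb{G}))\leq g(\mathbb{G})+1$ for any ribbon graph, the task reduces to producing two nonzero summands of $\widetilde{Kh}(\mathbb{G})$ whose diagonal gradings differ by exactly $g(\mathbb{G})$.

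The two witnesses will be supplied directly by Corollary \ref{cor:adequate}: when $\mathbb{G}$ is adequate, we have $\widetilde{Kh}^{0,\,1-|V(\mathbb{G})|}(\mathbb{G})\cong\mathbb{Z}$ and $\widetilde{Kh}^{|E(\mathbb{G})|,\,|E(\mathbb{G})|+|F(\mathbb{G})|-1}(\mathbb{G})\cong\mathbb{Z}$. So I would next compute the diagonal grading $\delta=j/2-i$ of each:
\begin{align*}
\delta_1 &= \tfrac{1-|V(\mathbb{G})|}{2},\\
\delta_2 &= \tfrac{|E(\mathbb{G})|+|F(\mathbb{G})|-1}{2}-|E(\mathbb{G})| \;=\; \tfrac{|F(\mathbb{G})|-|E(\mathbb{G})|-1}{2}.
\end{align*}

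The key arithmetic step is then to invoke the Euler characteristic relation $|V(\mathbb{G})|-|E(\mathbb{G})|+|F(\mathbb{G})|=2-2g(\mathbb{G})$ for the closed surface $\Sigma$ into which $\mathbb{G}$ cellularly embeds, which gives $|E(\mathbb{G})|-|V(\mathbb{G})|-|F(\mathbb{G})|=-2+2g(\mathbb{G})$. Substituting:
\[
\delta_1-\delta_2 \;=\; \tfrac{2+|E(\mathbb{G})|-|V(\mathbb{G})|-|F(\mathbb{G})|}{2}\;=\; \tfrac{2+(-2+2g(\mathbb{G}))}{2}\;=\;g(\mathbb{G}).
\]
As a sanity check, this is consistent with the computation $\delta(\mathbb{T})=-g(\mathbb{T})-\tfrac{1}{2}(|V(\mathbb{G})|-1)$ appearing in the proof of Theorem \ref{theorem:hw}: the bigrading $(0,1-|V(\mathbb{G})|)$ corresponds to a genus $0$ quasi-tree while $(|E(\mathbb{G})|,|E(\mathbb{G})|+|F(\mathbb{G})|-1)$ corresponds to a quasi-tree of genus $g(\mathbb{G})$.

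It follows that $\delta_{\max}(\mathbb{G})\geq\delta_1$ and $\delta_{\min}(\mathbb{G})\leq\delta_2$, so $\delta_{\max}(\mathbb{G})-\delta_{\min}(\mathbb{G})\geq g(\mathbb{G})$ and therefore $hw(\widetilde{Kh}(\mathbb{G}))\geq g(\mathbb{G})+1$. Combined with Theorem \ref{theorem:hw}, this yields equality. There is no substantive obstacle here; all of the real work has already been absorbed into Theorem \ref{theorem:loopless} and Corollary \ref{cor:adequate}, and the final step is just an Euler characteristic bookkeeping calculation.
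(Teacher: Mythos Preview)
Your proof is correct and follows essentially the same approach as the paper: both invoke Corollary \ref{cor:adequate} to obtain nonzero summands in bigradings $(0,1-|V(\mathbb{G})|)$ and $(|E(\mathbb{G})|,|E(\mathbb{G})|+|F(\mathbb{G})|-1)$, compute their diagonal gradings, use the Euler characteristic relation to show they differ by $g(\mathbb{G})$, and then combine this lower bound with Theorem \ref{theorem:hw}.
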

\begin{proof}
Corollary \ref{cor:adequate} implies that $\widetilde{Kh}(\mathbb{G})$ is nontrivial in bigradings $(0, 1 - |V(\mathbb{G})|)$ and $(|E(\mathbb{G})|,|E(\mathbb{G})| + |F(\mathbb{G})| - 1)$. Hence it is nontrivial in diagonal grading  
\begin{eqnarray*}
\delta_1 & = & \frac{1 - |V(\mathbb{G})|}{2}~\text{and}\\ 
\delta_2 & = & \frac{|F(\mathbb{G})| - |E(\mathbb{G})| - 1}{2}.
\end{eqnarray*}
Since $2 g(\mathbb{G}) = 2 - |V(\mathbb{G})| + |E(\mathbb{G})| - |F(\mathbb{G})|$, it follows that
\begin{eqnarray*}
\delta_1 - \delta_2 & = & \frac{2 -|V(\mathbb{G})| + |E(\mathbb{G})| + |F(\mathbb{G})|}{2} \\
& = & g(\mathbb{G}).\\
\end{eqnarray*}
Therefore $hw(\widetilde{Kh}(\mathbb{G}))\geq g(\mathbb{G})+1$. Since Theorem \ref{theorem:hw} states that $hw(\widetilde{Kh}(\mathbb{G}))\leq g(\mathbb{G}) + 1$, the desired equality follows.
\end{proof}

\subsection{Bounds on gradings}
Let $\mathbb{G}$ be a ribbon graph possibly with loops, and let $i_{\min}(\mathbb{G})$, $i_{\max}(\mathbb{G})$, $j_{\min}(\mathbb{G})$, and $j_{\max}(\mathbb{G})$ be the minimum and maximum homological and polynomial gradings of $\widetilde{Kh}(\mathbb{G})$ respectively. From the cube of resolutions complex for $\widetilde{Kh}(\mathbb{G})$ it is clear that $i_{\min}(\mathbb{G})\geq 0$ and $i_{\max}(\mathbb{G})\leq |E(\mathbb{G})|$, and thus $i_{\max}(\mathbb{G}) - i_{\min}(\mathbb{G}) \leq |E(\mathbb{G})|.$ The relationship between the maximum and minimum polynomial gradings is captured in the following proposition.
\begin{proposition}
\label{prop:polygrading}
Let $\mathbb{G}$ be a ribbon graph, and let $j_{\min}(\mathbb{G})$ and $j_{\max}(\mathbb{G})$ be the minimum and maximum polynomial gradings where $\widetilde{Kh}(\mathbb{G})$ is nontrivial. Then
$$j_{\max}(\mathbb{G}) - j_{\min}(\mathbb{G}) \leq 2(|E(\mathbb{G})| - g(\mathbb{G})).$$
Moreover, if $\mathbb{G}$ is adequate, then 
$$j_{\max}(\mathbb{G}) - j_{\min}(\mathbb{G}) = 2(|E(\mathbb{G})| - g(\mathbb{G})).$$
\end{proposition}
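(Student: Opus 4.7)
The plan is to dispatch the two assertions separately. The equality in the adequate case follows immediately from Corollary \ref{cor:adequate}, which gives $j_{\min}(\mathbb{G}) = 1 - |V(\mathbb{G})|$ and $j_{\max}(\mathbb{G}) = |E(\mathbb{G})| + |F(\mathbb{G})| - 1$. Subtracting yields $|E| + |F| + |V| - 2$, and Euler's formula $|V(\mathbb{G})| - |E(\mathbb{G})| + |F(\mathbb{G})| = 2 - 2g(\mathbb{G})$ for the surface into which $\mathbb{G}$ is embedded lets me substitute $|F| = |E| - |V| + 2 - 2g$; the difference collapses to $2|E| - 2g$.

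For the general inequality I would use the spanning quasi-tree complex $\widetilde{A}(\mathbb{G})$ from Theorem \ref{theorem:quasi-tree_expansion}. Combining $|E(\mathbb{T})| = |V(\mathbb{G})| - 1 + 2g(\mathbb{T})$, an Euler characteristic count on the one-boundary surface $\Sigma_{\mathbb{T}}$, with the activity formula of Section \ref{subsec:activities}, each quasi-tree generator $\mathbb{T}$ has polynomial grading
\[
j(\mathbb{T}) = |E(\mathbb{T})| + 2\bigl(ea(\mathbb{T}) - ia(\mathbb{T})\bigr).
\]
Since $\widetilde{Kh}(\mathbb{G})$ is a subquotient of this complex, it is enough to show that every generator satisfies $-|V(\mathbb{G})| + 1 \leq j(\mathbb{T}) \leq 2|E(\mathbb{G})| - |V(\mathbb{G})| + 1 - 2g(\mathbb{G})$; the desired inequality is then the difference of these two bounds.

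The essential ingredient is a strengthening of Lemma \ref{lemma:int_inactive} which drops the loopless hypothesis in exchange for weakening the conclusion by one unit: for any ribbon graph $\mathbb{G}$ and any spanning quasi-tree $\mathbb{T}$, at least $g(\mathbb{T})$ internal edges of $\mathbb{T}$ are inactive, and at least $g(\mathbb{G}) - g(\mathbb{T})$ external edges of $\mathbb{T}$ are inactive. I would prove the internal statement by the same induction on $g(\mathbb{T})$ as in Lemma \ref{lemma:int_inactive}: when $g(\mathbb{T}) > 0$ the internal chord diagram $C_{\text{int}}(\mathbb{G},\mathbb{T})$ must contain two crossing chords, so one of them is necessarily inactive, and deleting the two corresponding internal edges yields a quasi-tree of genus $g(\mathbb{T}) - 1$ to which induction applies. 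The external statement I would then derive by applying the internal bound to $(\mathbb{G}^*, \widehat{\mathbb{T}})$, using $g(\widehat{\mathbb{T}}) = g(\mathbb{G}) - g(\mathbb{T})$ and the identification of $C(\mathbb{G},\mathbb{T})$ with $C(\mathbb{G}^*,\widehat{\mathbb{T}})$ up to orientation reversal of the boundary circle and the swap of internal and external labels. These bounds translate to $ia(\mathbb{T}) \leq |V|-1+g(\mathbb{T})$ and $ea(\mathbb{T}) \leq |E|-|V|+1-g(\mathbb{G})-g(\mathbb{T})$, and substituting them (together with $ea(\mathbb{T}), ia(\mathbb{T}) \geq 0$) into the grading formula delivers the two $g(\mathbb{T})$-independent bounds on $j(\mathbb{T})$ above. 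The hard part will be the generalized activity lemma itself: the cycle configurations of Figures \ref{fig:schem1} and \ref{fig:schem2} used in the original proof assume cycle lengths $\geq 2$, which relies on looplessness, so the induction step in the general setting has to accommodate quasi-trees whose genus is created by loops.
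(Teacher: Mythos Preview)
Your proof is correct, but it takes a longer route than the paper's.

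For the adequate case you agree with the paper: both invoke Corollary~\ref{cor:adequate} and the Euler relation.

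For the general inequality the paper's argument is much shorter. It only establishes the lower bound $j_{\min}(\mathbb{G})\geq 1-|V(\mathbb{G})|$ and then applies the homological duality of Corollary~\ref{cor:duality}, which gives $j_{\max}(\mathbb{G})=|E(\mathbb{G})|-j_{\min}(\mathbb{G}^*)\leq |E(\mathbb{G})|+|F(\mathbb{G})|-1$; subtracting and using $|V|-|E|+|F|=2-2g$ finishes. So the paper never needs your external activity bound: duality at the level of homology replaces the chord-diagram duality you set up between $C(\mathbb{G},\mathbb{T})$ and $C(\mathbb{G}^*,\widehat{\mathbb{T}})$.

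Two further remarks. First, your worry about looplessness is unnecessary. The weaker statement ``at least $g(\mathbb{T})$ internal edges are inactive'' has the trivial base case $g(\mathbb{T})=0$, and the inductive step from Lemma~\ref{lemma:int_inactive} goes through verbatim: if $g(\mathbb{T})>0$ then two internal chords cross, the higher-labeled one is inactive, removing both edges yields a quasi-tree of genus $g(\mathbb{T})-1$, and the surgery picture in Figure~\ref{fig:chord_surgery} shows the remaining intersections are unchanged. The loopless hypothesis was used only to squeeze out the extra ``$+1$'' in the original lemma via the configurations of Figures~\ref{fig:schem1}--\ref{fig:schem2}; you do not need that.

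Second, the paper's own justification of $j_{\min}\geq 1-|V|$ via Equation~\eqref{eqn:j-grading} is actually elliptical: from the formula and the crude bounds $ea\geq 0$, $ia\leq |E(\mathbb{T})|$ one only gets $j(\mathbb{T})\geq 1-|V|-2g(\mathbb{T})$. Your internal activity bound $ia(\mathbb{T})\leq |E(\mathbb{T})|-g(\mathbb{T})$ is exactly what closes this gap. (Alternatively, the inequality $j_{\min}\geq 1-|V|$ is immediate from the original cube complex, since every generator at height $h(I)$ has $j\geq h(I)-|F(\mathbb{G}(I))|+1\geq 1-|V|$.) So what your approach buys is a fully self-contained quasi-tree argument, at the cost of proving the activity lemma; what the paper's approach buys is brevity, by outsourcing the upper bound to the already-established duality.
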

\begin{proof}
Equation \ref{eqn:j-grading} implies that the lowest possible polynomial grading occurs when $g(\mathbb{T}) = 0$ and $ia(\mathbb{T}) = |E(\mathbb{T})|$. In this case, $j_{\min}(\mathbb{G}) = 1-|V(\mathbb{G})|$. Note that there is no guarantee that such a $\mathbb{T}$ exists, and thus in general, we have that $j_{\min}(\mathbb{G}) \geq 1 - |V(\mathbb{G})|$. Similarly to the proof of Corollary \ref{cor:adequate}, if $j_{\min}(\mathbb{G}) \geq 1 - |V(\mathbb{G})|$, then Corollary \ref{cor:duality} implies that $j_{\max}(\mathbb{G}) \leq |E(\mathbb{G})| + |F(\mathbb{G})| -1$.

Hence, 
\begin{eqnarray*}
j_{\max}(\mathbb{G}) - j_{\min}(\mathbb{G}) & \leq & |E(\mathbb{G})| + |F(\mathbb{G})| + |V(\mathbb{G})| -2\\
& = & 2|E(\mathbb{G})| + (|V(\mathbb{G})| - |E(\mathbb{G})| + |F(\mathbb{G})| - 2 )\\
& = & 2|E(\mathbb{G})|-2g(\mathbb{G})
\end{eqnarray*}
Corollary \ref{cor:adequate} implies that equality is achieved when $\mathbb{G}$ is adequate.
\end{proof}

Theorem \ref{theorem:hw} and Proposition \ref{prop:polygrading} impose strong restrictions on the possible bigradings where $\widetilde{Kh}(\mathbb{G})$ can be nontrivial.  The shaded area in Figure \ref{fig:bounds} indicates where $\widetilde{Kh}(\mathbb{G})$ can be nontrivial.
\begin{figure}[h]
$$\begin{tikzpicture}[thick, >= triangle 45]
\draw (1,1) rectangle (7,7);
\draw[->] (1,7) -- (1,8);
\draw (1,8) node[above]{$j$-grading};
\draw[->] (7,1) -- (8,1);
\draw (8,1) node[right]{$i$-grading};
\begin{scope}[xshift = -1cm]
	\draw (0,1) -- (0,3.5);
	\draw (-.5,1) -- (.5,1);
	\draw (0,4.5) -- (0,7);
	\draw (-.5,7) -- (.5,7);
	\draw (0,4) node{\Large{$2(|E(\mathbb{G})|-g(\mathbb{G}))$}};
\end{scope}
\draw (1,0) -- (3,0);
\draw (5,0) -- (7,0);
\draw (1,-.5) -- (1,.5);
\draw (7,-.5) -- (7,.5);
\draw (4,0) node{\Large{$|E(\mathbb{G})|$}};
\draw (1,2.5) -- (5.5,7);
\draw (2.5,1) -- (7,5.5);
\draw (4,4) node{\Large{$g(\mathbb{G})$}};
\draw (2.5,4) -- (3.4,4);
\draw (5.5,4) -- (4.6,4);
\begin{pgfonlayer}{background}
\fill[black!30!] (1,1) -- (1,2.5) -- (5.5,7) -- (7,7) -- (7, 5.5) -- (2.5,1) -- (1,1);
\end{pgfonlayer}
\end{tikzpicture}$$
\caption{For a ribbon graph $\mathbb{G}$, the reduced Khovanov homology $\widetilde{Kh}(\mathbb{G})$ can only be nontrivial within the shaded region.}
\label{fig:bounds}
\end{figure}
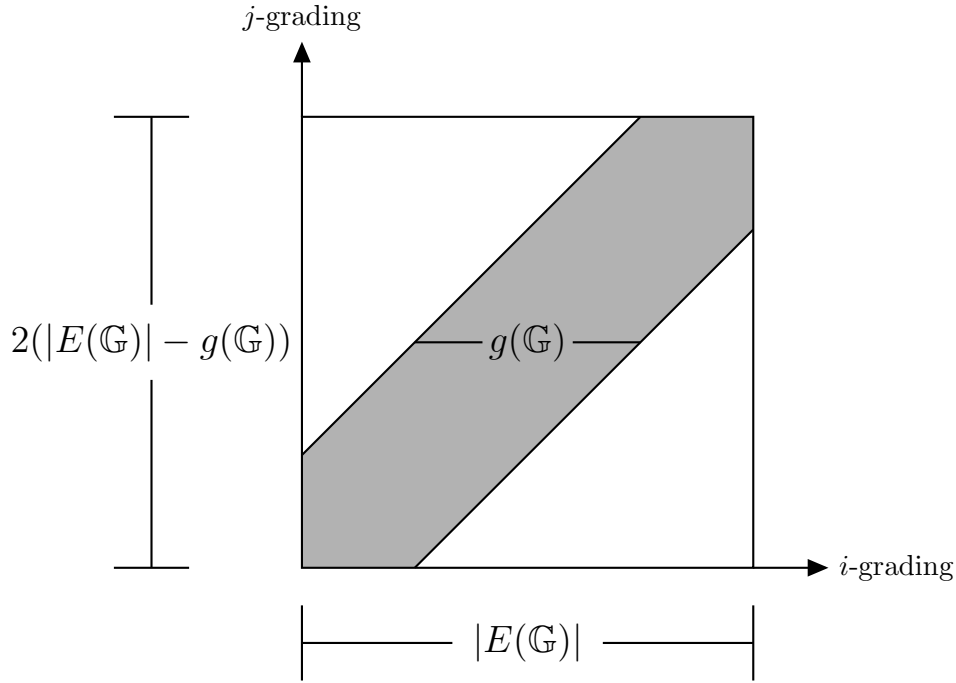

\section{An example}
\label{section:example}

In this section, we compute $Kh(\mathbb{G})$ and $\widetilde{Kh}(\mathbb{G})$ for a ribbon graph $\mathbb{G}$. We first compute $Kh(\mathbb{G})$ via the cube of resolutions complex. Although $\widetilde{Kh}(\mathbb{G})$ can also be computed from the cube of resolutions complex, we instead compute $\widetilde{Kh}(\mathbb{G})$ via the spanning quasi-tree model, which is possible because the differential in the spanning quasi-tree complex is zero for grading reasons.
\begin{figure}[h]
$$\begin{tikzpicture}[scale=1.5,thick]
%Vertex
\fill (0,0) circle (.2cm);
%Edge 1
\draw (0.3536, 0.3536) circle (.5cm);
\draw (.7,.8) node[right]{$2$};
%Edge 2
\draw (0,0) arc (-90:35:.5cm);
\draw (0,0) arc (270:55:.5cm);
\draw (-.5,.5) node[left]{$1$};
%Edge 3
\draw (0,0) arc (180:95:.5cm);
\draw(0,0) arc (-180:30:.5cm);
\draw (0.55,0.49) arc (85:55:.5cm);
\draw (.9,-.3) node[right]{$3$};
\end{tikzpicture}$$
\caption{We compute the Khovanov homology $Kh(\mathbb{G})$ and reduced Khovanov homology $\widetilde{Kh}(\mathbb{G})$ for the depicted ribbon graph $\mathbb{G}$.}
\label{fig:nonlinkribbon}
\end{figure}
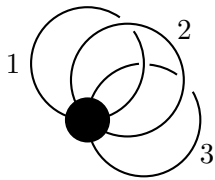

The ribbon graph $\mathbb{G}$ in Figure \ref{fig:nonlinkribbon} is not the all-$A$ ribbon graph of any classical link diagram. In order for a ribbon graph to be the all-$A$ ribbon graph of some classical link diagram, there must be a realization of $\mathbb{G}$ in the form of the middle diagram of Figure \ref{fig:ribbonconstruct} where the vertices of $\mathbb{G}$ are simple closed curves in the plane and the edges of $\mathbb{G}$ are nonintersecting curves with endpoints on the simple closed curves. The ribbon graph in Figure \ref{fig:nonlinkribbon} has no such depiction. However, this ribbon graph is represented arrow presentation in Figure \ref{fig:virtualribbon} and hence is the all-$A$ ribbon graph of the virtual link diagram in Figure \ref{fig:virtualribbon}.

\subsection{Computing $Kh(\mathbb{G})$}
Since $V(\mathbb{G}(I)) = V$ or $V^{\otimes 2}$ for each $I\in \mathcal{V}(3)$, each of our edge maps will either be $m$ or $\Delta$ (without tensoring with the identity on other factors of $V$). Before we begin the computation, set the notation for $CKh^{1,*}(\mathbb{G})$ and $CKh^{2,*}(\mathbb{G})$ by
\begin{eqnarray*}
CKh^{1,*}(\mathbb{G}) & = & V(\mathbb{G}(1,0,0))\oplus V(\mathbb{G}(0,1,0)) \oplus V(\mathbb{G}(0,0,1))\\
CKh^{2,*}(\mathbb{G}) & = & V(\mathbb{G}(1,1,0))\oplus V(\mathbb{G}(1,0,1)) \oplus V(\mathbb{G}(0,1,1)),
\end{eqnarray*}
so that the order of the terms in the direct sum are ordered from top to bottom as they appear in Figure \ref{fig:cube_example}. Note that any subgroup $CKh^{i,j}(\mathbb{G})$ of $CKh^{i,*}(\mathbb{G})$ will share this order of its summands.
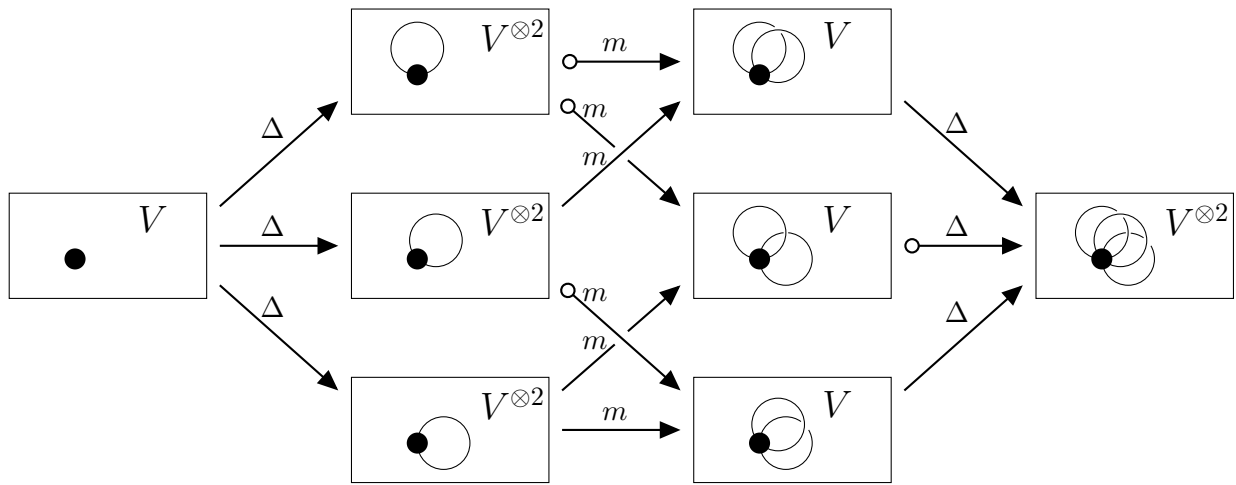
\begin{figure}[h]
$$\begin{tikzpicture}[scale=.35]
%Column 1, Only Box
\draw (-1.5,-1.5) rectangle (6,2.5);
\begin{scope}[scale=2, xshift = .5cm]
	%Vertex
	\fill (0,0) circle (.2cm);
\end{scope}
%------------------------------------------------------------------------%
%Labels
\draw (3,1.5) node[right]{\Large{$V$}};
%------------------------------------------------------------------------%
%------------------------------------------------------------------------%
%Column 2, Row 2
\begin{scope}[xshift = 13cm]
\draw (-1.5,-1.5) rectangle (6,2.5);
\begin{scope}[scale=2, xshift = .5cm]
	%Vertex
	\fill (0,0) circle (.2cm);
	%Edge 1
	\draw (0.3536, 0.3536) circle (.5cm);
\end{scope}
%------------------------------------------------------------------------%
%Labels
\draw (3,1.5) node[right]{\Large{$V^{\otimes 2}$}};
\end{scope}
%------------------------------------------------------------------------%
%------------------------------------------------------------------------%
%Column 2, Row 1
\begin{scope}[xshift=13cm, yshift=7cm]
\draw (-1.5,-1.5) rectangle (6,2.5);
\begin{scope}[scale=2, xshift = .5cm]
	%Vertex
	\fill (0,0) circle (.2cm);
	%Edge 2
	\draw (0,.5) circle (.5cm);
\end{scope}
%------------------------------------------------------------------------%
%Labels
\draw (3,1.5) node[right]{\Large{$V^{\otimes 2}$}};
\end{scope}
%------------------------------------------------------------------------%
%------------------------------------------------------------------------%
%Column 2, Row 3
\begin{scope}[xshift=13cm, yshift=-7cm]
\draw (-1.5,-1.5) rectangle (6,2.5);
\begin{scope}[scale=2, xshift = .5cm]
	%Vertex
	\fill (0,0) circle (.2cm);
	%Edge 3
	\draw (.5,0) circle (.5cm);
\end{scope}
%------------------------------------------------------------------------%
%Labels
\draw (3,1.5) node[right]{\Large{$V^{\otimes{2}}$}};
\end{scope}
%------------------------------------------------------------------------%
%------------------------------------------------------------------------%
%Column 3, Row 2
\begin{scope}[xshift=26cm]
\draw (-1.5,-1.5) rectangle (6,2.5);
\begin{scope}[scale=2, xshift = .5cm]
	%Vertex
	\fill (0,0) circle (.2cm);
	%Edge 2
	\draw (0,.5) circle (.5cm);
	
	%Edge 3
	\draw (0,0) arc (180:95:.5cm);
	\draw(0,0) arc (-180:85:.5cm);
\end{scope}
%------------------------------------------------------------------------%
%Labels
\draw (3,1.5) node[right]{\Large{$V$}};
\end{scope}
%------------------------------------------------------------------------%
%------------------------------------------------------------------------%
%Column3. Row 1
\begin{scope}[xshift=26cm, yshift=7cm]
\draw (-1.5,-1.5) rectangle (6,2.5);
\begin{scope}[scale=2, xshift = .5cm]
	%Vertex
	\fill (0,0) circle (.2cm);
	%Edge 1
	\draw (0.3536, 0.3536) circle (.5cm);
	%Edge 2
	\draw (0,0) arc (-90:35:.5cm);
	\draw (0,0) arc (270:55:.5cm);
\end{scope}
%------------------------------------------------------------------------%
%Labels
\draw (3,1.5) node[right]{\Large{$V$}};
\end{scope}
%------------------------------------------------------------------------%
%------------------------------------------------------------------------%
%Column 3. Row 3
\begin{scope}[xshift=26cm, yshift=-7cm]
\draw (-1.5,-1.5) rectangle (6,2.5);
\begin{scope}[scale=2, xshift = .5cm]
	%Vertex
	\fill (0,0) circle (.2cm);
	%Edge 1
	\draw (0.3536, 0.3536) circle (.5cm);
	%Edge 3
	\draw (0,0) arc (180:55:.5cm);
	\draw(0,0) arc (-180:30:.5cm);
\end{scope}
%------------------------------------------------------------------------%
%Labels
\draw (3,1.5) node[right]{\Large{$V$}};
\end{scope}
%------------------------------------------------------------------------%
%------------------------------------------------------------------------%
%Column 4, Only Box
\begin{scope}[xshift=39cm]
\draw (-1.5,-1.5) rectangle (6,2.5);
\begin{scope}[scale=2, xshift = .5cm]
	%Vertex
	\fill (0,0) circle (.2cm);
	%Edge 1
	\draw (0.3536, 0.3536) circle (.5cm);
	%Edge 2
	\draw (0,0) arc (-90:35:.5cm);
	\draw (0,0) arc (270:55:.5cm);
	%Edge 3
	\draw (0,0) arc (180:95:.5cm);
	\draw(0,0) arc (-180:30:.5cm);
	\draw (0.55,0.49) arc (85:55:.5cm);
\end{scope}
%------------------------------------------------------------------------%
%Labels
\draw (3,1.5) node[right]{\Large{$V^{\otimes 2}$}};
\end{scope}
%------------------------------------------------------------------------%
%------------------------------------------------------------------------%
%Arrows
\begin{scope}[>=triangle 45]
	\draw[->,thick] (6.5,.5) -- (11,.5);
	\draw (8.5,.5) node[above]{$\Delta$};
	\draw[->,thick] (6.5,2) -- (11, 6);
	\draw (8.5, 4.2) node[above]{$\Delta$};
	\draw[->,thick] (6.5,-1) -- (11,-5);
	\draw (8.5, -2.6) node[above]{$\Delta$};
	\begin{scope}[xshift = 13cm]
		\draw[->,thick] (6.5,2) -- (11,6);
		\draw (7.75,3.2) node[above]{$m$};
		\draw[o->,thick](6.5,-1)--(11,-5);
		\draw (7.75,-2) node[above]{$m$};
	\end{scope}
	\begin{scope}[xshift=13cm,yshift = 7cm]
		\draw[o->,thick](6.5,.5)--(11,.5);
		\draw (8.5,.5) node[above]{$m$};
		\draw[o-,thick] (6.5,-1) -- (8.5,-2.75);
		\draw (7.75,-2) node[above]{$m$};
		\draw[->,thick] (9,-3.25)--(11,-5);
	\end{scope}
	\begin{scope}[xshift=13cm, yshift=-7cm]
		\draw[->,thick](6.5,.5) -- (11,.5);
		\draw (8.5,.5) node[above]{$m$};
		\draw[thick] (6.5,2) -- (8.5,3.75);
		\draw (7.75,3.2) node[above]{$m$};
		\draw[->,thick](9,4.25) -- (11,6);
	\end{scope}
	\begin{scope}[xshift = 26cm, yshift=7cm]
		\draw[->,thick](6.5,-1) -- (11,-5);
		\draw (8.5, -2.6) node[above]{$\Delta$};
	\end{scope}
	\begin{scope}[xshift = 26cm]
		\draw[o->,thick] (6.5,.5) -- (11,.5);
		\draw (8.5,.5) node[above]{$\Delta$};
	\end{scope}
	\begin{scope}[xshift = 26cm, yshift = -7cm]
		\draw[->,thick] (6.5,2) -- (11, 6);
		\draw (8.5, 4.2) node[above]{$\Delta$};
	\end{scope}
\end{scope}
\end{tikzpicture}$$
\caption{The hypercube complex for the ribbon graph $\mathbb{G}$ in Figure \ref{fig:nonlinkribbon}.}
\label{fig:cube_example}
\end{figure}

We proceed by computing the complexes for each nontrivial polynomial grading. The $\mathbb{Z}$-module $CKh^{0,-1}(\mathbb{G})$ has basis $\{v_-\}$ and the $\mathbb{Z}$-module $CKh^{1,-1}(\mathbb{G})$ has basis $\{(v_-\otimes v_-,0,0),(0,v_-\otimes v_-, 0), (0,v_-\otimes v_-)\}$. For all other values of $i$, the $\mathbb{Z}$-module $CKh^{i,j}(\mathbb{G})$ is trivial, and so our complex for $j=-1$ is 
$$0\to \mathbb{Z} \xrightarrow{d^{0,-1}} \mathbb{Z}^3 \to 0.$$
Since $d^{0,-1}(v_-) = \Delta\oplus\Delta\oplus\Delta(v_-) = (v_-\otimes v_-,v_-\otimes v_-,v_-\otimes v_-)$, it follows that $Kh^{0,-1}(\mathbb{G}) = 0$ and $Kh^{1,-1}(\mathbb{G}) = \mathbb{Z}^2$.

The $\mathbb{Z}$-module $CKh^{0,1}(\mathbb{G})$ has basis $\{v_+\}$, the $\mathbb{Z}$-module $CKh^{1,1}(\mathbb{G})$ has basis $\{(v_{\pm}\otimes v_{\mp},0,0), (0, v_{\pm}\otimes v_{\mp},0), (0,0,v_{\pm}\otimes v_{\mp})\},$ the $\mathbb{Z}$-module $CKh^{2,1}(\mathbb{G})$ has basis $\{(v_-,0,0),(0,v_-,0),(0,0,v_-)\}$, and the $\mathbb{Z}$-module $CKh^{3,1}(\mathbb{G})$ has basis $\{v_-\}$. Thus our complex for $j=1$ is 
$$0\to \mathbb{Z} \xrightarrow{d^{0,1}} \mathbb{Z}^6 \xrightarrow{d^{1,1}} \mathbb{Z}^3\xrightarrow{d^{2,1}} \mathbb{Z}\to 0.$$
The kernel of $d^{0,1}$ is trivial, and thus $Kh^{0,1}(\mathbb{G})=0$. The image of $d^{0,1}$ has basis $\{(v_+\otimes v_- + v_-\otimes v_+,v_+\otimes v_- + v_-\otimes v_+,v_+\otimes v_- + v_-\otimes v_+)\}$, while the kernel of $d^{1,1}$ has basis $\{(v_+\otimes v_- + v_-\otimes v_+,v_+\otimes v_- + v_-\otimes v_+,v_+\otimes v_- + v_-\otimes v_+), (v_+\otimes v_- - v_-\otimes v_+,0,0),(0,v_+\otimes v_- - v_-\otimes v_+,0),(0,0,v_+\otimes v_- - v_-\otimes v_+)\}$. Therefore $Kh^{1,1}(\mathbb{G}) = \mathbb{Z}^3$. It can also be shown that the kernel of $d^{2,1}$ is equal to the image of $d^{1,1}$, and hence $Kh^{2,1}(\mathbb{G}) = 0$ and that the kernel of $d^{3,1}$ is equal to the image of $d^{2,1}$, and hence $Kh^{3,1}(\mathbb{G})=0$.

The $\mathbb{Z}$-module $CKh^{1,3}(\mathbb{G})$ has basis $\{(v_+\otimes v_+,0,0),(0,v_+\otimes v_+,0),(0,0,v_+\otimes v_+)\}$, the $\mathbb{Z}$-module $CKh^{2,3}(\mathbb{G})$ has basis $\{(v_+,0,0)(0,v_+,0),(0,0,v_+)\}$, and the $\mathbb{Z}$-module $CKh^{3,3}(\mathbb{G})$ has basis $\{v_+\otimes v_-,v_-\otimes v_+\}$.  Thus the complex for $j=3$ is
$$0\to \mathbb{Z}^3\xrightarrow{d^{1,3}}\mathbb{Z}^3\xrightarrow{d^{2,3}}\mathbb{Z}^2\to 0.$$
The kernel of $d^{1,3}$ has basis $\{(v_+\otimes v_+,v_+\otimes v_+,v_+\otimes v_+)\}$, and hence $Kh^{1,3}(\mathbb{G})=\mathbb{Z}$. Both the kernel of $d^{2,3}$ and the image of $d^{1,3}$ have basis $\{(v_+,v_+,0),(v_+,0,-v_+)\}$, and hence $Kh^{2,3}(\mathbb{G})=0$. The kernel of $d^{3,3}$ is all of $CKh^{3,3}(\mathbb{G})$ and the image of $d^{2,3}$ has basis $\{v_+\otimes v_-+v_-\otimes v_+\}$. Therefore, $Kh^{3,3}(\mathbb{G}) = \mathbb{Z}$. 

The $\mathbb{Z}$-module $CKh^{3,5}(\mathbb{G})$ has basis $\{v_+\otimes v_+\}$, and for all $i\neq 3$, the $\mathbb{Z}$-module $CKh^{i,5}(\mathbb{G})$ is trivial. Therefore $Kh^{3,5}(\mathbb{G})=\mathbb{Z}$. 

Table \ref{table:khovanov} shows the results of our calculation. The rows correspond to polynomial grading, and the columns correspond to the homological grading. If an entry is left blank, then that summand is trivial. 
\begin{table}[h]
\begin{tabular}{|r || l | l | l |}
\hline
$j\backslash i$ & $1$ & $2$ & $3$\\
\hline
\hline
$5$ & & & $\mathbb{Z}$\\
\hline
$3$ & $\mathbb{Z}$ & & $\mathbb{Z}$\\
\hline
$1$ & $\mathbb{Z}^3$ & & \\
\hline
$-1$ & $\mathbb{Z}^2$ & & \\
\hline
\end{tabular}
\caption{The Khovanov homology $Kh(\mathbb{G})$ of the ribbon graph $\mathbb{G}$ from Figure \ref{fig:nonlinkribbon}.}
\label{table:khovanov}
\end{table}

The Khovanov homology of this ribbon graph does not contain any torsion. Asaeda and Przytycki \cite{AsaedaPrzytycki:Thickness} proved a conjecture of Shumakovitch which states the all nonsplit alternating links except the unknot, the Hopf link, and connected sums of Hopf links contain torsion of order two. In the language of ribbon graphs, Asaeda and Przytycki's theorem says that all connected, genus zero ribbon graphs without loops or vertices of degree one except the graph consisting of a single vertex and arbitrary $1$-sums of the graph with two vertices and two parallel edges between them contains two torsion. Recently, Shumakovitch \cite{Shumakovitch:Torsion} proved that nonsplit alternating links do not have torsion of odd order, and conjectured that Asaeda's and Prztycki's result should extend to non-alternating links as well. Our example indicates that examining the torsion for non-planar ribbon graphs could be an interesting question.

\subsection{Computing $\widetilde{Kh}(\mathbb{G})$}
Instead of marking a point on the boundary of each $\Sigma_{\mathbb{G}(I)}$ and computing the subcomplex of the cube of resolutions complex, we examine the spanning quasi-tree complex and see that the differential must be zero. Let $\mathbb{T}_1$, $\mathbb{T}_2$, $\mathbb{T}_3$, and $\mathbb{T}_4$ be the four spanning quasi-trees depicted in Figure \ref{fig:exampleqtrees}.
\begin{figure}[h]
$$\begin{tikzpicture}[scale=1]
%Vertex
\fill (0,0) circle (.2cm);
\draw (0,-.5) node[below]{\Large{$\mathbb{T}_1$}};
\begin{scope}[xshift = 2.5cm]
	%Vertex
	\fill (0,0) circle (.2cm);
	%Edge 1
	\draw (0.3536, 0.3536) circle (.5cm);
	%Edge 2
	\draw (0,0) arc (-90:35:.5cm);
	\draw (0,0) arc (270:55:.5cm);
	\draw (0,-.5) node[below]{\Large{$\mathbb{T}_2$}};
\end{scope}

\begin{scope}[xshift = 5cm]
	%Vertex
	\fill (0,0) circle (.2cm);
	%Edge 2
	\draw (0,.5) circle (.5cm);
	
	%Edge 3
	\draw (0,0) arc (180:95:.5cm);
	\draw(0,0) arc (-180:85:.5cm);
	\draw (0,-.5) node[below]{\Large{$\mathbb{T}_3$}};
\end{scope}

\begin{scope}[xshift = 7.5cm]
	%Vertex
	\fill (0,0) circle (.2cm);
	%Edge 1
	\draw (0.3536, 0.3536) circle (.5cm);
	%Edge 3
	\draw (0,0) arc (180:55:.5cm);
	\draw(0,0) arc (-180:30:.5cm);
	\draw (0,-.5) node[below]{\Large{$\mathbb{T}_4$}};
\end{scope}
\end{tikzpicture}$$
\caption{The four spanning quasi-trees of $\mathbb{G}$.}
\label{fig:exampleqtrees}
\end{figure}
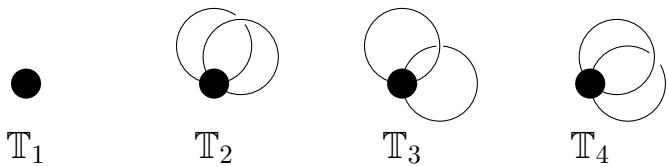
After constructing chord diagrams from each spanning quasi-tree (as in Subsection \ref{subsec:activities}), one can compute the number of internally active and externally active edges in $\mathbb{T}_k$ for each spanning quasi-tree using the edge ordering given in Figure \ref{fig:nonlinkribbon}. From this information, the homological and polynomial grading of each spanning quasi-tree can be computed. Table \ref{table:grading} shows the genus, activity information, and gradings of each of the four spanning quasi-trees.
\begin{table}[h]
\begin{tabular}{| l | c | c | c | c | c |}
\hline
$k$ & $g(\mathbb{T}_k)$ & $ia(\mathbb{T}_k)$ & $ea(\mathbb{T}_k)$ & $i(\mathbb{T}_k)$ & $j(\mathbb{T}_k)$\\
\hline
\hline
$1$ & $0$ & $0$ & $1$ & $1$ & $2$\\
\hline
$2$ & $1$ & $1$ & $0$ & $1$ & $0$\\
\hline
$3$ & $1$ & $1$ & $0$ & $1$ & $0$\\
\hline
$4$ & $1$ & $0$ & $1$ & $3$ & $4$\\
\hline
\end{tabular}
\caption{The activities and bigrading for each of the four spanning quasi-trees of $\mathbb{G}$.}
\label{table:grading}
\end{table}

As Table \ref{table:grading} indicates, there does not exist two spanning quasi-tree $\mathbb{T}_k$ and $\mathbb{T}_l$ with $j(\mathbb{T}_k) = j(\mathbb{T}_l)$ and $i(\mathbb{T}_k) = i(\mathbb{T}_l) - 1$. Therefore, the differential of the reduced Khovanov complex (for any choice of basepoint) must be zero, and the reduced Khovanov homology of $\mathbb{G}$ has a summand of $\mathbb{Z}$ corresponding to each spanning tree. Table \ref{table:reduced} shows the reduced Khovanov homology of $\mathbb{G}$.
\begin{table}[h]
\begin{tabular}{|r || l | l | l |}
\hline
$j\backslash i$ & $1$ & $2$ & $3$\\
\hline
\hline
$4$ & & & $\mathbb{Z}$\\
\hline
$2$ & $\mathbb{Z}$ & & \\
\hline
$0$ & $\mathbb{Z}^2$ & & \\
\hline
\end{tabular}
\caption{The reduced Khovanov homology $\widetilde{Kh}(\mathbb{G})$ of the ribbon graph $\mathbb{G}$ from Figure \ref{fig:nonlinkribbon}.}
\label{table:reduced}
\end{table}

\newpage
\bibliography{RibbonGraphHomology}
\bibliographystyle {amsalpha}

\end{document}